\newcommand{\nc}{\normalcolor}
\numberwithin{equation}{section}
\newtheorem{theorem}{Theorem}
\newtheorem{corollary}[theorem]{Corollary}
\newtheorem{lemma}[theorem]{Lemma}
\newtheorem{prop}[theorem]{Proposition}
\newtheorem{defn}[theorem]{Definition}
\newcommand{\zz}{{\mathbb{Z}}}
\newcommand{\pp}{{\mathbb{P}}}
\newcommand{\oh}{\mathcal{O}}
\newcommand{\tr}{{\operatorname{Tr}\,}}
\newcommand{\beq}[1]{\begin{equation} \label{#1}}
\newcommand{\eeq}{\end{equation}}
\newcommand{\expb}[1]{\operatorname{exp}\left\{#1\right\}}
\newcommand{\ntr}[1]{\left\langle #1 \right\rangle}
\newcommand*\samethanks[1][\value{footnote}]{\footnotemark[#1]}
\renewcommand{\nc}{\newcommand}
\nc{\rnc}{\renewcommand}
\nc{\R}{\mathbb{R}}
\nc{\C}{\mathbb{C}}
\rnc{\d}{\mathrm{d}}
\nc{\E}{\mathbb{E}}
\begin{document}
\title{Gaussian statistics for left and right eigenvectors of complex non-Hermitian matrices}

\author{Sofiia Dubova\thanks{Department of Mathematics, Harvard University} \and Kevin Yang\samethanks \and Horng-Tzer Yau\samethanks \and Jun Yin\thanks{Department of Mathematics, University of California, Los Angeles}}
\maketitle

\begin{abstract}
We consider a constant-size subset of left and right eigenvectors of an $N\times N$ i.i.d. complex non-Hermitian matrix associated with the eigenvalues with pairwise distances at least $N^{-\frac12+\epsilon}$. We show that arbitrary constant rank projections of these eigenvectors are Gaussian and jointly independent.
\end{abstract}

\tableofcontents

\section{Introduction}
Eigenvector statistics of random matrix ensembles have been extensively studied in random matrix theory. In the case of Hermitian random matrices, which can be viewed as Hamiltonians of disordered quantum systems, the Quantum Unique Ergodicity (QUE) conjecture \cite{RS94} asserts that their eigenvectors tend to be distributed uniformly on a sphere. For Gaussian orthogonal and Gaussian unitary ensembles this statement is trivially correct due to their invariance under multiplication by orthogonal and unitary matrices respectively. A great deal of work has been done to show that the behavior of the eigenvectors of (generalized) Wigner ensemble is consistent with QUE: delocalization of eigenvectors \cite{EYY12}, asymptotic normality and independence of finitely many deterministic projections of eigenvectors \cite{bourgade2017eigenvector,marcinek2022high}, the size and normality of other eigenvector statistics \cite{CES2020,cipolloni2022normal,benigni2022fluctuations,benigni2023fluctuations}. Similar results have been established for other Hermitian ensembles in \cite{bourgade2018random,bourgade2017sparse,aggarwal2021eigenvector}. 

In the case of non-Hermitian i.i.d. ensembles, less is known. The delocalization of the eigenvectors has been established in \cite{rudelson2015delocalization,rudelson2016no,alt2018local,alt2021spectral,luh2020eigenvector,lytova2020delocalization}. Some work has been done on the size and distribution of the overlaps $\mathcal{O}_{ij} = \langle u_j, u_i\rangle\langle v_i, v_j\rangle$, where $u_i, u_j$ and $v_i, v_j$ are the right and left eigenvectors of a non-Hermitian i.i.d. matrix $A$, see e.g. \cite{BD20,cipolloni2023optimal,EJ23}. Overlaps $\mathcal{O}_{ij}$ are of particular interest for the non-Hermitian models as they govern \cite{BD20,grela2018full} the evolution of eigenvalues and eigenvectors of $A_t$ under the flow $\d A_t = \frac{1}{\sqrt{N}}\d B_t$, where $B_t$ is a Brownian motion. In this work we obtain the Gaussianity of the deterministic projections of eigenvectors of $A$.

The standard process for proving Gaussianity of the eigenvector statistics for a Hermitian ensemble $H$ is the well-known three-step strategy, initially used for the local eigenvalue statistics in \cite{erdHos2010bulk}:
\begin{enumerate}
    \item Local laws for the resolvent of $H$;
    \item Gaussianity of the eigenvector statistics of a Gaussian divisible ensemble $H_t = H + \sqrt{t}B$, where $B$ is GOE;
    \item Comparison of the eigenvector statistics of the initial ensemble $H$ and $H_t$.
\end{enumerate}
The key component of this argument is the second step, which is done using the Eigenvector Moment Flow (EMF), first developed in \cite{bourgade2017eigenvector}. EMF is derived from the evolution of eigenvectors under the flow $\d H_t = \frac{1}{\sqrt{N}}\d B_t$, where $B_t$ is a standard Hermitian Brownian motion. The non-Hermitian analogue of this eigenvector evolution is a lot more complicated, see e.g. \cite{BD20,grela2018full}. 

In this paper we take a different approach to the second step of the three-step strategy. Our method is inspired by the supersymmetric approach recently used to prove the universality of local eigenvalue statistics of complex i.i.d. non-Hermitian matrices in the bulk in \cite{MO} and extended to the real case in \cite{dubova2024bulk,M2}. This method allows us to compute the moment generating functions of the eigenvector statistics of the Gaussian divisible ensemble $A+\sqrt{t}B$ directly through asymptotic analysis of the exact integral formulas for these functions at the time-scale $t=N^{-\frac13+\epsilon_0}$. 

Some of the details behind the third step for non-Hermitian matrices are also different from what is done the Hermitian case \cite{KY13}. As in \cite{MO}, this comparison is done through Girko's formula, and this requires translating eigenvector statistics into eigenvalue statistics in some sense. More is explained towards the end of the introduction.

Now we state the main result of this paper.
\begin{theorem}\label{theorem:main}
Let $A$ be an $N\times N$ complex random matrix with i.i.d. entries satisfying $\E A_{ij} = 0$, $\E\left|A_{ij}\right|^2 = N^{-1}$ and $\E\left|A_{ij}\right|^p \le C_p N^{-p/2}$. Fix $\epsilon,\tau>0$, $m_R, m_L\in \mathbb{Z}_+$, set $m=m_R+m_L$. Consider deterministic points in the complex plane $\left\{z^0_j = z^0_j(N)\right\}_{j=1}^{m}\subset\C$ such that $\left|z^0_j\right|<1-\tau$ for any $1\le j\le m$ and $\left|z^0_j-z^0_k\right|\ge N^{-1/2+\epsilon}$ for any distinct $1\le j,k\le m$. Let $(\lambda_j, u_j)$ denote an eigenvalue and the corresponding right eigenvector of $A$ for $1\le j\le m_R$ and $(\lambda_j, v_j)$ denote an eigenvalue and the corresponding left eigenvector of $A$ for $m_R+1\le j\le m$. Let $\left\{T_j = T_j(N)\right\}_{j=1}^{m}$ be a family of deterministic bounded rank matrices with bounded norm. Then for any test function $F\in C_c^\infty\left(\C^{m}\times \R^{m}\right)$, we have
\begin{align}
&\lim_{N\rightarrow\infty} \E F\left(\sqrt{N}(\lambda_1 - z^0_1),\ldots,\sqrt{N}(\lambda_m - z^0_{m}), \|T_1 u_1\|^2,\ldots, \|T_m v_m\|^2\right)\\
= &\lim_{N\rightarrow\infty} \E F\left(\sqrt{N}(\lambda_1 - z^0_1),\ldots,\sqrt{N}(\lambda_m - z^0_{m}), Z_1^2,\ldots, Z_m^2\right),
\end{align}
where $Z= (Z_1, \ldots, Z_N)$ consists of independent Gaussian random variables $Z_j \sim \mathcal{N}(0,\|T_j\|_F^2)$ which are independent of $A$. 
\end{theorem}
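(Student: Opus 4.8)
The plan is to follow the three-step strategy described in the introduction, but with the supersymmetric (SUSY) integral approach replacing the Eigenvector Moment Flow in step two. First, I would establish the relevant local law and isotropic local law for the resolvent $G(z) = (A-z)^{-1}$ and, more importantly, for the Hermitization $H_z = \begin{pmatrix} 0 & A-z \\ (A-z)^* & 0 \end{pmatrix}$, whose singular value structure at energy zero controls both the local eigenvalue density of $A$ near $z$ and the eigenvector statistics. The key observation linking eigenvectors to resolvents is that for a right eigenvector $u_j$ with eigenvalue $\lambda_j$ close to $z^0_j$, the quantity $\|T_j u_j\|^2$ can be extracted (up to normalization of $u_j$) from a contour integral of $\tr T_j^* T_j G(z) $ or from the small-singular-value block of the resolvent of $H_{z^0_j}$; the bounded-rank structure of $T_j$ makes this an isotropic quantity, i.e.\ a finite sum of terms $\langle \mathbf{b}_a, G \mathbf{a}_a\rangle$. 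The normalization issue (eigenvectors of non-Hermitian matrices are not canonically normalized the way orthonormal eigenvectors are) must be handled by always working with the ratio $\|T_j u_j\|^2 / \|u_j\|^2$ implicitly, or by fixing the convention and tracking it through.

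Second, for the Gaussian-divisible ensemble $A_t = A + \sqrt{t}\,B$ with $B$ a complex Ginibre matrix and $t = N^{-1/3+\epsilon_0}$, I would derive an exact supersymmetric integral representation for the joint moment generating function
\beq{eq:mgf}
\E \exp\Bigl\{ \textstyle\sum_{j} s_j \|T_j u_j\|^2 \Bigr\} \cdot (\text{eigenvalue-counting factors near each } z^0_j),
\eeq
extending the Grassmann/bosonic integral formulas of \cite{MO} (which handled the eigenvalue correlation functions) to include source terms coupling to $T_j^* T_j$. Concretely, one introduces supervectors at each point $z^0_j$, writes the characteristic-polynomial-type generating function as a superintegral over a constant number of Grassmann and bosonic variables after integrating out the Gaussian randomness, and then performs a saddle-point analysis. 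Because the $z^0_j$ are separated by at least $N^{-1/2+\epsilon}$ and the relevant Green's function between well-separated spectral points decays, the saddle-point contributions factorize across the $m$ points, which yields both the asymptotic independence of the $m$ pairs $(\sqrt{N}(\lambda_j - z^0_j), \|T_j u_j\|^2)$ and, at each individual point, the Gaussianity: the generating function at a single point converges to $(1 - 2 s \|T_j\|_F^2)^{-1/2}$ up to the normalization, which is exactly the MGF of $Z_j^2$ with $Z_j \sim \mathcal{N}(0, \|T_j\|_F^2)$. The left-eigenvector case $\|T_j v_j\|^2$ is symmetric, obtained by replacing $A$ with $A^*$ (equivalently $T_j$ with $T_j^*$) in the corresponding block of the Hermitization, and the cross-terms between a left and a right eigenvector at different points again factorize by the separation hypothesis.

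Third, I would transfer the conclusion from $A_t$ back to $A$ via a Green's function comparison. Following \cite{MO}, the comparison is phrased through Girko's Hermitization formula, which expresses smooth linear statistics of the eigenvalues of $A$ (and, with the insertion of the $T_j^* T_j$ source terms, of the eigenvectors as well) in terms of $\log\det$ of $H_z$ integrated against a test function of $z$; one then compares the resolvent $G_z^{A}$ with $G_z^{A_t}$ using the local laws from step one and a moment-matching / continuity-in-$t$ argument (e.g.\ controlling $\frac{d}{dt}$ of the relevant functional of the resolvent), exploiting that $t = N^{-1/3+\epsilon_0}$ is small enough that the first few moments are essentially unchanged and the error in the local law is subpolynomial. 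The main obstacle I expect is the second step: setting up the supersymmetric formula so that it simultaneously encodes eigenvalue locations and the eigenvector projections, and then carrying out the saddle analysis uniformly over the regime $|z^0_j| < 1-\tau$ while controlling the Grassmann sector — in particular, verifying that the source terms $s_j T_j^* T_j$ do not destabilize the saddle manifold and that the fluctuations around the saddle produce exactly a chi-squared law rather than some deformation of it. A secondary technical difficulty is making the factorization across the $m$ well-separated points rigorous with effective error bounds, since the off-diagonal decay of the saddle-point Green's function at distance $N^{-1/2+\epsilon}$ is only barely enough, so the error terms need to be tracked carefully against the $N^{\epsilon}$ margin.
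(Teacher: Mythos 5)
Your outline reproduces the paper's three-step skeleton, but the two steps that carry all the difficulty are left as announced hopes rather than arguments, and in both places the mechanism you propose is not the one that actually closes the argument. For step two, the paper does not set up a Grassmann/bosonic superintegral with source terms and a saddle-point analysis; it performs a partial Schur-type change of variables via Householder reflections (Lemma \ref{lemma:jacobian} and \eqref{eq:changeofvariables}), integrates out the bulk matrix $M_t^{(m)}$ and the $w_j$'s, and then proves concentration of the resulting spherical measures $\nu_j$ (Lemma \ref{lem:master-concentration}, Corollary \ref{cor:conc}, Lemma \ref{lemma:simplify}). Your proposal gives no indication of how the eigenvector source terms $s_jT_j^*T_j$ would survive the superintegral derivation or the saddle analysis, and you flag exactly this as the ``main obstacle,'' so the core of the theorem is missing. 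Moreover the limit you assert at a single point, $(1-2s\|T_j\|_F^2)^{-1/2}$, is not what the Gaussian-divisible computation produces: conditionally on $\lambda_j$ the moment generating function equals $\det[1-tq\,H_{\lambda_j}(\eta_{\lambda_j,t})T_j^*T_j]^{-1}$ (Proposition \ref{prop:single}), which via the isotropic law \eqref{assn:3.1} and $t\langle H_{\lambda_j}(\eta_{\lambda_j,t})\rangle=1$ becomes $(1-q\|T_j\|_F^2)^{-1}$, i.e.\ the exponential-type law of a squared complex Gaussian modulus, not a real one-degree chi-square; so even the target of your saddle analysis is off. Finally, the joint independence in the Gaussian-divisible step is not obtained from ``factorization of saddle contributions barely surviving at distance $N^{-1/2+\epsilon}$'': it comes from the sequential change of variables together with the two-resolvent lower bounds \eqref{assn:2.1}--\eqref{assn:2.3} and the coefficient bounds in the proof of Lemma \ref{lemma:simplify}, and it only needs separation between eigenvalues attached to eigenvectors on the same side.

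For step three, saying that one inserts ``$T_j^*T_j$ source terms'' into Girko's formula and runs a moment-matching comparison skips the key new idea: one must first express the eigenvector observable, evaluated at the \emph{random} eigenvalue $\lambda_j$, as a Green-function quantity that is stable under the swap. The paper does this with $V(z,T)=N\eta_V\tr[T^*T\,\mathrm{Im}\,G_z(\eta_V)]$ at the sub-microscopic scale $\eta_V=N^{-1-\delta_V}$ (Definition \ref{defn:Vfunction}, Lemma \ref{lemma:Vformula}), and the validity of this approximation hinges on the level repulsion estimate for the second-smallest singular value of $A-z$ (Proposition \ref{prop:levelrepulsion}, via Theorem 2.10 of \cite{EJ23} and Theorem 3.2 of \cite{CL19}), extended to random $z$ near the spectrum by a net and the gradient bounds of Lemma \ref{lemma:gradientestimates} (Lemma \ref{lemma:levelrepulsion}). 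Without such a repulsion input, the spectral weight in $V(\lambda_j,T)$ can be shared among several near-degenerate singular directions and the identification $V(\lambda_j,T)\approx N\|T_ju_j\|^2$ fails, so your comparison would not be comparing the right quantity. The comparison itself is also not a ``continuity in $t$'' argument: since $t=N^{-1/3+\epsilon_0}$ is far too large for that, the paper constructs a matching ensemble with three-and-a-half matched moments (Lemma \ref{lemma:ex35M}) and runs a Lindeberg-type swap on the functionals $I_{\delta_V}$ (Lemma \ref{lemma:comparisonlemma}), which in turn requires the derivative bounds \eqref{eq:trtgderivative}--\eqref{eq:trgderivative} at $\eta\sim N^{-1-10\delta_V}$; none of this is addressed in your sketch.
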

Theorem \ref{theorem:main} implies convergence in distribution of $\|T_{1}u_{1}\|^{2},\ldots,\|T_{m}v_{m}\|^{2}$ to squares of independent Gaussians. The point of including $\sqrt{N}(\lambda_{1}-z_{1}^{0}),\ldots,\sqrt{N}(\lambda_{m}-z_{m}^{0})$ is to pin down the corresponding eigenvalues near $z_{1}^{0},\ldots,z_{m}^{0}$. Based on the universality result for local eigenvalue statistics in Theorem 1.2 of \cite{MO} (and its proof), we anticipate that
\begin{align*}
&\lim_{N\rightarrow\infty} \E F\left(\sqrt{N}(\lambda_1 - z^0_1),\ldots,\sqrt{N}(\lambda_m - z^0_{m}), Z_1^2,\ldots, Z_m^2\right)\\
&=\int_{\C^{m}}\E F(w_{1},\ldots,w_{m},Z_{1}^{2},\ldots,Z_{m}^{2})\rho_{\mathrm{GinUE}}^{(m)}(w_{1},\ldots,w_{m})\d w_{1}\ldots\d w_{m},
\end{align*}
where the expectation in the second line is with respect to $Z_{1},\ldots,Z_{m}$, and
\begin{align*}
\rho_{\mathrm{GinUE}}^{(m)}(w_{1},\ldots,w_{m})=\det\left[\frac{1}{\pi}e^{-\frac12(|w_{j}|^{2}+|w_{\ell}|^{2})+w_{j}\overline{w}_{\ell}}\right]_{j,\ell=1}^{m}.
\end{align*}
(We can assume that $z_{j}^{0}$ converge as $N\to\infty$ by taking a subsequence and observe that the limiting $m$-point correlation function $\rho_{\mathrm{GinUE}}^{(m)}$ in Theorem 1.2 of \cite{MO} is independent of said limits of $z_{j}^{0}$.) This should follow exactly from the proof of Theorem 1.2 in \cite{MO}, but we have stated Theorem \ref{theorem:main} in the above way since this argument is not written down.

As mentioned above, the proof of Theorem \ref{theorem:main} follows the three-step strategy. The first step is carried out in \cite{alt2018local,cipolloni2023mesoscopic,cipolloni2021fluctuation,cipolloni2023optimal}, where the authors establish the local laws for the resolvent of the Hermitization of $A$:
\[
G_z(\eta) = \begin{pmatrix}-i\eta&A-z\\A^\ast-\bar{z}&-i\eta\end{pmatrix}^{-1}.
\]
In particular, the results we use in our proof are the averaged and isotropic local laws as well as the two-resolvent averaged local laws with possibly different shifts $z_1$ and $z_2$. The second step is covered in Sections \ref{sec:gauss-div}-\ref{sec:concentration}. Here we adapt the method of \cite{MO} to access left and right eigenvectors. The first technical aspect of this step is analyzing the resolvents $[(A-z)^{\ast}(A-z)+\eta^{2}+Y]^{-1}$ and $[(A-z)(A-z)^{\ast}+\eta^{2}+Y]^{-1}$, where $Y$ is finite rank, Hermitian, and possibly negative. The method of \cite{MO} depends crucially on various positivity properties and estimates for the resolvents when $Y=0$. We must ensure these properties are stable under perturbation by $Y$; this is why we require isotropic local laws in addition to the estimates used in \cite{MO}. Moreover, when studying multiple eigenvalues, we must also analyze these resolvents after projecting to the orthogonal complement of the span of finitely many eigenvectors. Isotropic local laws for deterministic vectors alone no longer suffice. We must also remove the projection using various perturbations and the concentration estimates in Section 6 of \cite{MO} (this is the content of Section \ref{sec:concentration}).

The third step, the comparison between the eigenvector statistics of a general i.i.d. matrix and a Gaussian divisible matrix, is done in Sections \ref{sec:comparison} and \ref{sec:green-function}. This argument follows the same framework as the eigenvector comparison argument of \cite{KY13} for the Hermitian matrices. The key novelty of our proof is the approximation of the eigenvector statistics with certain functions of $G^z$ in Lemma \ref{lemma:Vformula} and Girko's formula \cite{girko1984}. This approximation relies on the level repulsion estimate for the singular values of $A-z$ in Proposition \ref{prop:levelrepulsion} that we derive using Theorem 2.10 of \cite{EJ23} and Theorem 3.2 of \cite{CL19}. The rest of the argument is a standard Green function comparison, see e.g. \cite{KY16}.

\subsection{Notation}
We denote the standard basis vectors in $\C^m$ by $\mathbf{e}_{j,m}$ for $1\le j\le m$. Whenever the dimension of the space is clear from the context we omit it and write $\mathbf{e}_j = \mathbf{e}_{j,m}$. We write $\llbracket a, b\rrbracket = \mathbb{Z} \cap [a,b]$. Given a matrix $X$, normalized trace of $X$ is $\ntr{X} = N^{-1}\tr X$.

\subsection{Acknowledgements}
K.Y. is supported in part by NSF Grant No. DMS-2203075. H.-T. Y. is supported in part by NSF grant DMS-2153335. J. Y. is supported in part by the Simons Fellows in Mathematics.

\section{Gaussian divisible matrices}\label{sec:gauss-div}
In this section we state the main results for Gaussian divisible matrices.
For any $A\in M_N(\C)$ we define its Hermitization by
\[
\mathcal{H}_z = \begin{pmatrix}0& A-z\\A^\ast-\bar{z}&0\end{pmatrix},\,z\in\C.
\]
The resolvent of this Hermitization $G_z(\eta) = \left(\mathcal{H}_z-i\eta\right)^{-1}$ can be written as
\[
G_z(\eta) = \begin{pmatrix} 
i\eta\tilde{H}_z(\eta) & \tilde{H}_z(\eta)(A-z)\\
(A^\ast-\bar{z})\tilde{H}_z(\eta) & i\eta H_z(\eta)
\end{pmatrix},
\]
where
\begin{align*}
H_z(\eta) &= \left[(A-z)^\ast(A-z)+\eta^2\right]^{-1},\\
\tilde{H}_z(\eta) &= \left[(A-z)(A-z)^\ast+\eta^2\right]^{-1}.
\end{align*}
We will now condition on some assumptions on $A$; we explain shortly why they hold with probability $1-O(N^{-D})$ for any $D>0$. Consider $\epsilon_0>0$. We assume that for any $\eta, \eta_1, \eta_2, \in[N^{-\frac12+\epsilon_0}, 10]$, $\eta^\ast = \max\{\eta_1,\eta_2\}$ and $z,z_1,z_2\in \C$ such that $|z|,|z_{1}|,|z_{2}|<1-\tau$ for some fixed $\tau>0$, there exist constants $c = c(\epsilon_0)>0$ and $C=C(\epsilon_0)>0$ such that

\paragraph{A1:} We have
\begin{align}
c\eta^{-1} \le \ntr{H_{z}(\eta)} \le C\eta^{-1},\tag{A1.1}\label{assn:1.1}\\
c\eta^{-2} \le \ntr{H_{z}(\eta)\tilde{H}_z(\eta)} \le C\eta^{-2},\tag{A1.2}\label{assn:1.2}\\
\left|\ntr{H_{z}(\eta)^2(A-z)}\right| \le C\eta^{-1},\tag{A1.3}\label{assn:1.3}\\
c\eta^{-3} \le \ntr{H_{z}(\eta)^2} \le C\eta^{-3}.\tag{A1.4}\label{assn:1.4}
\end{align}
For
\[
B_j = \begin{pmatrix}0&1\\0&0\end{pmatrix}\otimes I_N \text{ or } \begin{pmatrix}0&0\\1&0\end{pmatrix}\otimes I_N
\]
we have
\begin{align}
\left|\ntr{G_{z}(\eta_1)B_1G_{z}(\eta_2)B_2}\right| &\le C.\tag{A1.5}\label{assn:1.5}
\end{align}

\paragraph{A2:} We have 
\begin{align}
\ntr{H_{z_1}(\eta_1)H_{z_2}(\eta_2)} &\ge c\left(\eta^\ast\right)^{-2},\tag{A2.1}\label{assn:2.1}\\
\ntr{\tilde{H}_{z_1}(\eta_1)\tilde{H}_{z_2}(\eta_2)} &\ge c\left(\eta^\ast\right)^{-2},\tag{A2.2}\label{assn:2.2}\\
\ntr{H_{z_1}(\eta_1)\tilde{H}_{z_2}(\eta_2)} &\ge \frac{c}{\eta^\ast\left(\eta^\ast+|z_1-z_2|^2\right)}.\tag{A2.3}\label{assn:2.3}
\end{align}
We fix $m$ families of finite rank deterministic matrices $T_j$, where $j\in\llbracket 1,m\rrbracket$ and $m\in\mathbb{Z}_+$. Then matrices $T_j T_j^\ast$ have spectral decomposition
\begin{align}\label{eq:T-spectral}
T_j T_j^\ast = \sum_{k=1}^{l_j} q_{j,k} \mathbf{w}_{j,k} \mathbf{w}_{j,k}^\ast,
\end{align}
where $q_{j,k}$ are positive constants and $\{\mathbf{w}_{j,k}\}_{k}$ are deterministic orthonormal vectors for each $j$. We make the following assumption on $A$ in relation to matrices $T_j$.

\paragraph{A3:} For any $\mathbf{w}_1,\mathbf{w}_2\in\left\{\mathbf{w}_{j,k_1},\mathbf{w}_{j,k_1}\pm\mathbf{w}_{j,k_2},\mathbf{w}_{j,k_1}\pm i\mathbf{w}_{j,k_2}\right\}_{j\in\llbracket1,m\rrbracket, k_1,k_2\in\llbracket1,l_j\rrbracket}$, we have 
\begin{align}
\left|\mathbf{w}_1^\ast H_z(\eta) \mathbf{w}_2 - \ntr{H_z(\eta)}\mathbf{w}_1^\ast\mathbf{w}_2\right| \le C\eta^{-\frac32}N^{-\frac12},\tag{A3.1}\label{assn:3.1}\\
\left|\mathbf{w}_1^\ast \tilde{H}_z(\eta) \mathbf{w}_2- \ntr{\tilde{H}_z(\eta)}\mathbf{w}_1^\ast\mathbf{w}_2\right| \le C\eta^{-\frac32}N^{-\frac12}.\tag{A3.2}\label{assn:3.2}
\end{align}

Let us explain why assumptions {\bf A1}, {\bf A2}, {\bf A3} hold for i.i.d. matrices $A$ with probability $1-O(N^{-D})$ for any $D>0$ fixed. All of {\bf A1} is explained in Section 7 of \cite{MO}; see after Lemma 7.1 in \cite{MO}. For {\bf A2}, see \eqref{eq:2G-LL-1}-\eqref{eq:2G-LL-3}. For {\bf A3}, see Proposition \ref{prop:locallaw}.

Now, following \cite{MO}, for any $z\in\C$ such that $|z|<1$ and $t>0$, we define $\eta_{z,t}>0$ such that $t\langle H_{z}(\eta_{z,t})\rangle=1$. By \eqref{assn:1.1}, we know $ct\leq \eta_{z,t}\leq Ct$ for finite constants $C,c>0$ for all $|z|<1-\tau$ and $t\geq N^{-1/2+\epsilon}$. (The existence and uniqueness of such $\eta_{z,t}$ for $|z|<1$ and $t\geq N^{-1/2+\epsilon}$ holds by Theorem 1.1 in \cite{MO}.)
\begin{theorem}\label{theorem:maingauss}
Suppose $t=N^{-\frac13+\epsilon_{0}}$ for some $\epsilon_{0}>0$ fixed. Let $m_R, m_L \in \mathbb{Z}_+$. For any $j\in \llbracket 1,m_R\rrbracket$ let $(\lambda_{j},u_j)$ denote an eigenvalue-right-eigenvector pair of $M_{t}$, and for any $j\in \llbracket m_R+1,m_R+m_L\rrbracket$ let $(\lambda_{j},v_j)$ denote an eigenvalue-left-eigenvector pair of $M_{t}$. Assume that there exists $\upsilon>0$ such that $|\lambda_{j}-\lambda_{k}|\geq N^{-1/2+\upsilon}$ for any distinct $j,k\in\llbracket1,m_R\rrbracket$ and for any distinct $j,k\in\llbracket m_R+1,m_R+m_L\rrbracket$. Let $\{T_{j}\}_{j\in\llbracket1,m_R+m_L\rrbracket}$ be deterministic, finite-rank matrices. There exists $\kappa>0$ and $\mathfrak{q}>0$ depending only on $T_{1},\ldots,T_{m_{R}+m_{L}}$ such that for all $q_{1},\ldots,q_{m_R+m_L}<\mathfrak{q}$, we have
\begin{align}
&\E_{\bm{\lambda}}\prod_{j=1}^{m_R}\exp[Nq_{j}\|T_{j}u_j\|^{2}]\prod_{j=m_R+1}^{m_R+m_L}\exp[Nq_{j}\|T_{j}v_j\|^{2}]\label{eq:thmmain}\\
&=\prod_{j=1}^m\det[1-tq_{j}H_{\lambda_{j}}(\eta_{\lambda_{j},t})T_{j}^{\ast}T_{j}]^{-1}\left[1+O(N^{-\kappa})\right].\nonumber
\end{align}
Above, $\E_{\bm{\lambda}}$ denotes expectation conditioning on $\lambda_{1},\ldots,\lambda_{m_{R}+m_{L}}$.
\end{theorem}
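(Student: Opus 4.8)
The plan is to follow the supersymmetric/exact-formula approach of \cite{MO}, adapted to track left and right eigenvectors simultaneously. The starting point is the observation that, since $M_t = A + \sqrt{t}B$ with $B$ a complex Ginibre matrix (or the Gaussian divisible model at scale $t$), the left and right eigenvector norms $\|T_j u_j\|^2$ and $\|T_j v_j\|^2$ can be expressed, via residue calculus in Girko-type integral representations, in terms of the resolvents $H_z(\eta)$ and $\tilde H_z(\eta)$ of the Hermitization, evaluated with an additional finite-rank shift. Concretely, one inserts source terms $Y_j = -t q_j T_j^\ast T_j$ (or $T_j T_j^\ast$) into the quadratic forms $(A-z)^\ast(A-z) + \eta^2 + Y_j$ and expresses $\E_{\bm\lambda}\prod_j \exp[Nq_j\|T_j u_j\|^2]$ as a ratio of determinants/Gaussian integrals over a constant number of supersymmetric variables, one block near each $z^0_j \approx \lambda_j$. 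This is the content of adapting Sections 3--5 of \cite{MO}: the partition-function-type integral factorizes, up to negligible error, into a product over $j$ because the eigenvalues are separated by $N^{-1/2+\upsilon}$, and the contribution from each block is governed by the saddle point $\eta = \eta_{\lambda_j,t}$ determined by $t\langle H_{\lambda_j}(\eta_{\lambda_j,t})\rangle = 1$.

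The key new technical input, as the introduction flags, is controlling the perturbed resolvents $[(A-z)^\ast(A-z)+\eta^2 + Y]^{-1}$ for finite-rank Hermitian (possibly indefinite) $Y$. First I would use the isotropic and averaged local laws (assumptions \textbf{A1}, \textbf{A3}) together with the resolvent identity $[H_z(\eta)^{-1}+Y]^{-1} = H_z(\eta) - H_z(\eta)Y[1 + H_z(\eta)Y]^{-1}H_z(\eta)$ to show that all the positivity and size bounds in \textbf{A1}--\textbf{A2} are stable under the perturbation, since $\|H_z(\eta)Y\| \lesssim \eta^{-1} \cdot t q_j = q_j t\eta^{-1} \lesssim q_j$ is small when $\mathfrak q$ is chosen small enough; here one needs $\mathbf{w}^\ast H_z(\eta)\mathbf{w}$ close to $\langle H_z(\eta)\rangle$ along the eigenvectors of $T_jT_j^\ast$ and their combinations, which is exactly why \textbf{A3} is stated the way it is. Then the saddle-point analysis of \cite{MO} carries through with $\langle H_z(\eta)\rangle$ replaced by $\langle (H_z(\eta)^{-1}+Y)^{-1}\rangle$, and evaluating the Gaussian integral over the finite-rank fluctuation directions produces precisely the factor $\det[1 - tq_j H_{\lambda_j}(\eta_{\lambda_j,t})T_j^\ast T_j]^{-1}$. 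The second technical point is that, when handling $m \ge 2$ eigenvectors, one must work in the orthogonal complement of the previously-selected eigenvectors; isotropic laws for deterministic vectors do not directly apply to these random projections, so I would remove the projections by a further finite-rank perturbation argument combined with the concentration estimates of Section 6 of \cite{MO} (deferred here to Section \ref{sec:concentration}), showing the projected resolvents differ from the unprojected ones by $O(N^{-\kappa})$ in the relevant quadratic forms.

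The main obstacle I expect is precisely the stability-under-finite-rank-perturbation step combined with the saddle-point analysis: in \cite{MO} the positivity of $\langle H_z(\eta)\tilde H_z(\eta)\rangle$ and the lower bounds in \textbf{A2} are used to guarantee that the relevant contour integrals are dominated by a single non-degenerate saddle with a steep-descent direction, and one must verify that adding an indefinite $Y$ of size $\sim q_j t$ does not create spurious saddles, does not destroy the steepest-descent contour, and shifts $\eta_{z,t}$ by a controlled amount; getting uniform control here is what forces the smallness threshold $\mathfrak q$ and determines the exponent $\kappa$. A secondary difficulty is bookkeeping the factorization error across the $m$ blocks: one needs the two-resolvent bound \eqref{assn:2.3}, which degrades like $(|z_1-z_2|^2 + \eta^\ast)^{-1}$, to beat the number of cross terms, and this is where the separation hypothesis $|\lambda_j - \lambda_k| \ge N^{-1/2+\upsilon}$ and the time scale $t = N^{-1/3+\epsilon_0}$ must be balanced — one checks that $t/|\lambda_j-\lambda_k|^2 \le N^{-1/3+\epsilon_0 - 2\upsilon}$ can be made a negative power of $N$, so each cross term is negligible. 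Everything else (the passage from the exact integral formula to the determinant, and the final error accounting) is a routine, if lengthy, adaptation of \cite{MO}.
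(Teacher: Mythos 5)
Your overall skeleton matches the paper's: adapt the change-of-variables/spherical-integral computation of \cite{MO} (Sections 3--5 there), handle the finite-rank insertions $-tq_jT_j^{\ast}T_j$ by a Woodbury-type stability argument that forces $\mathfrak q$ small (this is the paper's Lemma \ref{lemma:Hqestimates}), and deal with $m\ge2$ by removing the random projections via concentration (Section \ref{sec:concentration}). However, there is a genuine gap in how you treat several eigenvectors with \emph{positive} $q_j$. For $j\ge2$ the true eigenvector $u_j$ is not an integration variable of the partial Schur decomposition; it agrees with the rotated variable $u^{(j-1)}$ only up to corrections involving the $w$-variables and eigenvalue differences, and this identification (Lemma \ref{lemma:simplify}) holds only with high probability under $\mu$. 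When $q_j>0$ you cannot simply insert such a high-probability approximation into $\E_{\bm{\lambda}}\prod_j\exp[Nq_j\|T_ju_j\|^2]$: on the exceptional event the integrand can be as large as $e^{cN}$, and an exceptional probability $O(N^{-D})$ does not beat that. The paper resolves this by splitting the claim into Proposition \ref{prop:single} (a single eigenvector, where $u$ \emph{is} exactly the integration variable, so small positive $q$ is admissible and yields exponential-moment/tightness control) and Proposition \ref{prop:maingauss} (all $m$ eigenvectors but only $q_j\le0$, where the integrand is bounded by $1$ so bad events cost only $O(N^{-D})$), and then deduces Theorem \ref{theorem:maingauss} by tightness plus identification of subsequential limits. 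Your proposal, which aims to prove the full formula directly for all $q_j<\mathfrak q$ and all $m$ blocks at once, has no substitute for this step and would stall there.

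A secondary inaccuracy: your cross-term bookkeeping is off. With $|\lambda_j-\lambda_k|\ge N^{-1/2+\upsilon}$ and $t=N^{-1/3+\epsilon_0}$ one has $t/|\lambda_j-\lambda_k|^2\le N^{2/3+\epsilon_0-2\upsilon}$, which is large (not $N^{-1/3+\epsilon_0-2\upsilon}$); the quantity that actually makes inter-block corrections negligible is of the form $(Nt)^{-1}(t+|\lambda_{i}-\lambda_{i'}|^2)|\lambda_{i}-\lambda_j|^{-2}$, where the crucial factor $(Nt)^{-1}$ per step comes from the Gaussian $w$-integrations and the concentration estimates, as in the proof of Lemma \ref{lemma:simplify}. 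Relatedly, Girko's formula belongs to the comparison step (Section \ref{sec:comparison}), not to the Gaussian-divisible computation, and \eqref{assn:2.3} is used for lower bounds on quadratic forms in the concentration argument rather than for factorizing the partition function, which factorizes exactly (conditionally on $\bm{\lambda}$) after the change of variables.
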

The choice $t=N^{-\frac13+\epsilon}$ can perhaps be improved to $t=N^{-\frac12+\epsilon}$, or even $t=N^{-1+\epsilon}$; see the end of Section 1 in \cite{MO}. For this reason, often in the proof we use only the lower bounds $t\geq N^{-1/2+\epsilon}$.

Theorem \ref{theorem:maingauss} gives joint normality in the large $N$ limit of finitely many components of finitely many left and right eigenvectors (subject to a separation condition on eigenvalues for eigenvectors on the same side, right or left). This is the content of the following result.
\begin{corollary}\label{corollary:maingauss}
The random vector $(N\|T_{j}u_{j}\|^{2},N\|T_{k}v_{k}\|^{2})_{j,k}$ converges in distribution to $(Z_{j}^{2},Z_{k}^{2})_{j,k}$, where $Z_{j}\sim N(0,\|T_{j}\|_{\mathrm{F}}^{2})$ are independent (here, $\|\cdot\|_{\mathrm{F}}$ is the Frobenius norm for square matrices).
\end{corollary}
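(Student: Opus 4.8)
The plan is to deduce convergence in distribution from convergence of joint Laplace transforms, taking Theorem \ref{theorem:maingauss} as the only analytic input. I would first specialize \eqref{eq:thmmain} to $q_{j}=-s_{j}$ with $s_{j}\ge 0$; such $q_{j}$ satisfy $q_{j}<\mathfrak{q}$ automatically, and for them both sides of \eqref{eq:thmmain} are bounded by $1$ since $H_{\lambda_{j}}(\eta_{\lambda_{j},t})T_{j}^{\ast}T_{j}$ has nonnegative spectrum. Taking $\E$ over $\bm{\lambda}$ on both sides of \eqref{eq:thmmain} — the relative error $O(N^{-\kappa})$ being uniform and the right-hand side $O(1)$ — the joint Laplace transform of $(N\|T_{j}u_{j}\|^{2},N\|T_{k}v_{k}\|^{2})_{j,k}$ equals $\E\prod_{j=1}^{m}\det[1+ts_{j}H_{\lambda_{j}}(\eta_{\lambda_{j},t})T_{j}^{\ast}T_{j}]^{-1}+O(N^{-\kappa})$, with $m:=m_{R}+m_{L}$. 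So it suffices to show
\[
\E\prod_{j=1}^{m}\det\bigl[1+ts_{j}H_{\lambda_{j}}(\eta_{\lambda_{j},t})T_{j}^{\ast}T_{j}\bigr]^{-1}\ \xrightarrow[N\to\infty]{}\ \prod_{j=1}^{m}\prod_{k=1}^{l_{j}}\frac{1}{1+s_{j}\mu_{j,k}},
\]
where $\mu_{j,1},\dots,\mu_{j,l_{j}}$ are the nonzero eigenvalues of $T_{j}^{\ast}T_{j}$ (equivalently of $T_{j}T_{j}^{\ast}$), so that $\sum_{k}\mu_{j,k}=\|T_{j}\|_{\mathrm{F}}^{2}$.

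To establish this limit I would work on the event $\Omega$ of probability $1-O(N^{-D})$ on which the isotropic local law for $H_{z}$ holds for all $|z|<1-\tau$ and on which $|\lambda_{j}|<1-\tau$ for every $j$. On $\Omega$, by $\det(1-XY)=\det(1-YX)$, the quantity $\det[1+ts_{j}H_{\lambda_{j}}(\eta_{\lambda_{j},t})T_{j}^{\ast}T_{j}]$ is a determinant of a fixed-size matrix whose entries are $ts_{j}$ times bilinear forms $\mathbf{v}^{\ast}H_{\lambda_{j}}(\eta_{\lambda_{j},t})\mathbf{v}'$ in the finitely many deterministic (singular) vectors attached to $T_{j}$. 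By the isotropic local law (cf. \eqref{assn:3.1} and Proposition \ref{prop:locallaw}) these equal $\ntr{H_{\lambda_{j}}(\eta_{\lambda_{j},t})}\,\mathbf{v}^{\ast}\mathbf{v}'$ up to an error $O(\eta_{\lambda_{j},t}^{-3/2}N^{-1/2})=O(N^{-3\epsilon_{0}/2})$, which is negligible next to $\ntr{H_{\lambda_{j}}(\eta_{\lambda_{j},t})}=1/t$ (the latter being the defining relation for $\eta_{\lambda_{j},t}$, and $\eta_{\lambda_{j},t}\sim t$). Hence $tH_{\lambda_{j}}(\eta_{\lambda_{j},t})$ acts as the identity up to $o(1)$ on the relevant finite-dimensional subspace, so $\det[1+ts_{j}H_{\lambda_{j}}(\eta_{\lambda_{j},t})T_{j}^{\ast}T_{j}]=\det[1+s_{j}T_{j}^{\ast}T_{j}]+o(1)=\prod_{k}(1+s_{j}\mu_{j,k})+o(1)\ge 1$ with a deterministic $o(1)$. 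Since the left side of the displayed limit is bounded by $1$ off $\Omega$ and $\pp(\Omega^{c})=O(N^{-D})$, the limit follows.

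Finally, $s\mapsto\prod_{j}\prod_{k}(1+s_{j}\mu_{j,k})^{-1}$ is the joint Laplace transform of a probability measure on $[0,\infty)^{m}$ and is continuous at $s=0$, so the continuity theorem for multivariate Laplace transforms yields convergence in distribution of $(N\|T_{j}u_{j}\|^{2},N\|T_{k}v_{k}\|^{2})_{j,k}$ to the random vector with that transform: its coordinates are independent, and the coordinate with index $j$ is distributed as $\sum_{k}\mu_{j,k}\xi_{j,k}$ for i.i.d. rate-one exponentials $\xi_{j,k}$ — equivalently, as the squared norm of a complex Gaussian vector with covariance $T_{j}T_{j}^{\ast}$, which is the law of $Z_{j}^{2}$ in the statement. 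Independence among the right indices $j\le m_{R}$, among the left indices $m_{R}<j\le m$, and between the two groups, is inherited directly from the product over $j$ on the right of \eqref{eq:thmmain}. I do not expect a serious obstacle: the only points requiring care are the uniformity of the error in \eqref{eq:thmmain} and the applicability of the isotropic local law at the random spectral points $\lambda_{j}$, both fine because those estimates hold simultaneously for all $|z|<1-\tau$ with probability $1-O(N^{-D})$; the mathematical substance is entirely contained in Theorem \ref{theorem:maingauss}.
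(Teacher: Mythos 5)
Your proposal is correct, and it rests on the same pillar as the paper's own proof: Theorem \ref{theorem:maingauss} combined with the observation that $tH_{\lambda_{j}}(\eta_{\lambda_{j},t})$ acts as the identity up to $o(1)$ on any fixed deterministic finite-dimensional subspace, via $t\langle H_{\lambda_{j}}(\eta_{\lambda_{j},t})\rangle=1$ together with \eqref{assn:1.1}, \eqref{assn:3.1} and Proposition \ref{prop:locallaw}. The execution, however, differs in two genuine ways. First, the paper implicitly uses moment-generating-function convergence on the two-sided range $q_{j}<\mathfrak{q}$ supplied by Theorem \ref{theorem:maingauss} and does not spell out the passage to convergence in distribution, whereas you restrict to $q_{j}=-s_{j}\le 0$ and invoke the continuity theorem for multivariate Laplace transforms; this buys you the a priori bound by $1$ on both sides of \eqref{eq:thmmain}, which makes the averaging over $\bm{\lambda}$ and the treatment of the small exceptional event (where the local laws or $|\lambda_{j}|<1-\tau$ may fail) essentially trivial. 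Second, to identify the limit of the deterministic determinant the paper performs a Gram--Schmidt orthogonalization of the vectors $q_{j,k}^{1/2}H_{\lambda_{j}}(\eta_{\lambda_{j},t})^{1/2}\mathbf{w}_{j,k}$ and tracks the near-orthogonality errors, while you reduce via $\det(1-XY)=\det(1-YX)$ to a fixed-size $l_{j}\times l_{j}$ determinant of bilinear forms and apply the isotropic law entrywise; your route is cleaner and avoids that bookkeeping (note that you evaluate $H$ at the eigenvectors of $T_{j}^{\ast}T_{j}$ rather than the vectors $\mathbf{w}_{j,k}$ of \eqref{eq:T-spectral}, which is fine because Proposition \ref{prop:locallaw} is isotropic in arbitrary deterministic vectors). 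Finally, your limiting Laplace transform $\prod_{j}\prod_{k}(1+s_{j}\mu_{j,k})^{-1}$, i.e.\ independent coordinates distributed as $\|g_{j}\|^{2}$ for complex Gaussian vectors $g_{j}$ with covariance $T_{j}T_{j}^{\ast}$, is exactly what the paper's chain of identities produces (its intermediate product $\prod_{k}[1-q_{j}q_{j,k}]^{-1}$) before its final rewriting as $[1-tq_{j}\|T_{j}\|_{\mathrm{F}}^{2}]^{-1}$, which literally coincides with that product only in the rank-one case; so your identification of the limit agrees with the paper's actual computation, and like the paper you read the statement's $Z_{j}^{2}$ in this complex-Gaussian sense.
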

\begin{proof}
By Theorem \ref{theorem:maingauss}, it suffices to show that for the above choices of $T_{j}$, we have 
\begin{align*}
\det[1-tq_{j}H_{\lambda_{j}}(\eta_{\lambda_{j},t})T_{j}^{\ast}T_{j}]^{-1}=[1-tq_{j}\|T_{j}\|_{\mathrm{F}}^{2}]^{-1}[1+O(N^{-\kappa})]
\end{align*}
for small enough $q_{j}>0$ and for some $\kappa>0$. To this end, we claim 
\begin{align*}
\det[1-tq_{j}H_{\lambda_{j}}(\eta_{\lambda_{j},t})T_{j}^{\ast}T_{j}]^{-1}&=\prod_{k=1}^{\ell_{j}}[1-tq_{j}q_{j,k}\mathbf{w}_{j,k}^{\ast}H_{\lambda_{j}}(\eta_{\lambda_{j},t})\mathbf{w}_{j,k}]^{-1}\left[1+O(N^{-\kappa})\right]\\
&=\prod_{k=1}^{\ell_{j}}[1-q_{j}q_{j,k}\|\mathbf{w}_{j,k}\|^{2}]^{-1}\left[1+O(N^{-\kappa})\right]\\
&=[1-tq_{j}\|T_{j}\|_{\mathrm{F}}^{2}]^{-1}[1+O(N^{-\kappa})],
\end{align*}
where vectors $\mathbf{w}_{j,k}$ and scalars $q_{j,k}$ are defined by \eqref{eq:T-spectral}. The second line holds by \eqref{assn:3.1} and $t=\langle H_{\lambda_{j}}(\eta_{\lambda_{j},t})\rangle$. It suffices to show the first line. To this end, for any $j$, define the vectors
\begin{align*}
\mathbf{x}_{j,k}=q_{j,k}^{\frac12}H_{\lambda_{j}}(\eta_{\lambda_{j},t})^{1/2}\mathbf{w}_{j,k}
\end{align*}
and let $\{\mathbf{y}_{j,k}\}$ be the collection of orthogonal vectors obtained by Gram-Schmidt applied to $\{\mathbf{x}_{j,k}\}_{k}$ without dividing by the norm. In particular, we have 
\begin{align*}
\det[1-tq_{j}H_{\lambda_{j}}(\eta_{\lambda_{j},t})T_{j}^{\ast}T_{j}]^{-1}&=\det\left[1-tq_{j}\sum_{k=1}^{\ell_{j}}\mathbf{x}_{j,k}\mathbf{x}_{j,k}^{\ast}\right]^{-1}=\prod_{k=1}^{\ell_{j}}[1-tq_{j}\mathbf{y}_{j,k}^{\ast}\mathbf{y}_{j,k}]^{-1}.
\end{align*}
By \eqref{assn:1.1} and \eqref{assn:3.1}, we know $\mathbf{x}_{j,k}^{\ast}\mathbf{x}_{j,m}=O(N^{-1/2}t^{-3/2})$ if $k\neq m$ and $C^{-1}t^{-1}\leq |\mathbf{x}_{j,k}|^{2}\leq Ct^{-1}$ for some $C>0$. It follows by Gram-Schmidt that $\mathbf{y}_{j,k}^{\ast}\mathbf{y}_{j,k}=\mathbf{x}_{j,k}^{\ast}\mathbf{x}_{j,k}[1+O(N^{-1/2}t^{-1/2})]$. Thus, 
\begin{align*}
\prod_{k=1}^{\ell_{j}}[1-tq_{j}\mathbf{y}_{j,k}^{\ast}\mathbf{y}_{j,k}]^{-1}&=\prod_{k=1}^{\ell_{j}}[1-tq_{j}\mathbf{x}_{j,k}^{\ast}\mathbf{x}_{j,k}(1+O(N^{-1/2}t^{-1/2})]^{-1}\\
&=\prod_{k=1}^{\ell_{j}}[1-tq_{j}q_{j,k}\mathbf{w}_{j,k}^{\ast}H_{\lambda_{j}}(\eta_{\lambda_{j},t})\mathbf{w}_{j,k}(1+O(N^{-1/2}t^{-1/2})]^{-1}.
\end{align*}
By \eqref{assn:3.1} and $t\geq N^{-1/3+\epsilon_{0}}$ and $t\langle H_{\lambda_{j}}(\eta_{\lambda_{j},t})\rangle=1$, we know for some $\kappa>0$ that
\begin{align*}
tq_{j}q_{j,k}\mathbf{w}_{j,k}^{\ast}H_{\lambda_{j}}(\eta_{\lambda_{j},t})\mathbf{w}_{j,k}&=tq_{j}q_{j,k}\left[\langle H_{\lambda_{j}}(\eta_{\lambda_{j},t})\rangle+O(N^{-1/2}t^{-3/2})\right]\\
&=q_{j}q_{j,k}\left[1+O(N^{-1/2}t^{-1/2})\right]=q_{j}q_{j,k}[1+O(N^{-\kappa})].
\end{align*}
In particular, since $q_{j},q_{j,k}\neq0$ and $|q_{j}|$ is small, this implies that 
\begin{align*}
&[1-tq_{j}q_{j,k}\mathbf{w}_{j,k}^{\ast}H_{\lambda_{j}}(\eta_{\lambda_{j},t})\mathbf{w}_{j,k}(1+O(N^{-1/2}t^{-1/2})]^{-1}\\
&=[1-tq_{j}q_{j,k}\mathbf{w}_{j,k}^{\ast}H_{\lambda_{j}}(\eta_{\lambda_{j},t})\mathbf{w}_{j,k}]^{-1}(1+O(N^{-1/2}t^{-1/2}),
\end{align*}
which completes the proof.
\end{proof}

The first main step to prove Theorem \ref{theorem:maingauss} is the case where $m_{R}+m_{L}=1$. The second is the case $q_{1},\ldots,q_{m_{R}+m_{L}}\leq q$, where we have a priori bounds of the form $\exp[Nq_{j}\|T_{j}u_{j}\|^{2}]\leq1$.
\begin{prop}\label{prop:single}
Suppose $t=N^{-\frac13+\epsilon_{0}}$ for some $\epsilon_{0}>0$ fixed. Let $(\lambda_{1},u)$ denote an eigenvalue-right-eigenvector pair of $M_{t}$, and let $T$ be a fixed deterministic finite rank matrix. There exists $\kappa>0$ and $q_{T}>0$ such that for all $q<q_{T}$, we have 
\begin{align}
\E_{\lambda_{1}}\exp[Nq\|Tu\|^{2}]=\frac{1}{\det[1-tqH_{\lambda_{1}}(\eta_{\lambda_{1},t})T^{\ast}T]}\left[1+O(N^{-\kappa})\right].\label{eq:singleright}
\end{align}
Let $(\lambda_{2},v)$ denote an eigenvalue-left-eigenvector pair of $M_{t}$, and let $T$ be a fixed deterministic finite rank matrix. There exists $\kappa>0$ and $q_{T}>0$ such that for all $q<q_{T}$, we have
\begin{align}
\E_{\lambda_{2}}\exp[Nq\|Tv\|^{2}]=\frac{1}{\det[1-tq\tilde{H}_{\lambda_{2}}(\eta_{\lambda_{2},t})T^{\ast}T]}\left[1+O(N^{-\kappa})\right].\label{eq:singleleft}
\end{align}
\end{prop}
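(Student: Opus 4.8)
## Proof proposal for Proposition \ref{prop:single}

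The plan is to derive an exact integral representation for the moment generating function $\E_{\lambda_1}\exp[Nq\|Tu\|^2]$ and then carry out a saddle-point/steepest-descent analysis at the timescale $t=N^{-1/3+\epsilon_0}$, following the supersymmetric approach of \cite{MO} but adapted to track the eigenvector observable. First I would recall that under the Brownian flow $M_t = A_0 + \sqrt{t}B$, the squared norm $\|Tu\|^2$ of the right eigenvector associated with an eigenvalue $\lambda_1$ can be written in terms of the resolvent $H_z(\eta) = [(M_t-z)^\ast(M_t-z)+\eta^2]^{-1}$: near an eigenvalue the smallest singular value of $M_t-\lambda_1$ is zero, and the associated singular vector is exactly $u/\|u\|$ suitably normalized. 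Concretely I would use that quantities like $\mathbf{w}^\ast H_z(\eta)\mathbf{w}$, after integrating against the correct contour in $z$ (Girko-type) and $\eta$, localize onto the eigenvector overlap. The cleanest route is to introduce a source term: consider the perturbed resolvent $[(M_t-z)^\ast(M_t-z)+\eta^2 + Y]^{-1}$ with $Y = -s\, T^\ast T$ finite rank (this is exactly the object flagged in the introduction as the first technical aspect of Step 2), differentiate the corresponding log-determinant / Girko integral in $s$ at $s=0$, and recognize $\exp[Nq\|Tu\|^2]$ as the resulting generating object once one sums the series in $q$.

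The key steps, in order, would be: (1) Write an exact formula, via the superbosonization / Grassmann-integral identity as in \cite{MO}, for a regularized version of $\E\,\mathbf{1}(\lambda_1\in \text{small disk})\exp[Nq\|Tu\|^2]$ in which the eigenvector observable enters through $\det[(M_t-z)^\ast(M_t-z)+\eta^2 - t q T^\ast T]$ ratios; the $-tqT^\ast T$ shift is natural because $t\langle H\rangle = 1$ sets the scale. (2) Integrate out the Gaussian component $B$ of $M_t$ exactly, producing an effective action on a bounded-dimensional (super)matrix space whose size is controlled by $m_R+m_L=1$ plus $\operatorname{rank}(T)$. (3) Identify the saddle point of this effective action; by assumptions \textbf{A1}, the relevant saddle is governed by $\eta_{\lambda_1,t}$ with $t\langle H_{\lambda_1}(\eta_{\lambda_1,t})\rangle = 1$, and the Hessian is non-degenerate thanks to the lower bounds in \textbf{A1.2}, \textbf{A1.4} and the two-resolvent bounds \textbf{A2}. (4) Expand the action around the saddle; the Gaussian fluctuation integral over the finite-rank directions produces precisely $\det[1 - tqH_{\lambda_1}(\eta_{\lambda_1,t})T^\ast T]^{-1}$, while the directions transverse to $T$ contribute $1 + O(N^{-\kappa})$ by the same estimates that give the bulk universality in \cite{MO}. (5) Control the error: the non-saddle part of the contour is exponentially suppressed, and the subleading terms in the saddle expansion are $O(N^{-\kappa})$ using $t = N^{-1/3+\epsilon_0}$ (this is where the choice of $t$ enters, via $N^{-1/2}t^{-3/2} = N^{-\epsilon_0 \cdot 3/2}$-type bounds on off-diagonal $H$-entries from \textbf{A3}). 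The left-eigenvector case \eqref{eq:singleleft} is identical after swapping $(M_t-z)^\ast(M_t-z) \leftrightarrow (M_t-z)(M_t-z)^\ast$, i.e. $H_z \leftrightarrow \tilde H_z$, since left eigenvectors of $M_t$ are right eigenvectors of $M_t^\ast$.

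The main obstacle I anticipate is Step (1)–(2): producing an exact, analytically tractable integral formula that simultaneously (a) pins the eigenvalue $\lambda_1$ — so that the conditional expectation $\E_{\lambda_1}$ makes sense — and (b) isolates the eigenvector norm $\|Tu\|^2$ rather than some spectral average. In the Hermitian EMF approach this is automatic; here one must essentially re-derive, for the non-Hermitian case, a formula in which the eigenvector overlap appears as the residue of a ratio of determinants, and then check that conditioning on $\lambda_1$ is compatible with the contour deformation. A secondary difficulty is ensuring the perturbed resolvent $[(M_t-z)^\ast(M_t-z)+\eta^2 - tqT^\ast T]^{-1}$ remains well-defined (the shift $-tqT^\ast T$ is negative, so positivity is not automatic): this is exactly why $q$ must be taken below a threshold $q_T$ depending on $\|T\|$, and why isotropic local laws — not just the averaged estimates of \textbf{A1} — are invoked, so that $tq\,\mathbf{w}^\ast H_{\lambda_1}\mathbf{w} = q\|\mathbf{w}\|^2 + o(1) < 1$ and the relevant determinant stays bounded away from zero. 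Once the formula and this stability are in hand, the saddle-point analysis is a (substantial but) routine adaptation of Sections 3–7 of \cite{MO}.
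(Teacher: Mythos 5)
There is a genuine gap, and it is exactly the one you flag yourself: your Steps (1)--(2) are the entire content of the argument, and you do not supply them. The paper does not obtain the moment generating function from a Girko-type contour, a residue of determinant ratios, or a supersymmetric source-term construction. Instead it uses the partial Schur (Householder) change of variables of Section 3 (Lemma \ref{lemma:jacobian} and the density \eqref{eq:changeofvariables1}, following Sections 3--5 of \cite{MO}): after changing variables $M_{t}\mapsto(\lambda_{1},u,w_{1},M_{t}^{(1)})$ and integrating out $w_{1}$ and $M_{t}^{(1)}$, the conditional law of the unit eigenvector $u$ given $\lambda_{1}$ is \emph{explicitly} proportional to $e^{-\frac{N}{t}\|(A-\lambda_{1})u\|^{2}}|\det[V_{1}^{\ast}G_{\lambda_{1}}(\eta_{\lambda_{1},t})V_{1}]|\,\d u$ on $\mathbb{S}^{N-1}$. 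Because $\|Tu\|^{2}=u^{\ast}T^{\ast}Tu$, the factor $e^{Nq\|Tu\|^{2}}$ is absorbed algebraically into the quadratic form, producing the shift $-tqT^{\ast}T$ with no contour deformation, residue extraction, or resummation in $q$ needed; conditioning on $\lambda_{1}$ is built into the change of variables from the start. Your proposed substitute (differentiating a perturbed log-determinant in a source $s$ and ``recognizing'' $\exp[Nq\|Tu\|^{2}]$ after summing a series) is not justified and faces precisely the two difficulties you name --- pinning $\lambda_{1}$ and isolating the eigenvector norm rather than a spectral average --- so as written the proof does not get off the ground.

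The subsequent analysis also differs from your Steps (3)--(5): there is no saddle point over a bounded-dimensional (super)matrix space. The paper computes the normalizing constant $K_{q}$ by the one-dimensional Fourier representation of the spherical integral (Lemma 6.1 of \cite{MO}), with a Gaussian approximation in the single variable $p$ justified by Lemma \ref{lemma:Hqestimates} (Woodbury, giving $C^{-1}H_{\lambda_{1}}\leq H_{\lambda_{1},q}\leq CH_{\lambda_{1}}$ for $q<q_{T}$) together with \eqref{assn:1.1}, \eqref{assn:1.4}; and it controls $\int|\det[V_{1}^{\ast}G_{\lambda_{1}}V_{1}]|\,\d\mu_{q}$ by concentration estimates \eqref{eq:singleconcentrate1}--\eqref{eq:singleconcentrate3} for the tilted spherical measure $\mu_{q}$, adapted from Lemma 6.2 of \cite{MO}. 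The target determinant arises as the exact ratio $\det[H_{\lambda_{1}}]^{-1}\det[H_{\lambda_{1},q}]=\det[1-tqH_{\lambda_{1}}(\eta_{\lambda_{1},t})T^{\ast}T]^{-1}$, not as a Gaussian fluctuation determinant transverse to a saddle. The ingredients you do anticipate correctly --- the threshold $q_{T}$ ensuring positivity of the $-tqT^{\ast}T$-perturbed resolvent, the role of the isotropic law \eqref{assn:3.1}, and obtaining \eqref{eq:singleleft} by replacing $A$ with $A^{\ast}$ (i.e.\ $H\leftrightarrow\tilde H$) --- are genuine, but they are supporting estimates and cannot replace the missing exact conditional-density formula.
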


\begin{prop}\label{prop:maingauss}
Retain the setting of Theorem \ref{theorem:maingauss}. Then \eqref{eq:thmmain} holds for all $q_{1},\dots,q_{m_{R}+m_{L}}\leq0$.
\end{prop}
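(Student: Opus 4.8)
The plan is to extend the exact integral formula analysis behind Proposition~\ref{prop:single} from a single eigenvector to the joint statistics of all $m:=m_R+m_L$ eigenvectors at once. Write $w_j=u_j$ for $j\in\llbracket1,m_R\rrbracket$ and $w_j=v_j$ for $j\in\llbracket m_R+1,m\rrbracket$, so the left side of \eqref{eq:thmmain} is $\E_{\bm{\lambda}}\prod_{j=1}^m\exp[Nq_j\|T_jw_j\|^2]$. The hypothesis $q_j\le0$ enters only through the deterministic a priori bound $\exp[Nq_j\|T_jw_j\|^2]\le1$; this is what lets the multi-point analysis run without the delicate convergence control that small positive $q_j$ would require, and what lets us truncate integration domains and discard low-probability events at cost $O(N^{-D})$ for all $D$. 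The first step is to represent, after conditioning on $\bm{\lambda}$, the product $\prod_j\exp[Nq_j\|T_jw_j\|^2]$ by the same exact integral formula that underlies Proposition~\ref{prop:single} (with the Gaussian component of $M_t$ integrated out), now carrying one independent block of integration variables per index $j$, the $j$-th block sourced by $T_j^\ast T_j$. Since each $T_j$ has rank and norm bounded uniformly in $N$, every block lives in an $N$-independent dimension after the reduction, and the integrand becomes a product of $m$ single-block integrands times coupling factors between distinct blocks $j\neq k$ built from two-resolvent quantities of the form $\langle H_{\lambda_j}(\eta_j)\,\sharp\,H_{\lambda_k}(\eta_k)\rangle$, $\langle\tilde H_{\lambda_j}(\eta_j)\,\sharp\,\tilde H_{\lambda_k}(\eta_k)\rangle$ and $\langle H_{\lambda_j}(\eta_j)\,\sharp\,\tilde H_{\lambda_k}(\eta_k)\rangle$ with the appropriate $T$-insertions.

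The second step, which I expect to be the heart of the matter, is the removal of the eigenvector projections. Isolating the contribution of a fixed $w_j$ forces one to work with $H_{\lambda_j}(\eta)$ (resp.\ $\tilde H_{\lambda_j}(\eta)$) compressed to the orthogonal complement of the span of the other eigenvectors $\{w_k\}_{k\neq j}$, since only in that subspace is $w_j$ the distinguished small-singular-value direction of $M_t-\lambda_j$. These are finite-rank \emph{random} compressions, so isotropic local laws for deterministic vectors alone no longer suffice. I would remove them by resolvent perturbation in the finite-rank difference, which produces error terms of the form $\mathbf{w}^\ast H_{\lambda_j}(\eta)w_k$ and perturbed normalized traces; by \eqref{assn:3.1}--\eqref{assn:3.2}, eigenvector delocalization, and the concentration estimates of Section~\ref{sec:concentration} (as imported from Section~6 of \cite{MO}) these are negligible on the relevant scale $\eta\sim t\sim N^{-1/3+\epsilon_0}$, so each block collapses to the deterministic-source integral already evaluated in the proof of Proposition~\ref{prop:single}.

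The third step is the decoupling of the blocks. Using the separation $|\lambda_j-\lambda_k|\ge N^{-1/2+\upsilon}$ together with the two-resolvent averaged local laws with distinct shifts and the lower bounds \eqref{assn:2.1}--\eqref{assn:2.3} (which fix and keep non-degenerate the saddle of each block, hence stabilize the asymptotic analysis), I would show each inter-block coupling factor equals $1+O(N^{-\kappa})$, so the $m$-block integral factorizes into the product of $m$ single-block integrals. By the second step and Proposition~\ref{prop:single} each single-block integral equals $\det[1-tq_jH_{\lambda_j}(\eta_{\lambda_j,t})T_j^\ast T_j]^{-1}(1+O(N^{-\kappa}))$, and multiplying yields \eqref{eq:thmmain} for $q_j\le0$. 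Beyond the second step, the remaining technical point is to make the saddle-point analysis of the joint $m$-block integral uniform over admissible configurations $\bm{\lambda}$ and to verify that the finite-rank sources $T_j$ do not enhance the couplings beyond the gain supplied by the eigenvalue separation exploited here.
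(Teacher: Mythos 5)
Your overall strategy (exact integral/change-of-variables formula, using $q_{j}\leq0$ only through the a priori bound $\exp[Nq_{j}\|T_{j}w_{j}\|^{2}]\leq1$, removal of projections by finite-rank resolvent perturbation plus the concentration estimates of Section \ref{sec:concentration}, reduction to the single-eigenvector determinant) is in the same spirit as the paper's proof, but there is a genuine gap at the very first step. The multi-eigenvector formula \eqref{eq:changeofvariables1} is \emph{not} ``one independent block per $j$ times inter-block coupling factors built from two-resolvent quantities.'' The change of variables is sequential: the $j$-th variable $u^{(j-1)}$ (resp.\ $v^{(j-1)}$) is a unit vector attached to the reduced matrix $A^{(j-1)}$, which depends on all earlier variables, and no explicit coupling factor between blocks $j\neq k$ appears that one could compare to $1$ via \eqref{assn:2.1}--\eqref{assn:2.3}. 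In the actual argument those two-resolvent bounds enter only as \emph{lower} bounds on quadratic forms such as $u^{(j-1),\ast}\tilde{H}^{(j-1)}_{\lambda_{k}}u^{(j-1)}$ (Lemma \ref{lem:two-res-removing-j}, Corollary \ref{cor:conc}), used to control error terms; factorization is automatic from the change of variables, and the work is to show the dependence of each block on the earlier ones is negligible, via concentration with respect to the measures $\nu_{j}$ (Lemma \ref{lem:wHw}) feeding into Lemmas \ref{lemma:kjcomputation} and \ref{lemma:concentrationallj}. Relatedly, you cannot simply invoke Proposition \ref{prop:single} per block: each block carries $A^{(j-1)}$ and the restricted source $T_{j}^{(j-1)}$, so the determinant asymptotics must be rederived with interlacing and Lemma \ref{lem:wHw}, as in the proof of \eqref{eq:kjcomputation2}.

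More importantly, your proposal never addresses the conversion between the true eigenvectors of $M_{t}$ and the integration variables, which is the central missing lemma. The variable $u^{(j-1)}$ is an eigenvector of the reduced matrix $M^{(j-1)}$, not of $M_{t}$; reconstructing $u_{j}$ uses the chain \eqref{eq:evec-conversion-1}--\eqref{eq:evec-conversion-2}, giving \eqref{eq:uj-initial} with correction coefficients $a(i_{1},\ldots,i_{k+1})$ containing factors $(\lambda_{i_{n}}-\lambda_{j})^{-1}$ and inner products $w_{i_{n}}^{\ast}\cdots u^{(i_{n+1}-1)}$. Showing $\|T_{j}u_{j}\|^{2}=\||T_{j}|U_{j-1}(0,u^{(j-1)})^{\top}\|^{2}+O(N^{-1-\kappa})$ is exactly Lemma \ref{lemma:simplify} (via Lemma \ref{lem:evec-first-entry-error}), and it is here--not in any block-decoupling of the integral--that the separation $|\lambda_{j}-\lambda_{k}|\geq N^{-1/2+\upsilon}$, together with $t=N^{-1/3+\epsilon_{0}}$, is actually consumed. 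Without this step your ``single-block integrals'' are integrals over the $u^{(j-1)}$, which are not the eigenvector projections in \eqref{eq:thmmain}; your step 2 gestures at the right tools (Woodbury-type perturbation, \eqref{assn:3.1}--\eqref{assn:3.2}, Section \ref{sec:concentration}), but as written the proposal both mischaracterizes where the eigenvalue separation is used and omits the eigenvector-reconstruction estimate on which the whole reduction rests.
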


\begin{proof}[Proof of Theorem \ref{theorem:maingauss}]
Proposition \ref{prop:single} implies the term inside the expectation in \eqref{eq:thmmain} (for small enough $q_{j}$) is tight as $N\to\infty$, and its expectation converges along subsequences to that of its subsequential limits. Proposition \ref{prop:maingauss} identifies these subsequential limits.
\end{proof}
\section{Change of variables}
In this section, we prepare the necessary change of variables for multiple left/right eigenvectors. For any positive integer $j$ consider a manifold 
\begin{align*}
\Omega_{j}=\C\times\mathbb{S}^{N-j}\times\C^{N-j}.
\end{align*}
We define Householder transformations $R_{j}:\mathbb{S}^{N-j}\to\mathbf{U}(N-j+1)$ via
\begin{align*}
R_j(u) &= I_{N-j+1} - 2\frac{(\mathbf{e}_1 - u)(\mathbf{e}_1 - u)^\ast}{\|\mathbf{e}_1-u\|^2}.
\end{align*}
Now, define the maps
\begin{align*}
\Phi_j^{R}, \Phi_j^{L}: \Omega_j \times M_{N-j}(\C) \rightarrow M_{N-j+1}(\C)
\end{align*}
given by the data
\begin{align*}
\Phi_j^{R}(\lambda_j, u^{(j-1)}, w_j, M^{(j)}) &= R_j(u^{(j-1)}) \begin{pmatrix}
    \lambda_j & w_j^\ast \\
    0 & M^{(j)}
\end{pmatrix} R_j(u^{(j-1)}),\\
\Phi_j^{L}(\lambda_j, v^{(j-1)}, w_j, M^{(j)}) &= R_j(v^{(j-1)}) \begin{pmatrix}
    \lambda_j & 0 \\
    w_j & M^{(j)}
\end{pmatrix} R_j(v^{(j-1)}).
\end{align*}
Finally, consider a map
\[
\Phi: \prod_{j=1}^{m_R+m_L} \Omega_j \times M_{N-m_R-m_L}(\C) \rightarrow M_N(\C)
\]
given by the composition of several $\Phi_j^{R}$ and $\Phi_j^{L}$ maps
\[
\Phi = \Phi_1^R \circ \left(\mathrm{Id} \times \Phi_2^R\right) \circ \ldots \circ \left(\mathrm{Id} \times \Phi_{m_R}^R\right) \circ \left(\mathrm{Id} \times \Phi_{m_R+1}^L\right) \circ \ldots \circ \left(\mathrm{Id} \times \Phi_{m_R+m_L}^L\right).
\]
For brevity we denote the total number of steps by $m = m_R+m_L$.
\begin{lemma}\label{lemma:jacobian}
The Jacobian of $\Phi$ is given by the following:
\begin{align*}
J(\Phi)=|\Delta(\bm{\lambda})|^{2}\prod_{j=1}^{m}|\det[\lambda_{j}-M^{(m)}]|^{2}.
\end{align*}
\end{lemma}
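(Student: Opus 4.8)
The plan is to compute $J(\Phi)$ by the chain rule, decomposing it as a product of the Jacobians of the individual maps $\Phi_j^R$ and $\Phi_j^L$ (composed with identities on the already-unpacked factors), and then to recognize the telescoping product. First I would analyze a single step, say $\Phi_j^R$, which sends $(\lambda_j, u^{(j-1)}, w_j, M^{(j)}) \in \Omega_j \times M_{N-j}(\C)$ to a matrix $M^{(j-1)} \in M_{N-j+1}(\C)$ via conjugation by the Householder unitary $R_j(u^{(j-1)})$. Since conjugation by a fixed unitary is an isometry on $M_{N-j+1}(\C)$ (with its Hilbert-Schmidt metric), the only nontrivial contribution comes from differentiating in the $u^{(j-1)} \in \mathbb{S}^{N-j}$ directions, i.e. from how the upper-triangular block matrix rotates as $u^{(j-1)}$ varies. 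This is exactly the computation that underlies the Schur-form / Ginibre-type change of variables: parametrizing a matrix by its first eigenvalue $\lambda_j$, the associated unit eigenvector $u^{(j-1)}$, the row $w_j$, and the compression $M^{(j)}$ to the orthogonal complement produces a Jacobian factor $|\det[\lambda_j - M^{(j)}]|^2$. I would derive this by writing the differential in a basis adapted to $\mathbf{e}_1$ versus its complement, observing that the tangent directions along $\mathbb{S}^{N-j}$ map (under $dR_j$) to the off-diagonal entries coupling the first coordinate to the rest, and the linear map governing this coupling is $\lambda_j - M^{(j)}$ acting on $\C^{N-j}$ (real-linearly, hence the modulus squared).

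Next I would assemble the chain: $\Phi = \Phi_1^R \circ (\mathrm{Id} \times \Phi_2^R) \circ \cdots \circ (\mathrm{Id} \times \Phi_m^L)$, so by the multiplicativity of Jacobians under composition,
\begin{align*}
J(\Phi) = \prod_{j=1}^{m_R} \big|\det[\lambda_j - M^{(j)}]\big|^2 \cdot \prod_{j=m_R+1}^{m} \big|\det[\lambda_j - M^{(j)}]\big|^2,
\end{align*}
where $M^{(j)}$ is the square matrix of size $N-j$ produced at stage $j$, and at the last stage $M^{(m)} = M^{(N-m)}$ in the notation of the lemma. The left-eigenvector version $\Phi_j^L$ is handled identically, with the lower-triangular block and the transpose playing symmetric roles; its Jacobian factor is again $|\det[\lambda_j - M^{(j)}]|^2$ (the relevant linear map is $(\lambda_j - M^{(j)})^\ast$, same modulus of determinant).

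The remaining task is to convert $\prod_{j=1}^m |\det[\lambda_j - M^{(j)}]|^2$ into the claimed $|\Delta(\bm\lambda)|^2 \prod_{j=1}^m |\det[\lambda_j - M^{(m)}]|^2$. This is the telescoping step: each $M^{(j-1)}$ has $\lambda_j$ as an eigenvalue with $M^{(j)}$ as the compression to the complementary invariant subspace, so the spectrum of $M^{(j)}$ is $\{\lambda_{j+1}, \ldots, \lambda_m\} \cup \mathrm{spec}(M^{(m)})$. Hence
\begin{align*}
\det[\lambda_j - M^{(j)}] = \prod_{k=j+1}^{m} (\lambda_j - \lambda_k) \cdot \det[\lambda_j - M^{(m)}],
\end{align*}
and multiplying over $j$ and taking moduli squared gives $\prod_{j<k}|\lambda_j - \lambda_k|^2 \cdot \prod_j |\det[\lambda_j - M^{(m)}]|^2 = |\Delta(\bm\lambda)|^2 \prod_j |\det[\lambda_j - M^{(m)}]|^2$, which is the assertion.

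The main obstacle is the single-step Jacobian computation: one must carefully set up the parametrization of $T_{M^{(j-1)}} M_{N-j+1}(\C)$ by the tangent data $(d\lambda_j, du^{(j-1)}, dw_j, dM^{(j)})$, verify that the block-triangular form is not over- or under-counted (the zero block in $\Phi_j^R$ is exactly compensated by the $(N-j)$-dimensional sphere and the $w_j$-row), and correctly identify that the "angular" derivative contributes the factor $|\det[\lambda_j - M^{(j)}]|^2$ with no spurious normalization constant. Once this local computation is pinned down, the composition and telescoping are routine.
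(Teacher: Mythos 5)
Your proposal follows essentially the same route as the paper: factor $J(\Phi)$ by the chain rule over the individual maps $\Phi_j^{R}$, $\Phi_j^{L}$, identify the one-step Jacobian factor $|\det[\lambda_j-M^{(j)}]|^{2}$ from the partial Schur-type change of variables (the paper simply cites Lemma 3.1 of \cite{MO} for this, including the left-eigenvector case via the adjoint, rather than rederiving it), and then telescope using $\det[\lambda_j-M^{(j)}]=\prod_{k=j+1}^{m}(\lambda_j-\lambda_k)\det[\lambda_j-M^{(m)}]$. Note that the paper's intermediate display records the per-step Jacobian to the first power, which is inconsistent with the squared final formula and appears to be a typo; your squared per-step factor is the version that matches both the cited one-step result and the statement of the lemma.
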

\begin{proof}
Since $\Phi$ is the composition of several $\Phi_{j}^R$ and  $\Phi_{j}^L$ maps, we have the chain rule 
\[
J(\Phi)=\prod_{j=1}^{m_R}J(\Phi_{j}^R) \prod_{j=m_R+1}^m J(\Phi_{j}^L).
\]
We now claim that 
\begin{align*}
J(\Phi_{j}^R) = J(\Phi_{j}^L) =\left|\det[\lambda_{j}-M^{(j)}]\right|
\end{align*}
for $j\in\llbracket 1,m\rrbracket$. For the maps $\Phi_{j}^R$, this holds by Lemma 3.1 of \cite{MO}. For the maps $\Phi_{j}^L$, we use Lemma 3.1 in \cite{MO} (this lemma holds for left eigenvectors as well, since one can always replace $M^{(j-1)}$, for which we apply Lemma 3.1 of \cite{MO}, by its adjoint). It remains to note 
\[
\left|\det[\lambda_{j}-M^{(j)}]\right|=\left(\prod_{k=j+1}^{m}\left|\lambda_{k}-\lambda_{j}\right|\right)\left|\det[\lambda_{j}-M^{(m)}]\right|.
\]
\end{proof}
We now apply this transformation to the ensemble $M_{t}=A+t^{\frac12}B$. Since $B$ is Gaussian, the distribution of $M_{t}$ is given by the following density with respect to flat measure on $M_{N}(\C)$, the space of $N\times N$ complex matrices:
\begin{align*}
\rho(M_{t})\d M_{t}=\left(\frac{N}{\pi t}\right)^{N^{2}}\exp\left\{-\frac{N}{t}\tr[(M_{t}-A)^{\ast}(M_{t}-A)]\right\}\d M_{t}.
\end{align*}
Now, by following the computations in Section 4 of \cite{MO} and combining them with Lemma \ref{lemma:jacobian}, we have the following formula, which uses notation $A^{(j)}$ and $b_{j}$ that we define afterwards:
\begin{align}
\rho(M_{t})\d M_{t}&=|\Delta(\bm{\lambda})|^{2}\prod_{j=1}^{m}|\det[\lambda_{j}-M^{(m)}]|^{2}\label{eq:changeofvariables}\\
&\times\left(\frac{N}{\pi t}\right)^{N^{2}}\exp\left\{-\frac{N}{t}\tr[(M_{t}^{(m)}-A^{(m)})^{\ast}(M_{t}^{(m)}-A^{(m)})]\right\}\nonumber\\
&\times\prod_{j=1}^{m_R}\exp\left\{-\frac{N}{t}\|(A^{(j-1)}-\lambda_{j})u^{(j-1)}\|^{2}\right\}\prod_{j=m_R+1}^m\exp\left\{-\frac{N}{t}\|(A^{(j-1)}-\lambda_{j})^{\ast}v^{(j-1)}\|^{2}\right\}\nonumber\\
&\times\prod_{j=1}^{m_R}\exp\left\{-\frac{N}{t}\|w_{j}-b_{j}\|^{2}\right\}\prod_{j=m_R+1}^{m}\exp\left\{-\frac{N}{t}\|w_{j}-c_{j}\|^{2}\right\}\nonumber\\
&\times\d\bm{\lambda}\prod_{j=1}^{m_R}\d u^{(j-1)}\prod_{j=m_R+1}^m\d v^{(j-1)} \prod_{j=1}^m\d w_{j}\d M_{t}^{(m)}\nonumber\\
&=:\tilde{\rho}_{\bm{\lambda}}(u,\ldots, u^{(m_R-1)}, v^{(m_R)}, \ldots, v^{(m-1)},w_{1},\ldots,w_{m},M_{t}^{(m)})\nonumber\\
&\times\d\bm{\lambda}\prod_{j=1}^{m_R}\d u^{(j-1)}\prod_{j=m_R+1}^m\d v^{(j-1)} \prod_{j=1}^m\d w_{j}\d M_{t}^{(m)}\nonumber
\end{align}
Here, $M_{t}^{(m)}$ is the same as $M^{(m)}$ from above after applying the change-of-variables procedure to $M_{t}$. The vectors $b_{j}\in \C^{N-j}$ and the matrices $A^{(j)}\in M_{N-j}(\C)$ are defined by the identities
\begin{align*}
R_{j}(u^{(j-1)})A^{(j-1)}R_{j}(u^{(j-1)})&=\begin{pmatrix} a_j &b_{j}^{\ast}\\c_{j}&A^{(j)}\end{pmatrix}.
\end{align*}
We clarify that \eqref{eq:changeofvariables} holds with the additional adjoint in the third line because the $(m_R+1)$-st through $m$-th steps are obtained by following Section 4 of \cite{MO} but for the adjoint of $A^{(j)}$ instead of $A^{(j)}$ itself. This is because we perform the change-of-variables with respect to the left eigenvector; see the proof of Lemma \ref{lemma:jacobian}. In the rest of the paper we will be computing expectations of functions of $u^{(j)}$, $v^{(j)}$, $w_j$ conditionally on $\bm{\lambda}$. Thus we can integrate out $M^{(m)}$ by following Section 5 of \cite{MO} verbatim. Ultimately, we have the following in which $C_{N,t,\bm{\lambda}}$ depends only on $N,t,\bm{\lambda_{1}}$ (and it may vary from line to line):
\begin{align*}
&\int_{M_{N-2}(\C)}\prod_{j=1}^m|\det[\lambda_{j}-M_{t}^{(m)}]|^{2}\exp\left\{-\frac{N}{t}\tr[(M_{t}^{(m)}-A^{(m)})^{\ast}(M_{t}^{(m)}-A^{(m)})]\right\}\d M_{t}^{(m)}\\
&=C_{N,t,\bm{\lambda}}\prod_{j=1}^m\det[H_{\lambda_{j}}(\eta_{\lambda_j,t})]^{-1}\prod_{j=1}^m|\det[V_{j}^{\ast}G_{\lambda_{j}}^{(j-1)}(\eta_{\lambda_{j},t})V_{j}]|^{m}\left[1+O(N^{-\delta})\right].
\end{align*}
Here, $V_{j}=I_{2}\otimes u^{(j-1)}$ for $j\in\llbracket 1,m_R\rrbracket$ and $V_{j}=I_{2}\otimes v^{(j-1)}$ for $j\in\llbracket m_R+1,m\rrbracket$, and $\delta>0$ is fixed. Also, $G_{z}^{(k)}(\eta)$ is the same as $G_{z}(\eta)$ but for the matrix $A^{(k)}$ instead of $A$. We can also integrate out $w_{j}$ variables because they are Gaussian. In particular, the marginal of $\tilde{\rho}_{\bm{\lambda}}$ after integrating out $w_{j}$ variables and $M_{t}^{(m)}$ is 
\begin{align}
\tilde{\rho}_{\bm{\lambda}}(u,\ldots, u^{(m_R-1)}, v^{(m_R)}, \ldots, v^{(m-1)})&=C_{N,t,\bm{\lambda}}\prod_{j=1}^{m}\det[H_{\lambda_{j}}(\eta_{\lambda_{j},t})]^{-1}\prod_{j=1}^{m}|\det[V_{j}^{\ast}G_{\lambda_{j}}^{(j-1)}(\eta_{\lambda_{j},t})V_{j}]|^{m}\nonumber\\
&\times\prod_{j=1}^{m_R}\exp\left\{-\frac{N}{t}\|(A^{(j-1)}-\lambda_{j})u^{(j-1)}\|^{2}\right\}\nonumber\\
&\times\prod_{j=m_R+1}^m\exp\left\{-\frac{N}{t}\|(A^{(j-1)}-\lambda_{j})^{\ast}v^{(j-1)}\|^{2}\right\}.\label{eq:changeofvariables1}
\end{align}
Additionally, we introduce the following measures on $\mathbb{S}^{N-j}$.
\begin{align*}
\nu_j(u^{(j-1)}) &= K_j(\lambda_j,A^{(j-1)})^{-1} \exp\left\{-\frac{N}{t}\|(A^{(j-1)}-\lambda_{j})u^{(j-1)}\|^{2}\right\}\d u^{(j-1)},\, j\in\llbracket 1,m_R\rrbracket,\\
\nu_j(v^{(j-1)}) &= K_j(\lambda_j,A^{(j-1)})^{-1} \exp\left\{-\frac{N}{t}\|(A^{(j-1)}-\lambda_{j})^\ast v^{(j-1)}\|^{2}\right\}\d v^{(j-1)},\, j\in\llbracket m_R+1,m\rrbracket,
\end{align*}
where
\begin{align*}
K_j(\lambda_j,A^{(j-1)}) &= \int_{\mathbb{S}^{N-j}} \exp\left\{-\frac{N}{t}\|(A^{(j-1)}-\lambda_{j})u^{(j-1)}\|^{2}\right\}\d u^{(j-1)},\, j\in\llbracket 1,m_R\rrbracket,\\
K_j(\lambda_j,A^{(j-1)}) &= \int_{\mathbb{S}^{N-j}}\exp\left\{-\frac{N}{t}\|(A^{(j-1)}-\lambda_{j})^\ast v^{(j-1)}\|^{2}\right\}\d v^{(j-1)},\, j\in\llbracket m_R+1,m\rrbracket.
\end{align*}
From Lemma 4.1 of \cite{MO}, we have
\begin{align*}
K_j(\lambda_j,A^{(j-1)}) = C_{N,t,\bm{\lambda}} \prod_{k=1}^{j-1} \left|\det[V_{j}^{\ast}G_{\lambda_{j}}^{(j-1)}(\eta_{\lambda_{j},t})V_{j}]\right|^{-1}(1+O(N^{-\delta})).
\end{align*}
The measures of the variables $w_j$ are Gaussian on $\C^{N-j}$ and given by
\begin{align*}
\d\omega_j(w_j) &= \left(\frac{N}{\pi t}\right)^{N-j}\exp\left\{-\frac{N}{t}\|w_{j}-b_{j}\|^{2}\right\} \d w_j, & j&\in\llbracket1,m_R\rrbracket,\\
\d\omega_j(w_j) &= \left(\frac{N}{\pi t}\right)^{N-j}\exp\left\{-\frac{N}{t}\|w_{j}-c_{j}\|^{2}\right\} \d w_j, & j&\in\llbracket m_R+1,m\rrbracket.
\end{align*}
The the marginal of $\tilde{\rho}_{\bm{\lambda}}$ after integrating out $M_t^{(m)}$ (but not $w_{j}$ variables) is  
\begin{align*}
&\tilde{\rho}_{\bm{\lambda}}(u,\ldots, u^{(m_R-1)}, v^{(m_R)}, \ldots, v^{(m-1)},w_{1},\ldots,w_{m})\prod_{j=1}^{m_R}\d u^{(j-1)}\prod_{j=m_R+1}^m\d v^{(j-1)} \prod_{j=1}^m\d w_{j}\\
&=C_{N,t,\bm{\lambda}}\prod_{j=1}^m\det[H_{\lambda_{j}}(\eta_{\lambda_{j},t})]^{-1}\left|\det[V_{j}^{\ast}G_{\lambda_{j}}^{(j-1)}(\eta_{\lambda_{j},t})V_{j}]\right|^{j}\\
&\times\prod_{j=1}^{m_R}\d \nu_j(u^{(j-1)})\prod_{j=m_R+1}^m\d \nu_j(v^{(j-1)}) \prod_{j=1}^m\d \omega_j(w_{j})\left[1+O(N^{-\delta})\right].
\end{align*}

\section{Proof of Proposition \ref{prop:single}}
We prove \eqref{eq:singleright}, since \eqref{eq:singleleft} follows by the same argument (just replace $A$ by $A^{\ast}$ to make left eigenvectors into right eigenvectors; in particular, this is why \eqref{eq:singleleft} has $\tilde{H}$ instead of $H$). Now, to compute $\E_{\lambda}\exp[Nq\|Tu\|^{2}]$, we use \eqref{eq:changeofvariables1} and integrate out everything except $u$ (equivalently, set $m=m_{R}=1$ in our application of \eqref{eq:changeofvariables1}). This gives the formula
\begin{align}
&\E_{\lambda_{1}}\exp[Nq\|Tu\|^{2}]\approx\det[H_{\lambda_{1}}(\eta_{\lambda_{1},t})]^{-1}\int_{\mathbb{S}^{N-1}}e^{Nq\|Tu\|^{2}}e^{-\frac{N}{t}\|(A-\lambda_{1})u\|^{2}}|\det[V_{1}^{\ast}G_{\lambda_{1}}(\eta_{\lambda_{1},t})V_{1}]|\d u\nonumber\\
&=e^{\frac{N}{t}\eta_{\lambda_{1},t}^{2}}\det[H_{\lambda_{1}}(\eta_{\lambda_{1},t})]^{-1}\int_{\mathbb{S}^{N-1}}e^{-\frac{N}{t}u^{\ast}[(A-\lambda_{1})^{\ast}(A-\lambda_{1})+\eta_{\lambda_{1},t}^{2}-tqT^{\ast}T]u}|\det[V_{1}^{\ast}G_{\lambda_{1}}(\eta_{\lambda_{1}})V_{1}]|\d u\nonumber\\
&=:e^{\frac{N}{t}\eta_{\lambda_{1},t}^{2}}\det[H_{\lambda_{1}}(\eta_{\lambda_{1},t})]^{-1}K_{q}\int_{\mathbb{S}^{N-1}}|\det[V_{1}^{\ast}G_{\lambda_{1}}(\eta_{\lambda_{1},t})V_{1}]|\d\mu_{q}(u),\label{eq:singlestart}
\end{align}
where $\approx$ means true up to a factor of $1+O(N^{-\delta})$ for $\delta>0$, and where 
\begin{align*}
K_{q}&:=\int_{\mathbb{S}^{N-1}}e^{-\frac{N}{t}u^{\ast}[(A-\lambda_{1})^{\ast}(A-\lambda_{1})+\eta_{\lambda_{1},t}^{2}-tqT^{\ast}T]u}\d u,\\
\d\mu_{q}(u)&:=K_{q}^{-1}e^{-\frac{N}{t}u^{\ast}[(A-\lambda_{1})^{\ast}(A-\lambda_{1})+\eta_{\lambda_{1},t}^{2}-tqT^{\ast}T]u}\d u.
\end{align*}
Before we can further analyze $K_{q}$ and the other remaining terms, we need a preliminary estimate. For convenience, let us the use the notation $H_{\lambda,q}(\eta):=[(A-\lambda)^{\ast}(A-\lambda)+\eta^{2}-tqT^{\ast}T]^{-1}$. Note that $H_{\lambda,0,T}(\eta):=H_{\lambda}(\eta)$. Let us also use the following more general notation, since it will be important for the proof of Proposition \ref{prop:maingauss}. For any $j\geq0$, recall $A^{(j)}$. Define 
\begin{align*}
H_{\lambda,q}^{(j)}(\eta)&:=[(A^{(j)}-\lambda)^{\ast}(A^{(j)}-\lambda)+\eta^{2}-tqT^{\ast}T]^{-1},\\
\tilde{H}_{\lambda,q,T}^{(j)}(\eta)&:=[(A^{(j)}-\lambda)(A^{(j)}-\lambda)^{\ast}+\eta^{2}-tqT^{\ast}T]^{-1}
\end{align*}
We also define $H_{\lambda}^{(j)}(\eta):=H_{\lambda,0,T}^{(j)}(\eta)$ and $\tilde{H}_{\lambda}^{(j)}(\eta):=H_{\lambda,0,T}^{(j)}(\eta)$. Here, we always assume that $T$ has the same dimension as $A^{(j)}$, and that $T$ is finite rank.
\begin{lemma}\label{lemma:Hqestimates}
Fix $\lambda\in\C$. There exists $q_{T}>0$ such that if $q<q_{T}$, then for some finite-rank, positive semi-definite Hermitian matrix $Y$ with operator norm $\|Y\|_{\mathrm{op}}=O(1)$, we have 
\begin{align*}
H_{\lambda,q}^{(j)}(\eta_{\lambda,t})=H_{\lambda}^{(j)}(\eta_{\lambda,t})^{\frac12}[I+qY]H_{\lambda}^{(j)}(\eta_{\lambda,t})^{\frac12}.
\end{align*}
Moreover, we have $\langle H_{\lambda,q}^{(j)}(\eta_{\lambda,t})\rangle=\langle H_{\lambda}^{(j)}(\eta_{\lambda,t})\rangle+O(N^{-1})$. The same is true for $\tilde{H}$ in place of $H$.
\end{lemma}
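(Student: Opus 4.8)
The plan is to obtain the factorization from the symmetrized second resolvent identity and then read off the properties of $Y$ by functional calculus in a single $O(1)$-rank matrix. Set $P^{(j)}:=(A^{(j)}-\lambda)^{\ast}(A^{(j)}-\lambda)+\eta_{\lambda,t}^{2}$; this Hermitian matrix has smallest eigenvalue at least $\eta_{\lambda,t}^{2}>0$, so it has a positive square root, and $H_{\lambda}^{(j)}(\eta_{\lambda,t})=(P^{(j)})^{-1}$ has the positive square root $(P^{(j)})^{-1/2}$. Since $H_{\lambda,q}^{(j)}(\eta_{\lambda,t})=[P^{(j)}-tqT^{\ast}T]^{-1}$, factoring out $(P^{(j)})^{1/2}$ on both sides gives
\begin{align*}
H_{\lambda,q}^{(j)}(\eta_{\lambda,t})=H_{\lambda}^{(j)}(\eta_{\lambda,t})^{1/2}\bigl[I-qS\bigr]^{-1}H_{\lambda}^{(j)}(\eta_{\lambda,t})^{1/2},\qquad S:=t\,H_{\lambda}^{(j)}(\eta_{\lambda,t})^{1/2}\,T^{\ast}T\,H_{\lambda}^{(j)}(\eta_{\lambda,t})^{1/2},
\end{align*}
where $S$ is Hermitian, positive semidefinite, and has rank at most $\operatorname{rank}(T)=O(1)$. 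Provided $q\|S\|_{\mathrm{op}}<1$, the matrix $I-qS$ is invertible, and setting $Y:=S[I-qS]^{-1}$ turns the bracket into $I+qY$. By functional calculus in $S$ (its eigenvalues $\sigma\ge0$ are mapped to $\sigma(1-q\sigma)^{-1}\ge0$), $Y$ is positive semidefinite, has the same range as $S$ (hence rank at most $\operatorname{rank}(T)$), and satisfies $\|Y\|_{\mathrm{op}}\le\|S\|_{\mathrm{op}}(1-q\|S\|_{\mathrm{op}})^{-1}$.

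The crux is therefore the bound $\|S\|_{\mathrm{op}}=O(1)$, which fixes the threshold $q_{T}$ and yields $\|Y\|_{\mathrm{op}}=O(1)$. Writing the spectral decomposition $T^{\ast}T=\sum_{k}q_{k}\mathbf{w}_{k}\mathbf{w}_{k}^{\ast}$ from \eqref{eq:T-spectral} and using that $S$ is a sum of $O(1)$ rank-one terms,
\begin{align*}
\|S\|_{\mathrm{op}}\le t\sum_{k}q_{k}\,\mathbf{w}_{k}^{\ast}H_{\lambda}^{(j)}(\eta_{\lambda,t})\mathbf{w}_{k}.
\end{align*}
The isotropic local law \eqref{assn:3.1}, applied to $A^{(j)}$ in place of $A$ (which is the content of Proposition \ref{prop:locallaw}), together with \eqref{assn:1.1}, gives $\mathbf{w}_{k}^{\ast}H_{\lambda}^{(j)}(\eta_{\lambda,t})\mathbf{w}_{k}=\langle H_{\lambda}^{(j)}(\eta_{\lambda,t})\rangle+O(\eta_{\lambda,t}^{-3/2}N^{-1/2})=O(\eta_{\lambda,t}^{-1})$, and since $\eta_{\lambda,t}\asymp t$ we conclude $\|S\|_{\mathrm{op}}=O(t\cdot t^{-1})=O(1)$. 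Choosing $q_{T}$ smaller than the reciprocal of twice the implied constant guarantees $q\|S\|_{\mathrm{op}}\le\tfrac12$ for $q<q_{T}$, so $H_{\lambda,q}^{(j)}(\eta_{\lambda,t})$ is well defined with the asserted factorization and $\|Y\|_{\mathrm{op}}=O(1)$.

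For the trace comparison, the factorization yields $H_{\lambda,q}^{(j)}(\eta_{\lambda,t})-H_{\lambda}^{(j)}(\eta_{\lambda,t})=q\,H_{\lambda}^{(j)}(\eta_{\lambda,t})^{1/2}YH_{\lambda}^{(j)}(\eta_{\lambda,t})^{1/2}$, a matrix of rank at most $\operatorname{rank}(T)=O(1)$, so that
\begin{align*}
\bigl\langle H_{\lambda,q}^{(j)}(\eta_{\lambda,t})\bigr\rangle-\bigl\langle H_{\lambda}^{(j)}(\eta_{\lambda,t})\bigr\rangle=\frac{q}{N}\operatorname{Tr}\!\bigl[Y\,H_{\lambda}^{(j)}(\eta_{\lambda,t})\bigr],
\end{align*}
and $\operatorname{Tr}[Y H_{\lambda}^{(j)}(\eta_{\lambda,t})]$ is a sum of $O(1)$ eigenvalues, each controlled using $\|Y\|_{\mathrm{op}}=O(1)$ together with the local laws on the deterministic subspace spanned by the $\mathbf{w}_{k}$ (i.e.\ by $\mathbf{w}^{\ast}H_{\lambda}^{(j)}(\eta_{\lambda,t})^{2}\mathbf{w}$-type quantities, bounded through \eqref{assn:1.4}); this gives the stated $O(N^{-1})$ error. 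The version with $\tilde{H}$ in place of $H$ follows verbatim after replacing $A^{(j)}$ by $(A^{(j)})^{\ast}$, which interchanges $H$ and $\tilde{H}$ and uses \eqref{assn:3.2} in place of \eqref{assn:3.1}.

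The step I expect to be the main obstacle is the bound $\|S\|_{\mathrm{op}}=O(1)$ \emph{uniformly in} $j$: one needs the isotropic local law to hold for every reduced matrix $A^{(j)}$ on the deterministic subspace spanned by the $\mathbf{w}_{k}$ and their $\pm,\pm i$ combinations (the combinations entering through polarization, so that \eqref{assn:3.1} also controls the relevant off-diagonal entries). This is precisely the difficulty flagged in the introduction as needing isotropic local laws ``after projecting to the orthogonal complement of the span of finitely many eigenvectors,'' and it is handled by the perturbation and concentration estimates of Section \ref{sec:concentration}.
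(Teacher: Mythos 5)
Your factorization and the $j=0$ case are essentially the paper's argument: the paper reaches the same structure via the Woodbury identity (writing $H_{\lambda,q}=H_{\lambda}+tqH_{\lambda}T^{\ast}XTH_{\lambda}$ and pulling out $H_{\lambda}^{1/2}$), and, like you, it bounds the operator norm of the resulting finite-rank positive matrix by its trace, so that everything reduces to $\tr TH_{\lambda}^{(j)}T^{\ast}=O(t^{-1})$. The genuine gap is your justification of this input when $j\ge1$. You assert that \eqref{assn:3.1} holds ``applied to $A^{(j)}$ in place of $A$ (which is the content of Proposition \ref{prop:locallaw})''. It is not: Proposition \ref{prop:locallaw} and the assumptions \textbf{A1}--\textbf{A3} concern the original i.i.d.\ matrix $A$ tested against deterministic vectors, whereas $A^{(j)}$ is a random compression of $A$ built from the Householder transforms $R_{k}(u^{(k-1)})$, and the matrix $T=T_{j}^{(j-1)}$ appearing in $H^{(j)}_{\lambda,q}$ is the restriction of $T_{j}U_{j-1}$, which is random as well; for an arbitrary unit vector one only has $\mathbf{w}^{\ast}H^{(j)}_{\lambda}\mathbf{w}\le\eta_{\lambda,t}^{-2}\sim t^{-2}$, which is not enough. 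This is exactly the difficulty the introduction flags. The paper inherits the needed trace bound for the minors from the $j=0$ bound by interlacing, while your fallback of invoking Section \ref{sec:concentration} (Lemma \ref{lem:wHw}) would only give the estimate with high probability with respect to the measures $\nu_{j}$, which changes the nature of the lemma and does not match how it is used: the proof of \eqref{eq:kjcomputation1} is explicitly carried out without restricting to a good event.

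The trace comparison is also not closed as written. Your identity $\langle H_{\lambda,q}^{(j)}\rangle-\langle H_{\lambda}^{(j)}\rangle=\frac{q}{N}\tr[YH_{\lambda}^{(j)}]$ is correct, but the ingredients you list do not yield $O(N^{-1})$: since the range of $Y$ is spanned by the vectors $H_{\lambda}^{(j)}(\eta_{\lambda,t})^{1/2}\mathbf{w}_{k}$, the quantity $\tr[YH_{\lambda}^{(j)}]$ is governed by $t\,\mathbf{w}_{k}^{\ast}H_{\lambda}^{(j)}(\eta_{\lambda,t})^{2}\mathbf{w}_{k}$-type terms, and $\mathbf{w}^{\ast}H^{2}\mathbf{w}$ is of size $t^{-3}$ (note that \eqref{assn:1.4} is an averaged bound and does not control this isotropic quantity; one should instead use $\mathbf{w}^{\ast}H^{2}\mathbf{w}\le\|H\|_{\mathrm{op}}\,\mathbf{w}^{\ast}H\mathbf{w}$ together with \eqref{assn:1.1} and \eqref{assn:3.1}). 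Your computation therefore gives $O(qN^{-1}t^{-2})$, not $O(N^{-1})$; indeed a direct Sherman--Morrison check with $T^{\ast}T=\mathbf{w}\mathbf{w}^{\ast}$ shows the difference really is of order $qN^{-1}t^{-2}$, so no argument of this type can produce $O(N^{-1})$ without identifying a cancellation, and you identify none. The weaker bound $O(N^{-1}t^{-2})$ is in fact all that the subsequent arguments use (they only need $t$ times this difference to be $O(N^{-1}t^{-1})$, and the Gaussian-width comparison tolerates it), but as written your claimed $O(N^{-1})$ does not follow from your own estimates.
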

\begin{proof}
We prove the claim for $H$ and not $\tilde{H}$; for $\tilde{H}$, again, the same argument applies (just replace $A$ by $A^{\ast}$). Also, We first assume $j=0$; we comment on general $j$ at the end. Recall $C^{-1}t\leq\eta_{\lambda,t}\leq Ct$ for some $C>0$. By the Woodbury matrix identity, we have
\begin{align*}
H_{\lambda,q}(\eta_{\lambda,t})&=H_{\lambda}(\eta_{\lambda,t})+H_{\lambda}(\eta_{\lambda,t})T^{\ast}[t^{-1}q^{-1}-TH_{\lambda}(\eta_{\lambda,t})T^{\ast}]^{-1}TH_{\lambda}(\eta_{\lambda,t}).
\end{align*}
By \eqref{assn:1.1} and \eqref{assn:3.1}, since $T^{\ast}T$ is a finite rank projection with $O(1)$ operator norm, we have $\|TH_{\lambda}(\eta_{\lambda,t})T^{\ast}\|_{\mathrm{op}}=O(t^{-1})$. Hence, we can choose $q_{T}>0$ such that if $q<q_{T}$, then the term in square brackets on the RHS is a Hermitian operator that is bounded below by $Cq^{-1}t^{-1}$. In particular, the second term on the RHS of the above identity has the form $tqH_{\lambda}(\eta_{\lambda,t})T^{\ast}XTH_{\lambda}(\eta_{\lambda,t})$ with $X$ Hermitian, bounded, and positive, i.e.
\begin{align*}
H_{\lambda,q}(\eta_{\lambda,t})&=H_{\lambda}(\eta_{\lambda,t})+tqH_{\lambda}(\eta_{\lambda,t})T^{\ast}XTH_{\lambda}(\eta_{\lambda,t})\\
&=H_{\lambda}(\eta_{\lambda,t})^{\frac12}\left\{I+tqH_{\lambda}(\eta_{\lambda,t})^{\frac12}T^{\ast}XTH_{\lambda}(\eta_{\lambda,t})^{\frac12}\right\}H_{\lambda}(\eta_{\lambda,t})^{\frac12}.
\end{align*}
To prove the first estimate, it suffices to show that the second term inside the curly brackets without the factor $q$ has operator norm $O(1)$. This term is a finite rank, positive operator, so we can bound its operator norm by its trace. This gives
\begin{align*}
\|tH_{\lambda}(\eta_{\lambda,t})^{\frac12}T^{\ast}XTH_{\lambda}(\eta_{\lambda,t})^{\frac12}\|_{\mathrm{op}}&\leq t\tr H_{\lambda}(\eta_{\lambda,t})^{\frac12}T^{\ast}XTH_{\lambda}(\eta_{\lambda,t})^{\frac12}\\
&=t\tr TH_{\lambda}(\eta_{\lambda,t})T^{\ast}X\\
&\leq t\|X\|_{\mathrm{op}}\tr TH_{\lambda}(\eta_{\lambda,t})T^{\ast}.
\end{align*}
Again, we use \eqref{assn:1.1} and \eqref{assn:3.1} to get $t\tr TH_{\lambda}(\eta_{\lambda,t})T^{\ast}=O(1)$. Now use $\|X\|_{\mathrm{op}}=O(1)$ to get the first estimate. For the comparison of normalized traces, we have
\begin{align*}
\tr H_{\lambda}(\eta_{\lambda,t})^{\frac12}T^{\ast}XTH_{\lambda}(\eta_{\lambda,t})^{\frac12}&=\tr TH_{\lambda}(\eta_{\lambda,t})T^{\ast}X\\
&=O(1)\tr TH_{\lambda}(\eta_{\lambda,t})T^{\ast}=O(t^{-1});
\end{align*}
the last line holds by $\|X\|_{\mathrm{op}}=O(1)$, by \eqref{assn:1.1}, and \eqref{assn:3.1}, and by $\eta_{\lambda,t}^{-1}=O(t^{-1})$. Multiply by $O(N^{-1}t)$ to conclude. For general $j\neq0$, the same argument works. Indeed, all we need are
\begin{align*}
\|TH_{\lambda}^{(j)}(\eta_{\lambda,t})T^{\ast}\|_{\mathrm{op}}=O(t^{-1})\quad\mathrm{and}\quad \tr TH_{\lambda}^{(j)}(\eta_{\lambda,t})T^{\ast}=O(t^{-1}).
\end{align*} 
The former follows from the latter. The latter follows by interlacing (and the corresponding estimate without the $(j)$ superscript).
\end{proof}
We proceed to analyze $K_{q}$. By Lemma \ref{lemma:Hqestimates}, we know $C^{-1}H_{\lambda_{1}}(\eta_{\lambda_{1},t})\leq H_{\lambda_{1},q}(\eta_{\lambda_{1},t})\leq CH_{\lambda_{1}}(\eta_{\lambda_{1},t})$ for a finite, positive constant $C$. In particular, by Lemma 6.1 in \cite{MO}, we have
\begin{align*}
K_{q}&=C_{N,t}\det[H_{\lambda_{1},q}(\eta_{\lambda_{1},t})]\int_{\R}e^{i\frac{N}{t}p}\det[I+ipH_{\lambda_{1},q}(\eta_{\lambda_{1},t})]^{-1}\d p.
\end{align*}
The two-sided bound $C^{-1}H_{\lambda_{1}}(\eta_{\lambda_{1},t})\leq H_{\lambda_{1},q}(\eta_{\lambda_{1},t})\leq CH_{\lambda_{1}}(\eta_{\lambda_{1},t})$ is used in the proof of Lemma 6.1 of \cite{MO} only to guarantee that $H_{\lambda_{1},q}(\eta_{\lambda_{1},t})$ is strictly positive. We now use it to justify the following approximation (see immediately after the proof of Lemma 6.1 in \cite{MO}):
\begin{align*}
\int_{\R}e^{i\frac{N}{t}p}\det[I+ipH_{\lambda_{1},q}(\eta_{\lambda_{1},t})]^{-1}\d p&\approx\int_{\R}e^{i\frac{N}{t}p}\exp\left\{-iNp\langle H_{\lambda_{1},q}(\eta_{\lambda_{1},t})\rangle-\frac12Np^{2}\langle H_{\lambda_{1},q}(\eta_{\lambda_{1},t})^{2}\rangle\right\}\d p.
\end{align*}
By the trace estimate in Lemma \ref{lemma:Hqestimates}, we can continue as follows (below, $\kappa_{N,t,\lambda_{1}}=O(1)$):
\begin{align*}
&\int_{\R}e^{i\frac{N}{t}p}\exp\left\{-iNp\langle H_{\lambda_{1},q}(\eta_{\lambda_{1},t})\rangle-\frac12Np^{2}\langle H_{\lambda_{1},q}(\eta_{\lambda_{1},t})^{2}\rangle\right\}\d p\\
&=\int_{\R}e^{i\frac{N}{t}p[1-t\langle H_{\lambda_{1},q}(\eta_{\lambda_{1},t})\rangle]}\exp\left\{-\frac12Np^{2}\langle H_{\lambda_{1},q}(\eta_{\lambda_{1},t})^{2}\rangle\right\}\d p\\
&=\int_{\R}\exp\left\{i\kappa_{N,t,\lambda_{1}}p\right\}\exp\left\{-\frac12Np^{2}\langle H_{\lambda_{1},q}(\eta_{\lambda_{1},t})^{2}\rangle\right\}\d p\\
&=\frac{\sqrt{2\pi}}{N^{1/2}\langle H_{\lambda_{1},q}(\eta_{\lambda_{1},t})^{2}\rangle^{1/2}}\exp\left\{-\frac{\kappa_{N,t,\lambda_{1}}^{2}}{2N\langle H_{\lambda_{1},q}(\eta_{\lambda_{1},t})^{2}\rangle}\right\}.
\end{align*}
The last line follows by Gaussian Fourier transform. The bounds $C^{-1}H_{\lambda_{1}}(\eta_{\lambda_{1},t})\leq H_{\lambda_{1},q}(\eta_{\lambda_{1},t})$ and $\langle H_{\lambda_{1}}(\eta_{\lambda_{1},q})^{2}\rangle\geq C^{-1}\eta_{\lambda_{1},q}^{-3}$ and $\kappa_{N,t,\lambda_{1}}=O(1)$ show that the exponential in the last line is $\approx1$. Now, note that if $q\geq0$, then $H_{\lambda_{1},q}(\eta_{\lambda_{1},t})-H_{\lambda_{1}}(\eta_{\lambda_{1},t})\geq0$, and that if $q\leq0$, the reverse inequality holds. This follows by Lemma \ref{lemma:Hqestimates}. In particular, $H_{\lambda_{1},q}(\eta_{\lambda_{1},t})-H_{\lambda_{1}}(\eta_{\lambda_{1},t})$ is always positive or negative semi-definite; it cannot have both positive and negative eigenvalues. Using this, we have the following, where the last line follows by $H_{\lambda_{1},q}(\eta_{\lambda_{1},t})\leq CH_{\lambda_{1}}(\eta_{\lambda_{1},t})$ and Lemma \ref{lemma:Hqestimates}:
\begin{align*}
&|\langle H_{\lambda_{1},q}(\eta_{\lambda_{1},t})^{2}\rangle^{\frac12}-\langle H_{\lambda_{1},q}(\eta_{\lambda_{1},t})^{2}\rangle^{\frac12}|\lesssim|\langle H_{\lambda_{1},q}(\eta_{\lambda_{1},t})^{2}\rangle-\langle H_{\lambda_{1}}(\eta_{\lambda_{1},t})^{2}\rangle|^{\frac12}\\
&\lesssim(\|H_{\lambda_{1},q}(\eta_{\lambda_{1},t})\|_{\mathrm{op}}+\|H_{\lambda_{1}}(\eta_{\lambda_{1},t})\|_{\mathrm{op}})|\langle H_{\lambda_{1},q}(\eta_{\lambda_{1},t})-H_{\lambda_{1}}(\eta_{\lambda_{1},t})\rangle|\\
&\lesssim \|H_{\lambda_{1}}(\eta_{\lambda_{1},t})\|_{\mathrm{op}}||\langle H_{\lambda_{1},q}(\eta_{\lambda_{1},t})-H_{\lambda_{1}}(\eta_{\lambda_{1},t})\rangle|\lesssim N^{-1}t^{-2}.
\end{align*}
Because $C^{-1}H_{\lambda_{1}}(\eta_{\lambda_{1},t})\leq H_{\lambda_{1},q}(\eta_{\lambda_{1},t})$, we also have $\langle H_{\lambda_{1},q}(\eta_{\lambda_{1},t})^{2}\rangle^{1/2}\gtrsim\langle H_{\lambda_{1}}(\eta_{\lambda_{1},t})^{2}\rangle^{1/2}\gtrsim t^{-3/2}$. We deduce from this and the previous display that $\langle H_{\lambda_{1},q}(\eta_{\lambda_{1},t})^{2}\rangle^{1/2}\approx\langle H_{\lambda_{1}}(\eta_{\lambda_{1},t})^{2}\rangle^{1/2}$ since $t\ll1$. Ultimately, since $\langle H_{\lambda_{1}}(\eta_{\lambda_{1},t})^{2}\rangle$ depends only on $N,t$ (recall that $\lambda_{1}$ is fixed), we have 
\begin{align}
K_{q}&=C_{N,t}\det[H_{\lambda_{1},q}(\eta_{\lambda_{1},t})]\int_{\R}e^{i\frac{N}{t}p}\det[I+ipH_{\lambda_{1},q}(\eta_{\lambda_{1},t})]^{-1}\d p\approx C_{N,t}\det[H_{\lambda_{1},q}(\eta_{\lambda_{1},t})].\label{eq:kcomputation}
\end{align}
We now control the $\d\mu_{q}(u)$ integration. First, we record the following from Section 6 of \cite{MO}:
\begin{align*}
|\det[V_{1}^{\ast}G_{\lambda_{1}}(\eta_{\lambda_{1},t})V_{1}]|&=\eta_{\lambda_{1},t}^{2}(u^{\ast}H_{\lambda_{1}}(\eta_{\lambda_{1},t})u)(u^{\ast}\tilde{H}_{\lambda_{1}}(\eta_{\lambda_{1},t})u)+|u^{\ast}H_{\lambda_{1}}(\eta_{\lambda_{1},t})(A-\lambda_{1})u|^{2}.
\end{align*}
We claim the following concentration estimates:
\begin{align}
\mu_{q}\left(|\eta_{\lambda_{1},t}u^{\ast}H_{\lambda_{1}}(\eta_{\lambda_{1},t})u-t\eta_{\lambda_{1},t}^{-1}\langle\tilde{H}_{\lambda_{1}}(\eta_{\lambda_{1},t})H_{\lambda_{1}}(\eta_{\lambda_{1},t})\rangle|\geq\frac{\log N}{N^{1/2}t}\right)&\leq e^{-C\log^{2}N},\label{eq:singleconcentrate1}\\
\mu_{q}\left(|\eta_{\lambda_{1},t}u^{\ast}\tilde{H}_{\lambda_{1}}(\eta_{\lambda_{1},t})u-t\eta_{\lambda_{1},t}^{-1}\langle\tilde{H}_{\lambda_{1}}(\eta_{\lambda_{1},t})^{2}\rangle|\geq\frac{\log N}{N^{1/2}t^{3/2}}\right)&\leq e^{-C\log^{2}N},\label{eq:singleconcentrate2}\\
\mu_{q}\left(|\eta_{\lambda_{1},t}u^{\ast}H_{\lambda_{1}}(\eta_{\lambda_{1},t})(A-\lambda_{1})u-t\eta_{\lambda_{1},t}^{-1}\langle\tilde{H}_{\lambda_{1}}(\eta_{\lambda_{1},t})(A-\lambda_{1})\rangle|\geq\frac{\log N}{N^{1/2}t}\right)&\leq e^{-C\log^{2}N}.\label{eq:singleconcentrate3}
\end{align}
For $q=0$, these estimates are exactly the content of Lemma 6.2 in \cite{MO}. Assuming these estimates hold, we can finish the proof of Proposition \ref{prop:single} by following the calculation after the proof of Lemma 6.3 in \cite{MO} (in said calculation, we set $j=1$, so that Lemma 6.3 in \cite{MO} is unnecessary). In particular, this would give 
\begin{align*}
\int_{\mathbb{S}^{N-1}}|\det[V_{1}^{\ast}G_{\lambda_{1}}(\eta_{\lambda_{1},t})V_{1}]|\d\mu_{q}(u)&\approx C_{N,t,\lambda_{1}}.
\end{align*}
We can combine the previous display with our computation \eqref{eq:kcomputation} of $K_{q}$ and \eqref{eq:singlestart} to get 
\begin{align*}
\E_{\lambda_{1}}\exp[Nq\|Tu\|^{2}]\approx C_{N,t,\lambda_{1}}\det[H_{\lambda_{1}}(\eta_{\lambda_{1},t})]^{-1}\det[H_{\lambda_{1},q}(\eta_{\lambda_{1},t})]=C_{N,t,\lambda_{1}}\det[1-tqH_{\lambda_{1}}(\eta_{\lambda_{1},t})T^{\ast}T]^{-1},
\end{align*}
where the last identity follows from an elementary resolvent identity. To finish the proof of Proposition \ref{prop:single}, it suffices to note that the constant $C_{N,t,\lambda_{1}}$ on the far RHS is equal to $1+O(N^{-\kappa})$ for some $\kappa>0$. This can be verified by plugging in $q=0$ in the above identity.

We now show \eqref{eq:singleconcentrate1}-\eqref{eq:singleconcentrate3}. We give details for \eqref{eq:singleconcentrate1}; it amounts to adjustments in the proof of Lemma 6.2 in \cite{MO}. The other two estimates follow by the same adjustments. We first prove 
\begin{align}
\mu_{q}\left(\eta_{\lambda_{1},t}u^{\ast}H_{\lambda_{1}}(\eta_{\lambda_{1},t})u-t\eta_{\lambda_{1},t}^{-1}\langle\tilde{H}_{\lambda_{1}}(\eta_{\lambda_{1},t})H_{\lambda_{1}}(\eta_{\lambda_{1},t})\rangle\geq\frac{\log N}{N^{1/2}t}\right)&\leq e^{-C\log^{2}N}.\label{eq:singleconcentrate1a}
\end{align}
The bound \eqref{eq:singleconcentrate1} would then follow by proving the same but replacing $H_{\lambda_{1}}(\eta_{\lambda_{1},t})$ by its negative:
\begin{align*}
\mu_{q}\left(-\eta_{\lambda_{1},t}u^{\ast}H_{\lambda_{1}}(\eta_{\lambda_{1},t})u+t\eta_{\lambda_{1},t}^{-1}\langle\tilde{H}_{\lambda_{1}}(\eta_{\lambda_{1},t})H_{\lambda_{1}}(\eta_{\lambda_{1},t})\rangle\geq\frac{\log N}{N^{1/2}t}\right)&\leq e^{-C\log^{2}N}.
\end{align*}
As in Lemma 6.2 in \cite{MO}, this holds by the same argument as the proof of \eqref{eq:singleconcentrate1a}, so we focus on \eqref{eq:singleconcentrate1a}. By following the proof of Lemma 6.2 in \cite{MO}, we eventually apply Markov to the LHS of \eqref{eq:singleconcentrate1a}. In particular, let $Y$ be a Hermitian, semi-definite matrix. (Any matrix is a linear combination of such matrices $Y$.) We must control the following for $r>0$:
\begin{align*}
m_{Y}(r)&=e^{-\frac{rt}{N}\tr[\tilde{H}_{\lambda_{1}}(\eta_{\lambda_{1},t})Y]}\int_{\mathbb{S}^{N-1}}e^{ru^{\ast}Yu}\d\mu_{q}(u)\\
&=e^{-\frac{rt}{N}\tr[\tilde{H}_{\lambda_{1}}(\eta_{\lambda_{1},t})Y]}e^{\frac{N}{t}\eta_{\lambda_{1},t}^{2}}K_{q}^{-1}\int_{\mathbb{S}^{N-1}}e^{-\frac{N}{t}u^{\ast}[(A-\lambda_{1})^{\ast}(A-\lambda_{1})+\eta_{\lambda_{1},t}^{2}-tqT^{\ast}T-\frac{rt}{N}Y]u}\d u.
\end{align*}
For the choice of $Y$ which produces \eqref{eq:singleconcentrate1a}, as in the proof of Lemma 6.2 in \cite{MO}, we pick $r=N^{-1/2}\log N$. It turns out by inspection of the proof of Lemma 6.2 in \cite{MO} that we always have $r\leq N^{-1/2}t^{-1/2}\log N$, so the following argument will use only this. Our choices of $Y$ will also always satisfy $\|Y\|_{\mathrm{op}}=O(1)$. As in our computation of $K_{q}$ from earlier, we use Lemma 2.3 in \cite{MO} to get 
\begin{align*}
&\int_{\mathbb{S}^{N-1}}e^{-\frac{N}{t}u^{\ast}[(A-\lambda_{1})^{\ast}(A-\lambda_{1})+\eta_{\lambda_{1},t}^{2}-tqT^{\ast}T-\frac{rt}{N}Y]u}\d u\\
&=C_{N,t}\det[H_{\lambda_{1},q,r}(\eta_{\lambda_{1},t})]\int_{\R}e^{i\frac{N}{t}p}\det[I+ipH_{\lambda_{1},q,Y}(\eta_{\lambda_{1},t})]^{-1}\d p,
\end{align*}
where $H_{\lambda_{1},q,Y}(\eta_{\lambda_{1},t}):=[(A-\lambda_{1})^{\ast}(A-\lambda_{1})+\eta_{\lambda_{1},t}^{2}-tqT^{\ast}T-N^{-1}rtY]^{-1}$. (Technically, for this to apply, we need $H_{\lambda_{1},q,Y}(\eta_{\lambda_{1},t})>0$ in order to perform a Gaussian integration. This holds by Lemma \ref{lemma:Hqestimates} and the trivial bound $N^{-1}t\|Y\|_{\mathrm{op}}=O(N^{-1}t)$.) In fact, this gives $H_{\lambda_{1},q,r}(\eta_{\lambda_{1},t})\gtrsim H_{\lambda_{1},q}(\eta_{\lambda_{1},t})$. In particular, we can again approximate the determinant in the $\d p$ integration by a Gaussian density. It also allows us to restrict to the region $|p|\leq N^{-1/2}t^{3/2}\log N$. Ultimately, 
\begin{align*}
&\int_{\R}e^{i\frac{N}{t}p}\det[I+ipH_{\lambda_{1},q,Y}(\eta_{\lambda_{1},t})]^{-1}\d p\\
&\approx\int_{\R}e^{i\frac{N}{t}p}\exp\left\{-ipN\langle H_{\lambda_{1},q,Y}(\eta_{\lambda_{1},t})\rangle -\frac12p^{2}N\langle H_{\lambda_{1},q,Y}(\eta_{\lambda_{1},t})^{2}\rangle \right\}\d p\\
&\approx\int_{\R}e^{i\frac{N}{t}p}\exp\left\{-ipN\langle H_{\lambda_{1},q}(\eta_{\lambda_{1},t})\rangle -\frac12p^{2}N\langle H_{\lambda_{1},q}(\eta_{\lambda_{1},t})^{2}\rangle \right\}\d p.
\end{align*}
(The last line follows since $\langle H_{\lambda_{1},q,Y}(\eta_{\lambda_{1},t})\rangle=\langle H_{\lambda_{1},q}(\eta_{\lambda_{1},t})\rangle+O(N^{-1}t^{-2})$ by resolvent perturbation, the bound $N^{-1}t\|Y\|_{\mathrm{op}}=O(N^{-1}t)$, and $\langle H_{\lambda_{1},q}(\eta_{\lambda_{1},t})\rangle\lesssim\langle H_{\lambda_{1}}(\eta_{\lambda_{1},t})\rangle\lesssim t^{-1}$; see Lemma \ref{lemma:Hqestimates}. Multiply by $p=O(N^{-1/2}t^{3/2}\log N)$ to get an error inside the exponential of $O(N^{-1/2}t^{-1/2})$. A similar perturbation shows $p^{2}N\langle H_{\lambda_{1},q,Y}(\eta_{\lambda_{1},t})^{2}\rangle=p^{2}N\langle H_{\lambda_{1},q}(\eta_{\lambda_{1},t})^{2}\rangle+O(N^{-\delta})$ for some $\delta>0$, hence the last line follows.) On the other hand, we have 
\begin{align*}
\det[H_{\lambda_{1},q,r}(\eta_{\lambda_{1},t})]&=\det[H_{\lambda_{1},q}(\eta_{\lambda_{1},t})]\det[I-N^{-1}rtH_{\lambda_{1},q}(\eta_{\lambda_{1},t})^{1/2}YH_{\lambda_{1},q}(\eta_{\lambda_{1},t})^{1/2}]^{-1}\\
&=\det[H_{\lambda_{1},q}(\eta_{\lambda_{1},t})]\exp\left\{-\tr\log\left[1-N^{-1}rtH_{\lambda_{1},q}(\eta_{\lambda_{1},t})^{1/2}YH_{\lambda_{1},q}(\eta_{\lambda_{1},t})^{1/2}\right]\right\}.
\end{align*}
Ultimately, we have 
\begin{align*}
m_{Y}(r)&\approx C_{N,t,\lambda_{1}}K_{q}^{-1}\det[H_{\lambda_{1},q}(\eta_{\lambda_{1},t})]\int_{\R}e^{i\frac{N}{t}p}\exp\left\{-ipN\langle H_{\lambda_{1},q}(\eta_{\lambda_{1},t})-\frac12p^{2}N\langle H_{\lambda_{1},q}(\eta_{\lambda_{1},t})^{2}\rangle\right\}\d p\\
&\times \exp\left\{-\tr[H_{\lambda_{1},q}(\eta_{\lambda_{1},t})N^{-1}rtY]-\tr\log\left(1-N^{-1}rt[H_{\lambda_{1},q}(\eta_{\lambda_{1},t})^{\frac12}Y[H_{\lambda_{1},q}(\eta_{\lambda_{1},t})^{-\frac12}]\right)\right\}.
\end{align*}
By our asymptotics for $K_{q}$ from earlier, we deduce 
\begin{align*}
m_{Y}(r)&\approx C_{N,t,\lambda_{1}}\exp\left\{-\tr[H_{\lambda_{1},q}(\eta_{\lambda_{1},t})N^{-1}rtY]-\tr\log\left(1-N^{-1}rt[H_{\lambda_{1},q}(\eta_{\lambda_{1},t})^{\frac12}Y[H_{\lambda_{1},q}(\eta_{\lambda_{1},t})^{-\frac12}]\right)\right\}.
\end{align*}
The exponential on the RHS is equal to $1$ when we set $r=0$; this means $C_{N,t,\lambda_{1}}\approx1$. At this point, we can now follow the proof of Lemma 6.2 and use elementary log-inequalities to get
\begin{align*}
m_{Y}(r)&\lesssim \exp\left\{\frac{\tr[H_{\lambda_{1},q}(\eta_{\lambda_{1},t})^{\frac12}Y[H_{\lambda_{1},q}(\eta_{\lambda_{1},t})^{-\frac12}]^{2}}{1-\|N^{-1}rtH_{\lambda_{1},q}(\eta_{\lambda_{1},t})^{\frac12}Y[H_{\lambda_{1},q}(\eta_{\lambda_{1},t})^{-\frac12}\|_{\mathrm{op}}}\right\}.
\end{align*}
We must now control the terms inside the exponential. This is done for $q=0$ in the proof of Lemma 6.2. To inherit the estimates for all $q<q_{T}$ (with $q_{T}>0$ small enough), it suffices to show that 
\begin{align*}
\|\tilde{H}_{\lambda_{1},q}(\eta_{\lambda_{1},t})^{\frac12}Y\tilde{H}_{\lambda_{1},q}(\eta_{\lambda_{1},t})^{\frac12}\|_{\mathrm{op}}&\lesssim \|\tilde{H}_{\lambda_{1}}(\eta_{\lambda_{1},t})^{\frac12}Y\tilde{H}_{\lambda_{1}}(\eta_{\lambda_{1},t})^{\frac12}\|_{\mathrm{op}}\\
\tr \tilde{H}_{\lambda_{1},q}(\eta_{\lambda_{1},t})^{\frac12}Y\tilde{H}_{\lambda_{1},q}(\eta_{\lambda_{1},t})Y\tilde{H}_{\lambda_{1},q}(\eta_{\lambda_{1},t})^{\frac12}&\lesssim \tr \tilde{H}_{\lambda_{1}}(\eta_{\lambda_{1},t})^{\frac12}Y\tilde{H}_{\lambda_{1}}(\eta_{\lambda_{1},t})Y\tilde{H}_{\lambda_{1}}(\eta_{\lambda_{1},t})^{\frac12}
\end{align*}
for $\tilde{H}_{\lambda,q}(\eta):=[(A-\lambda)(A-\lambda)^{\ast}+\eta^{2}-tqT^{\ast}T]$. Both follow by semi-definiteness of $Y$ and $\tilde{H}_{\lambda_{1},q}(\eta_{\lambda_{1},t})\leq C\tilde{H}_{\lambda_{1}}(\eta_{\lambda_{1},t})$; this can be shown by using the exact same proof of Lemma \ref{lemma:Hqestimates} (but with $A-\lambda$ replaced by its adjoint). The bound \eqref{eq:singleconcentrate1a} follows, and the proof is finished. \qed

\section{Proof of Proposition \ref{prop:maingauss}}

For any $j\in\llbracket1,m\rrbracket$ consider the composition of first $j$ Householder transforms
\begin{align*}
U_j &= \prod_{k=1}^{j}\left(I_{k-1}\oplus R_k(u^{(k-1)})\right),&\, j&\in\llbracket 1,m_R\rrbracket;\\
U_j &= \prod_{k=1}^{m_R}\left(I_{k-1}\oplus R_k(u^{(k-1)})\right)\prod_{k=m_R+1}^{j}\left(I_{k-1}\oplus R_k(v^{(k-1)})\right),&\, j&\in\llbracket m_R+1,m\rrbracket.
\end{align*}
We start by expressing the finite-rank projections of left and right eigenvectors $u_j$, $v_j$ through the integration variables $u^{(j-1)}, v^{(j-1)}$. This is the content of the following lemma.
\begin{lemma}\label{lemma:simplify}
Set $\epsilon_1 = \min\{\epsilon_0, \upsilon\}$. Then for $j\in\llbracket1,m_R\rrbracket$ we have
\begin{align*}
|T_j|u_j = |T_j|U_{j-1}\begin{pmatrix}0\\u^{(j-1)}\end{pmatrix} + \oh_{\mu}\left(N^{-\frac12-\epsilon_1}\right).
\end{align*}
For $j\in\llbracket m_R+1,m\rrbracket$ we have
\begin{align*}
|T_j|v_j = |T_j|U_{j-1}\begin{pmatrix}0\\v^{(j-1)}\end{pmatrix} + \oh_{\mu}\left(N^{-\frac12-\epsilon_1}\right).
\end{align*}
\end{lemma}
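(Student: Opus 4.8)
The plan is to trace through the change-of-variables construction from Section 3 and identify what the eigenvector $u_j$ is in terms of the integration variables, then argue that the ``small'' components (those living in the orthogonal complement of the final block where $\lambda_j$ appears) are negligible under the measure $\mu$. Recall that after applying the Householder transformations $\Phi_1^R,\ldots,\Phi_{m_R}^R$ (and then the left ones), the matrix $A$ is conjugated by $U_{j-1}$ into a form where the top-left $(j-1)\times(j-1)$ block is already upper triangular with $\lambda_1,\ldots,\lambda_{j-1}$ on the diagonal, and the $j$-th pivot is $\lambda_j$ sitting in the top-left corner of the remaining $(N-j+1)\times(N-j+1)$ block $A^{(j-1)}$. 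By construction of $R_j$, the right eigenvector of $A^{(j-1)}$ for eigenvalue $\lambda_j$ is $\mathbf{e}_1$ up to the triangular structure; pulling back through $R_j(u^{(j-1)})$, the right eigenvector of the conjugated matrix with eigenvalue $\lambda_j$ has the form $\binom{\ast}{u^{(j-1)}}$ where $\ast\in\C^{j-1}$ is determined by solving an upper-triangular linear system involving $\lambda_j-\lambda_k$ for $k<j$ and the off-diagonal entries. So $u_j = U_{j-1}\binom{\ast}{u^{(j-1)}}$ exactly (up to normalization), and the claim reduces to showing $\||T_j|U_{j-1}\binom{\ast}{0}\| = \oh_\mu(N^{-1/2-\epsilon_1})$, i.e. the ``upper'' part $\ast$ is small.

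The main work, then, is estimating the components $\ast$. These are built from the entries $w_k^\ast$ (for the right-eigenvector steps) and the sub-block off-diagonal pieces $b_k$, divided by the eigenvalue gaps $\lambda_j-\lambda_k\gtrsim N^{-1/2+\upsilon}$. Under the measure $\tilde\rho_{\bm\lambda}$, the $w_k$ variables are Gaussian centered at $b_k$ with variance $\sim t/N$, so $w_k = b_k + \oh(N^{-1/2}t^{1/2}\cdot\mathrm{polylog})$ with overwhelming probability; and the relevant inner products of $b_k$ with the eigenvector directions should themselves be small because $u^{(k-1)}$ concentrates around the true singular vector of $A^{(k-1)}-\lambda_k$, for which the overlap with $b_k$ is governed by off-diagonal resolvent entries controlled by the isotropic local law and assumption \textbf{A3}. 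Quantitatively: each component of $\ast$ is $\oh_\mu$ of (size of a $w$-fluctuation or $b$-overlap, $\sim N^{-1/2}t^{1/2-o(1)}$ or better) times (inverse gap, $\sim N^{1/2-\upsilon}$), giving $\oh_\mu(N^{-\upsilon} t^{1/2-o(1)})$, which with $t = N^{-1/3+\epsilon_0}$ and $\epsilon_1 = \min\{\epsilon_0,\upsilon\}$ is comfortably $\oh_\mu(N^{-1/2-\epsilon_1})$ after accounting for the extra $N^{-1/2}$ one expects from delocalization of the relevant vectors. Applying $|T_j|U_{j-1}$, which has bounded operator norm, preserves the bound. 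The left-eigenvector case $j\in\llbracket m_R+1,m\rrbracket$ is identical after replacing the relevant blocks by their adjoints, exactly as in Lemma \ref{lemma:jacobian} and the discussion after \eqref{eq:changeofvariables}.

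I expect the main obstacle to be the bookkeeping of \emph{which} randomness controls the $\ast$ components and proving the concentration cleanly under $\mu$ rather than under the original matrix law: one must combine (i) the Gaussian concentration of the $w_k$'s around $b_k$, (ii) concentration of the spherical variables $u^{(k-1)}$ (in the spirit of the estimates \eqref{eq:singleconcentrate1}--\eqref{eq:singleconcentrate3} and Lemma 6.2 of \cite{MO}, now applied at each step $k<j$), and (iii) the isotropic local law / assumption \textbf{A3} to bound the deterministic-looking overlaps, all while the gaps $\lambda_j-\lambda_k$ are only as large as $N^{-1/2+\upsilon}$ so there is little room to spare. Tracking the errors through the composition $U_{j-1}$ of $j-1$ Householder reflections — each of which is close to, but not exactly, the identity on the relevant subspace — and making sure the accumulated error stays below $N^{-1/2-\epsilon_1}$ is the delicate part; the choice $\epsilon_1 = \min\{\epsilon_0,\upsilon\}$ is precisely what makes this work.
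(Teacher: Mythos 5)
Your overall skeleton is the same as the paper's (iterate the eigenvector recursion through the Householder maps, bound the extra components by overlaps divided by eigenvalue gaps, use Gaussianity of $w_k$, spherical concentration, and \textbf{A3} to gain the final $N^{-1/2}$), but the quantitative core is missing, and it is exactly the step you defer as ``the delicate part.'' Iterating $\tilde u_{j-1}=R_j(u^{(j-1)})\binom{w_j^{\ast}\tilde u_j/(\lambda-\lambda_j)}{\tilde u_j}$ does not give a correction whose components are single ratios (overlap)/(gap): for each $s<j$ one gets a sum over chains $s=i_1<\cdots<i_{k+1}=j$ with coefficients $a(i_1,\ldots,i_{k+1})=\prod_{n=1}^{k}(\lambda_{i_n}-\lambda_j)^{-1}\,w_{i_n}^{\ast}(\cdots)$, as in \eqref{eq:uj-initial}. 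Moreover your size for a single overlap, $\sim N^{-1/2}t^{1/2-o(1)}$, is too small: the dominant contribution comes from the mean $b_{i_n}$ of $w_{i_n}$, and the correct bound (Lemma \ref{lem:evec-first-entry-error}, itself requiring an iterative Woodbury/rank-one removal through all intermediate levels with the lower bounds \eqref{eq:uHu-lower-bound}) is $\bigl((t+|\lambda_{i_n}-\lambda_{i_{n+1}}|^2)/(Nt)\bigr)^{1/2}$, which is of order $(Nt)^{-1/2}\gg N^{-1/2}t^{1/2}$ when the consecutive chain gap is of order one. Hence an individual factor $\frac{(t+|\lambda_{i_n}-\lambda_{i_{n+1}}|^2)^{1/2}}{(Nt)^{1/2}|\lambda_{i_n}-\lambda_j|}$ can be as large as $t^{-1/2}N^{-\upsilon}=N^{1/6-\epsilon_0/2-\upsilon}\gg1$ (take $|\lambda_{i_n}-\lambda_j|\sim N^{-1/2+\upsilon}$ but $|\lambda_{i_n}-\lambda_{i_{n+1}}|\sim1$), so the chain products cannot be controlled factor by factor as your arithmetic implicitly does.

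What is needed, and what the paper supplies, is a balancing argument: split the chain indices into $I_1=\{i:|\lambda_i-\lambda_j|>t^{1/2}\}$ and $I_2=\{i:N^{-1/2+\upsilon}\le|\lambda_i-\lambda_j|\le t^{1/2}\}$, and use that a consecutive gap $|\lambda_{i_n}-\lambda_{i_{n+1}}|>2t^{1/2}$ forces at least one endpoint into $I_1$, so the number of ``large-numerator'' factors is at most $2|I_1|$; the two cases $|I_1|\ge|I_2|$ and $|I_1|<|I_2|$ then give $|a|^2\prec(Nt^3)^{-|I_1|}=N^{-3\epsilon_0|I_1|}$ and $N^{-\upsilon}$ respectively. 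This is precisely where the hypothesis $t\ge N^{-1/3+\epsilon_0}$ enters and why $\epsilon_1=\min\{\epsilon_0,\upsilon\}$; your argument never uses $t\ge N^{-1/3+\epsilon_0}$ at all, which is a concrete sign that this step is absent rather than merely compressed. Once the coefficients are shown to be $\oh(N^{-\epsilon_1})$, your final step (bounded rank of $T_j$, Corollary \ref{cor:vector-bound-first-entry} and \textbf{A3} giving $|\mathbf{w}_{j,k}^{\ast}U_{s-1}\binom{0}{u^{(s-1)}}|\prec N^{-1/2}$) matches the paper and is fine.
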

The error terms here are bounded in the sense of stochastic domination, defined below, with respect to the measure $\mu$, where
\[
\d \mu = \prod_{j=1}^m \d \nu_j \prod_{j=1}^m \d \omega_j.
\]
\begin{defn}[Stochastic domination]
Suppose 
\[
X = \left\{X_N(s) : N\in \zz_+, s\in S_N\right\} \text{ and }Y = \left\{Y_N(s) : N\in \zz_+, s\in S_N\right\}
\]
are sequences of random variables, possibly parametrized by $s$. We say that $X$ is stochastically dominated by $Y$ uniformly in $s$ with respect to measure $\mu$ and write $X\prec_{\mu} Y$ or $X = \oh_{\mu}(Y)$ if for any $\varepsilon, D>0$ we have
\[
\sup_{S_N} \pp_{\mu}\left(X_N(s) > N^{\varepsilon} Y_N(s)\right) < N^{-D}
\]
for large enough $N$.
\end{defn}
We prove Lemma \ref{lemma:simplify} in section \ref{sec:concentration}, where we collect all technical high probability estimates with respect to the measure $\mu$.
In view of Lemma \ref{lemma:simplify}, consider the event $\mathcal{E}:=\cap_{j=1}^{m}\mathcal{E}_{j}$, where for some $\kappa>0$ small,
\begin{align*}
\mathcal{E}_{j}&:=\left\{\|T_{j}u_{j}\|^{2}=\left\||T_{j}|U_{j-1}\begin{pmatrix}0\\u^{(j-1)}\end{pmatrix}\right\|^{2}+O(N^{-1-\kappa})\right\}\quad\mathrm{if} \ j\leq m_{R},\\
\mathcal{E}_{j}&:=\left\{\|T_{j}v_{j}\|^{2}=\left\||T_{j}|U_{j-1}\begin{pmatrix}0\\v^{(j-1)}\end{pmatrix}\right\|^{2}+O(N^{-1-\kappa})\right\}\quad\mathrm{if} \ j\geq m_{R}+1.
\end{align*}
By Lemma \ref{lemma:simplify} and a union bound, we know that $\E_{\bm{\lambda}}\mathbf{1}_{\mathcal{E}^{C}}=O(N^{-D})$ for large $D>0$. Since $q_{j}\leq0$ for all $j$ by assumption, this gives 
\begin{align}
&\E_{\bm{\lambda}}\left\{\prod_{j=1}^{m_{R}}\exp[Nq_{j}\|T_{j}u_{j}\|^{2}]\prod_{j=m_{R}+1}^{m_{R}+m_{L}}\exp[Nq_{j}\|T_{j}v_{j}\|^{2}]\right\}\label{eq:mainstart1}\\
&\approx\E_{\bm{\lambda}}\left\{\prod_{j=1}^{m_{R}}\exp\left[Nq_{j}\left\||T_{j}|U_{j-1}\begin{pmatrix}0\\u^{(j-1)}\end{pmatrix}\right\|^{2}\right]\prod_{j=m_{R}+1}^{m_{R}+m_{L}}\exp\left[Nq_{j}\left\||T_{j}|U_{j-1}\begin{pmatrix}0\\v^{(j-1)}\end{pmatrix}\right\|^{2}\right]\right\}+O(N^{-D}),\nonumber
\end{align}
where, as before, $\approx$ means true up to a factor of $1+O(N^{-\delta})$ for some $\delta>0$. Now use \eqref{eq:changeofvariables1}:
\begin{align*}
&\E_{\bm{\lambda}}\left\{\prod_{j=1}^{m_{R}}\exp\left[Nq_{j}\left\||T_{j}|U_{j-1}\begin{pmatrix}0\\u^{(j-1)}\end{pmatrix}\right\|^{2}\right]\prod_{j=m_{R}+1}^{m_{R}+m_{L}}\exp\left[Nq_{j}\left\||T_{j}|U_{j-1}\begin{pmatrix}0\\v^{(j-1)}\end{pmatrix}\right\|^{2}\right]\right\}\\
&\approx C_{N,t,\bm{\lambda}}\prod_{j=1}^{m}\det[H_{\lambda_{j}}(\eta_{\lambda_{j},t})]^{-1}\int_{\mathbb{S}^{N-1}}\ldots\int_{\mathbb{S}^{N-m}}\prod_{j=1}^{m}|\det[V_{j}^{\ast}G_{\lambda_{j}}^{(j-1)}(\eta_{\lambda_{j},t})V_{j}]|^{m}\\
&\times\prod_{j=1}^{m_{R}}e^{-\frac{N}{t}u^{(j-1),\ast}\left[(A^{(j-1)}-\lambda_{j})^{\ast}(A^{(j-1)}-\lambda_{j})+\eta_{\lambda_{j},t}^{2}-tq_{j}T_{j}^{(j-1),\ast}T_{j}^{(j-1)}\right]u^{(j-1)}}\d u^{(j-1)}\\
&\times\prod_{j=1}^{m_{R}}e^{-\frac{N}{t}v^{(j-1),\ast}\left[(A^{(j-1)}-\lambda_{j})(A^{(j-1)}-\lambda_{j})^{\ast}+\eta_{\lambda_{j},t}^{2}-tq_{j}T_{j}^{(j-1),\ast}T_{j}^{(j-1)}\right]v^{(j-1)}}\d v^{(j-1)}.
\end{align*}
Above, $T_{j}^{(j-1)}$ is the restriction of $T_{j}U_{j-1}$ to the orthogonal complement of $\mathbf{e}_{1},\ldots,\mathbf{e}_{j-1}$. For $j\leq m_{R}$, we set
\begin{align*}
K_{j,q_{j},T_{j}}&:=\int_{\mathbb{S}^{N-j}}e^{-\frac{N}{t}u^{(j-1),\ast}\left[(A^{(j-1)}-\lambda_{j})^{\ast}(A^{(j-1)}-\lambda_{j})+\eta_{\lambda_{j},t}^{2}-tq_{j}T_{j}^{(j-1),\ast}T_{j}^{(j-1)}\right]u^{(j-1)}}\d u^{(j-1)},\\
\d\nu_{j,q_{j},T_{j}}(u^{(j-1)})&:=K_{j,q_{j},T_{j}}^{-1}e^{-\frac{N}{t}u^{(j-1),\ast}\left[(A^{(j-1)}-\lambda_{j})^{\ast}(A^{(j-1)}-\lambda_{j})+\eta_{\lambda_{j},t}^{2}-tq_{j}T_{j}^{(j-1),\ast}T_{j}^{(j-1)}\right]u^{(j-1)}}\d u^{(j-1)},
\end{align*}
and for $j\geq m_{R}+1$, we define 
\begin{align*}
K_{j,q_{j},T_{j}}&:=\int_{\mathbb{S}^{N-j}}e^{-\frac{N}{t}v^{(j-1),\ast}\left[(A^{(j-1)}-\lambda_{j})(A^{(j-1)}-\lambda_{j})^{\ast}+\eta_{\lambda_{j},t}^{2}-tq_{j}T_{j}^{(j-1),\ast}T_{j}^{(j-1)}\right]v^{(j-1)}}\d v^{(j-1)},\\
\d\nu_{j,q_{j},T_{j}}(v^{(j-1)})&:=K_{j,q_{j},T_{j}}^{-1}e^{-\frac{N}{t}v^{(j-1),\ast}\left[(A^{(j-1)}-\lambda_{j})(A^{(j-1)}-\lambda_{j})^{\ast}+\eta_{\lambda_{j},t}^{2}-tq_{j}T_{j}^{(j-1),\ast}T_{j}^{(j-1)}\right]v^{(j-1)}}\d v^{(j-1)}.
\end{align*}
With this notation, we can write
\begin{align}
&\E_{\bm{\lambda}}\left\{\prod_{j=1}^{m_{R}}\exp\left[Nq_{j}\left\||T_{j}|U_{j-1}\begin{pmatrix}0\\u^{(j-1)}\end{pmatrix}\right\|^{2}\right]\prod_{j=m_{R}+1}^{m_{R}+m_{L}}\exp\left[Nq_{j}\left\||T_{j}|U_{j-1}\begin{pmatrix}0\\v^{(j-1)}\end{pmatrix}\right\|^{2}\right]\right\}\label{eq:mainstart2}\\
&\approx C_{N,t,\bm{\lambda}}\prod_{j=1}^{m}\det[H_{\lambda_{j}}(\eta_{\lambda_{j},t})]^{-1}K_{j,q_{j},T_{j}}\nonumber\\
&\times\int_{\mathbb{S}^{N-1}}\ldots\int_{\mathbb{S}^{N-m}}\prod_{j=1}^{m}|\det[V_{j}^{\ast}G_{\lambda_{j}}^{(j-1)}(\eta_{\lambda_{j},t})V_{j}]|^{j}\prod_{j=1}^{m_{R}}\d\nu_{j,q_{j},T_{j}}(u^{(j-1)})\prod_{j=m_{R}+1}^{m}\d\nu_{j,q_{j},T_{j}}(v^{(j-1)}).\nonumber
\end{align}
Similar to the proof of Proposition \ref{prop:single}, we now focus on the following two lemmas. The first computes $\det[H_{\lambda_{j}}(\eta_{\lambda_{j},t})]^{-1}K_{j,q_{j},T_{j}}$ for all $j$. The second computes the remaining spherical integrals.
\begin{lemma}\label{lemma:kjcomputation}
There exists an event $\mathcal{E}$ such that $\E_{\bm{\lambda}}\mathbf{1}_{\mathcal{E}}=O(N^{-D})$ for any $D>0$ fixed and such that on $\mathcal{E}$, we have the following for all $j=1,\ldots,m$:
\begin{align}
&\det[H_{\lambda_{j}}(\eta_{\lambda_{j},t})]^{-1}K_{j,q_{j},T_{j}}\nonumber\\
&\approx \det\left[I-tq_{j}\begin{pmatrix}0&0\\0&H_{\lambda_{j}}^{(j-1)}(\eta_{\lambda_{j},t})\end{pmatrix}U_{j-1}^{\ast}T_{j}^{\ast}T_{j}U_{j-1}\right]^{-1}\prod_{\ell=1}^{j-1}|\det[V_{\ell}^{\ast}G_{\lambda_{\ell}}^{(\ell-1)}(\eta_{\lambda_{\ell},t})V_{\ell}]|^{-1}\label{eq:kjcomputation1}\\
&\approx\det[I-tq_{j}H_{\lambda_{j}}(\eta_{\lambda_{j},t})T_{j}^{\ast}T_{j}]^{-1}\prod_{\ell=1}^{j-1}|\det[V_{\ell}^{\ast}G_{\lambda_{\ell}}^{(\ell-1)}(\eta_{\lambda_{\ell},t})V_{\ell}]|^{-1}.\label{eq:kjcomputation2}
\end{align}
\end{lemma}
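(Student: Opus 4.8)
The statement to prove is Lemma \ref{lemma:kjcomputation}: the formula for $\det[H_{\lambda_j}(\eta_{\lambda_j,t})]^{-1}K_{j,q_j,T_j}$. The strategy mirrors the single-eigenvector analysis in the proof of Proposition \ref{prop:single}: for fixed $j$, I treat $A^{(j-1)}$ as a fixed matrix (it is measurable with respect to $\bm{\lambda}$ and the earlier integration variables $u^{(0)},\ldots,u^{(j-2)}$ or $v^{(m_R)},\ldots,v^{(j-2)}$), and I apply the Gaussian-integral representation of the spherical integral (Lemma 2.3 / Lemma 6.1 in \cite{MO}) to the quadratic form defining $K_{j,q_j,T_j}$. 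Concretely, the exponent is $-\tfrac{N}{t}\,(u^{(j-1)})^\ast[\,(A^{(j-1)}-\lambda_j)^\ast(A^{(j-1)}-\lambda_j)+\eta_{\lambda_j,t}^2 - t q_j\,S_j^\ast S_j\,]\,u^{(j-1)}$, where $S_j := T_j^{(j-1)}$ is the finite-rank matrix obtained by restricting $T_j U_{j-1}$ to the orthogonal complement of $\mathbf e_1,\ldots,\mathbf e_{j-1}$. This is exactly of the form treated in Lemma \ref{lemma:Hqestimates}, with $A$ replaced by $A^{(j-1)}$ and $T$ replaced by $S_j$. So first I invoke Lemma \ref{lemma:Hqestimates} (applied with superscript $j-1$) to write $H^{(j-1)}_{\lambda_j,q_j}(\eta_{\lambda_j,t}) = H^{(j-1)}_{\lambda_j}(\eta_{\lambda_j,t})^{1/2}(I+q_j Y)H^{(j-1)}_{\lambda_j}(\eta_{\lambda_j,t})^{1/2}$ for small $q_j$, which gives the two-sided bound $C^{-1}H^{(j-1)}_{\lambda_j}\leq H^{(j-1)}_{\lambda_j,q_j}\leq C H^{(j-1)}_{\lambda_j}$ needed to run Lemma 6.1 of \cite{MO}.

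Given this, Lemma 6.1 of \cite{MO} yields
$K_{j,q_j,T_j} = C_{N,t,\bm\lambda}\,\det[H^{(j-1)}_{\lambda_j,q_j}(\eta_{\lambda_j,t})]\int_{\R}e^{iNp/t}\det[I+ipH^{(j-1)}_{\lambda_j,q_j}(\eta_{\lambda_j,t})]^{-1}\d p$,
and the $\d p$ integral is $\approx 1$ by the same Gaussian-Fourier-transform argument as in the proof of Proposition \ref{prop:single} (using $\langle (H^{(j-1)}_{\lambda_j,q_j})^2\rangle \approx \langle (H^{(j-1)}_{\lambda_j})^2\rangle \gtrsim t^{-3}$ from Lemma \ref{lemma:Hqestimates} and $t\langle H^{(j-1)}_{\lambda_j}(\eta_{\lambda_j,t})\rangle = 1 + O(N^{-\kappa})$, which needs the isotropic-type control on $A^{(j-1)}$; this is where the event $\mathcal E$ enters — it is the event on which assumptions \textbf{A1}--\textbf{A3} hold for all the matrices $A^{(j-1)}$, and $\E_{\bm\lambda}\mathbf 1_{\mathcal E^C}=O(N^{-D})$ by interlacing together with the local laws of Section 7 of \cite{MO} and Proposition \ref{prop:locallaw}). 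Hence $K_{j,q_j,T_j} \approx C_{N,t,\bm\lambda}\,\det[H^{(j-1)}_{\lambda_j,q_j}(\eta_{\lambda_j,t})]$. Then, by the elementary resolvent/determinant identity
$\det[H^{(j-1)}_{\lambda_j,q_j}] = \det[H^{(j-1)}_{\lambda_j}]\,\det[I - tq_j H^{(j-1)}_{\lambda_j}(\eta_{\lambda_j,t})\,S_j^\ast S_j]^{-1}$,
combined with the identification $\det[H_{\lambda_j}(\eta_{\lambda_j,t})] = C_{N,t,\bm\lambda}\det[H^{(j-1)}_{\lambda_j}(\eta_{\lambda_j,t})]\prod_{\ell=1}^{j-1}|\det[V_\ell^\ast G^{(\ell-1)}_{\lambda_\ell}(\eta_{\lambda_\ell,t})V_\ell]|^{-1}$ coming from the repeated application of the $K_j$-formula in Section 4 of \cite{MO} (i.e. from \eqref{eq:changeofvariables1} and the stated Lemma 4.1 of \cite{MO}), I obtain
$\det[H_{\lambda_j}(\eta_{\lambda_j,t})]^{-1}K_{j,q_j,T_j} \approx \det[I - tq_j H^{(j-1)}_{\lambda_j}(\eta_{\lambda_j,t}) S_j^\ast S_j]^{-1}\prod_{\ell=1}^{j-1}|\det[V_\ell^\ast G^{(\ell-1)}_{\lambda_\ell}(\eta_{\lambda_\ell,t})V_\ell]|^{-1}$.
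Unwinding $S_j^\ast S_j = \big(\text{restriction of }T_j U_{j-1}\big)^\ast\big(\text{restriction}\big)$ as the compression of $U_{j-1}^\ast T_j^\ast T_j U_{j-1}$ to the last $N-j+1$ coordinates gives precisely \eqref{eq:kjcomputation1}.

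For the passage from \eqref{eq:kjcomputation1} to \eqref{eq:kjcomputation2}, the plan is to show that the block-truncated resolvent $\begin{pmatrix}0&0\\0&H^{(j-1)}_{\lambda_j}(\eta_{\lambda_j,t})\end{pmatrix}$ sandwiched by $U_{j-1}^\ast T_j^\ast T_j U_{j-1}$ can be replaced — inside the $\det[I - tq_j(\cdot)]^{-1}$ — by $U_{j-1}^\ast$ times the \emph{untruncated} resolvent $\widehat H_{\lambda_j}$ of the full matrix (conjugated appropriately back to $H_{\lambda_j}(\eta_{\lambda_j,t})T_j^\ast T_j$), up to an error $O(N^{-\kappa})$ in operator norm on the relevant finite-rank subspace. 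This is the step I expect to be the main obstacle: it requires (i) that removing the rank-$(j-1)$ projection corresponding to $\mathbf e_1,\ldots,\mathbf e_{j-1}$ — equivalently, to the already-extracted eigenvectors — costs only $O(N^{-\kappa})$ when tested against the fixed finite-rank vectors $\mathbf w_{j,k}$ defining $T_j^\ast T_j$, and (ii) that $H^{(j-1)}_{\lambda_j}$ and (the compression of) $H_{\lambda_j}$ agree when tested against those vectors, again up to $O(N^{-\kappa})$. Both of these are exactly the "removing the projection'' issue flagged in the introduction: isotropic local laws for deterministic vectors do not suffice because $U_{j-1}$ is random, so one must use the perturbation/concentration machinery of Section \ref{sec:concentration} (adapted from Section 6 of \cite{MO}), controlling the overlap of the extracted eigenvectors with $\mathbf w_{j,k}$ and with $H_{\lambda_j}^{1/2}\mathbf w_{j,k}$, and using the eigenvalue-separation hypothesis $|\lambda_j-\lambda_k|\ge N^{-1/2+\upsilon}$ to control the resolvent near the extracted eigenvalues. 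Once these two approximations are in hand, since $\det[I-tq_j(\cdot)]^{-1}$ is a smooth function of the finitely many entries of a fixed-size matrix and $t q_j(\cdot) = O(1)$ with $1-tq_j(\cdot)$ bounded away from $0$ for $q_j$ small, the $O(N^{-\kappa})$ perturbation inside propagates to an overall $1+O(N^{-\kappa})$ factor, giving \eqref{eq:kjcomputation2}. I would package the precise statement of these overlap estimates as part of the definition of the good event $\mathcal E$ in the lemma, so that the proof here is a deterministic computation on $\mathcal E$ together with the union bound $\E_{\bm\lambda}\mathbf 1_{\mathcal E^C}=O(N^{-D})$ proved in Section \ref{sec:concentration}.
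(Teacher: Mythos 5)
Your proposal is correct and follows essentially the paper's own route: Lemma \ref{lemma:Hqestimates} at level $j-1$ plus Lemma 6.1 of \cite{MO} and the Gaussian approximation of the $p$-integral give $K_{j,q_{j},T_{j}}\approx C\det[H^{(j-1)}_{\lambda_{j},q_{j}}(\eta_{\lambda_{j},t})]$ and hence \eqref{eq:kjcomputation1}, and the passage to \eqref{eq:kjcomputation2} is done exactly by the Section \ref{sec:concentration} overlap estimates (Lemma \ref{lem:wHw}) applied to the finitely many vectors in the spectral decomposition of $T_{j}^{\ast}T_{j}$, followed by a finite-dimensional determinant perturbation as in Corollary \ref{corollary:maingauss}. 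The only minor deviation is bookkeeping: the paper points out that \eqref{eq:kjcomputation1} needs no good event at all, since the trace quantities for $A^{(j-1)}$ follow deterministically by interlacing from \textbf{A1} and $t\langle H_{\lambda_{j}}(\eta_{\lambda_{j},t})\rangle=1$, whereas you fold ``\textbf{A1}--\textbf{A3} for $A^{(j-1)}$'' into $\mathcal{E}$ --- harmless, but note that isotropic laws cannot literally be asserted for $A^{(j-1)}$ in the random directions produced by $U_{j-1}$, which is precisely why the second step must (as you correctly say) go through the concentration machinery rather than Proposition \ref{prop:locallaw}.
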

\begin{lemma}\label{lemma:concentrationallj}
For all $j=1,\ldots,m$, there exists a constant $C_{N,t,j}$ such that 
\begin{align*}
\int_{\mathbb{S}^{N-j}}|\det[V_{j}^{\ast}G_{\lambda_{j}}^{(j-1)}(\eta_{\lambda_{j},t})V_{j}]|^{j}\d\nu_{j,q_{j},T_{j}}(u^{(j-1)})\approx C_{N,t,j}.
\end{align*}
\end{lemma}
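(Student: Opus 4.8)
### Proof Plan for Lemma \ref{lemma:concentrationallj}

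The plan is to mimic the argument used for the single-eigenvector case (Proposition \ref{prop:single}), proceeding inductively on $j$. For fixed $j$, the integrand $|\det[V_{j}^{\ast}G_{\lambda_{j}}^{(j-1)}(\eta_{\lambda_{j},t})V_{j}]|^{j}$ is, by the formula recorded in the proof of Proposition \ref{prop:single} (applied to $A^{(j-1)}$ in place of $A$), a fixed polynomial in the three scalar quantities
\begin{align*}
a_{j}&=\eta_{\lambda_{j},t}\,u^{(j-1),\ast}H_{\lambda_{j}}^{(j-1)}(\eta_{\lambda_{j},t})u^{(j-1)},\quad \tilde{a}_{j}=\eta_{\lambda_{j},t}\,u^{(j-1),\ast}\tilde{H}_{\lambda_{j}}^{(j-1)}(\eta_{\lambda_{j},t})u^{(j-1)},\\
b_{j}&=\eta_{\lambda_{j},t}\,u^{(j-1),\ast}H_{\lambda_{j}}^{(j-1)}(\eta_{\lambda_{j},t})(A^{(j-1)}-\lambda_{j})u^{(j-1)}
\end{align*}
(with the analogous substitutions for $j>m_{R}$, swapping $H\leftrightarrow\tilde{H}$ and $u\leftrightarrow v$, $A\leftrightarrow A^{\ast}$). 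So it suffices to prove concentration of each of $a_{j},\tilde{a}_{j},b_{j}$ under $\nu_{j,q_{j},T_{j}}$ around its deterministic value, exactly as in \eqref{eq:singleconcentrate1}--\eqref{eq:singleconcentrate3}, and then plug the concentrated values into the polynomial. The deterministic value of the polynomial is the constant $C_{N,t,j}$ (which, as in Proposition \ref{prop:single}, will ultimately be $1+O(N^{-\kappa})$, verified by setting $q_{j}=0$). Here I should emphasize that the deterministic quantities appearing are computed with respect to $A^{(j-1)}$, and interlacing plus the local laws {\bf A1}, {\bf A2} for $A$ transfer all needed traces to the $A^{(j-1)}$ versions up to $O(N^{-1})$ corrections.

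The concentration step itself is the heart of the matter and is obtained by adapting the proof of Lemma 6.2 of \cite{MO} together with the modifications already carried out for \eqref{eq:singleconcentrate1a} in the proof of Proposition \ref{prop:single}. The key points are: (i) by Lemma \ref{lemma:Hqestimates} we have the two-sided bound $C^{-1}H_{\lambda_{j}}^{(j-1)}(\eta_{\lambda_{j},t})\le H_{\lambda_{j},q_{j}}^{(j-1)}(\eta_{\lambda_{j},t})\le C H_{\lambda_{j}}^{(j-1)}(\eta_{\lambda_{j},t})$, so the tilted measure $\nu_{j,q_{j},T_{j}}$ is comparable to the untilted $\nu_{j}$ at the level needed to run the Laplace-method argument of \cite{MO}; (ii) the Markov/exponential-moment computation of \cite{MO} goes through after replacing $H_{\lambda_{j}}^{(j-1)}$ by $H_{\lambda_{j},q_{j}}^{(j-1)}$ everywhere, using the operator-norm and trace comparisons
\begin{align*}
\|\tilde{H}_{\lambda_{j},q_{j}}^{(j-1)1/2}Y\tilde{H}_{\lambda_{j},q_{j}}^{(j-1)1/2}\|_{\mathrm{op}}&\lesssim\|\tilde{H}_{\lambda_{j}}^{(j-1)1/2}Y\tilde{H}_{\lambda_{j}}^{(j-1)1/2}\|_{\mathrm{op}},\\
\tr\big[\tilde{H}_{\lambda_{j},q_{j}}^{(j-1)1/2}Y\tilde{H}_{\lambda_{j},q_{j}}^{(j-1)}Y\tilde{H}_{\lambda_{j},q_{j}}^{(j-1)1/2}\big]&\lesssim\tr\big[\tilde{H}_{\lambda_{j}}^{(j-1)1/2}Y\tilde{H}_{\lambda_{j}}^{(j-1)}Y\tilde{H}_{\lambda_{j}}^{(j-1)1/2}\big]
\end{align*}
which follow from semi-definiteness of $Y$ and Lemma \ref{lemma:Hqestimates} just as in the single-eigenvector case; and (iii) the additional tilt by $N^{-1}rtY$ with $r\le N^{-1/2}t^{-1/2}\log N$ and $\|Y\|_{\mathrm{op}}=O(1)$ only perturbs $H_{\lambda_{j},q_{j}}^{(j-1)}$ by a negligible amount, so the Gaussian-density approximation of the $\d p$ integral and the restriction to $|p|\le N^{-1/2}t^{3/2}\log N$ remain valid. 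Since the bounds in Lemma \ref{lemma:Hqestimates} hold for all $j$ (via interlacing, as stated there), this produces concentration estimates for $a_{j},\tilde{a}_{j},b_{j}$ of the same strength as \eqref{eq:singleconcentrate1}--\eqref{eq:singleconcentrate3} on an event of overwhelming probability in $\bm{\lambda}$.

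The main obstacle, and the only place this lemma genuinely goes beyond Proposition \ref{prop:single}, is the presence of the Householder rotations $U_{j-1}$ hidden inside $T_{j}^{(j-1)}$ (the restriction of $T_{j}U_{j-1}$ to the orthogonal complement of $\mathbf{e}_{1},\ldots,\mathbf{e}_{j-1}$), and inside the resolvent of $A^{(j-1)}$. These objects depend on the earlier integration variables $u^{(0)},\ldots,u^{(j-2)},v^{(m_R)},\ldots$, which are themselves random under $\mu$. To handle this I would condition on the first $j-1$ spheres and note that the concentration estimates above are uniform over all finite-rank $T$ with $O(1)$ operator norm and all deterministic shifts, hence uniform over the (random but finite-rank, $O(1)$-norm) matrices $T_{j}^{(j-1)}$ and over the matrices $A^{(j-1)}$ arising on the overwhelming-probability event where the local laws {\bf A1}--{\bf A3} hold for $A^{(j-1)}$. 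The uniformity in $s$ built into the definition of stochastic domination, together with the hypotheses {\bf A1}--{\bf A3} being stable under the Householder conjugation and the finite-rank deletion (local laws for $A^{(j-1)}$ follow from those for $A$ plus interlacing, as in \cite{MO}), is exactly what makes this step work; no new analytic input is required beyond what is already assembled. Finally, integrating the concentrated polynomial against $\nu_{j,q_{j},T_{j}}$ (a probability measure) yields the constant $C_{N,t,j}$ up to $1+O(N^{-\delta})$, which is the claim.
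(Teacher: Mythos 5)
Your plan is viable, but it is a genuinely different (and heavier) route than the paper's. The paper's proof of Lemma \ref{lemma:concentrationallj} never re-runs the tilted-measure concentration at all: it exploits the standing assumption $q_{j}\leq0$ of Proposition \ref{prop:maingauss}, which makes the unnormalized tilted density pointwise smaller than the untilted one, so $\d\nu_{j,q_{j},T_{j}}\leq K_{j,q_{j},T_{j}}^{-1}K_{j,0,T_{j}}\,\d\nu_{j,0,T_{j}}$; the ratio of normalizations is $O(t^{-C})=N^{O(1)}$ by the already-established Lemma \ref{lemma:kjcomputation}. It then quotes Lemmas 6.2--6.3 of \cite{MO} wholesale for the \emph{untilted} measure: a good set $\mathbb{G}_{j}\subset\mathbb{S}^{N-j}$ on which $|\det[V_{j}^{\ast}G_{\lambda_{j}}^{(j-1)}V_{j}]|^{j}\approx C_{N,t,j}\gtrsim N^{-\upsilon}$, with $\nu_{j,0}(\mathbb{G}_{j}^{C})=O(N^{-D})$, and the deterministic bound $|\det[\cdot]|=O(t^{-C})$ kills the bad-set contribution. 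Your route instead redoes the exponential-moment/Laplace argument under $\nu_{j,q_{j},T_{j}}$ for each $j$, mirroring the proof of \eqref{eq:singleconcentrate1}--\eqref{eq:singleconcentrate3}; this buys validity for small $q_{j}$ of either sign (the paper's trick is genuinely one-sided), at the cost of extra infrastructure. Two caveats if you carry it out: transferring the centering traces such as $\ntr{H^{(j-1)}_{\lambda_{j}}\tilde{H}^{(j-1)}_{\lambda_{j}}}$ to the full-matrix quantities is not plain eigenvalue interlacing but the rank-one/Woodbury argument of Lemma \ref{lem:two-res-removing-j}, and the trace bound $t\tr T_{j}^{(j-1)}H^{(j-1)}T_{j}^{(j-1),\ast}=O(1)$ needed for Lemma \ref{lemma:Hqestimates} with the rotated, restricted $T_{j}^{(j-1)}$ requires the isotropic transfer of Lemma \ref{lem:wHw} (valid only on a high-probability event in the earlier variables), not just {\bf A3}; and when you finally integrate, you must still invoke the deterministic $O(t^{-C})$ bound on the determinant together with the sub-polynomial failure probability to discard the bad set, exactly as the paper does. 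With those points made explicit, your argument closes, and for the constant's identification the same $q_{j}=0$ normalization trick applies.
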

Assuming Lemmas \ref{lemma:kjcomputation} and \ref{lemma:concentrationallj}, since $q_{j}\leq0$, we can deduce from \eqref{eq:mainstart2} that 
\begin{align*}
&\E_{\bm{\lambda}}\left\{\prod_{j=1}^{m_{R}}\exp\left[Nq_{j}\left\||T_{j}|U_{j-1}\begin{pmatrix}0\\u^{(j-1)}\end{pmatrix}\right\|^{2}\right]\prod_{j=m_{R}+1}^{m_{R}+m_{L}}\exp\left[Nq_{j}\left\||T_{j}|U_{j-1}\begin{pmatrix}0\\v^{(j-1)}\end{pmatrix}\right\|^{2}\right]\right\}\\
&\approx C_{N,t,\bm{\lambda}}\prod_{j=1}^{m}\det[I-tq_{j}H_{\lambda_{j}}(\eta_{\lambda_{j},t})T_{j}^{\ast}T_{j}]^{-1}+O(N^{-D}).
\end{align*}
We set $q=0$ to get $C_{N,t,\bm{\lambda}}\approx1$. Next, we use the finite rank property of $T_{2}$ to get the trivial bound $\det[I-tq_{j}H_{\lambda_{j}}(\eta_{\lambda_{j},t})T_{j}^{\ast}T_{j}]\lesssim \|H_{\lambda_{j}}(\eta_{\lambda_{j},t)})\|_{\mathrm{op}}^{C}\lesssim t^{-C}$ for some $C=O(1)$. This is much bigger than $N^{-D}$ if $D>0$ is large enough, so the second term in the second line of \eqref{eq:mainstart1} is much smaller than the first term therein. We deduce
\begin{align*}
&\E_{\bm{\lambda}}\left\{\prod_{j=1}^{m_{R}}\exp[Nq_{j}\|T_{j}u_{j}\|^{2}]\prod_{j=m_{R}+1}^{m_{R}+m_{L}}\exp[Nq_{j}\|T_{j}v_{j}\|^{2}]\right\}\label{eq:mainstart1}\\
&\approx\E_{\bm{\lambda}}\left\{\prod_{j=1}^{m_{R}}\exp\left[Nq_{j}\left\||T_{j}|U_{j-1}\begin{pmatrix}0\\u^{(j-1)}\end{pmatrix}\right\|^{2}\right]\prod_{j=m_{R}+1}^{m_{R}+m_{L}}\exp\left[Nq_{j}\left\||T_{j}|U_{j-1}\begin{pmatrix}0\\v^{(j-1)}\end{pmatrix}\right\|^{2}\right]\right\},
\end{align*}
at which point it suffices to combine the previous two displays to conclude the proof of Proposition \ref{prop:maingauss}. Thus, the finish the proof of Proposition \ref{prop:maingauss}, we must prove Lemmas \ref{lemma:kjcomputation} and \ref{lemma:concentrationallj}. We prove the latter first, since it is short. 
\begin{proof}[Proof of Lemma \ref{lemma:concentrationallj}]
In Lemmas 6.2 and 6.3 of \cite{MO}, it is shown that there is a decomposition $\mathbb{S}^{N-j}=\mathbb{G}_{j}\cup\mathbb{G}_{j}^{C}$ such that the following hold. 
\begin{enumerate}
\item On $\mathbb{G}_{j}$, we have $|\det[V_{j}^{\ast}G_{\lambda_{j}}^{(j-1)}(\eta_{\lambda_{j},t})V_{j}]|^{j}\approx C_{N,t,j}$ for some $C_{N,t,j}\gtrsim N^{-\upsilon}$ with $\upsilon=O(1)$.
\item We have $\nu_{j,0,T_{j}}(\mathbb{G}_{j}^{C})=O(N^{-D})$ for any large, fixed $D\geq0$. This and $|\det[V_{j}^{\ast}G_{\lambda_{j}}^{(j-1)}(\eta_{\lambda_{j},t})V_{j}]|=O(t^{-C})$ for some $C=O(1)$ (along with $t\geq N^{-1})$ give the following for any $D\geq0$ fixed:
\begin{align*}
\int_{\mathbb{G}_{j}^{C}}|\det[V_{j}^{\ast}G_{\lambda_{j}}^{(j-1)}(\eta_{\lambda_{j},t})V_{j}]|^{j}\d\nu_{j,q_{j},T_{j}}(u^{(j-1)})\lesssim N^{-D}
\end{align*}
\end{enumerate}
Thus, it suffices to show that for any $q_{j}\leq0$ independent of $N$, we have the inequality of measures $\d\nu_{j,q_{j},T_{j}}\leq N^{C}\d\nu_{j,0,T_{j}}$ for some $C=O(1)$ (note that $\nu_{j,0,T_{j}}$ has no dependence on $T_{j}$). To this end, note that by construction and the assumption $q_{j}\leq0$, we have $\d\nu_{j,q_{j},T_{j}}\leq K_{j,q_{j},T_{j}}^{-1}K_{j,0,T_{j}}\d\nu_{j,0,T_{j}}$. By Lemma \ref{lemma:kjcomputation}, we have 
\begin{align*}
K_{j,q_{j},T_{j}}^{-1}K_{j,0,T_{j}}&\approx\det[I-tq_{j}H_{\lambda_{j}}(\eta_{\lambda_{j},t})T_{j}^{\ast}T_{j}].
\end{align*}
Because $T_{j}^{\ast}T_{j}$ is finite rank by assumption, the determinant on the RHS is bounded above by a finite power of the operator norm of $tq_{j}H_{\lambda_{j}}(\eta_{\lambda_{j},t})$, which is at most $O(t^{-1})$. Since $t\geq N^{-1}$, we obtain $K_{j,q_{j},T_{j}}^{-1}K_{j,0,T_{j}}=O(t^{-C})$ as desired.
\end{proof}
The rest of this section is dedicated towards the proof of Lemma \ref{lemma:kjcomputation}. We assume $j=1,\ldots,m_{R}$; for $j=m_{R}+1,\ldots,m$, just replace $A^{(j-1)}-\lambda_{j}$ by its adjoint. 
\subsection{Proof of \eqref{eq:kjcomputation1}}
In this step, there is no need to restrict to an event $\mathcal{E}$. Throughout the proof of \eqref{eq:kjcomputation1}, we will adopt the notation
\begin{align*}
H_{\lambda_{j},q_{j}}^{(j-1)}(\eta_{\lambda_{j},t})&:=[(A^{(j-1)}-\lambda_{j})^{\ast}(A^{(j-1)}-\lambda_{j})+\eta_{\lambda_{j},t}^{2}-tq_{j}T_{j}^{(j-1),\ast}T_{j}^{(j-1)}];
\end{align*}
here $T_{j}^{(j-1)}$ is the restriction of $T_{j}U_{j-1}$ (see Lemma \ref{lemma:simplify}) to the orthogonal complement of $\mathbf{e}_{1},\ldots,\mathbf{e}_{j-1}$. In particular, we choose $T=T_{j}^{(j-1)}$ in the context of Lemma \ref{lemma:Hqestimates}.

The argument is similar to the proof of \eqref{eq:kcomputation}. Lemma \ref{lemma:Hqestimates} gives 
\begin{align*}
C^{-1}H_{\lambda_{j}}^{(j-1)}(\eta_{\lambda_{j},t})\leq H_{\lambda_{j},q_{j}}^{(j-1)}(\eta_{\lambda_{j},t})\leq CH_{\lambda_{j}}^{(j-1)}(\eta_{\lambda_{j},t}),
\end{align*}
where $C>0$ is a fixed constant. This gives $H_{\lambda_{j},q_{j}}^{(j-1)}(\eta_{\lambda_{j},t})>0$, and we can follow the proof of Lemma 6.1 in \cite{MO} verbatim to get
\begin{align*}
K_{j,q_{j},T_{j}}&=C_{N,t,j}\det[H_{\lambda_{j},q_{j}}^{(j-1)}(\eta_{\lambda_{j},t})]\int_{\R}e^{i\frac{N}{t}p}\det[I+ipH_{\lambda_{j},q_{j}}^{(j-1)}(\eta_{\lambda_{j},t})]^{-1}\d p.
\end{align*}
The above two-sided resolvent bound also lets us approximate the last determinant in the previous display by a Gaussian factor, as in the discussion after Lemma 6.1 of \cite{MO}. In particular, with more explanation after, we have 
\begin{align*}
&\int_{\R}e^{i\frac{N}{t}p}\det[I+ipH_{\lambda_{j},q_{j}}^{(j-1)}(\eta_{\lambda_{j},t})]^{-1}\d p\\
&=\int_{\R}e^{i\frac{N}{t}p}\exp\left\{-\tr\log\left[I+ipH_{\lambda_{j},q_{j}}^{(j-1)}(\eta_{\lambda_{j},t})\right]\right\}\d p\\
&\approx\int_{|p|\leq N^{\delta}N^{-1/2}t^{3/2}}e^{i\frac{N}{t}p}\exp\left\{-iNp\langle H_{\lambda_{j},q_{j}}^{(j-1)}(\eta_{\lambda_{j},t})\rangle-\frac12Np^{2}\langle H_{\lambda_{j},q_{j}}^{(j-1)}(\eta_{\lambda_{j},t})^{2}\rangle\right\}\d p.
\end{align*}
Indeed, to show that the last line holds, we use the inequality $\mathrm{Re}\log[1+ix]\geq Cx^{2}$ for a fixed constant $C>0$ along with $\tr H_{\lambda_{j},q_{j}}^{(j-1)}(\eta_{\lambda_{j},t})^{2}=N\langle H_{\lambda_{j},q_{j}}^{(j-1)}(\eta_{\lambda_{j},t})^{2}\rangle\gtrsim Nt^{-3}$. This lets us restrict $\d p$ integration from $\R$ to $|p|\leq N^{\delta}N^{-1/2}t^{3/2}$. After this, we Taylor expand and control the third-order error term by interlacing $\langle H_{\lambda_{j},q_{j}}^{(j-1)}(\eta_{\lambda_{j},t})^{k}\rangle=\langle H_{\lambda_{j},q_{j}}(\eta_{\lambda_{j},t})^{k}\rangle+O(N^{-1}t^{-2k})$ and \eqref{assn:1.4} to get $\langle H_{\lambda_{j},q_{j}}(\eta_{\lambda_{j},t})^{k}\rangle\lesssim t^{-2k+2}\langle H_{\lambda_{j},q_{j}}(\eta_{\lambda_{j},t})\rangle\lesssim t^{-2k+1}$:
\begin{align*}
&\tr\log\left[I+ipH_{\lambda_{j},q_{j}}^{(j-1)}(\eta_{\lambda_{j},t})\right]\\
&=iNp\langle H_{\lambda_{j},q_{j}}^{(j-1)}(\eta_{\lambda_{j},t})\rangle+\frac12Np^{2}\langle H_{\lambda_{j},q_{j}}^{(j-1)}(\eta_{\lambda_{j},t})^{2}\rangle+O(Np^{3}t^{-5})+O(p^{3}t^{-6})\\
&=iNp\langle H_{\lambda_{j},q_{j}}^{(j-1)}(\eta_{\lambda_{j},t})\rangle+\frac12Np^{2}\langle H_{\lambda_{j},q_{j}}^{(j-1)}(\eta_{\lambda_{j},t})^{2}\rangle+O(N^{3\delta}N^{-1/2}t^{-1/2})+O(N^{3\delta}N^{-3/2}t^{-3/2}).
\end{align*}
Now use $t\gg N^{-1/2}$. Finally, by the same token, we can again remove the constraint on $p$ in the $\R$-integration after controlling the Taylor expansion above. In particular, we have 
\begin{align*}
&\int_{\R}e^{i\frac{N}{t}p}\det[I+ipH_{\lambda_{j},q_{j}}^{(j-1)}(\eta_{\lambda_{j},t})]^{-1}\d p\\
&\approx\int_{\R}e^{i\frac{N}{t}p}\exp\left\{-iNp\langle H_{\lambda_{j},q_{j}}^{(j-1)}(\eta_{\lambda_{j},t})\rangle-\frac12Np^{2}\langle H_{\lambda_{j},q_{j}}^{(j-1)}(\eta_{\lambda_{j},t})^{2}\rangle\right\}\d p\\
&=\int_{\R}e^{i\frac{N}{t}p[1-t\langle H_{\lambda_{j},q_{j}}^{(j-1)}(\eta_{\lambda_{j},t})\rangle]}\exp\left\{-\frac12Np^{2}\langle H_{\lambda_{j},q_{j}}^{(j-1)}(\eta_{\lambda_{j},t})^{2}\rangle\right\}\d p.
\end{align*}
By interlacing and Lemma \ref{lemma:Hqestimates}, we get $t\langle H_{\lambda_{j},q_{j}}^{(j-1)}(\eta_{\lambda_{j},t})\rangle=t\langle H_{\lambda_{j},q_{j}}(\eta_{\lambda_{j},t})\rangle+O(N^{-1}t^{-1})=t\langle H_{\lambda_{j}}(\eta_{\lambda_{j},t})\rangle+O(N^{-1}t^{-1})$. By definition of $\eta_{\lambda_{j},t}$, we also have $1-t\langle H_{\lambda_{j}}(\eta_{\lambda_{j},t})\rangle=0$. Hence, for some quantity $\kappa_{N,t,j}=O(N^{-1}t^{-1}))$, we have 
\begin{align*}
&\int_{\R}e^{i\frac{N}{t}p[1-t\langle H_{\lambda_{j},q_{j}}^{(j-1)}(\eta_{\lambda_{j},t})\rangle]}\exp\left\{-\frac12Np^{2}\langle H_{\lambda_{j},q_{j}}^{(j-1)}(\eta_{\lambda_{j},t})^{2}\rangle\right\}\d p\\
&=\int_{\R}e^{i\frac{N}{t}p\kappa_{N,t,j}}\exp\left\{-\frac12Np^{2}\langle H_{\lambda_{j},q_{j}}^{(j-1)}(\eta_{\lambda_{j},t})^{2}\rangle\right\}\d p\\
&=\frac{\sqrt{2\pi}}{N^{1/2}\langle H_{\lambda_{j},q_{j}}^{(j-1)}(\eta_{\lambda_{j},t})^{2}\rangle^{1/2}}\exp\left\{-\frac{\kappa_{N,t,j}^{2}}{2N\langle H_{\lambda_{j},q_{j}}^{(j-1)}(\eta_{\lambda_{j},t})^{2}\rangle}\right\},
\end{align*}
where the last line follows by Gaussian integration. Now, we use $\langle H_{\lambda_{j},q_{j}}^{(j-1)}(\eta_{\lambda_{j},t})^{2}\rangle\lesssim t^{-4}$, so that
\begin{align*}
\exp\left\{-\frac{\kappa_{N,t,j}^{2}}{2N\langle H_{\lambda_{j},q_{j}}^{(j-1)}(\eta_{\lambda_{j},t})^{2}\rangle}\right\}&= \exp\left\{O(N^{-3}t^{-6})\right\}\approx1,
\end{align*}
where the last bound follows by $t\gg N^{-1/2}$. On the other hand, by interlacing once again, we have 
\begin{align*}
N^{\frac12}\langle H_{\lambda_{j},q_{j}}^{(j-1)}(\eta_{\lambda_{j},t})^{2}\rangle^{\frac12}=N^{\frac12}\left\{\langle H_{\lambda_{j},q_{j}}(\eta_{\lambda_{j},t})^{2}\rangle+O(N^{-1}t^{-4})\right\}^{\frac12}.
\end{align*}
By \eqref{assn:1.4}, we have $\langle H_{\lambda_{j},q_{j}}(\eta_{\lambda_{j},t})^{2}\rangle\gtrsim t^{-3}$. Hence, if we now use $t\gg N^{-1}$, the RHS of the previous display is $\approx N^{1/2}\langle H_{\lambda_{j},q_{j}}(\eta_{\lambda_{j},t})^{2}\rangle^{1/2}$. At this point, we can follow the display before \eqref{eq:kcomputation} to get
\begin{align*}
\frac{\sqrt{2\pi}}{N^{1/2}\langle H_{\lambda_{j},q_{j}}^{(j-1)}(\eta_{\lambda_{j},t})^{2}\rangle^{1/2}}\exp\left\{-\frac{\kappa_{N,t,j}^{2}}{2N\langle H_{\lambda_{j},q_{j}}^{(j-1)}(\eta_{\lambda_{j},t})^{2}\rangle}\right\}\approx C_{N,t}.
\end{align*}
By combining our computations thus far, we deduce
\begin{align*}
K_{j,q_{j},T_{j}}&=C_{N,t,j}\det[H_{\lambda_{j},q_{j}}^{(j-1)}(\eta_{\lambda_{j},t})]\int_{\R}e^{i\frac{N}{t}p}\det[I+ipH_{\lambda_{j},q_{j}}^{(j-1)}(\eta_{\lambda_{j},t})]^{-1}\d p\\
&\approx C_{N,t,j}\det[H_{\lambda_{j},q_{j}}^{(j-1)}(\eta_{\lambda_{j},t})].
\end{align*}
By the previous display and elementary resolvent identities (see the display after (6.3) in \cite{MO} for the second line below), we have
\begin{align*}
&\det[H_{\lambda_{j}}(\eta_{\lambda_{j},t})]^{-1}K_{j,q_{j},T_{j}}\approx C_{N,t,j}\det[H_{\lambda_{j}}(\eta_{\lambda_{j},t})]^{-1}\det[H_{\lambda_{j},q_{j}}^{(j-1)}(\eta_{\lambda_{j},t})]\\
&\approx C_{N,t,j}\prod_{\ell=1}^{j-1}|\det[V_{\ell}^{\ast}G_{\lambda_{\ell}}^{(\ell-1)}(\eta_{\lambda_{\ell},t})V_{\ell}]|^{-1}\det[H_{\lambda_{j}}^{(j-1)}(\eta_{\lambda_{j},t})]^{-1}\det[H_{\lambda_{j},q_{j}}^{(j-1)}(\eta_{\lambda_{j},t})],
\end{align*}
as well as 
\begin{align*}
\det[H_{\lambda_{j}}^{(j-1)}(\eta_{\lambda_{j},t})]^{-1}\det[H_{\lambda_{j},q_{j}}^{(j-1)}(\eta_{\lambda_{j},t})]&=\det[I-tq_{j}H_{\lambda_{j}}^{(j-1)}(\eta_{\lambda_{j},t})T_{j}^{(j-1),\ast}T_{j}^{(j-1)}]^{-1}\\
&=\det\left[I-tq_{j}\begin{pmatrix}0&0\\0&H_{\lambda_{j}}^{(j-1)}(\eta_{\lambda_{j},t})\end{pmatrix}U_{j-1}^{\ast}T_{j}^{\ast}T_{j}U_{j-1}\right],
\end{align*}
where the last line follows from recalling that $T_{j}^{(j-1)}$ is the restriction of $T_{j}U_{j-1}$ to the orthogonal complement of $\mathbf{e}_{1},\ldots,\mathbf{e}_{j-1}$. This completes the proof of \eqref{eq:kjcomputation1}. \qed
\subsection{Proof of \eqref{eq:kjcomputation2}}
By the spectral theorem and finite rank property of $T_{j}$, we can write $T_{j}^{\ast}T_{j}=\sum_{k=1}^{\ell_{j}}q_{j,k}\mathbf{w}_{j,k}\mathbf{w}_{j,k}^{\ast}$ for $q_{j,k}>0$. If we define
\begin{align*}
\mathbf{x}_{j,k}^{(j-1)}&:=\begin{pmatrix}0&0\\0&H_{\lambda_{j}}^{(j-1)}(\eta_{\lambda_{j},t})^{\frac12}\end{pmatrix}U_{j-1}^{\ast}\mathbf{w}_{j,k},
\end{align*}
then we have 
\begin{align*}
&\det\left[I-tq_{j}\begin{pmatrix}0&0\\0&H_{\lambda_{j}}^{(j-1)}(\eta_{\lambda_{j},t})\end{pmatrix}U_{j-1}^{\ast}T_{j}^{\ast}T_{j}U_{j-1}\right]=\det\left[I-t\sum_{k=1}^{\ell_{j}}q_{j}q_{j,k}\mathbf{x}_{j,k}^{(j-1)}\mathbf{x}_{j,k}^{(j-1),\ast}\right].
\end{align*}
Lemma \ref{lem:wHw} and polarization give $\mathbf{x}_{j,k}^{(j-1),\ast}\mathbf{x}_{j,m}^{(j-1)}=\mathbf{x}_{j,k}^{\ast}\mathbf{x}_{j,m}+\mathcal{O}(N^{-1}t^{-3})$, where 
\begin{align*}
\mathbf{x}_{j,k}&:=H_{\lambda_{j}}(\eta_{\lambda_{j},t})^{\frac12}\mathbf{w}_{j,k},
\end{align*}
We now argue as in the proof of Corollary \ref{corollary:maingauss}. By \eqref{assn:1.1} and \eqref{assn:3.1}, we have $\mathbf{x}_{j,k}^{\ast}\mathbf{x}_{j,m}=O(N^{-1/2}t^{-3/2})$ if $k\neq m$ and $C^{-1}t^{-1}\leq |\mathbf{x}_{j,k}|^{2}\leq Ct^{-1}$ for some $C>0$. In particular, following the proof of Corollary \ref{corollary:maingauss}, we have the following on an event $\mathcal{E}$ satisfying $\E_{\bm{\lambda}}\mathbf{1}_{\mathcal{E}}=O(N^{-D})$ for any $D>0$ fixed:
\begin{align*}
\det\left[I-t\sum_{k=1}^{\ell_{j}}q_{j}q_{j,k}\mathbf{x}_{j,k}^{(j-1)}\mathbf{x}_{j,k}^{(j-1),\ast}\right]^{-1}&=\prod_{k=1}^{\ell_{j}}\left[1-tq_{j}q_{j,k}|\mathbf{x}_{j,k}^{(j-1)}|^{2}+O(N^{-1}t^{-3})\right]^{-1}\\
&\approx\prod_{k=1}^{\ell_{j}}\left[1-tq_{j}q_{j,k}|\mathbf{x}_{j,k}^{(j-1)}|^{2}\right]^{-1}\\
&=\prod_{k=1}^{\ell_{j}}\left[1-tq_{j}q_{j,k}\mathbf{w}_{j,k}^{\ast}U_{j-1}\begin{pmatrix}0&0\\0&H_{\lambda_{j}}^{(j-1)}(\eta_{\lambda_{j},t})\end{pmatrix}U_{j-1}^{\ast}\mathbf{w}_{j,k}\right]^{-1}\\
&=\prod_{k=1}^{\ell_{j}}\left[1-tq_{j}q_{j,k}\mathbf{w}_{j,k}^{\ast}H_{\lambda_{j}}(\eta_{\lambda_{j},t})\mathbf{w}_{j,k}+\mathcal{O}(N^{-1}t^{-3})\right]^{-1}.
\end{align*}
The second line follows by $tq_{j}q_{j,k}|\mathbf{x}_{j,k}^{(j-1)}|^{2}\gtrsim1$ that we justified in the previous paragraph and the assumption $t\geq N^{-1/3+\epsilon}$ for some $\epsilon>0$. The last bound follows again by Lemma \ref{lem:wHw}. Again, we can remove the $\mathcal{O}$-term by using \eqref{assn:1.1} and \eqref{assn:3.1}; this gives $\mathbf{w}_{j,k}^{\ast}H_{\lambda_{j}}(\eta_{\lambda_{j},t})\mathbf{w}_{j,k}=\langle H_{\lambda_{j}}(\eta_{\lambda_{j},t})\rangle[1+O(N^{-1/2}t^{-3/2})]\gtrsim t^{-1}$. The same reasoning that gave the first line in the above display yields 
\begin{align*}
\prod_{k=1}^{\ell_{j}}\left[1-tq_{j}q_{j,k}\mathbf{w}_{j,k}^{\ast}H_{\lambda_{j}}(\eta_{\lambda_{j},t})\mathbf{w}_{j,k}+\mathcal{O}(N^{-1}t^{-2})\right]^{-1}&\approx\det[I-tq_{j}H_{\lambda_{j}}(\eta_{\lambda_{j},t})T_{j}^{\ast}T_{j}]^{-1},
\end{align*}
which completes the proof. \qed

\section{Concentration}\label{sec:concentration}

In this section we prove the concentration estimates with respect to the measures $\nu_j$ and $\omega_j$. First, we state a master concentration inequality below. This was essentially proved in Lemma 6.2 of \cite{MO}. We state it in a more general form.

\begin{lemma}[Master concentration inequality]\label{lem:master-concentration}
For any $j\in\llbracket 1,m_R\rrbracket$ and any Hermitian matrix $B\in M_{N-j+1}(\C)$ such that 
\[
\left\|\sqrt{H^{(j-1)}_{\lambda_j}(\eta_{\lambda_j,t})}B\sqrt{H^{(j-1)}_{\lambda_j}(\eta_{\lambda_j,t})}\right\|\le K \text{ and } \tr\left(H^{(j-1)}_{\lambda_j}(\eta_{\lambda_j,t})B\right)^2 \le S,
\]
we have
\begin{equation}
\left|u^{(j-1),\ast} B u^{(j-1)} - \frac{t}{N}\tr H^{(j-1)}_{\lambda_j}(\eta_{\lambda_j,t})B\right| \prec_{\nu_j} \frac{t}{N}\max\left\{\sqrt{S}, K\right\}.  
\end{equation}
In particular, in case of rank 1 matrix $B$, we have
\begin{equation}
u^{(j-1),\ast} B u^{(j-1)} \prec_{\nu_j} \frac{t}{N}\tr H^{(j-1)}_{\lambda_j}(\eta_{\lambda_j,t})B.  
\end{equation}
For any $j\in\llbracket m_R+1, m\rrbracket$ and any Hermitian matrix $B\in M_{N-j+1}(\C)$ such that 
\[
\left\|\sqrt{\tilde{H}^{(j-1)}_{\lambda_j}(\eta_{\lambda_j,t})}B\sqrt{\tilde{H}^{(j-1)}_{\lambda_j}(\eta_{\lambda_j,t})}\right\|\le K \text{ and } \tr\left(\tilde{H}^{(j-1)}_{\lambda_j}(\eta_{\lambda_j,t})B\right)^2 \le S,
\]
we have
\begin{equation}
\left|v^{(j-1),\ast} B v^{(j-1)} - \frac{t}{N}\tr \tilde{H}^{(j-1)}_{\lambda_j}(\eta_{\lambda_j,t})B\right| \prec_{\nu_j} \frac{t}{N}\max\left\{\sqrt{S}, K\right\}.  
\end{equation}
In particular, in case of rank 1 matrix $B$, we have
\begin{equation}
v^{(j-1),\ast} B v^{(j-1)} \prec_{\nu_j} \frac{t}{N}\tr \tilde{H}^{(j-1)}_{\lambda_j}(\eta_{\lambda_j,t})B.  
\end{equation}
\end{lemma}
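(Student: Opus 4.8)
The plan is to run the exponential-moment argument behind Lemma 6.2 of \cite{MO}, but keeping track only of the two scales $K$ and $S$ rather than of any finer structure of $B$. I will condition on the good event for $A$, so that {\bf A1}--{\bf A3} and, via eigenvalue interlacing, their $A^{(j-1)}$-analogues hold; everything below is then deterministic except for $u^{(j-1)}\sim\nu_j$. It suffices to treat $j\in\llbracket1,m_R\rrbracket$, since the range $j\in\llbracket m_R+1,m\rrbracket$ follows verbatim upon replacing $A^{(j-1)}-\lambda_j$ by its adjoint (so that $\tilde H$ takes the place of $H$); write $H:=H^{(j-1)}_{\lambda_j}(\eta_{\lambda_j,t})$. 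The two rank-one assertions are immediate special cases of the general bounds: for $B=\mathbf v\mathbf v^\ast$ one has $\tr HB=\mathbf v^\ast H\mathbf v$, $\|\sqrt H B\sqrt H\|=\mathbf v^\ast H\mathbf v$ and $\tr(HB)^2=(\mathbf v^\ast H\mathbf v)^2$, so $\max\{\sqrt S,K\}=\tr HB$ and the general estimate reads $u^{(j-1),\ast}Bu^{(j-1)}=\tfrac tN\tr HB+\oh_{\nu_j}(\tfrac tN\tr HB)\prec_{\nu_j}\tfrac tN\tr HB$. For the general bounds I would apply Markov's inequality to $e^{\pm r(u^{(j-1),\ast}Bu^{(j-1)}-\frac tN\tr HB)}$; since $-B$ carries the same $K$ and $S$, this reduces matters to the following exponential-moment estimate: there is an absolute $C>0$ so that for every $r>0$ with $\tfrac{rt}{N}K\le\tfrac12$,
\begin{align*}
m_B(r):=e^{-\frac{rt}{N}\tr(HB)}\,\E_{\nu_j}e^{r\,u^{(j-1),\ast}Bu^{(j-1)}}\lesssim\exp\Big\{C\big(\tfrac{rt}{N}\big)^2S\Big\}.
\end{align*}

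To prove this I would set $H_{B,r}:=[H^{-1}-\tfrac{rt}{N}B]^{-1}$. Because $\|\sqrt H B\sqrt H\|\le K$, when $\tfrac{rt}{N}K\le\tfrac12$ we have $\tfrac12 I\le I-\tfrac{rt}{N}\sqrt H B\sqrt H\le\tfrac32 I$, hence $H_{B,r}$ is positive definite with $\tfrac23 H\le H_{B,r}\le2H$. Using $\|u^{(j-1)}\|=1$ as in the proof of Proposition \ref{prop:single} one gets $\E_{\nu_j}e^{r\,u^{(j-1),\ast}Bu^{(j-1)}}=\big(\int_{\mathbb S^{N-j}}e^{-\frac Nt\,u^\ast H_{B,r}^{-1}u}\,\d u\big)\big(\int_{\mathbb S^{N-j}}e^{-\frac Nt\,u^\ast H^{-1}u}\,\d u\big)^{-1}$. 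I would then apply Lemma 2.3 of \cite{MO} to numerator and denominator and carry out the same Gaussian approximation of $\det[I+ipH_\bullet]^{-1}$ used in the proof of Proposition \ref{prop:single} and of \eqref{eq:kjcomputation1}: localize $p$ to $|p|\lesssim N^\delta N^{-1/2}t^{3/2}$ via $\mathrm{Re}\log(1+ix)\ge cx^2$ together with $\tr H_{B,r}^2\gtrsim Nt^{-3}$ (which holds since $H_{B,r}\ge\tfrac23 H$ and \eqref{assn:1.4} with interlacing), and control the Taylor remainder by interlacing and the a priori bounds for $\langle H^k\rangle$. This yields
\begin{align*}
\E_{\nu_j}e^{r\,u^{(j-1),\ast}Bu^{(j-1)}}=\frac{\det H_{B,r}}{\det H}\cdot\frac{\langle H^2\rangle^{1/2}}{\langle H_{B,r}^2\rangle^{1/2}}\,\exp\left\{-\frac{(\tr H-\tr H_{B,r})^2}{2\,\tr H_{B,r}^2}\right\}(1+O(N^{-\delta})),
\end{align*}
where $\tr H=N/t$ by $t\langle H\rangle=1$. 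Writing $X:=\tfrac{rt}{N}\sqrt H B\sqrt H$, so that $\|X\|\le\tfrac12$, $\tr X=\tfrac{rt}{N}\tr HB$ and $\tr X^2=(\tfrac{rt}{N})^2\tr(HB)^2\le(\tfrac{rt}{N})^2S$, the determinant ratio is $\det[I-X]^{-1}=\exp\{\tr X+O(\tr X^2)\}$, the Gaussian exponent is nonpositive, and the ratio $\langle H^2\rangle^{1/2}/\langle H_{B,r}^2\rangle^{1/2}$ lies in $[\tfrac12,\tfrac32]$ by $\tfrac23 H\le H_{B,r}\le2H$ and monotonicity of eigenvalues under the Loewner order. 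Multiplying by $e^{-\tr X}$ gives the claimed bound on $m_B(r)$.

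Finally I would optimize in $r$. Set $M:=\max\{\sqrt S,K\}$, $\theta:=N^\varepsilon\tfrac tN M$ for a fixed $\varepsilon>0$, and take $r:=\min\{\tfrac{N^{1+\varepsilon}M}{2CtS},\tfrac{N}{2tK}\}$, which respects $\tfrac{rt}{N}K\le\tfrac12$. A short case analysis according to which of the two terms realizes the minimum — using $M^2\ge S$ in the first case, and $M\ge K$ together with the inequality $CS\le N^\varepsilon MK$ (which is exactly the condition that the second term is the minimizer) in the second — shows $-r\theta+C(\tfrac{rt}{N})^2S\le-cN^\varepsilon$. Markov then gives $\pp_{\nu_j}\!\big(u^{(j-1),\ast}Bu^{(j-1)}-\tfrac tN\tr HB\ge\theta\big)\lesssim e^{-cN^\varepsilon}$; applying the same to $-B$ and letting $\varepsilon,D$ range over all positive values completes the proof.

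The main obstacle I anticipate is not conceptual but bookkeeping: one must verify that every step of the \cite{MO} argument (positivity of the perturbed resolvent, uniform validity of the Gaussian approximation in $p$, control of the Taylor remainder in $\tr\log[I+ipH_{B,r}]$) uses only $K$ and $S$ as inputs on $B$ — note in particular that $\|B\|$ itself is merely $O(K)$, not small, while $\tfrac{rt}{N}\sqrt H B\sqrt H$ is small — and that the two-regime choice of $r$ is compatible with the hard constraint $\tfrac{rt}{N}K\le\tfrac12$ in all parameter ranges; the latter is the only genuinely delicate point.
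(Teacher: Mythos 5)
Your proposal is correct and takes essentially the same route as the paper: a Markov/Chernoff bound on the exponential moment of $u^{(j-1),\ast}Bu^{(j-1)}$ under $\nu_j$, with $r$ chosen (up to an $N^{\varepsilon}$ factor) as $\min\{N/(t\sqrt{S}),\,N/(tK)\}$ so that the constraint from $K$ is respected and the resulting exponent is a large negative power of $N$. The only difference is that the paper cites the bound $m(r,B)\le\exp\{c^{-1}r^{2}t^{2}S/N^{2}\}$ (valid when $\tfrac{rt}{N}\|\sqrt{H}B\sqrt{H}\|$ stays bounded away from $1$) directly from the proof of Lemma 6.2 in \cite{MO}, whereas you re-derive it via the spherical-integral representation and Gaussian approximation — the same computation underlying that citation and the paper's own analysis of $K_{q}$ — so your extra work is a correct filling-in of the cited step rather than a new argument.
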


\begin{proof}
Consider the right eigenvector case of $j\in\llbracket 1,m_R\rrbracket$. The case of the left eigenvector step $j\in\llbracket m_R+1, m\rrbracket$ is proved similarly after replacing $A$ with its adjoint. In the right eigenvector case we show that 
\[
\mu\left(u^{(j-1),\ast} B u^{(j-1)} - \frac{t}{N}\tr H^{(j-1)}_{\lambda_j}(\eta_{\lambda_j,t})B>N^{\delta_1} \frac{t}{N}\max\left\{\sqrt{S}, KN^{\delta_1}\right\}\right) \le e^{-CN^{\delta_1}}.
\]
The proof of the other side of the inequality is analogous. Define the moment generating function
\[
m(r, B) = \E_{\mu} \expb{r\left(u^{(j-1),\ast} B u^{(j-1)} - \frac{t}{N}\tr H^{(j-1)}_{\lambda_j}(\eta_{\lambda_j,t})B\right)}.
\]
We take 
\[
r = \left(\max\left\{\frac{t\sqrt{S}}{N}, \frac{tKN^{\delta_1}}{N}\right\}\right)^{-1} = \min\left\{\frac{N}{t\sqrt{S}}, \frac{N}{N^{\delta_1}t K}\right\},
\]
By Markov's inequality, we have
\begin{equation}\label{eq:markov}
\mu\left(r\left(u^{(j-1),\ast} B u^{(j-1)} - \frac{t}{N}\tr H^{(j-1)}_{\lambda_j}(\eta_{\lambda_j,t})B\right)>N^{\delta_1}\right) \le e^{-N^{\delta_1}}m(r, B).
\end{equation}
In the proof Lemma 6.2 of \cite{MO} it is shown that if $\frac{rt}{N}\|B\| < 1-c$ for $0<c<1$, then
\[
m(r, B) \le \expb{c^{-1}\frac{r^2t^2}{N^2} \tr\left(H^{(j-1)}_{\lambda_j}(\eta_{\lambda_j,t})B\right)^2}.
\]
In our case, since $r \le N^{-\delta_1}\frac{N}{tK}$,
\[
\frac{rt}{N}\|B\| \le N^{-\delta_1} < 1-c.
\]
Hence, we have
\[
m(r, B) \le \expb{c^{-1}\frac{r^2t^2S}{N^2}} \le \expb{c^{-1}}.
\]
Plugging this into \eqref{eq:markov} and dividing by $r$ on the left, we get the desired bound.
\end{proof}

\begin{lemma}\label{lem:two-res-removing-j}
For any $j\in\llbracket1,m-1\rrbracket$ and $k, l\in\llbracket 1,m\rrbracket$, we have
\begin{align*}
\ntr{H^{(j)}_{\lambda_k}\left(\eta_{\lambda_k,t}\right)H^{(j)}_{\lambda_l}\left(\eta_{\lambda_l,t}\right)} &= \ntr{H_{\lambda_k}\left(\eta_{\lambda_k,t}\right)H_{\lambda_l}\left(\eta_{\lambda_l,t}\right)} + O\left(\frac{1}{Nt^4}\right),\\
\ntr{H^{(j)}_{\lambda_k}\left(\eta_{\lambda_k,t}\right)\tilde{H}^{(j)}_{\lambda_l}\left(\eta_{\lambda_l,t}\right)} &= \ntr{H_{\lambda_k}\left(\eta_{\lambda_k,t}\right)\tilde{H}_{\lambda_l}\left(\eta_{\lambda_l,t}\right)} + O\left(\frac{1}{Nt^4}\right),\\
\ntr{\tilde{H}^{(j)}_{\lambda_k}\left(\eta_{\lambda_k,t}\right)\tilde{H}^{(j)}_{\lambda_l}\left(\eta_{\lambda_l,t}\right)} &= \ntr{\tilde{H}_{\lambda_k}\left(\eta_{\lambda_k,t}\right)\tilde{H}_{\lambda_l}\left(\eta_{\lambda_l,t}\right)} + O\left(\frac{1}{Nt^4}\right).
\end{align*}
In particular, these together with {\bf A2} imply
\begin{align*}
\ntr{H^{(j)}_{\lambda_k}\left(\eta_{\lambda_k,t}\right)H^{(j)}_{\lambda_l}\left(\eta_{\lambda_l,t}\right)} &\gtrsim t^{-2},\\
\ntr{H^{(j)}_{\lambda_k}\left(\eta_{\lambda_k,t}\right)\tilde{H}^{(j)}_{\lambda_l}\left(\eta_{\lambda_l,t}\right)} &\gtrsim t^{-1}\left(t+|\lambda_k-\lambda_l|^2\right)^{-1},\\
\ntr{\tilde{H}^{(j)}_{\lambda_k}\left(\eta_{\lambda_k,t}\right)\tilde{H}^{(j)}_{\lambda_l}\left(\eta_{\lambda_l,t}\right)} &\gtrsim t^{-2}.
\end{align*}
\end{lemma}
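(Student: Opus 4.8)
The plan is to prove the three asymptotic identities by a resolvent expansion that removes the rank-one deformation introduced at each Householder step, iterating $j$ times. Recall that $A^{(j)}$ is obtained from $A^{(j-1)}$ by conjugating by a Householder reflection and deleting the first row and column; equivalently, $A^{(j)}$ is a corank-one compression of a unitary conjugate of $A^{(j-1)}$. Consequently the Hermitization $\mathcal{H}^{(j)}_z$ is a finite-rank perturbation (of rank $O(1)$, uniformly in $N$) of a unitary conjugate of $\mathcal{H}^{(j-1)}_z$ embedded in one higher dimension. Since all three quantities in the lemma are of the form $\langle F_1 F_2\rangle$ with $F_1,F_2\in\{H^{(j)}_{\lambda}(\eta),\tilde{H}^{(j)}_{\lambda}(\eta)\}$ and these are blocks of $G^{(j)}_{z}(\eta)$, it suffices to compare $\langle G^{(j)}_{z_1}(\eta_1)B_1G^{(j)}_{z_2}(\eta_2)B_2\rangle$ with the same quantity at level $j-1$ for the block matrices $B_1,B_2$ appearing in the definitions of $H,\tilde H$.

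First I would record the single-step estimate: if $\mathcal{H}'=\mathcal{H}+V$ with $V$ Hermitian of rank $r=O(1)$, and $G,G'$ are the corresponding resolvents at spectral parameter $i\eta$ with $\eta\ge N^{-1/2+\epsilon_0}$, then the resolvent identity $G'=G-G'VG$ together with $\|G\|_{\mathrm{op}},\|G'\|_{\mathrm{op}}\le\eta^{-1}\lesssim t^{-1}$ and $\|V\|_{\mathrm{op}}=O(1)$ gives, for any bounded $B_1,B_2$,
\[
\left|\tr\bigl(G'B_1G'B_2\bigr)-\tr\bigl(GB_1GB_2\bigr)\right|\lesssim r\,\|B_1\|\,\|B_2\|\,\|V\|_{\mathrm{op}}\,\eta^{-3}\lesssim t^{-3},
\]
since each substitution of $G'=G-G'VG$ produces an extra factor $VG$ whose trace contribution is supported on an $r$-dimensional subspace. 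Dividing by $N$ gives an error $O(N^{-1}t^{-3})$ per step. (One must also account for the dimension dropping from $N-j+1$ to $N-j$, but this only affects the normalization $\langle\cdot\rangle=N^{-1}\tr$ by a harmless relative $O(N^{-1})$ and shifts the traces by $O(1)$ terms that are again $O(N^{-1})$ after normalization; alternatively, embed $A^{(j)}$ as an $(N-j+1)\times(N-j+1)$ matrix with a zero last row/column, which is itself a rank-$O(1)$ perturbation.) Summing over the at most $m=O(1)$ steps from level $0$ down to level $j$ yields the total error $O(N^{-1}t^{-3})$; since $t\ge N^{-1/3+\epsilon_0}$ we have $t^{-3}\lesssim N^{1-3\epsilon_0}\le N\,t$, so $N^{-1}t^{-3}\lesssim t^{-2}\cdot t^{-1}N^{-1}\lesssim \ldots$—in any case $N^{-1}t^{-3}\le t^{-4}\cdot(N t)^{-1}=O(N^{-1}t^{-4})$ is weaker than claimed, so I would instead keep the sharper bound $O(N^{-1}t^{-3})$ and note it is $\le O(N^{-1}t^{-4})$, matching the stated $O((Nt^4)^{-1})$.

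The three specific identities then follow by choosing $B_1,B_2$ to be the appropriate constant block matrices ($\begin{psmallmatrix}0&1\\0&0\end{psmallmatrix}\otimes I$ etc.) so that $\langle G^{(j)}_{\lambda_k}(\eta_k)B_1G^{(j)}_{\lambda_l}(\eta_l)B_2\rangle$ equals $\langle H^{(j)}_{\lambda_k}H^{(j)}_{\lambda_l}\rangle$, $\langle H^{(j)}_{\lambda_k}\tilde H^{(j)}_{\lambda_l}\rangle$, or $\langle\tilde H^{(j)}_{\lambda_k}\tilde H^{(j)}_{\lambda_l}\rangle$ respectively; the $\eta_k$-dependence of the shift inside the resolvent is harmless since $\eta_{\lambda_k,t}\asymp t$ uniformly. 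Finally, the lower bounds follow immediately by combining the just-proved identities with assumptions \eqref{assn:2.1}, \eqref{assn:2.3}, \eqref{assn:2.2}: the main terms are $\gtrsim t^{-2}$ (resp.\ $\gtrsim t^{-1}(t+|\lambda_k-\lambda_l|^2)^{-1}$, resp.\ $\gtrsim t^{-2}$), and the error $O(N^{-1}t^{-4})$ is negligible against these since $t\gg N^{-1/3}$ forces $N^{-1}t^{-4}\ll t^{-2}$ and likewise $N^{-1}t^{-4}\ll t^{-1}(t+|\lambda_k-\lambda_l|^2)^{-1}$ because $|\lambda_k-\lambda_l|\le 2$ so the RHS is $\gtrsim t^{-1}$. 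The main obstacle is purely bookkeeping: correctly tracking that the Householder-conjugation-plus-compression at each step is genuinely a rank-$O(1)$ Hermitian perturbation of the Hermitization (not just of $A$), and making sure the dimension change is absorbed cleanly; there is no analytic difficulty beyond the elementary norm bounds on $G$.
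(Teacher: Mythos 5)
Your structural observation is essentially right: the passage from level $j-1$ to level $j$ is a unitary conjugation followed by a compression, and at the level of the Hermitization this can be encoded as a Hermitian perturbation of rank $O(1)$ and (on the event $\|A\|_{\mathrm{op}}=O(1)$) operator norm $O(1)$. The single-step bound $|\tr(G'B_1G'B_2)-\tr(GB_1GB_2)|\lesssim r\|V\|_{\mathrm{op}}\|B_1\|\|B_2\|\eta^{-3}$ for bounded $B_1,B_2$ is also correct. The gap is in the conversion back to the quantities in the lemma. For constant block matrices one has, e.g., $\ntr{H_{z_1}(\eta_1)H_{z_2}(\eta_2)}=-\tfrac{1}{\eta_1\eta_2}\ntr{G_{z_1}(\eta_1)E_{22}G_{z_2}(\eta_2)E_{22}}$ (the $H$, $\tilde H$ blocks of $G_z(\eta)$ carry prefactors $i\eta$), so your per-step error $O(N^{-1}\eta^{-3})$ for $\langle GB_1GB_2\rangle$ becomes $O\bigl(\tfrac{1}{\eta_1\eta_2}\cdot N^{-1}\eta^{-3}\bigr)=O(N^{-1}t^{-5})$ for $\langle H_1H_2\rangle$, not the $O(N^{-1}t^{-3})$ you claim, and not the $O(N^{-1}t^{-4})$ the lemma asserts. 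This loss is fatal for the second ``in particular'' conclusion: when $|\lambda_k-\lambda_l|\asymp1$ the target lower bound is only $\asymp t^{-1}$, and $N^{-1}t^{-5}\ll t^{-1}$ requires $t\gg N^{-1/4}$, which fails for $t=N^{-1/3+\epsilon_0}$ with $\epsilon_0<1/12$. (The two $t^{-2}$ lower bounds and the error exponent $t^{-4}$ itself survive $N^{-1}t^{-5}$ only marginally or not at all; the $t^{-4}$ rate is also used downstream.)

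The reason the paper gets $N^{-1}t^{-4}$ is that it does not use a generic rank-one expansion with operator-norm bounds on three resolvents. It uses the exact compression formula (Lemma 2.1 of \cite{MO}) and the Woodbury identity, in which every correction term is a \emph{ratio} such as $\frac{u^{\ast}\hat{H}_{k}\hat{H}_{l}\hat{H}_{k}u}{Nu^{\ast}\hat{H}_{k}u}$ or $\frac{u^{\ast}(A-\lambda_k)H_kH_lH_k(A-\lambda_k)^{\ast}u}{N\eta_k^{2}u^{\ast}\tilde{H}_ku}$; the denominator cancels one full factor of $\hat H_k$ (resp.\ the identity $(A-\lambda)H^{2}(A-\lambda)^{\ast}\le\tilde H$ cancels the $\eta_k^{-2}$), leaving only $\|\hat H_k\|\,\|\hat H_l\|\lesssim t^{-4}$ over $N$. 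Your generic error terms $v_i^{\ast}GB_1G'B_2G'v_i$, with $v_i$ the singular vectors of $V$, have no such denominator, so the cancellation is invisible. To rescue your route you would need to improve the crude bound $\|v_i^{\ast}G E_{22}\|\le\eta^{-1}$ to $\lesssim t^{-1/2}$ for the specific vectors $v_i$ (built from $\mathbf{e}_1$, $b_j$, $c_j$, hence depending on $u^{(j-1)}$), which amounts to re-importing the isotropic/concentration estimates the paper's argument is designed to avoid at this stage.
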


\begin{proof}
We prove the first estimate, the other two a proved similarly. It is sufficient to show that we can reduce the index $j$ by $1$ with the error $O\left(\frac{1}{Nt^4}\right)$, i.e.
\[
\ntr{H^{(j)}_{\lambda_k}\left(\eta_{\lambda_k,t}\right)H^{(j)}_{\lambda_l}\left(\eta_{\lambda_l,t}\right)} = \ntr{H^{(j-1)}_{\lambda_k}\left(\eta_{\lambda_k,t}\right)H^{(j-1)}_{\lambda_l}\left(\eta_{\lambda_l,t}\right)} + O\left(\frac{1}{Nt^4}\right).
\]
We will consider $j\in\llbracket1,m_R\rrbracket$, the other case is proved the same way, the only modification is the replacement of $u^{(j-1)}$ with $v^{(j-1)}$. For $i=k,l$ consider
\begin{equation*}
\hat{H}^{(j-1)}_{\lambda_i}(\eta_{\lambda_i,t}) = \left[\left(A^{(j-1)}-\lambda_i\right)^\ast(I-u^{(j-1)}u^{(j-1),\ast})\left(A^{(j-1)}-\lambda_i\right) + \eta_{\lambda_i,t}^2\right]^{-1}.
\end{equation*}
By Lemma 2.1 of \cite{MO}, we have
\begin{align*}
\ntr{H^{(j)}_{\lambda_k}\left(\eta_{\lambda_k,t}\right)H^{(j)}_{\lambda_l}\left(\eta_{\lambda_l,t}\right)} &= \ntr{\hat{H}^{(j-1)}_{\lambda_k}\left(\eta_{\lambda_k,t}\right)\hat{H}^{(j-1)}_{\lambda_l}\left(\eta_{\lambda_l,t}\right)} \\
&- \frac{u^{(j-1),\ast}\hat{H}^{(j-1)}_{\lambda_k}(\eta_{\lambda_k,t})\hat{H}^{(j-1)}_{\lambda_l}(\eta_{\lambda_l,t})\hat{H}^{(j-1)}_{\lambda_k}(\eta_{\lambda_k,t})u^{(j-1)}}{Nu^{(j-1),\ast}\hat{H}^{(j-1)}_{\lambda_k}(\eta_{\lambda_k,t})u^{(j-1)}} \\
&- \frac{u^{(j-1),\ast}\hat{H}^{(j-1)}_{\lambda_l}(\eta_{\lambda_l,t})\hat{H}^{(j-1)}_{\lambda_k}(\eta_{\lambda_k,t})\hat{H}^{(j-1)}_{\lambda_l}(\eta_{\lambda_l,t})u^{(j-1)}}{Nu^{(j-1),\ast}\hat{H}^{(j-1)}_{\lambda_l}(\eta_{\lambda_l,t})u^{(j-1)}} \\
&+ \frac{\left|u^{(j-1),\ast}\hat{H}^{(j-1)}_{\lambda_k}(\eta_{\lambda_k,t})\hat{H}^{(j-1)}_{\lambda_l}(\eta_{\lambda_l,t})u^{(j-1)}\right|^2}{Nu^{(j-1),\ast}\hat{H}^{(j-1)}_{\lambda_k}(\eta_{\lambda_k,t})u^{(j-1)}u^{(j-1),\ast}\hat{H}^{(j-1)}_{\lambda_l}(\eta_{\lambda_l,t})u^{(j-1)}}.
\end{align*}
To bound the second and the third term we note that
\[
u^{(j-1),\ast}\hat{H}^{(j-1)}_{\lambda_k}(\eta_{\lambda_k,t})\hat{H}^{(j-1)}_{\lambda_l}(\eta_{\lambda_l,t})\hat{H}^{(j-1)}_{\lambda_k}(\eta_{\lambda_k,t})u^{(j-1)} \lesssim t^{-4} u^{(j-1),\ast}\hat{H}^{(j-1)}_{\lambda_k}(\eta_{\lambda_k,t})u^{(j-1)}
\]
and to bound the last term we note that
\begin{align*}
&\left|u^{(j-1),\ast}\hat{H}^{(j-1)}_{\lambda_k}(\eta_{\lambda_k,t})\hat{H}^{(j-1)}_{\lambda_l}(\eta_{\lambda_l,t})u^{(j-1)}\right|^2 \le \left\|\hat{H}^{(j-1)}_{\lambda_k}(\eta_{\lambda_k,t})u^{(j-1)}\right\|^2 \left\|\hat{H}^{(j-1)}_{\lambda_l}(\eta_{\lambda_l,t})u^{(j-1)}\right\|^2\\
&\lesssim t^{-4} u^{(j-1),\ast}\hat{H}^{(j-1)}_{\lambda_k}(\eta_{\lambda_k,t})u^{(j-1)}u^{(j-1),\ast}\hat{H}^{(j-1)}_{\lambda_l}(\eta_{\lambda_l,t})u^{(j-1)}.
\end{align*}
Then
\begin{align*}
\ntr{H^{(j)}_{\lambda_k}\left(\eta_{\lambda_k,t}\right)H^{(j)}_{\lambda_l}\left(\eta_{\lambda_l,t}\right)} &= \ntr{\hat{H}^{(j-1)}_{\lambda_k}\left(\eta_{\lambda_k,t}\right)\hat{H}^{(j-1)}_{\lambda_l}\left(\eta_{\lambda_l,t}\right)} + O\left(\frac{1}{Nt^4}\right)
\end{align*}
Now we use Woodbury's identity to get
\begin{align*}
&\ntr{\hat{H}^{(j-1)}_{\lambda_k}\left(\eta_{\lambda_k,t}\right)\hat{H}^{(j-1)}_{\lambda_l}\left(\eta_{\lambda_l,t}\right)} = \ntr{H^{(j-1)}_{\lambda_k}\left(\eta_{\lambda_k,t}\right)H^{(j-1)}_{\lambda_l}\left(\eta_{\lambda_l,t}\right)}\\
&+ \frac{u^{(j-1),\ast}(A^{(j-1)}-\lambda_k)H^{(j-1)}_{\lambda_k}(\eta_{\lambda_k,t})H^{(j-1)}_{\lambda_l}(\eta_{\lambda_l,t})H^{(j-1)}_{\lambda_k}(\eta_{\lambda_k,t})(A^{(j-1)}-\lambda_k)^\ast u^{(j-1)}}{N\eta_{\lambda_k,t}^2 u^{(j-1),\ast} \tilde{H}^{(j-1)}_{\lambda_k}\left(\eta_{\lambda_k,t}\right) u^{(j-1)}}\\
&+ \frac{u^{(j-1),\ast}(A^{(j-1)}-\lambda_l)H^{(j-1)}_{\lambda_l}(\eta_{\lambda_l,t})H^{(j-1)}_{\lambda_k}(\eta_{\lambda_k,t})H^{(j-1)}_{\lambda_l}(\eta_{\lambda_l,t})(A^{(j-1)}-\lambda_l)^\ast u^{(j-1)}}{N\eta_{\lambda_l,t}^2 u^{(j-1),\ast} \tilde{H}^{(j-1)}_{\lambda_l}\left(\eta_{\lambda_l,t}\right) u^{(j-1)}}\\
&+ \frac{\left|u^{(j-1),\ast}(A^{(j-1)}-\lambda_k)H^{(j-1)}_{\lambda_k}(\eta_{\lambda_k,t})H^{(j-1)}_{\lambda_l}(\eta_{\lambda_l,t})(A^{(j-1)}-\lambda_l)^\ast u^{(j-1)}\right|^2}{N\eta_{\lambda_k,t}^2\eta_{\lambda_l,t}^2 u^{(j-1),\ast} \tilde{H}^{(j-1)}_{\lambda_k}\left(\eta_{\lambda_k,t}\right) u^{(j-1)} u^{(j-1),\ast} \tilde{H}^{(j-1)}_{\lambda_l}\left(\eta_{\lambda_l,t}\right) u^{(j-1)}}.
\end{align*}
To bound the first two error terms we note that
\begin{align*}
&u^{(j-1),\ast}(A^{(j-1)}-\lambda_k)H^{(j-1)}_{\lambda_k}(\eta_{\lambda_k,t})H^{(j-1)}_{\lambda_l}(\eta_{\lambda_l,t})H^{(j-1)}_{\lambda_k}(\eta_{\lambda_k,t})(A^{(j-1)}-\lambda_k)^\ast u^{(j-1)}\\
&\lesssim t^{-2} u^{(j-1),\ast}(A^{(j-1)}-\lambda_k)H^{(j-1)}_{\lambda_k}(\eta_{\lambda_k,t})^2(A^{(j-1)}-\lambda_k)^\ast u^{(j-1)}\\
&\le t^{-2} u^{(j-1),\ast}\tilde{H}^{(j-1)}_{\lambda_k}(\eta_{\lambda_k,t}) u^{(j-1)}.
\end{align*}
To bound the final error term we use Cauchy-Schwarz as follows.
\begin{align*}
&\left|u^{(j-1),\ast}(A^{(j-1)}-\lambda_k)H^{(j-1)}_{\lambda_k}(\eta_{\lambda_k,t})H^{(j-1)}_{\lambda_l}(\eta_{\lambda_l,t})(A^{(j-1)}-\lambda_k)^\ast u^{(j-1)}\right|^2 \\
&\le \left\|H^{(j-1)}_{\lambda_l}(\eta_{\lambda_k,t})(A^{(j-1)}-\lambda_k)^\ast u^{(j-1)}\right\|^2 \left\|H^{(j-1)}_{\lambda_l}(\eta_{\lambda_l,t})(A^{(j-1)}-\lambda_l)^\ast u^{(j-1)}\right\|^2 \\
&\le u^{(j-1),\ast}\tilde{H}^{(j-1)}_{\lambda_k}(\eta_{\lambda_k,t}) u^{(j-1)} u^{(j-1),\ast}\tilde{H}^{(j-1)}_{\lambda_l}(\eta_{\lambda_l,t}) u^{(j-1)}.
\end{align*}
This concludes the proof.
\end{proof}

\begin{corollary}\label{cor:conc}
For any $j\in\llbracket 1,m_R\rrbracket$ and any $k\in\llbracket 1,m\rrbracket$ we have
\begin{align}
&\left|u^{(j-1),\ast} H^{(j-1)}_{\lambda_k}(\eta_{\lambda_k,t}) u^{(j-1)} - t\ntr{H^{(j-1)}_{\lambda_j}(\eta_{\lambda_j,t})H^{(j-1)}_{\lambda_k}(\eta_{\lambda_k,t})}\right|\nonumber \\
&\prec_{\nu_j} \frac{1}{\sqrt{N}t}\ntr{H^{(j-1)}_{\lambda_j}(\eta_{\lambda_j,t})H^{(j-1)}_{\lambda_k}(\eta_{\lambda_k,t})}^{\frac12},\label{eq:conc1}\\
&\left|u^{(j-1),\ast} \tilde{H}^{(j-1)}_{\lambda_k}(\eta_{\lambda_k,t}) u^{(j-1)} - t\ntr{H^{(j-1)}_{\lambda_j}(\eta_{\lambda_j,t})\tilde{H}^{(j-1)}_{\lambda_k}(\eta_{\lambda_k,t})}\right| \nonumber\\
&\prec_{\nu_j} \frac{1}{\sqrt{N}t}\ntr{H^{(j-1)}_{\lambda_j}(\eta_{\lambda_j,t})\tilde{H}^{(j-1)}_{\lambda_k}(\eta_{\lambda_k,t})}^{\frac12}.\label{eq:conc3}
\end{align}
For any $j\in\llbracket m_R+1,m\rrbracket$ and any $k\in\llbracket 1,m\rrbracket$ we have
\begin{align*}
&\left|u^{(j-1),\ast} H^{(j-1)}_{\lambda_k}(\eta_{\lambda_k,t}) u^{(j-1)} - t\ntr{\tilde{H}^{(j-1)}_{\lambda_j}(\eta_{\lambda_j,t})H^{(j-1)}_{\lambda_k}(\eta_{\lambda_k,t})}\right|\nonumber \\
&\prec_{\nu_j} \frac{1}{\sqrt{N}t}\ntr{\tilde{H}^{(j-1)}_{\lambda_j}(\eta_{\lambda_j,t})H^{(j-1)}_{\lambda_k}(\eta_{\lambda_k,t})}^{\frac12},\\
&\left|u^{(j-1),\ast} \tilde{H}^{(j-1)}_{\lambda_k}(\eta_{\lambda_k,t}) u^{(j-1)} - t\ntr{\tilde{H}^{(j-1)}_{\lambda_j}(\eta_{\lambda_j,t})\tilde{H}^{(j-1)}_{\lambda_k}(\eta_{\lambda_k,t})}\right| \nonumber\\
&\prec_{\nu_j} \frac{1}{\sqrt{N}t}\ntr{\tilde{H}^{(j-1)}_{\lambda_j}(\eta_{\lambda_j,t})\tilde{H}^{(j-1)}_{\lambda_k}(\eta_{\lambda_k,t})}^{\frac12}.
\end{align*}
As a consequence,
\begin{align}
u^{(j-1),\ast} H^{(j-1)}_{\lambda_k}(\eta_{\lambda_k,t}) u^{(j-1)} &\succ_{\nu_j} t^{-1},  &
u^{(j-1),\ast} \tilde{H}^{(j-1)}_{\lambda_k}(\eta_{\lambda_k,t}) u^{(j-1)}&\succ_{\nu_j} \left(t+|\lambda_k-\lambda_l|^2\right)^{-1}, \label{eq:uHu-lower-bound}\\
v^{(j-1),\ast} \tilde{H}^{(j-1)}_{\lambda_k}(\eta_{\lambda_k,t}) v^{(j-1)}&\succ_{\nu_j} t^{-1},  & v^{(j-1),\ast} H^{(j-1)}_{\lambda_k}(\eta_{\lambda_k,t}) v^{(j-1)} &\succ_{\nu_j} \left(t+|\lambda_k-\lambda_l|^2\right)^{-1}.\label{eq:vHv-lower-bound}
\end{align}
Additionally, for any deterministic unit vector $\mathbf{w}\in\mathbb{S}^{N-j}$, we have
\begin{align}\label{eq:conc2}
&\left|\mathbf{w}^\ast \tilde{H}^{(j-1)}_{\lambda_k}(\eta_{\lambda_k,t})(A^{(j-1)}-\lambda_k)u^{(j-1)}\right|^2\prec_{\nu_j} \frac{1}{Nt}\mathbf{w}^\ast \tilde{H}^{(j-1)}_{\lambda_k}(\eta_{\lambda_k,t})\mathbf{w},\\
&\left|\mathbf{w}^\ast H^{(j-1)}_{\lambda_k}(\eta_{\lambda_k,t})(A^{(j-1)}-\lambda_k)u^{(j-1)}\right|^2\prec_{\nu_j} \frac{1}{Nt}\mathbf{w}^\ast H^{(j-1)}_{\lambda_k}(\eta_{\lambda_k,t})\mathbf{w},\\
&\left|\mathbf{w}^\ast \tilde{H}^{(j-1)}_{\lambda_k}(\eta_{\lambda_k,t})u^{(j-1)}\right|^2\prec_{\nu_j} \frac{1}{Nt^3}\mathbf{w}^\ast \tilde{H}^{(j-1)}_{\lambda_k}(\eta_{\lambda_k,t})\mathbf{w},\label{eq:conc5}\\
&\left|\mathbf{w}^\ast H^{(j-1)}_{\lambda_k}(\eta_{\lambda_k,t})u^{(j-1)}\right|^2\prec_{\nu_j} \frac{1}{Nt^3}\mathbf{w}^\ast H^{(j-1)}_{\lambda_k}(\eta_{\lambda_k,t})\mathbf{w},\\
&\left|\mathbf{w}^\ast u^{(j-1)}\right|^2 \prec \frac{t}{N}\mathbf{w}^\ast H_{\lambda_j}^{(j-1)}(\eta_{\lambda_j,t})\mathbf{w}\label{eq:conc4}
\end{align}
for $j\in\llbracket 1,m_R\rrbracket$ uniformly in $\mathbf{w}$ and 
\begin{align}\label{eq:conc2v}
&\left|\mathbf{w}^\ast H^{(j-1)}_{\lambda_k}(\eta_{\lambda_k,t})(A^{(j-1)}-\lambda_k)^\ast v^{(j-1)}\right|^2\prec_{\nu_j} \frac{1}{Nt}\mathbf{w}^\ast H^{(j-1)}_{\lambda_k}(\eta_{\lambda_k,t})\mathbf{w},\\
&\left|\mathbf{w}^\ast \tilde{H}^{(j-1)}_{\lambda_k}(\eta_{\lambda_k,t})(A^{(j-1)}-\lambda_k)^\ast v^{(j-1)}\right|^2\prec_{\nu_j} \frac{1}{Nt}\mathbf{w}^\ast \tilde{H}^{(j-1)}_{\lambda_k}(\eta_{\lambda_k,t})\mathbf{w},\\
&\left|\mathbf{w}^\ast H^{(j-1)}_{\lambda_k}(\eta_{\lambda_k,t})v^{(j-1)}\right|^2\prec_{\nu_j} \frac{1}{Nt^3}\mathbf{w}^\ast H^{(j-1)}_{\lambda_k}(\eta_{\lambda_k,t})\mathbf{w},\label{eq:conc5v}\\
&\left|\mathbf{w}^\ast \tilde{H}^{(j-1)}_{\lambda_k}(\eta_{\lambda_k,t})v^{(j-1)}\right|^2\prec_{\nu_j} \frac{1}{Nt^3}\mathbf{w}^\ast \tilde{H}^{(j-1)}_{\lambda_k}(\eta_{\lambda_k,t})\mathbf{w},\\
&\left|\mathbf{w}^\ast v^{(j-1)}\right|^2 \prec \frac{t}{N}\mathbf{w}^\ast \tilde{H}_{\lambda_j}^{(j-1)}(\eta_{\lambda_j,t})\mathbf{w}\label{eq:conc4v}
\end{align}
for $j\in\llbracket m_R+1,m\rrbracket$ uniformly in $\mathbf{w}$.
\end{corollary}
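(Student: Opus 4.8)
The plan is to derive every estimate in the corollary from the two facts already established: the master concentration inequality for the quadratic forms $u^{(j-1),\ast}Bu^{(j-1)}$ and $v^{(j-1),\ast}Bv^{(j-1)}$ (Lemma \ref{lem:master-concentration}), and the two-resolvent trace estimates of Lemma \ref{lem:two-res-removing-j}. The ingredients I would use repeatedly are $\eta_{\lambda,t}\asymp t$, hence $\|H^{(j-1)}_{\lambda}(\eta_{\lambda,t})\|,\|\tilde H^{(j-1)}_{\lambda}(\eta_{\lambda,t})\|\lesssim t^{-2}$; the bound $\|A^{(j-1)}\|\le\|A\|=O(1)$ (since $A^{(j-1)}$ is a unitary compression of $A^{(0)}=A$); and the elementary resolvent identities
\[
\tilde H_z(A-z)(A-z)^{\ast}\tilde H_z=\tilde H_z-\eta^{2}\tilde H_z^{2}\le\tilde H_z,\qquad H_z(A-z)^{\ast}(A-z)H_z=H_z-\eta^{2}H_z^{2}\le H_z,
\]
as well as $(A-z)H_z(A-z)^{\ast}=I-\eta^{2}\tilde H_z\le I$ and its adjoint (valid for any matrix in place of $A$).

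For the trace-type bounds \eqref{eq:conc1} and \eqref{eq:conc3} I would apply Lemma \ref{lem:master-concentration} with $B=H^{(j-1)}_{\lambda_k}(\eta_{\lambda_k,t})$, resp.\ $B=\tilde H^{(j-1)}_{\lambda_k}(\eta_{\lambda_k,t})$; then $\frac{t}{N}\tr H^{(j-1)}_{\lambda_j}B$ is exactly the $t\ntr{\cdot}$ appearing on the left. One checks $K\le\|H^{(j-1)}_{\lambda_j}\|\,\|B\|\lesssim t^{-4}$ and, via $\tr(PQ)^{2}=\|\sqrt{P}\,Q\sqrt{P}\|_{\mathrm{HS}}^{2}\le\|\sqrt{P}\,Q\sqrt{P}\|\,\tr PQ$ for $P,Q\succeq 0$, also $S\lesssim t^{-4}N\ntr{H^{(j-1)}_{\lambda_j}B}$; the resulting error $\frac{t}{N}\max\{\sqrt S,K\}\lesssim\frac{1}{\sqrt N t}\ntr{\cdot}^{1/2}+\frac{1}{Nt^{3}}$ has its $\frac{1}{Nt^{3}}$ tail absorbed into the first term using the lower bounds $\ntr{H^{(j-1)}_{\lambda_j}H^{(j-1)}_{\lambda_k}}\gtrsim t^{-2}$ and $\ntr{H^{(j-1)}_{\lambda_j}\tilde H^{(j-1)}_{\lambda_k}}\gtrsim t^{-1}(t+|\lambda_j-\lambda_k|^{2})^{-1}$ from Lemma \ref{lem:two-res-removing-j}, the fact $|\lambda_j-\lambda_k|=O(1)$, and $t\ge N^{-1/3+\epsilon_0}$. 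The $j\in\llbracket m_R+1,m\rrbracket$ versions are identical after replacing $u^{(j-1)}\to v^{(j-1)}$, $H^{(j-1)}_{\lambda_j}\to\tilde H^{(j-1)}_{\lambda_j}$, and $A-z\to(A-z)^{\ast}$. The lower bounds \eqref{eq:uHu-lower-bound}, \eqref{eq:vHv-lower-bound} are then immediate: e.g.\ $u^{(j-1),\ast}H^{(j-1)}_{\lambda_k}u^{(j-1)}\ge t\ntr{H^{(j-1)}_{\lambda_j}H^{(j-1)}_{\lambda_k}}-\oh_{\nu_j}(\frac{1}{\sqrt N t}\ntr{\cdot}^{1/2})\gtrsim t^{-1}$, since the main term beats the error once $\sqrt N t\,\ntr{\cdot}^{1/2}\gg1$, which holds by $\ntr{\cdot}\gtrsim t^{-2}$ (resp.\ $\gtrsim(t+|\lambda_j-\lambda_k|^{2})^{-1}$) and $t\gg N^{-1/2}$.

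For the isotropic estimates \eqref{eq:conc2}--\eqref{eq:conc4} and \eqref{eq:conc2v}--\eqref{eq:conc4v}, each left-hand side is of the form $|\mathbf{w}^{\ast}Xu^{(j-1)}|^{2}$ (resp.\ with $v^{(j-1)}$), where $X$ is deterministic once the earlier integration variables are fixed. I would write $\mathbf{w}^{\ast}Xu^{(j-1)}=\langle X^{\ast}\mathbf{w},u^{(j-1)}\rangle$ and apply the rank-one case of Lemma \ref{lem:master-concentration} with $B=(X^{\ast}\mathbf{w})(X^{\ast}\mathbf{w})^{\ast}$, obtaining uniformly in $\mathbf{w}$
\[
|\mathbf{w}^{\ast}Xu^{(j-1)}|^{2}\prec_{\nu_j}\frac{t}{N}\,\mathbf{w}^{\ast}XH^{(j-1)}_{\lambda_j}X^{\ast}\mathbf{w}\le\frac{t}{N}\|H^{(j-1)}_{\lambda_j}\|\,\|X^{\ast}\mathbf{w}\|^{2}\lesssim\frac{1}{Nt^{2}}\|X^{\ast}\mathbf{w}\|^{2}.
\]
The remaining task is to bound $\|X^{\ast}\mathbf{w}\|^{2}$ by the advertised quadratic form in $\mathbf{w}$, which is exactly what the resolvent identities above do: e.g.\ $X=\tilde H^{(j-1)}_{\lambda_k}(A^{(j-1)}-\lambda_k)$ gives $\|X^{\ast}\mathbf{w}\|^{2}\le\mathbf{w}^{\ast}\tilde H^{(j-1)}_{\lambda_k}\mathbf{w}$ (hence \eqref{eq:conc2}), $X=\tilde H^{(j-1)}_{\lambda_k}$ gives $\|X^{\ast}\mathbf{w}\|^{2}\lesssim t^{-2}\mathbf{w}^{\ast}\tilde H^{(j-1)}_{\lambda_k}\mathbf{w}$ (hence \eqref{eq:conc5}), and $X=I$ is the bare rank-one master inequality with $B=\mathbf{w}\mathbf{w}^{\ast}$ (hence \eqref{eq:conc4}); the powers of $t$ that appear are precisely those produced by these manipulations. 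The $v^{(j-1)}$-versions are the same with $\tilde H^{(j-1)}_{\lambda_j}$ as the covariance and the identity $H_z(A-z)^{\ast}(A-z)H_z\le H_z$ playing the matching role.

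I expect no serious conceptual difficulty here: the corollary is essentially bookkeeping on top of Lemmas \ref{lem:master-concentration} and \ref{lem:two-res-removing-j}. The one point that demands care is the trace-type estimates \eqref{eq:conc1}, \eqref{eq:conc3}, where I must verify that the master inequality's error $\frac{t}{N}\max\{\sqrt S,K\}$ does not exceed the claimed $\frac{1}{\sqrt N t}\ntr{\cdot}^{1/2}$; this forces the sharp bound $S\lesssim\|\sqrt{P}\,B\sqrt{P}\|\,\tr PB$ (rather than a crude $S\lesssim N\|\cdot\|^{2}$) and the use of the {\bf A2}-type lower bounds on the two-resolvent traces to dispose of the $K$-contribution, and it is precisely in the mixed $H\tilde H$ case that $t\ge N^{-1/3+\epsilon_0}$ together with the boundedness of $|\lambda_j-\lambda_k|$ is needed.
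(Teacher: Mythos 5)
Your proposal is correct and follows essentially the same route as the paper: both apply the master concentration inequality with $B=H^{(j-1)}_{\lambda_k}$ or $\tilde H^{(j-1)}_{\lambda_k}$ (using $K\lesssim t^{-4}$, $S\lesssim Nt^{-4}\ntr{H^{(j-1)}_{\lambda_j}B}$ and the lower bounds of Lemma \ref{lem:two-res-removing-j} to absorb the $K$-contribution) for \eqref{eq:conc1}, \eqref{eq:conc3}, and with the rank-one matrices $B=(X^{\ast}\mathbf{w})(X^{\ast}\mathbf{w})^{\ast}$ plus the identities $\tilde H_z(A-z)(A-z)^{\ast}\tilde H_z\le\tilde H_z$ for the isotropic bounds, exactly as in the paper's proof. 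Only fix the arithmetic slip $\frac{t}{N}\|H^{(j-1)}_{\lambda_j}(\eta_{\lambda_j,t})\|_{\mathrm{op}}\lesssim\frac{1}{Nt}$ (not $\frac{1}{Nt^{2}}$), which is precisely what yields the stated prefactors $\frac{1}{Nt}$ in \eqref{eq:conc2} and $\frac{1}{Nt^{3}}$ in \eqref{eq:conc5}.
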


\begin{proof}
We prove the concentration bounds with respect to the right eigenvector measures $\nu_j$ for $j\in\llbracket1,m_R\rrbracket$. The concentration bounds with respect to the left eigenvectors are proved the same way after replacing $A$ with $A^\ast$. To show \eqref{eq:conc1}, we note that 
\[
\left\|\sqrt{H^{(j-1)}_{\lambda_j}(\eta_{\lambda_j,t})}H^{(j-1)}_{\lambda_k}(\eta_{\lambda_k,t})\sqrt{H^{(j-1)}_{\lambda_j}(\eta_{\lambda_j,t})}\right\| \le \eta_{\lambda_j,t}^{-2}\eta_{\lambda_k,t}^{-2}\lesssim t^{-4}
\]
and
\[
\tr\left(H^{(j-1)}_{\lambda_j}(\eta_{\lambda_j,t})H^{(j-1)}_{\lambda_k}(\eta_{\lambda_k,t})\right)^2 \lesssim \frac{N}{t^4}\ntr{H^{(j-1)}_{\lambda_j}(\eta_{\lambda_j,t})H^{(j-1)}_{\lambda_k}(\eta_{\lambda_k,t})}.
\]
Then \eqref{eq:conc1} follows by Lemma \ref{lem:master-concentration} and Lemma \ref{lem:two-res-removing-j}. The proof of \eqref{eq:conc3} is analogous.
We now need to address 
\begin{align*}
&\left|\mathbf{w}^\ast \tilde{H}^{(j-1)}_{\lambda_k}(\eta_{\lambda_k,t})(A^{(j-1)}-\lambda_k)u^{(j-1)}\right|^2\\
&=u^{(j-1),\ast}(A^{(j-1)}-\lambda_k)^{\ast}\tilde{H}^{(j-1)}_{\lambda_k}(\eta_{\lambda_k,t})\mathbf{w}\mathbf{w}^{\ast}\tilde{H}^{(j-1)}_{\lambda_k}(\eta_{\lambda_k,t})(A^{(j-1)}-\lambda_k)u^{(j-1)}.
\end{align*}
We use Lemma \ref{lem:master-concentration} for the rank $1$ matrix
\begin{equation*}
B =(A-\lambda_{2})^{\ast}\tilde{H}_{\lambda_{2}}(\eta_{\lambda_{2},t})\mathbf{w}\mathbf{w}^{\ast}\tilde{H}_{\lambda_{2}}(\eta_{\lambda_{2},t})(A-\lambda_{2}).
\end{equation*}
It remains to compute
\begin{align*}
&\frac{t}{N}\tr H^{(j-1)}_{\lambda_j}(\eta_{\lambda_j,t})B \\
&=\frac{t}{N}\mathbf{w}^{\ast}\tilde{H}^{(j-1)}_{\lambda_k}(\eta_{\lambda_k,t})(A^{(j-1)}-\lambda_k) H^{(j-1)}_{\lambda_j}(\eta_{\lambda_j,t})(A^{(j-1)}-\lambda_k)^{\ast}\tilde{H}^{(j-1)}_{\lambda_k}(\eta_{\lambda_k,t})\mathbf{w}\\
&\lesssim \frac{1}{Nt}\mathbf{w}^{\ast}\tilde{H}^{(j-1)}_{\lambda_k}(\eta_{\lambda_k,t})(A^{(j-1)}-\lambda_k)(A^{(j-1)}-\lambda_k)^{\ast}\tilde{H}^{(j-1)}_{\lambda_k}(\eta_{\lambda_k,t})\mathbf{w}\\
&=\frac{1}{Nt}\mathbf{w}^{\ast}\tilde{H}^{(j-1)}_{\lambda_k}(\eta_{\lambda_k,t})\mathbf{w}-\frac{1}{Nt}\eta_{\lambda_{k},t}^{2}\mathbf{w}^{\ast}\tilde{H}^{(j-1)}_{\lambda_k}(\eta_{\lambda_k,t})^{2}\mathbf{w}\le \frac{1}{Nt}\mathbf{w}^{\ast}\tilde{H}^{(j-1)}_{\lambda_k}(\eta_{\lambda_k,t})\mathbf{w}.
\end{align*}
Thus, Lemma \ref{lem:master-concentration} implies \eqref{eq:conc2}.
Now, we deal with 
\begin{align*}
\left|\mathbf{w}^\ast \tilde{H}^{(j-1)}_{\lambda_k}(\eta_{\lambda_k,t})u^{(j-1)}\right|^2 =u^{(j-1),\ast}\tilde{H}^{(j-1)}_{\lambda_k}(\eta_{\lambda_k,t})\mathbf{w}\mathbf{w}^{\ast}\tilde{H}^{(j-1)}_{\lambda_k}(\eta_{\lambda_k,t})u^{(j-1)}.
\end{align*}
We consider a rank $1$ matrix
\[
B = \tilde{H}^{(j-1)}_{\lambda_k}(\eta_{\lambda_k,t})\mathbf{w}\mathbf{w}^{\ast}\tilde{H}^{(j-1)}_{\lambda_k}(\eta_{\lambda_k,t}).
\]
Then it is easy to check that
\begin{align*}
\frac{t}{N}\tr H^{(j-1)}_{\lambda_j}(\eta_{\lambda_j,t}) B &= \frac{t}{N} \mathbf{w}^{\ast}\tilde{H}^{(j-1)}_{\lambda_k}(\eta_{\lambda_k,t}) H^{(j-1)}_{\lambda_j}(\eta_{\lambda_j,t}) \tilde{H}^{(j-1)}_{\lambda_k}(\eta_{\lambda_k,t})\mathbf{w}\\
&\lesssim \frac{1}{Nt^3} \mathbf{w}^{\ast}\tilde{H}^{(j-1)}_{\lambda_k}(\eta_{\lambda_k,t})\mathbf{w}
\end{align*}
Plugging this into Lemma \ref{lem:master-concentration} gives us \eqref{eq:conc5}. Finally, the bound \eqref{eq:conc4} follows directly from Lemma \ref{lem:master-concentration} with $B = \mathbf{w}\mathbf{w}^\ast$.
\end{proof}

\begin{lemma}\label{lem:wHw}
For any $j\in\llbracket 1,m-1\rrbracket$, any $k\in\llbracket 1,m\rrbracket$ and any deterministic vector $\mathbf{w}\in \C^N$ we have 
\begin{align}
\mathbf{w}^\ast U_{j}^\ast \begin{pmatrix}0&0\\0&H^{(j)}_{\lambda_k}(\eta_{\lambda_k,t})\end{pmatrix}U_{j}\mathbf{w} &= \mathbf{w}^\ast H_{\lambda_k}(\eta_{\lambda_k,t})\mathbf{w}\left(1+ \oh_{\nu_{j\rightarrow 1}} \left(\frac{1}{Nt^3}\right)\right),\\
\mathbf{w}^\ast U_{j}^\ast \begin{pmatrix}0&0\\0&\tilde{H}^{(j)}_{\lambda_k}(\eta_{\lambda_k,t})\end{pmatrix}U_{j}\mathbf{w} &= \mathbf{w}^\ast\tilde{H}_{\lambda_k}(\eta_{\lambda_k,t})\mathbf{w}\left(1+ \oh_{\nu_{j\rightarrow 1}} \left(\frac{1}{Nt^3}\right)\right).
\end{align}
\end{lemma}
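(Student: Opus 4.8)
The plan is to induct on $j$, removing one Householder reflection at a time. It suffices to prove the single‑step estimate
\begin{equation*}
\mathbf{w}^\ast U_{j}^\ast \begin{pmatrix}0&0\\0&H^{(j)}_{\lambda_k}(\eta_{\lambda_k,t})\end{pmatrix}U_{j}\mathbf{w} = \mathbf{w}^\ast U_{j-1}^\ast \begin{pmatrix}0&0\\0&H^{(j-1)}_{\lambda_k}(\eta_{\lambda_k,t})\end{pmatrix}U_{j-1}\mathbf{w}\left(1 + \oh_{\nu_j}\!\left(\tfrac{1}{Nt^3}\right)\right),
\end{equation*}
uniformly over the first $j-1$ steps (and the same with $\tilde H$); chaining these $j\le m$ estimates and multiplying the boundedly many errors then gives the claim with respect to $\nu_{j\to1}$. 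I treat $j\in\llbracket1,m_R\rrbracket$ and the $H$‑statement; the case $j\in\llbracket m_R+1,m\rrbracket$ and the $\tilde H$‑statement are obtained verbatim after replacing $A^{(j-1)}$ with its adjoint and using the $v$‑versions of the estimates in Corollary~\ref{cor:conc}.

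Conditioning on $u^{(0)},\dots,u^{(j-2)}$, let $\mathbf{w}'\in\C^{N-j+1}$ denote the (now deterministic) restriction of $\mathbf{w}$ to the last $N-j+1$ coordinates in the frame produced by the first $j-1$ reflections, so the right‑hand side above is $\mathbf{w}'^\ast H^{(j-1)}_{\lambda_k}\mathbf{w}'$. Peeling the reflection $R_j(u^{(j-1)})$ off $U_j = U_{j-1}(I_{j-1}\oplus R_j(u^{(j-1)}))$ and invoking the Householder identity of Lemma~2.1 of \cite{MO},
\begin{equation*}
R_j(u^{(j-1)})\begin{pmatrix}0&0\\0&H^{(j)}_{\lambda_k}\end{pmatrix}R_j(u^{(j-1)}) = \hat H^{(j-1)}_{\lambda_k} - \frac{\hat H^{(j-1)}_{\lambda_k}u^{(j-1)}u^{(j-1),\ast}\hat H^{(j-1)}_{\lambda_k}}{u^{(j-1),\ast}\hat H^{(j-1)}_{\lambda_k}u^{(j-1)}},
\end{equation*}
with $\hat H^{(j-1)}_{\lambda_k} = \hat H^{(j-1)}_{\lambda_k}(\eta_{\lambda_k,t})$ the projected resolvent of Lemma~\ref{lem:two-res-removing-j}, one gets
\begin{equation*}
\mathbf{w}^\ast U_{j}^\ast \begin{pmatrix}0&0\\0&H^{(j)}_{\lambda_k}\end{pmatrix}U_{j}\mathbf{w} = \mathbf{w}'^\ast\hat H^{(j-1)}_{\lambda_k}\mathbf{w}' - \frac{|u^{(j-1),\ast}\hat H^{(j-1)}_{\lambda_k}\mathbf{w}'|^2}{u^{(j-1),\ast}\hat H^{(j-1)}_{\lambda_k}u^{(j-1)}}.
\end{equation*}
So it remains to prove (i) $\mathbf{w}'^\ast\hat H^{(j-1)}_{\lambda_k}\mathbf{w}' = \mathbf{w}'^\ast H^{(j-1)}_{\lambda_k}\mathbf{w}'(1+\oh_{\nu_j}(\tfrac1{Nt^3}))$ and (ii) that the rank‑one correction above is $\oh_{\nu_j}(\tfrac1{Nt^3})\,\mathbf{w}'^\ast H^{(j-1)}_{\lambda_k}\mathbf{w}'$.

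For (i) I would use Woodbury's identity, $\hat H^{(j-1)}_{\lambda_k} - H^{(j-1)}_{\lambda_k} = (1-\mathbf v^\ast H^{(j-1)}_{\lambda_k}\mathbf v)^{-1} H^{(j-1)}_{\lambda_k}\mathbf v\mathbf v^\ast H^{(j-1)}_{\lambda_k}$ with $\mathbf v = (A^{(j-1)}-\lambda_k)^\ast u^{(j-1)}$; here $1-\mathbf v^\ast H^{(j-1)}_{\lambda_k}\mathbf v = \eta_{\lambda_k,t}^2\, u^{(j-1),\ast}\tilde H^{(j-1)}_{\lambda_k}u^{(j-1)}\succ_{\nu_j}t^2$ by \eqref{eq:uHu-lower-bound} and $\eta_{\lambda_k,t}\gtrsim t$, and, rewriting $\mathbf v^\ast H^{(j-1)}_{\lambda_k}\mathbf w'=u^{(j-1),\ast}(A^{(j-1)}-\lambda_k)H^{(j-1)}_{\lambda_k}\mathbf w'$ and using $\|H^{(j-1)}_{\lambda_j}\|\lesssim t^{-2}$, the identity $(A^{(j-1)}-\lambda_k)H^{(j-1)}_{\lambda_k}(A^{(j-1)}-\lambda_k)^\ast = I - \eta_{\lambda_k,t}^2\tilde H^{(j-1)}_{\lambda_k}\le I$, and \eqref{eq:conc4}, one gets $|\mathbf v^\ast H^{(j-1)}_{\lambda_k}\mathbf w'|^2\prec_{\nu_j}\tfrac1{Nt}\,\mathbf w'^\ast H^{(j-1)}_{\lambda_k}\mathbf w'$; combining gives (i). For (ii) the denominator is $\succ_{\nu_j}t^{-1}$ by $\hat H^{(j-1)}_{\lambda_k}\ge H^{(j-1)}_{\lambda_k}$ and \eqref{eq:uHu-lower-bound}. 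For the numerator I would expand
\begin{equation*}
\hat H^{(j-1)}_{\lambda_k}u^{(j-1)} = \eta_{\lambda_k,t}^{-2}\Big(u^{(j-1)} - \hat H^{(j-1)}_{\lambda_k}(A^{(j-1)}-\lambda_k)^\ast(I-u^{(j-1)}u^{(j-1),\ast})(A^{(j-1)}-\lambda_j)u^{(j-1)}\Big),
\end{equation*}
which rests on the cancellation $(I-u^{(j-1)}u^{(j-1),\ast})(A^{(j-1)}-\lambda_k)u^{(j-1)} = (I-u^{(j-1)}u^{(j-1),\ast})(A^{(j-1)}-\lambda_j)u^{(j-1)}$ (valid since $(I-u^{(j-1)}u^{(j-1),\ast})u^{(j-1)}=0$) and on $\|(A^{(j-1)}-\lambda_j)u^{(j-1)}\|^2\prec_{\nu_j}t$ (from Lemma~\ref{lem:master-concentration} with $B=(A^{(j-1)}-\lambda_j)^\ast(A^{(j-1)}-\lambda_j)$ together with $\langle H^{(j-1)}_{\lambda_j}(\eta_{\lambda_j,t})\rangle=t^{-1}+O(N^{-1}t^{-2})$). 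Substituting this and applying the deterministic‑vector concentration bounds \eqref{eq:conc2}, \eqref{eq:conc4}, \eqref{eq:conc5}, the lower bounds \eqref{eq:uHu-lower-bound}, and the crude bounds $\|H^{(j-1)}_\bullet\|,\|\tilde H^{(j-1)}_\bullet\|\lesssim t^{-2}$, $\|A^{(j-1)}-\lambda_\bullet\|\lesssim1$, $H^{(j-1)}_{\lambda_k}\gtrsim I$ — taking in each regime the sharpest available bound — yields (ii). Any trace that acquires a superscript $(j-1)$ or $(j)$ and must be matched against {\bf A1}--{\bf A3} or Lemma~\ref{lem:two-res-removing-j} is stripped of it by eigenvalue interlacing, exactly as in the proofs of Lemma~\ref{lem:two-res-removing-j} and Lemma~\ref{lemma:Hqestimates}.

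I expect the main obstacle to be the error bookkeeping in step (ii): because the cancellation above (and $\lambda_j$ being an eigenvalue of $A^{(j-1)}$) makes $u^{(j-1)}$ a near‑null direction of $\hat H^{(j-1)}_{\lambda_k}{}^{-1}$, crude operator‑norm bounds on $\hat H^{(j-1)}_{\lambda_k}$ lose too much, and one must track the alignment of $u^{(j-1)}$ with this special direction through the concentration estimates so as to bring $|u^{(j-1),\ast}\hat H^{(j-1)}_{\lambda_k}\mathbf w'|^2$ down to $\oh_{\nu_j}(\tfrac1{Nt^3})\,u^{(j-1),\ast}\hat H^{(j-1)}_{\lambda_k}u^{(j-1)}\cdot\mathbf w'^\ast H^{(j-1)}_{\lambda_k}\mathbf w'$. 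The rest is routine resolvent algebra and stochastic‑domination manipulations.
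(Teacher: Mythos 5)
Your setup coincides with the paper's: induct on $j$, peel off one Householder reflection via Lemma 2.1 of \cite{MO} to produce $\hat{H}^{(j-1)}_{\lambda_k}$ minus a rank-one correction, and use Woodbury plus Corollary \ref{cor:conc} to compare $\mathbf{w}'^{\ast}\hat{H}^{(j-1)}_{\lambda_k}\mathbf{w}'$ with $\mathbf{w}'^{\ast}H^{(j-1)}_{\lambda_k}\mathbf{w}'$. Your step (i) is essentially the paper's argument.

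Step (ii) has a genuine gap, which you yourself flag. Expanding $\hat{H}^{(j-1)}_{\lambda_k}u^{(j-1)}=\eta_{\lambda_k,t}^{-2}\bigl(u^{(j-1)}-\hat{H}^{(j-1)}_{\lambda_k}(A^{(j-1)}-\lambda_k)^{\ast}(I-u^{(j-1)}u^{(j-1),\ast})(A^{(j-1)}-\lambda_j)u^{(j-1)}\bigr)$ and then using $\|(A^{(j-1)}-\lambda_j)u^{(j-1)}\|^{2}\prec_{\nu_j}t$ together with operator-norm bounds gives at best $|\mathbf{w}'^{\ast}\hat{H}^{(j-1)}_{\lambda_k}u^{(j-1)}|^{2}\prec t^{-4}\cdot(t^{-1}+|\mathbf{w}'^{\ast}u^{(j-1)}|^{2})\sim t^{-5}$, so after dividing by $u^{(j-1),\ast}\hat{H}^{(j-1)}_{\lambda_k}u^{(j-1)}\succ_{\nu_j}t^{-1}$ you are left with $t^{-4}$, which misses the target $N^{-1}t^{-3}\,\mathbf{w}'^{\ast}H^{(j-1)}_{\lambda_k}\mathbf{w}'$ by a factor of order $Nt^{-1}$. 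The second term in your expansion cannot be improved by the available concentration inputs because the matrix sandwiched between the two copies of $u^{(j-1)}$ itself depends on $u^{(j-1)}$ (through $\hat{H}$ and the projection), so Lemma \ref{lem:master-concentration} does not apply to it; ``tracking the alignment'' is not a routine refinement but the whole difficulty. The paper avoids this entirely by applying the \emph{same} Woodbury identity to both the numerator and the denominator of the rank-one correction: it bounds $|\mathbf{w}'^{\ast}\hat{H}^{(j-1)}_{\lambda_k}u^{(j-1)}|^{2}$ by $|\mathbf{w}'^{\ast}H^{(j-1)}_{\lambda_k}u^{(j-1)}|^{2}$ plus a term containing the factor $|u^{(j-1),\ast}H^{(j-1)}_{\lambda_k}(A^{(j-1)}-\lambda_k)^{\ast}u^{(j-1)}|^{2}\eta_{\lambda_k,t}^{-2}(u^{(j-1),\ast}\tilde{H}^{(j-1)}_{\lambda_k}u^{(j-1)})^{-1}$, and lower-bounds the denominator by the \emph{maximum} of $u^{(j-1),\ast}H^{(j-1)}_{\lambda_k}u^{(j-1)}$ and that exact same factor. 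Pairing each numerator term with its matching denominator bound cancels the dangerous quantity identically, and what survives is precisely $|\mathbf{w}'^{\ast}H^{(j-1)}_{\lambda_k}u^{(j-1)}|^{2}/(u^{(j-1),\ast}H^{(j-1)}_{\lambda_k}u^{(j-1)})$ plus the term already controlled in step (i) — both handled by \eqref{eq:conc5}, \eqref{eq:conc2} and \eqref{eq:uHu-lower-bound}. You should replace your resolvent-identity expansion of $\hat{H}^{(j-1)}_{\lambda_k}u^{(j-1)}$ with this max-and-cancel argument.
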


\begin{proof}
We show the first estimate by induction over $j$. The other estimate is proved similarly. It is sufficient to show that for any $\mathbf{w}\in \mathbb{S}^{N-j}$ we have
\begin{align*}
\mathbf{w}^\ast R_{j}\left(u^{(j-1)}\right) \begin{pmatrix}0&0\\0&H^{(j)}_{\lambda_k}(\eta_{\lambda_k,t})\end{pmatrix}R_{j}\left(u^{(j-1)}\right)\mathbf{w} = \mathbf{w}^\ast H^{(j-1)}_{\lambda_k}(\eta_{\lambda_k,t})\mathbf{w}\left(1+ \oh_{\nu_j} \left(\frac{1}{Nt^3}\right)\right)
\end{align*}
uniformly in $\mathbf{w}$. Define
\begin{equation*}
\hat{H}^{(j-1)}_{\lambda_k}(\eta_{\lambda_k,t}) = \left[\left(A^{(j-1)}-\lambda_k\right)^\ast(I-u^{(j-1)}u^{(j-1),\ast})\left(A^{(j-1)}-\lambda_k\right) + \eta_{\lambda_k,t}^2\right]^{-1}.
\end{equation*}
By Lemma 2.1 of \cite{MO}, we have
\begin{align*}
&\mathbf{w}^\ast R_{j}\left(u^{(j-1)}\right) \begin{pmatrix}0&0\\0&H^{(j)}_{\lambda_k}(\eta_{\lambda_k,t})\end{pmatrix}R_{j}\left(u^{(j-1)}\right) \mathbf{w} \\
&= \mathbf{w}^\ast\hat{H}^{(j-1)}_{\lambda_k}(\eta_{\lambda_k,t})\mathbf{w} - \frac{\left|\mathbf{w}^\ast\hat{H}^{(j-1)}_{\lambda_k}(\eta_{\lambda_k,t})u^{(j-1)}\right|^2}{u^{(j-1),\ast}\hat{H}^{(j-1)}_{\lambda_k}(\eta_{\lambda_k,t})u^{(j-1)}}.
\end{align*}
We can use the Woodbury's identity to express $\hat{H}^{(j-1)}_{\lambda_k}(\eta_{\lambda_k,t})$ in terms of $H^{(j-1)}_{\lambda_k}(\eta_{\lambda_k,t})$ as follows.
\begin{align}
&\hat{H}^{(j-1)}_{\lambda_k}(\eta_{\lambda_k,t}) \nonumber\\
&= H^{(j-1)}_{\lambda_k}(\eta_{\lambda_k,t}) + \frac{H^{(j-1)}_{\lambda_k}(\eta_{\lambda_k,t})\left(A^{(j-1)}-\lambda_k\right)^\ast u^{(j-1)}u^{(j-1),\ast}\left(A^{(j-1)}-\lambda_k\right)H^{(j-1)}_{\lambda_k}(\eta_{\lambda_k,t})}{1-u^{(j-1),\ast}\left(A^{(j-1)}-\lambda_k\right)H^{(j-1)}_{\lambda_k}(\eta_{\lambda_k,t})\left(A^{(j-1)}-\lambda_k\right)^\ast u^{(j-1)}}\nonumber\\
&= H^{(j-1)}_{\lambda_k}(\eta_{\lambda_k,t}) + \frac{H^{(j-1)}_{\lambda_k}(\eta_{\lambda_k,t})\left(A^{(j-1)}-\lambda_k\right)^\ast u^{(j-1)}u^{(j-1),\ast}\left(A^{(j-1)}-\lambda_k\right)H^{(j-1)}_{\lambda_k}(\eta_{\lambda_k,t})}{\eta_{\lambda_k,t}^2 u^{(j-1),\ast}\tilde{H}^{(j-1)}_{\lambda_k}(\eta_{\lambda_k,t}) u^{(j-1)}}.\label{eq:woodbury}
\end{align}
Using this identity and Corollary \ref{cor:conc} we see that
\begin{align*}
\mathbf{w}^\ast \hat{H}^{(j-1)}_{\lambda_k}(\eta_{\lambda_k,t})\mathbf{w}- \mathbf{w}^\ast H^{(j-1)}_{\lambda_k}(\eta_{\lambda_k,t})\mathbf{w}&= \frac{\left|\mathbf{w}^\ast H^{(j-1)}_{\lambda_k}(\eta_{\lambda_k,t})\left(A^{(j-1)}-\lambda_k\right)^\ast u^{(j-1)}\right|^2}{\eta_{\lambda_k,t}^2 u^{(j-1),\ast}\tilde{H}^{(j-1)}_{\lambda_k}(\eta_{\lambda_k,t}) u^{(j-1)}}\\
&\prec_{\nu_j} \frac{1}{Nt^3} \mathbf{w}^\ast H^{(j-1)}_{\lambda_k}(\eta_{\lambda_k,t})\mathbf{w}.
\end{align*}
Using \eqref{eq:woodbury} again we get
\begin{align*}
&\left|\mathbf{w}^\ast\hat{H}^{(j-1)}_{\lambda_k}(\eta_{\lambda_k,t})u^{(j-1)}\right|^2 \lesssim \left|\mathbf{w}^\ast H^{(j-1)}_{\lambda_k}(\eta_{\lambda_k,t})u^{(j-1)}\right|^2 \\
&+ \frac{\left|\mathbf{w}^\ast H^{(j-1)}_{\lambda_k}(\eta_{\lambda_k,t})\left(A^{(j-1)}-\lambda_k\right)^\ast u^{(j-1)}\right|^2 \left|u^{(j-1),\ast} H^{(j-1)}_{\lambda_k}(\eta_{\lambda_k,t})\left(A^{(j-1)}-\lambda_k\right)^\ast u^{(j-1)} \right|^2}{\eta_{\lambda_k,t}^4 \left(u^{(j-1),\ast}\tilde{H}^{(j-1)}_{\lambda_k}(\eta_{\lambda_k,t}) u^{(j-1)}\right)^2} 
\end{align*}
and
\begin{align*}
&u^{(j-1),\ast}\hat{H}^{(j-1)}_{\lambda_k}(\eta_{\lambda_k,t})u^{(j-1)}\\
&\ge \max\left\{u^{(j-1),\ast}H^{(j-1)}_{\lambda_k}(\eta_{\lambda_k,t})u^{(j-1)}; \frac{\left|u^{(j-1),\ast} H^{(j-1)}_{\lambda_k}(\eta_{\lambda_k,t})\left(A^{(j-1)}-\lambda_k\right)^\ast u^{(j-1)} \right|^2}{\eta_{\lambda_k,t}^2 u^{(j-1),\ast}\tilde{H}^{(j-1)}_{\lambda_k}(\eta_{\lambda_k,t}) u^{(j-1)}} \right\}.
\end{align*}
Thus, combining these two bounds with Corollary \ref{cor:conc}, we get
\begin{align*}
\frac{\left|\mathbf{w}^\ast\hat{H}^{(j-1)}_{\lambda_k}(\eta_{\lambda_k,t})u^{(j-1)}\right|^2}{u^{(j-1),\ast}\hat{H}^{(j-1)}_{\lambda_k}(\eta_{\lambda_k,t})u^{(j-1)}} &\lesssim \frac{\left|\mathbf{w}^\ast H^{(j-1)}_{\lambda_k}(\eta_{\lambda_k,t})u^{(j-1)}\right|^2}{u^{(j-1),\ast}H^{(j-1)}_{\lambda_k}(\eta_{\lambda_k,t})u^{(j-1)}} \\
&+ \frac{\left|\mathbf{w}^\ast H^{(j-1)}_{\lambda_k}(\eta_{\lambda_k,t})\left(A^{(j-1)}-\lambda_k\right)^\ast u^{(j-1)}\right|^2}{\eta_{\lambda_k,t}^2 u^{(j-1),\ast}\tilde{H}^{(j-1)}_{\lambda_k}(\eta_{\lambda_k,t}) u^{(j-1)}}\\
&\prec_{\nu_j} \frac{1}{Nt^3} \mathbf{w}^\ast H^{(j-1)}_{\lambda_k}(\eta_{\lambda_k,t})\mathbf{w},
\end{align*}
which completes the proof.
\end{proof}

\begin{corollary}\label{cor:vector-bound-first-entry}
Suppose $\mathbf{w}\in\C^N$ is a deterministic unit vector. Then for any $j\in\llbracket 1,m_R\rrbracket$ we have
\begin{align*}
\left|\mathbf{w}^\ast U_{j-1} \begin{pmatrix}0\\u^{(j-1)}\end{pmatrix}\right|^2 \prec \frac{t}{N}\mathbf{w}^\ast H_{\lambda_j}(\eta_{\lambda_j,t})\mathbf{w}.
\end{align*}
For any $j\in\llbracket m_R+1,m\rrbracket$ we have
\begin{align*}
\left|\mathbf{w}^\ast U_{j-1} \begin{pmatrix}0\\v^{(j-1)}\end{pmatrix}\right|^2 \prec \frac{t}{N}\mathbf{w}^\ast \tilde{H}_{\lambda_j}(\eta_{\lambda_j,t})\mathbf{w}.
\end{align*}
\end{corollary}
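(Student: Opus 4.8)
The plan is to obtain this as a packaging of the concentration bound \eqref{eq:conc4} in Corollary \ref{cor:conc} together with Lemma \ref{lem:wHw}, after conditioning on the integration variables of the first $j-1$ Householder steps. I describe the right-eigenvector range $j\in\llbracket 1,m_R\rrbracket$; the left range $j\in\llbracket m_R+1,m\rrbracket$ is identical after replacing $A^{(j-1)}-\lambda_j$ by its adjoint and using \eqref{eq:conc4v} and the $\tilde{H}$-version of Lemma \ref{lem:wHw}. For $j=1$ one has $U_0=I_N$ and $H^{(0)}_{\lambda_1}=H_{\lambda_1}$, so the statement is exactly \eqref{eq:conc4} with $j=1$; hence assume $j\geq2$.

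Condition on $u^{(0)},\dots,u^{(j-2)}$. This data determines $A^{(j-1)}$ and hence $U_{j-1}$, so conditionally $U_{j-1}^{\ast}\mathbf{w}\in\C^{N}$ is deterministic; write $U_{j-1}^{\ast}\mathbf{w}=\begin{pmatrix}\ast\\ \mathbf{p}\end{pmatrix}$ with $\mathbf{p}\in\C^{N-j+1}$ the part supported on the last $N-j+1$ coordinates, so that $\|\mathbf{p}\|\leq\|U_{j-1}^{\ast}\mathbf{w}\|=1$ and
\[
\mathbf{w}^{\ast}U_{j-1}\begin{pmatrix}0\\ u^{(j-1)}\end{pmatrix}=\left(U_{j-1}^{\ast}\mathbf{w}\right)^{\ast}\begin{pmatrix}0\\ u^{(j-1)}\end{pmatrix}=\mathbf{p}^{\ast}u^{(j-1)} .
\]
We may assume $\mathbf{p}\neq0$, as otherwise both sides of the target estimate vanish. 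Since \eqref{eq:conc4} holds uniformly over all deterministic unit vectors of $\mathbb{S}^{N-j}$, it applies to the conditionally-deterministic unit vector $\mathbf{p}/\|\mathbf{p}\|$, giving
\[
\left|\mathbf{p}^{\ast}u^{(j-1)}\right|^{2}=\|\mathbf{p}\|^{2}\left|\left(\mathbf{p}/\|\mathbf{p}\|\right)^{\ast}u^{(j-1)}\right|^{2}\prec_{\nu_j}\frac{t}{N}\,\mathbf{p}^{\ast}H^{(j-1)}_{\lambda_j}(\eta_{\lambda_j,t})\,\mathbf{p} .
\]

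It remains to upgrade the right-hand side to $\frac{t}{N}\mathbf{w}^{\ast}H_{\lambda_j}(\eta_{\lambda_j,t})\mathbf{w}$. Undoing the projection and conjugation,
\[
\mathbf{p}^{\ast}H^{(j-1)}_{\lambda_j}(\eta_{\lambda_j,t})\,\mathbf{p}=\mathbf{w}^{\ast}U_{j-1}\begin{pmatrix}0&0\\ 0&H^{(j-1)}_{\lambda_j}(\eta_{\lambda_j,t})\end{pmatrix}U_{j-1}^{\ast}\mathbf{w},
\]
and Lemma \ref{lem:wHw} (with index $j-1$), together with $\frac{1}{Nt^{3}}\ll1$ coming from $t\geq N^{-1/3+\epsilon_0}$, shows this is $\leq C\,\mathbf{w}^{\ast}H_{\lambda_j}(\eta_{\lambda_j,t})\mathbf{w}$ on an event $\mathcal{G}$ that depends only on $u^{(0)},\dots,u^{(j-2)}$ and satisfies $\pp(\mathcal{G}^{C})=O(N^{-D})$ for every $D>0$. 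Intersecting $\mathcal{G}$ with the event produced by the previous display and integrating over $u^{(0)},\dots,u^{(j-2)}$ gives, for every $\varepsilon,D>0$ and uniformly in $\mathbf{w}$,
\[
\pp_{\mu}\left(\left|\mathbf{w}^{\ast}U_{j-1}\begin{pmatrix}0\\ u^{(j-1)}\end{pmatrix}\right|^{2}>CN^{\varepsilon}\,\frac{t}{N}\,\mathbf{w}^{\ast}H_{\lambda_j}(\eta_{\lambda_j,t})\mathbf{w}\right)=O(N^{-D}),
\]
which is the claimed bound.

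The only delicate point, and the one I would flag as the main obstacle, is the interplay of conditioning: one must use that \eqref{eq:conc4} is uniform over deterministic unit vectors in order to legitimately substitute the random vector $\mathbf{p}/\|\mathbf{p}\|$ (which depends on $u^{(0)},\dots,u^{(j-2)}$ but not on $u^{(j-1)}$), and one must carefully respect the nested conditional structure of $\mu=\prod_j\d\nu_j\prod_j\d\omega_j$ (each $\nu_j$ depending on $u^{(0)},\dots,u^{(j-2)}$) when combining the event from \eqref{eq:conc4} with the event $\mathcal{G}$ from Lemma \ref{lem:wHw}. Beyond this bookkeeping there is no new difficulty; the corollary is a direct consequence of Corollary \ref{cor:conc} and Lemma \ref{lem:wHw}.
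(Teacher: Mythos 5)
Your proposal is correct and is exactly the paper's argument: the paper proves this corollary in one line by combining \eqref{eq:conc4}/\eqref{eq:conc4v} with Lemma \ref{lem:wHw}, and your write-up simply fills in the conditioning and unit-normalization bookkeeping (including the $j=1$ case and the use of $t\geq N^{-1/3+\epsilon_0}$ to absorb the $1+\oh(N^{-1}t^{-3})$ factor) that the paper leaves implicit.
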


\begin{proof}
Follows directly from the bounds \eqref{eq:conc4} and \eqref{eq:conc4v} from Corollary \ref{cor:conc} and Lemma \ref{lem:wHw}.
\end{proof}
\subsection{Proof of Lemma \ref{lemma:simplify}}
First, we get the high probability bounds on some quantities involved in the proof of Lemma \ref{lemma:simplify}.

\begin{lemma}\label{lem:evec-first-entry-error}
For any $k,j\in\llbracket 1,m_R\rrbracket$ such that $k\le j$ we have
\begin{align*}
\left|w_k^\ast \prod_{l=k+1}^{j-1} \left(I_{l-k-1}\oplus R_l\left(u^{(l-1)}\right)\right)\begin{pmatrix}0\\u^{(j-1)}\end{pmatrix}\right|^2 \prec_{\omega_k,\nu_j}  \frac{t+|\lambda_k-\lambda_j|^2}{Nt}.
\end{align*}
For any $k,j\in\llbracket m_R+1,m\rrbracket$ such that $k\le j$ we have
\begin{align*}
\left|w_k^\ast \prod_{l=k+1}^{j-1} \left(I_{l-k-1}\oplus R_l\left(v^{(l-1)}\right)\right)\begin{pmatrix}0\\v^{(j-1)}\end{pmatrix}\right|^2 \prec_{\omega_k,\nu_j} \frac{t+|\lambda_k-\lambda_j|^2}{Nt}.
\end{align*}
\end{lemma}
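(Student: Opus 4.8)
The plan is to exploit that, conditionally on $u^{(0)},\ldots,u^{(k-1)}$, the vector $w_{k}$ is a complex Gaussian with mean $b_{k}$ and covariance $\tfrac tN$ which is independent of $u^{(k)},\ldots,u^{(j-1)}$, and then to peel off the Householder rotations one at a time. I treat the right‑eigenvector case with $k<j$; the left case follows by replacing $A$ with $A^{\ast}$, and $k=j$ is similar but simpler. I will also use the elementary identity $b_{k}=\pi_{k}^{\ast}R_{k}(u^{(k-1)})\beta$, where $\beta:=(A^{(k-1)}-a_{k})^{\ast}u^{(k-1)}$ is the component of $(A^{(k-1)})^{\ast}u^{(k-1)}$ orthogonal to $u^{(k-1)}$, $a_{k}:=u^{(k-1),\ast}A^{(k-1)}u^{(k-1)}$, and $\pi_{k}:\C^{N-k}\hookrightarrow\C^{N-k+1}$ is the inclusion onto the last $N-k$ coordinates.

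First I would reduce to a resolvent quadratic form. Writing $\tilde P_{j}:=\prod_{l=k+1}^{j-1}(I_{l-k-1}\oplus R_{l}(u^{(l-1)}))$ and letting $\iota:\C^{N-j+1}\hookrightarrow\C^{N-k}$ be the inclusion onto the last $N-j+1$ coordinates, one has $w_{k}^{\ast}\tilde P_{j}\begin{pmatrix}0\\u^{(j-1)}\end{pmatrix}=\mathbf q^{\ast}u^{(j-1)}$ with $\mathbf q:=\iota^{\ast}\tilde P_{j}^{\ast}w_{k}\in\C^{N-j+1}$. Conditioning on everything sampled before $u^{(j-1)}$ (so that $\mathbf q$ is frozen and $u^{(j-1)}\sim\nu_{j}$), \eqref{eq:conc4} gives
\begin{align*}
\left|w_{k}^{\ast}\tilde P_{j}\begin{pmatrix}0\\u^{(j-1)}\end{pmatrix}\right|^{2}&\prec_{\nu_{j}}\frac tN\,\mathbf q^{\ast}H^{(j-1)}_{\lambda_{j}}(\eta_{\lambda_{j},t})\mathbf q\\
&=\frac tN\,w_{k}^{\ast}\tilde P_{j}\begin{pmatrix}0&0\\0&H^{(j-1)}_{\lambda_{j}}(\eta_{\lambda_{j},t})\end{pmatrix}\tilde P_{j}^{\ast}w_{k},
\end{align*}
so it suffices to prove $w_{k}^{\ast}\tilde P_{j}\begin{pmatrix}0&0\\0&H^{(j-1)}_{\lambda_{j}}(\eta_{\lambda_{j},t})\end{pmatrix}\tilde P_{j}^{\ast}w_{k}\prec(t+|\lambda_{k}-\lambda_{j}|^{2})t^{-2}$. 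Although Lemma \ref{lem:wHw} is stated for the full product $U_{j}$, its proof is an induction peeling off one $R_{l}$ at a time using Corollary \ref{cor:conc}; running the same induction for $\tilde P_{j}$ (which peels off levels $j-1,\ldots,k+1$), with $w_{k}$ playing the role of the deterministic vector—legitimate since $w_{k}$ is conditionally independent of the $u^{(k)},\ldots,u^{(j-2)}$ in $\tilde P_{j}$—yields $w_{k}^{\ast}\tilde P_{j}\begin{pmatrix}0&0\\0&H^{(j-1)}_{\lambda_{j}}(\eta_{\lambda_{j},t})\end{pmatrix}\tilde P_{j}^{\ast}w_{k}=w_{k}^{\ast}H^{(k)}_{\lambda_{j}}(\eta_{\lambda_{j},t})w_{k}(1+\oh(\tfrac1{Nt^{3}}))$. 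Splitting $w_{k}=b_{k}+g_{k}$ into mean plus centered Gaussian part, $g_{k}^{\ast}H^{(k)}_{\lambda_{j}}g_{k}\prec1$ by Gaussian concentration (its conditional mean is $\tfrac tN\tr H^{(k)}_{\lambda_{j}}(\eta_{\lambda_{j},t})\lesssim1$ by interlacing and \eqref{assn:1.1}), which is $\ll(t+|\lambda_{k}-\lambda_{j}|^{2})t^{-2}$; so everything reduces to $b_{k}^{\ast}H^{(k)}_{\lambda_{j}}(\eta_{\lambda_{j},t})b_{k}$.

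For this term I would use $b_{k}=\pi_{k}^{\ast}R_{k}(u^{(k-1)})\beta$ together with Lemma 2.1 of \cite{MO} applied to the $(k-1)\to k$ deflation, which gives
\begin{align*}
b_{k}^{\ast}H^{(k)}_{\lambda_{j}}(\eta_{\lambda_{j},t})b_{k}&=\beta^{\ast}R_{k}(u^{(k-1)})\begin{pmatrix}0&0\\0&H^{(k)}_{\lambda_{j}}(\eta_{\lambda_{j},t})\end{pmatrix}R_{k}(u^{(k-1)})\beta\\
&\le\beta^{\ast}\hat H^{(k-1)}_{\lambda_{j}}(\eta_{\lambda_{j},t})\beta,
\end{align*}
where $\hat H^{(k-1)}_{\lambda_{j}}(\eta):=[(A^{(k-1)}-\lambda_{j})^{\ast}(I-u^{(k-1)}u^{(k-1),\ast})(A^{(k-1)}-\lambda_{j})+\eta^{2}]^{-1}$ and the subtracted rank‑one piece is dropped by positivity. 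Expanding $\hat H^{(k-1)}_{\lambda_{j}}$ by Woodbury as in \eqref{eq:woodbury}, writing $\beta=(A^{(k-1)}-\lambda_{j})^{\ast}u^{(k-1)}+(\bar\lambda_{j}-\bar a_{k})u^{(k-1)}$, and repeatedly using the resolvent identities $(A-\lambda)H_{\lambda}(A-\lambda)^{\ast}=I-\eta^{2}\tilde H_{\lambda}$, $(A-\lambda)^{\ast}\tilde H_{\lambda}(A-\lambda)=I-\eta^{2}H_{\lambda}$ and $(A-\lambda)H_{\lambda}=\tilde H_{\lambda}(A-\lambda)$, one reduces $\beta^{\ast}\hat H^{(k-1)}_{\lambda_{j}}\beta$ to a combination of: the bounded quantities $u^{(k-1),\ast}[I-\eta_{\lambda_{j},t}^{2}\tilde H^{(k-1)}_{\lambda_{j}}]u^{(k-1)}\le1$; the factor $|\lambda_{j}-a_{k}|^{2}\prec|\lambda_{j}-\lambda_{k}|^{2}+t$ (using $|\lambda_{k}-a_{k}|^{2}\le\|(A^{(k-1)}-\lambda_{k})u^{(k-1)}\|^{2}\prec t$, a Laplace‑type estimate from the weight $\exp\{-\tfrac Nt\|(A^{(k-1)}-\lambda_{k})u^{(k-1)}\|^{2}\}$ defining $\nu_{k}$); the bound $u^{(k-1),\ast}H^{(k-1)}_{\lambda_{j}}(\eta_{\lambda_{j},t})u^{(k-1)}\prec t\,\ntr{H^{(k-1)}_{\lambda_{k}}(\eta_{\lambda_{k},t})H^{(k-1)}_{\lambda_{j}}(\eta_{\lambda_{j},t})}\lesssim t^{-2}$ from \eqref{eq:conc1}, \eqref{assn:1.4} and Cauchy--Schwarz; and the lower bound $u^{(k-1),\ast}\tilde H^{(k-1)}_{\lambda_{j}}(\eta_{\lambda_{j},t})u^{(k-1)}\succ(t+|\lambda_{k}-\lambda_{j}|^{2})^{-1}$ from \eqref{eq:uHu-lower-bound}; together with $\eta_{\lambda_{j},t}\asymp t$. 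The bookkeeping then yields $\beta^{\ast}\hat H^{(k-1)}_{\lambda_{j}}(\eta_{\lambda_{j},t})\beta\prec(t+|\lambda_{k}-\lambda_{j}|^{2})t^{-2}$, which combined with the previous paragraph finishes the proof.

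The main obstacle is this last term: the trivial bound $b_{k}^{\ast}H^{(k)}_{\lambda_{j}}b_{k}\le\|H^{(k)}_{\lambda_{j}}\|_{\mathrm{op}}\|b_{k}\|^{2}\lesssim t^{-2}$ is off by a factor $t$, and recovering the gain requires both the cancellations in the resolvent identities above and the lower bound on $u^{(k-1),\ast}\tilde H^{(k-1)}_{\lambda_{j}}u^{(k-1)}$; the extension of Lemma \ref{lem:wHw} to a partial product $\tilde P_{j}$ is routine but must be set up carefully with respect to the conditional independence of $w_{k}$ and the later rotation variables.
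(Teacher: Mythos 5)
Your proposal is correct and follows essentially the same route as the paper: replace $w_{k}$ by $b_{k}$ using the Gaussian structure of $\omega_{k}$, apply the master concentration inequality in $u^{(j-1)}$, peel off the intermediate Householder reflections one level at a time via Lemma 2.1 of \cite{MO} and Woodbury with concentration at each level, and conclude with the lower bound $u^{(k-1),\ast}\tilde{H}^{(k-1)}_{\lambda_{j}}u^{(k-1)}\succ(t+|\lambda_{k}-\lambda_{j}|^{2})^{-1}$. The only organizational difference is that the paper inserts the shift $A^{(k-1)}\to A^{(k-1)}-\lambda_{j}$ at the outset (using that the relevant vector has vanishing first coordinate) and carries $(A^{(k-1)}-\lambda_{j})^{\ast}u^{(k-1)}$ through the peeling, whereas you carry $w_{k}$ and only decompose $\beta$ at the end, which costs you the extra (correct) estimate $|\lambda_{j}-a_{k}|^{2}\prec t+|\lambda_{j}-\lambda_{k}|^{2}$; both versions close.
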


\begin{proof}
We prove the first right eigenvector bound. The other bound is proved similarly. Since the measure $\omega_k(w_k)$ is Gaussian with mean $b_k$ and variance $\frac{t}{N}$, we have $\|w_k-b_k\|\prec t^{\frac12}N^{-\frac12}$. Thus
\begin{align*}
&\left|w_k^\ast \prod_{l=k+1}^{j-1} \left(I_{l-k-1}\oplus R_l\left(u^{(l-1)}\right)\right)\begin{pmatrix}0\\u^{(j-1)}\end{pmatrix}\right| \\
&= \left|b_k^\ast \prod_{l=k+1}^{j-1} \left(I_{l-k-1}\oplus R_l\left(u^{(l-1)}\right)\right)\begin{pmatrix}0\\u^{(j-1)}\end{pmatrix}\right| + \oh_{\omega_k}(t^{\frac12}N^{-\frac12})\\
&= \left|u^{(k-1),\ast}A^{(k-1)}R_{k}(u^{(k-1)})\prod_{l=k+1}^{j-1} \left(I_{l-k}\oplus R_l\left(u^{(l-1)}\right)\right)\begin{pmatrix}0\\u^{(j-1)}\end{pmatrix}\right| + \oh_{\omega_k}(t^{\frac12}N^{-\frac12})\\
&= \left|u^{(k-1),\ast}\left(A^{(k-1)}-\lambda_j\right)\prod_{l=k}^{j-1} \left(I_{l-k}\oplus R_l\left(u^{(l-1)}\right)\right)\begin{pmatrix}0\\u^{(j-1)}\end{pmatrix}\right| + \oh_{\omega_k}(t^{\frac12}N^{-\frac12}).
\end{align*}
For brevity we denote 
\[
U_{k,j-1} = \prod_{l=k}^{j-1} \left(I_{l-k}\oplus R_l\left(u^{(l-1)}\right)\right).
\]
Now we use Lemma \ref{lem:master-concentration} with
\[
B = U_{k,j-1}^\ast \left(A^{(k-1)}-\lambda_j\right)^\ast u^{(k-1)} u^{(k-1),\ast}\left(A^{(k-1)}-\lambda_j\right) U_{k,j-1}.
\]
Since this is a rank $1$ matrix, by Lemma \ref{lem:master-concentration} we have
\begin{align*}
&\left|u^{(k-1),\ast}\left(A^{(k-1)}-\lambda_j\right)U_{k,j-1}\begin{pmatrix}0\\u^{(j-1)}\end{pmatrix}\right|^2 \prec_{\nu_j} \frac{t}{N}\tr B\begin{pmatrix}0&0\\0& H^{(j-1)}_{\lambda_j}\left(\eta_{\lambda_j,t}\right)\end{pmatrix} \\
&= \frac{t}{N}u^{(k-1),\ast}\left(A^{(k-1)}-\lambda_j\right) U_{k,j-1} \begin{pmatrix}0&0\\0& H^{(j-1)}_{\lambda_j}\left(\eta_{\lambda_j,t}\right)\end{pmatrix} U_{k,j-1}^\ast \left(A^{(k-1)}-\lambda_j\right)^\ast u^{(k-1)} \\
&\le \frac{t}{N}u^{(k-1),\ast}\left(A^{(k-1)}-\lambda_j\right) U_{k,j-2} \begin{pmatrix}0&0\\0& H^{(j-2)}_{\lambda_j}\left(\eta_{\lambda_j,t}\right)\end{pmatrix} U_{k,j-2}^\ast \left(A^{(k-1)}-\lambda_j\right)^\ast u^{(k-1)} \\
&+ \frac{t\left|u^{(k-1),\ast}\left(A^{(k-1)}-\lambda_j\right) U_{k,j-2} \begin{pmatrix}0&0\\0& H^{(j-2)}_{\lambda_j}\left(\eta_{\lambda_j,t}\right)\left(A^{(j-2)}-\lambda_j\right)^\ast u^{(j-2)}\end{pmatrix}\right|^2}{N\eta_{\lambda_j,t}^2 u^{(j-2),\ast}\tilde{H}^{(j-2)}_{\lambda_j}\left(\eta_{\lambda_j,t}\right)u^{(j-2)}}.
\end{align*}
where in the last step we used Lemma 2.1 of \cite{MO} and Woodbury's identity. As long as $j\ge k+2$, we can use concentration in $u^{(j-2)}$ to bound the numerator of the final term by
\begin{align*}
&\frac{t}{N} u^{(k-1),\ast}\left(A^{(k-1)}-\lambda_j\right) U_{k,j-2} \\
&\times\begin{pmatrix}0&0\\0& H^{(j-2)}_{\lambda_j}\left(\eta_{\lambda_j,t}\right)\left(A^{(j-2)}-\lambda_j\right)^\ast H^{(j-2)}_{\lambda_{j-1}}\left(\eta_{\lambda_{j-1},t}\right)\left(A^{(j-2)}-\lambda_j\right) H^{(j-2)}_{\lambda_j}\left(\eta_{\lambda_j,t}\right) \end{pmatrix} \\
&\times U_{k,j-2}^\ast \left(A^{(k-1)}-\lambda_j\right)^\ast u^{(k-1)}
\end{align*}
in the sense of stochastic domination with respect to $\nu_{j-1}$. We deal with the resolvent product in the middle by bounding the middle $H$ with a trivial norm bound as follows.
\begin{align*}
&H^{(j-2)}_{\lambda_j}\left(\eta_{\lambda_j,t}\right)\left(A^{(j-2)}-\lambda_j\right)^\ast H^{(j-2)}_{\lambda_{j-1}}\left(\eta_{\lambda_{j-1},t}\right)\left(A^{(j-2)}-\lambda_j\right) H^{(j-2)}_{\lambda_j}\left(\eta_{\lambda_j,t}\right)\\
&\le \eta_{\lambda_{j-1},t}^{-2} H^{(j-2)}_{\lambda_j}\left(\eta_{\lambda_j,t}\right)\left(A^{(j-2)}-\lambda_j\right)^\ast \left(A^{(j-2)}-\lambda_j\right) H^{(j-2)}_{\lambda_j}\left(\eta_{\lambda_j,t}\right)\\
&\le \eta_{\lambda_{j-1},t}^{-2} H^{(j-2)}_{\lambda_j}\left(\eta_{\lambda_j,t}\right).
\end{align*}
Then putting the previous three displays together we get
\begin{align*}
&\left|u^{(k-1),\ast}\left(A^{(k-1)}-\lambda_j\right)U_{k,j-1}\begin{pmatrix}0\\u^{(j-1)}\end{pmatrix}\right|^2 \prec_{\nu_j,\nu_{j-1}}\\
&\frac{t}{N}u^{(k-1),\ast}\left(A^{(k-1)}-\lambda_j\right) U_{k,j-2} \begin{pmatrix}0&0\\0& H^{(j-2)}_{\lambda_j}\left(\eta_{\lambda_j,t}\right)\end{pmatrix} U_{k,j-2}^\ast \left(A^{(k-1)}-\lambda_j\right)^\ast u^{(k-1)}\\
&\times \left(1+ N^{-1}t^{-3}\left(u^{(j-2),\ast}\tilde{H}^{(j-2)}_{\lambda_j}\left(\eta_{\lambda_j,t}\right)u^{(j-2)}\right)^{-1}\right).
\end{align*}
Now using the concentration lower bound \eqref{eq:uHu-lower-bound}, we get
\begin{align*}
&\left|u^{(k-1),\ast}\left(A^{(k-1)}-\lambda_j\right)U_{k,j-1}\begin{pmatrix}0\\u^{(j-1)}\end{pmatrix}\right|^2 \prec_{\nu_{j},\nu_{j-1}}\\
&\frac{t}{N}u^{(k-1),\ast}\left(A^{(k-1)}-\lambda_j\right) U_{k,j-2} \begin{pmatrix}0&0\\0& H^{(j-2)}_{\lambda_j}\left(\eta_{\lambda_j,t}\right)\end{pmatrix} U_{k,j-2}^\ast \left(A^{(k-1)}-\lambda_j\right)^\ast u^{(k-1)}.
\end{align*}
Now we repeat these steps until we get
\begin{align*}
&\left|u^{(k-1),\ast}\left(A^{(k-1)}-\lambda_j\right)U_{k,j-1}\begin{pmatrix}0\\u^{(j-1)}\end{pmatrix}\right|^2 \prec_{\nu_{j\rightarrow k+1}}\\
&\frac{t}{N}u^{(k-1),\ast}\left(A^{(k-1)}-\lambda_j\right) R_k\left(u^{(k-1)}\right) \begin{pmatrix}0&0\\0& H^{(k)}_{\lambda_j}\left(\eta_{\lambda_j,t}\right)\end{pmatrix} R_k\left(u^{(k-1)}\right) \left(A^{(k-1)}-\lambda_j\right)^\ast u^{(k-1)}.
\end{align*}
We use Lemma 2.1 of \cite{MO} and Woodbury's identity again to get
\begin{align*}
&\left|u^{(k-1),\ast}\left(A^{(k-1)}-\lambda_j\right)U_{k,j-1}\begin{pmatrix}0\\u^{(j-1)}\end{pmatrix}\right|^2 \\
&\prec_{\nu_{j\rightarrow k+1}}\frac{t}{N}u^{(k-1),\ast}\left(A^{(k-1)}-\lambda_j\right) H^{(k-1)}_{\lambda_j}\left(\eta_{\lambda_j,t}\right) \left(A^{(k-1)}-\lambda_j\right)^\ast u^{(k-1)}\\
&+ \frac{t\left(u^{(k-1),\ast}\left(A^{(k-1)}-\lambda_j\right) H^{(k-1)}_{\lambda_j}\left(\eta_{\lambda_j,t}\right) \left(A^{(k-1)}-\lambda_j\right)^\ast u^{(k-1)}\right)^2}{N\eta_{\lambda_j,t}^2 u^{(k-1),\ast}\tilde{H}^{(k-1)}_{\lambda_j}\left(\eta_{\lambda_j,t}\right)u^{(k-1)}}\\
&\prec_{\nu_{j\rightarrow k}} \frac{t+|\lambda_k-\lambda_j|^2}{Nt}. 
\end{align*}
This concludes the proof.
\end{proof}
Now we are ready to prove Lemma \ref{lemma:simplify}.

\begin{proof}[Proof of Lemma \ref{lemma:simplify}]
First, we note that if $M^{(j)}$ has eigenvalue $\lambda$ with right eigenvector $\tilde{u}_j$ and left eigenvector $\tilde{v}_j$, then $M^{(j-1)}$ also has eigenvalue $\lambda$ with right and left eigenvectors given by
\begin{align}
\tilde{u}_{j-1} &= R_j\left(u^{(j-1)}\right)\begin{pmatrix}\frac{w_j^\ast \tilde{u}_j}{\lambda-\lambda_j}\\ \tilde{u}_j\end{pmatrix}, & \tilde{v}_{j-1} &= R_j\left(u^{(j-1)}\right)\begin{pmatrix}0\\ \tilde{v}_j\end{pmatrix}, & j&\in\llbracket 1,m_R\rrbracket;\label{eq:evec-conversion-1}\\
\tilde{u}_{j-1} &= R_j\left(v^{(j-1)}\right)\begin{pmatrix}0\\ \tilde{u}_j\end{pmatrix}, & \tilde{v}_{j-1} &= R_j\left(v^{(j-1)}\right)\begin{pmatrix}\frac{w_j^\ast \tilde{v}_j}{\lambda-\lambda_j}\\ \tilde{v}_j\end{pmatrix}, & j&\in\llbracket m_R+1,m\rrbracket.\label{eq:evec-conversion-2}
\end{align}
Using this observation, we can express the left and right eigenvectors of $M$ denoted by $u_1,\ldots,u_{m_R}$, $v_{m_R+1},\ldots, v_{m}$ through $u, \ldots, u^{(m_R-1)}, v^{(m_R)}, \ldots, v^{(m-1)}$. For instance, take $j\in \llbracket 1,m_R\rrbracket$. We consecutively apply the identities \eqref{eq:evec-conversion-1} to compute the right eigenvectors of $M^{(j-2)},\ldots,M^{(1)},M$ corresponding to the eigenvalue $\lambda_j$. In the end, we get
\begin{align}\label{eq:uj-initial}
u_j = U_{j-1}\begin{pmatrix}0\\u^{(j-1)}\end{pmatrix} + \sum_{s=1}^{j-1} \sum_{k=1}^{j-s} \sum_{s= i_1<i_2<\ldots<i_{k+1}=j} a(i_1,\ldots, i_{k+1}) U_{s-1}\begin{pmatrix}0\\u^{(s-1)}\end{pmatrix},
\end{align}
where the scalar coefficients are given by
\begin{align*}
a(i_1,\ldots, i_{k+1}) = \prod_{n=1}^{k} (\lambda_{i_n}-\lambda_{i_{k+1}})^{-1} w_{i_n}^\ast U_{i_n,i_{n+1}-1}\begin{pmatrix}0\\u^{(i_{n+1}-1)}\end{pmatrix}.
\end{align*}
Using Lemma \ref{lem:evec-first-entry-error} to bound the coefficients $a(i_1,\ldots, i_{k+1})$, we get
\begin{align*}
\left|a(i_1,\ldots, i_{k+1})\right|^2 \prec (Nt)^{-k}\prod_{n=1}^k (t + |\lambda_{i_n}-\lambda_{i_{n+1}}|^2)|\lambda_{i_n} - \lambda_{j}|^{-2},
\end{align*}
where $i_{k+1} = j$. We split the set of indices $I = \{i_1,\ldots, i_k\}$ into two disjoint subsets $I = I_1 \cup I_2$ such that
\begin{align*}
I_1 &= \{i\in I:\, |\lambda_i - \lambda_j| > t^{1/2}\};\\
I_2 &= \{i\in I:\, N^{-1/2+\upsilon} \le |\lambda_i - \lambda_j| \le t^{1/2}\}.
\end{align*}
Then 
\[
\prod_{n=1}^k |\lambda_{i_n} - \lambda_{j}|^{-2} \le t^{-|I_1|}N^{|I_2|(1-2\upsilon)}.
\]
If $|I_1|\ge|I_2|$, we bound each term $(t + |\lambda_{i_n}-\lambda_{i_{n+1}}|^2)$ in the product above by a constant and write $(Nt)^{-k} = (Nt)^{-|I_1|-|I_2|}$. Then 
\begin{align*}
\left|a(i_1,\ldots, i_{k+1})\right|^2 \prec N^{-|I_1|-2\upsilon |I_2| }t^{-2|I_1|-|I_2|} \le \left(\frac{1}{Nt^3}\right)^{|I_1|} = N^{-3|I_1|\epsilon_0} \le N^{-\epsilon_0}.
\end{align*}
If $|I_1|<|I_2|$, we note that for the distance between to eigenvalues to be larger that $2t^{1/2}$, at least one of them has to be outside of the set $I_2$, thus
\[
\left|\left\{n\in \llbracket1,k\rrbracket:\, |\lambda_{i_n}-\lambda_{i_{n+1}}| > 2t^{1/2}\right\} \right|\le 2|I_1|.
\]
Then for any $n$ outside of this set we can bound $t + |\lambda_{i_n}-\lambda_{i_{n+1}}|^2 \le 5t$, and for any $n$ in this set $t + |\lambda_{i_n}-\lambda_{i_{n+1}}|^2 \le C$. Then the product is bounded by
\[
\prod_{n=1}^k (t + |\lambda_{i_n}-\lambda_{i_{n+1}}|^2) \lesssim t^{k-2|I_1|} = t^{|I_2|-|I_1|}.
\]
Hence,
\begin{align*}
\left|a(i_1,\ldots, i_{k+1})\right|^2 \prec N^{-|I_1|-2\upsilon |I_2|}t^{-3|I_1|}  \le N^{-\upsilon}.
\end{align*}
Plugging these coefficient bounds into \eqref{eq:uj-initial}, we get
\begin{align*}
|T_j|u_j &= |T_j|U_{j-1}\begin{pmatrix}0\\u^{(j-1)}\end{pmatrix} + \sum_{s=1}^{j-1} \oh(N^{-\epsilon_1}) |T_j|U_{s-1}\begin{pmatrix}0\\u^{(s-1)}\end{pmatrix}.
\end{align*}
Here $\epsilon_1 = \min\{\epsilon_0, \upsilon\}$. Now we recall that
\[
|T_j| = \sum_{s=1}^{l_j}q_{j,s}^{\frac12}\mathbf{w}_{j,k}\mathbf{w}_{j,k}^\ast.
\]
Thus it remains to show that 
\[
\left|\mathbf{w}_{j,k}^\ast U_{s-1}\begin{pmatrix}0\\u^{(s-1)}\end{pmatrix}\right| \prec N^{-\frac12}.
\]
This follows by Corollary \ref{cor:vector-bound-first-entry} and the assumption {\bf A3}. The proof in the left eigenvector case is similar.
\end{proof}

\section{Proof of Theorem \ref{theorem:main}}\label{sec:comparison}
Given Theorem \ref{theorem:maingauss}, the first step is the following moment matching, which is Lemma 3.4 in \cite{EYY11}.
\begin{lemma}\label{lemma:ex35M}
Fix $\beta>0$ and $t=N^{-\beta}$. There exists a matrix $\tilde{A}$ such that the following hold.
\begin{enumerate}
\item The entries of $\tilde{A}$ satisfy $\E\tilde{A}_{ij}=0$ and $\E|\tilde{A}_{ij}|^{2}=N^{-1}$ and $\E|\tilde{A}_{ij}|^{p}\leq C_{p}N^{-p/2}$.
\item Consider $A$ and $\tilde{M}_{t}=(1+t)^{-1/2}(\tilde{A}+t^{1/2}B)$, where $B$ is sampled from an independent Gaussian ensemble. Set $m_{k}(i,j)=\E A_{ij}^{k}$ and $\tilde{m}_{k}(i,j)=\E (\tilde{M}_{t})_{ij}^{k}$. Then for all $i,j$ and $k=1,2,3$ and for some $\delta_{t}>0$, we have 
\begin{align*}
m_{k}(i,j)=\tilde{m}_{k}(i,j)\quad\mathrm{and}\quad m_{4}(i,j)=\tilde{m}_{4}(i,j)+O(N^{-\delta_{t}})
\end{align*}
\item The matrices $A$ and $\tilde{M}_{t}$ are independent.
\end{enumerate}
\end{lemma}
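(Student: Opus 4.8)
We recall the moment-matching construction of \cite{EYY11}. Take $\tilde A$ and the Gaussian matrix $B$ to be sampled on a probability space independent of that of $A$; since $\tilde M_{t}=(1+t)^{-1/2}(\tilde A+t^{1/2}B)$ is a deterministic function of $(\tilde A,B)$, item 3 follows at once, and it remains only to specify the law of each $\tilde A_{ij}$ so that items 1 and 2 hold. Fix $(i,j)$ and pass to standardized entries $\xi=\sqrt N A_{ij}$, $\tilde\xi=\sqrt N\tilde A_{ij}$; let $g$ be a standard Gaussian of the relevant symmetry class independent of $\tilde\xi$, so that $\sqrt N(\tilde M_{t})_{ij}=(1+t)^{-1/2}(\tilde\xi+t^{1/2}g)$. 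Binomial expansion and independence give
\[
\E\big(\tilde\xi+t^{1/2}g\big)^{k}=\sum_{l=0}^{k}\binom{k}{l}\,t^{(k-l)/2}\,\E\tilde\xi^{\,l}\,\E g^{\,k-l},
\]
so the moments of $\sqrt N(\tilde M_{t})_{ij}$ are explicit polynomials in $\E\tilde\xi,\ldots,\E\tilde\xi^{4}$ with $t$-dependent coefficients.

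Using $\E g=\E g^{3}=0$ and $\E g^{2},\E g^{4}=O(1)$, the requirement $\E\big(\sqrt N(\tilde M_{t})_{ij}\big)^{k}=\E\xi^{k}$ for $k=1,2$ forces $\E\tilde\xi=0$ and $\E\tilde\xi^{2}=\E\xi^{2}+t(\E\xi^{2}-\E g^{2})$ (which is just $\E\xi^{2}$ in the standardized real case), and for $k=3$ it forces $\E\tilde\xi^{3}=(1+t)^{3/2}\E\xi^{3}$. For $k=4$ I would not demand exact equality: set $\E\tilde\xi^{4}$ equal to $\E\xi^{4}$ up to an explicit correction of size $O(t)$, chosen so that the sequence $(1,0,\E\tilde\xi^{2},\E\tilde\xi^{3},\E\tilde\xi^{4})$ has positive semidefinite Hankel matrix. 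Tracking the binomial expansion, the resulting mismatch is $\E\big(\sqrt N(\tilde M_{t})_{ij}\big)^{4}=\E\xi^{4}+O(t)$; undoing the $N^{-1/2}$ scaling this reads $m_{4}(i,j)=\tilde m_{4}(i,j)+O(N^{-2-\beta})$, so item 2 holds with, say, $\delta_{t}=\beta$.

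It remains to realize the prescribed moment vector by a genuine probability law for $\tilde\xi$ whose higher moments are bounded uniformly in $N$ and $(i,j)$. The hypothesis $\E|A_{ij}|^{p}\le C_{p}N^{-p/2}$ gives $\E|\xi|^{p}\le C_{p}$ uniformly, so $\E\xi^{2},\ldots,\E\xi^{4}$ are uniformly bounded and the Hankel matrix of $(1,0,\E\xi^{2},\E\xi^{3},\E\xi^{4})$ is positive semidefinite; the $O(t)$ adjustment of the fourth moment above is exactly what keeps the perturbed Hankel matrix positive semidefinite for every $t=N^{-\beta}$. The truncated fourth-order Hamburger moment problem then produces a finitely supported probability measure with the prescribed first four moments, whose support is bounded in terms of these (uniformly bounded) moments; rescaling by $N^{-1/2}$ yields $\tilde A_{ij}$ satisfying item 1 with constants independent of $N$ and $(i,j)$. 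In the complex case one matches instead all joint moments $\E(\re A_{ij})^{a}(\im A_{ij})^{b}$ with $a+b\le4$ (as is actually needed for the later Green function comparison) by the same binomial bookkeeping and an explicit realizing law, e.g.\ a small deformation of a Gaussian on $\R^{2}$; everything goes through verbatim.

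The main obstacle is this last realization step: guaranteeing that the $O(t)$-perturbed targets still form a valid moment sequence — delicate exactly when the law of $A_{ij}$ sits on the boundary of the moment cone, e.g.\ a two-point distribution with nonzero skewness, which is why one corrects the fourth moment and not the third — and producing a realizing measure with a support bound uniform in $N$ and $(i,j)$. Everything else is the algebra of the binomial expansion together with the trivial independence of $(\tilde A,B)$ from $A$.
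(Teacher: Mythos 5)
The paper does not actually prove this lemma: it is quoted verbatim as Lemma 3.4 of \cite{EYY11}, so the ``paper's proof'' is a citation. Your argument is essentially a reconstruction of the proof of that cited lemma, and it is the right argument: take $(\tilde A,B)$ on a probability space independent of $A$ (item 3 is then immediate), match moments of the standardized entries via the binomial expansion (forcing $\E\tilde\xi=0$, $\E\tilde\xi^{2}=(1+t)\E\xi^{2}-t\E g^{2}$, $\E\tilde\xi^{3}=(1+t)^{3/2}\E\xi^{3}$), and accept an $O(t)$ mismatch in the fourth moment, which after undoing the $N^{-1/2}$ scaling gives $m_{4}-\tilde m_{4}=O(N^{-2-\beta})$ --- in fact this is exactly the strength used later for the bound $K_{4,(ij)}^{\theta}=O(N^{-2-\delta_{t}})$, stronger than the $O(N^{-\delta_{t}})$ stated in the lemma. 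You also correctly identify the one genuinely delicate point, which is the point of the EYY construction: when the law of $\xi$ sits on the boundary of the moment cone ($m_{4}=1+m_{3}^{2}$), the target sequence with third moment $(1+t)^{3/2}m_{3}$ is not realizable unless the fourth moment is pushed up by $O(t)$, and the uniform bound $m_{4}\le C$ guarantees a correction of size $O(t)$ suffices and yields a realizing law with uniformly bounded support.

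The only thin spot is the complex case. Since the Green-function comparison in Section \ref{sec:comparison} Taylor-expands in the entries of the Hermitization, what is really needed is matching of the joint moments of $\re A_{ij}$ and $\im A_{ij}$ (equivalently of $A_{ij}$ and $\bar A_{ij}$) up to order three, with the fourth-order joint moments matched up to $O(t)$; the quantities $m_{k}(i,j)=\E A_{ij}^{k}$ alone do not determine these. Your closing sentence asserts that the two-dimensional realization ``goes through verbatim,'' which is plausible (e.g.\ treat real and imaginary parts, or solve the truncated two-dimensional moment problem after the same $O(t)$ inflation of the fourth-order block of the moment matrix) but is asserted rather than proved, and the two-dimensional truncated moment problem is less automatic than the Hamburger case. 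Since the paper itself glosses over exactly this point by citing a real-variable lemma, this is not a defect relative to the paper, but if you want a self-contained proof you should spell out the bivariate version of the realization step.
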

(Condition 2 is sometimes referred to as ``matching up to three-and-a-half moments".) Theorem \ref{theorem:main} follows by Theorem \ref{theorem:maingauss} and the comparison result below for matrices which match up to three-and-a-half moments. 
\begin{theorem}\label{theorem:comparisonwitht}
Take $\tilde{A}$ and $\tilde{M}_{t}$ from Lemma \ref{lemma:ex35M}. Fix positive integers $m_{R},m_{L}$. Fix deterministic $\left\{z_{j}^{0} = z_{j}^{0}(N)\right\}_{j = 1}^{m_{R}+m_{L}}\subset\C$ such that $|z_{j}^{0}|<1-\tau$ for each $j$ and for some $\tau>0$ independent of $N$. Next:
\begin{itemize}
\item For any $j\in \llbracket 1,m_R\rrbracket$ let $(\lambda_{j},u_j)$ denote an eigenvalue-right-eigenvector pair of $A$, and for any $j\in \llbracket m_R+1,m_R+m_L\rrbracket$ let $(\lambda_{j},v_j)$ denote an eigenvalue-left-eigenvector pair of $A$. 
\item Similarly, let $(\lambda_{j}(t),u_{j}(t))$ denote an eigenvalue-right-eigenvector pair of $\tilde{M}_{t}$ for $j\in\llbracket1,m_{R}\rrbracket$, and $(\lambda_{j}(t),v_j(t))$ denote an eigenvalue-left-eigenvector pair of $\tilde{M}_{t}$ for $j\in\llbracket m_{R}+1,m_{R}+m_{L}\rrbracket$.
\item For any $j$, let $T_{j}$ be a deterministic, finite-rank matrix.
\item For any $j$, write $\lambda_{j}=z_{j}^{0}+N^{-1/2}z_{j}$ and $\lambda_{j}(t)=z_{j}^{0}+N^{-1/2}z_{j}(t)$.
\item Set $\mathbf{z}=(z_{j})_{j=1}^{m_{R}+m_{L}}$ and $\mathbf{T}:=(N\|T_{j}u_{j}\|^{2},N\|T_{k}v_{k}\|^{2})_{j,k}$. Similarly, set $\mathbf{z}(t)=(z_{j}(t))_{j=1}^{m_{R}+m_{L}}$ and $\mathbf{T}(t):=(N\|T_{j}u_{j}(t)\|^{2},N\|T_{k}v_{k}(t)\|^{2})_{j,k}$. 
\end{itemize}
For any test function $F\in C^{\infty}_{c}(\C^{m_{R}+m_{L}}\times\R^{m_{R}+m_{L}})$, we have 
\begin{align*}
\lim_{N\rightarrow\infty}\E F(\mathbf{z},\mathbf{T}) = \lim_{N\rightarrow\infty}\E F(\mathbf{z}(t),\mathbf{T}(t)),
\end{align*}
where the expectation is over all the randomness in $A$ and $\tilde{M}_{t}$, respectively.
\end{theorem}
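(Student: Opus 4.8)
The plan is to run a Green function comparison argument in the framework of \cite{KY13,KY16}, the one genuinely new ingredient — relative to the Hermitian eigenvector comparison of \cite{KY13} — being the passage from the eigenvalue and eigenvector functionals to functionals of the Hermitization resolvent $G^{z}(\eta)$, via Girko's formula \cite{girko1984} and Lemma~\ref{lemma:Vformula}. First I would write both $F(\mathbf{z},\mathbf{T})$ and $F(\mathbf{z}(t),\mathbf{T}(t))$, up to errors that are $o(1)$ in expectation, as the expectation of one and the same smooth, bounded functional $\Phi$ of a fixed finite family of normalized traces $\ntr{B_{a}G^{z_{b}}(\eta_{c})}$ and matrix elements $\mathbf{w}_{a}^{\ast}G^{z_{b}}(\eta_{c})\mathbf{w}_{a'}$ of the Hermitization resolvent, evaluated at finitely many shifts $z_{b}$ on circles of radius of order $N^{-1/2+\epsilon}$ about the points $z_{j}^{0}$ and at finitely many spectral parameters $\eta_{c}$ in a mesh of $[N^{-1-\epsilon},1]$; here the $B_{a}$ are built from the identity and from the finite-rank projections $T_{j}^{\ast}T_{j}$, embedded into the $2N\times2N$ Hermitization so as to produce $H_{z}(\eta)$, $\tilde{H}_{z}(\eta)$ and their off-diagonal analogues, and the $\mathbf{w}_{a}$ are the spectral vectors $\mathbf{w}_{j,k}$ of the $T_{j}^{\ast}T_{j}$. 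The eigenvalue coordinates $z_{j}=\sqrt{N}(\lambda_{j}-z_{j}^{0})$ are captured by Girko's formula, which expresses smooth functions of the eigenvalues in an $N^{-1/2}$-window around $z_{j}^{0}$ via $\log|\det(A-z)|$, hence via $\eta$-integrals of $\im\ntr{G^{z}(\eta)}$ cut off at scale $\eta\gtrsim N^{-1-\epsilon}$; the eigenvector coordinates $N\|T_{j}u_{j}\|^{2}$ and $N\|T_{k}v_{k}\|^{2}$ are captured by Lemma~\ref{lemma:Vformula}, which identifies them up to $o(1)$ with contour integrals over the circles $|z-z_{j}^{0}|\sim N^{-1/2+\epsilon}$ of suitable matrix elements of $G^{z}(\eta)$ at small $\eta$. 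I would carry this out for $A$ and for $\tilde{M}_{t}$ in parallel; its validity rests on the separation $|z_{j}^{0}-z_{k}^{0}|\ge N^{-1/2+\epsilon}$, which keeps the circles pairwise disjoint and forces each to enclose exactly one of the relevant eigenvalues, and on the level repulsion estimate of Proposition~\ref{prop:levelrepulsion}, which prevents the smallest singular value of $A-z$ from being abnormally small on these circles, so that the resolvents at $\eta\sim N^{-1}$ faithfully capture the $\eta=0$ spectral data. On the exceptional event, of probability $O(N^{-D})$, the bounded functions $F$ and $\Phi$ contribute $O(N^{-D})$; Proposition~\ref{prop:levelrepulsion} and the a priori averaged and isotropic local laws of \cite{alt2018local,cipolloni2023optimal} apply to both matrices, since $\tilde{M}_{t}$ is itself an i.i.d. matrix with entries of variance $N^{-1}$ and bounded moments.

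Once both sides are written as $\E\Phi$ of the resolvent data of the respective matrix, it suffices to show that these two expectations differ by $o(1)$, and for this I would use the Lindeberg exchange strategy. Enumerate the independent entries and interpolate from $A$ to $\tilde{M}_{t}$ through matrices $A=A^{(0)},A^{(1)},\ldots,A^{(N^{2})}=\tilde{M}_{t}$ obtained by replacing one entry at a time, and Taylor-expand $\Phi$ in the replaced entry to fourth order using the rule $\partial_{h}G^{z}=-G^{z}(\partial_{h}\mathcal{H}_{z})G^{z}$. Upon taking expectations, the contributions of orders $0$ through $3$ agree for consecutive matrices by the matching of the first three moments of the entries (Lemma~\ref{lemma:ex35M}); the order-$4$ contributions differ by $O(N^{-2-\delta_{t}})$ by the matching of the fourth moments up to $O(N^{-\delta_{t}})$; and the order-$5$ remainder is of order $N^{-5/2}$ up to sub-polynomial factors, controlled using the averaged and isotropic local laws for $G^{z}(\eta)$ at $\eta\ge N^{-1-\epsilon}$ — which hold also for the intermediate matrices $A^{(\gamma)}$, as these still have independent entries of variance $N^{-1}$ and bounded moments — together with bounds on the derivatives of $\Phi$ that grow at most sub-polynomially in $N$. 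Summing over the $O(N^{2})$ replacements yields a total error $O(N^{-\delta_{t}})+O(N^{-1/2+\epsilon'})=o(1)$, which finishes the proof.

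The hard part is the reduction of the first step: one must design $\Phi$ so that it is at once accurate (error $o(1)$ in expectation) and stable (derivatives in the matrix entries growing at most sub-polynomially in $N$, so that the order-$5$ Lindeberg remainder is summable over the $O(N^{2})$ swaps). The tension is that the eigenvector statistics have to be read off from $G^{z}(\eta)$ with $\eta$ near $N^{-1}$ and $z$ near a genuine eigenvalue, a regime where $G^{z}$ is large and fluctuates wildly; one must show that the regularizations — the contours of radius of order $N^{-1/2+\epsilon}$ and the lower cutoff on $\eta$ — introduce only negligible errors and that the resulting $\Phi$ has controlled derivatives, via Proposition~\ref{prop:levelrepulsion} and the isotropic local laws, and that this control is uniform along the Lindeberg interpolation, where the matrices are no longer i.i.d. Doing all of this simultaneously for the several eigenvalues, keeping the windows around the distinct $z_{j}^{0}$ separated, is the part that demands the most care.
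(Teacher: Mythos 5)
Your overall skeleton (reduce $N\|T_ju_j\|^2$ to the resolvent functional of Lemma \ref{lemma:Vformula}, remove the bad event by the level repulsion of Proposition \ref{prop:levelrepulsion}, pass to resolvent functionals via Girko's formula, then run a four-moment Green-function comparison with derivative bounds) is the same as the paper's, and your Lindeberg entry-swapping step is an acceptable variant of the interpolation of laws the paper takes from \cite{KY16}. But the central reduction — the step you yourself flag as the hard part — is not correctly specified, and as described it would fail. You propose to encode $F(\mathbf{z},\mathbf{T})$ through resolvent data at finitely many deterministic shifts on circles of radius $N^{-1/2+\epsilon}$ around the $z_j^0$, and to read off the eigenvector observables from ``contour integrals'' of $G^{z}(\eta)$ over these circles. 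This cannot work: (i) the Hermitization $\mathcal{H}_z$ depends on both $z$ and $\bar z$, so $G_z(\eta)$ is not holomorphic in $z$ and a contour integral extracts no residue — Girko's formula is intrinsically a two-dimensional integral of $\Delta f^{(j)}(z)$ against $\eta$-integrals of $\langle\mathrm{Im}\,G_z(\eta)\rangle$; (ii) if instead you contour-integrate the non-Hermitian resolvent $(w-A)^{-1}$, the residue is the non-orthogonal projector $u_jv_j^{\ast}/(v_j^{\ast}u_j)$, i.e.\ it produces overlaps, not $\|T_ju_j\|^2$ — this is precisely why the paper works with the Hermitization and the functional $V(z,T)$ evaluated \emph{at} $z=\lambda_j$ (Lemma \ref{lemma:Vformula} is a statement at the random point $\lambda_j$, not a contour identity); (iii) a disc of radius $N^{-1/2+\epsilon}$ around $z_j^0$ typically contains of order $N^{2\epsilon}$ eigenvalues (the density is of order $N$ per unit area), so ``each circle encloses exactly one relevant eigenvalue'' is false, and no separation of the $z_j^0$ is even assumed in Theorem \ref{theorem:comparisonwitht}; (iv) since $\lambda_j$ fluctuates across the whole $N^{-1/2}$-window and $V(\cdot,T)$ must be evaluated at $\lambda_j$ itself, finitely many deterministic evaluation points $(z_b,\eta_c)$ cannot represent $F(\sqrt N(\lambda_j-z_j^0),N\|T_ju_j\|^2)$ up to $o(1)$.

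The missing idea, which is the paper's key novelty, is to make the test function in Girko's formula itself random: one sets $f^{(j)}(z)=NF^{(j)}\bigl(N^{1/2}(z-z_j^0),V(z,F_{12}\otimes T_j)\bigr)$ and applies Girko's formula to the linear statistic $\sum_k f^{(j)}(\lambda_k)$, so that the eigenvector functional is automatically evaluated at the same random point as the eigenvalue; the comparison is then performed on the resulting functionals $I_{\delta_V}(\cdot,f^{(j)})$. Two further points in your plan need repair even granting this. First, the relevant spectral parameter is $\eta_V=N^{-1-\delta_V}$, \emph{below} the local-law scale, so the stability of $\Phi$ under entry derivatives cannot be obtained from averaged/isotropic local laws (which require $\eta\ge N^{-1+\epsilon}$); the paper instead uses rigidity, delocalization and the eigenstate-thermalization bound (Propositions \ref{prop:rigiditydeloc} and \ref{prop:eth}) to prove \eqref{eq:trtgderivative}--\eqref{eq:trgderivative}, and these bounds must hold uniformly along the interpolation. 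Second, because the test function $f^{(j)}$ is random, all derivative and $\|\Delta\tilde f^{(j)}\|_{L^1}$ bounds must go through the chain rule in $V$, which is the role of Lemma \ref{lemma:gradientestimates}; your proposal does not account for this dependence.
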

The rest of this section is dedicated to the proof of Theorem \ref{theorem:comparisonwitht}. The strategy is based on Girko's formula, and for this reason, we must express eigenvector statistics in terms of eigenvalues. We start with the following construction.
\begin{defn}\label{defn:Vfunction}
Fix $\delta_{V}>0$ small, and set $\eta_{V}:=N^{-1-\delta_{V}}$. Fix any finite-rank matrix $T\in M_{2N}(\C)$, and define 
\begin{align*}
V(z,T):=N\eta_{V}\tr[T^{\ast}T\mathrm{Im}G_{z}(\eta_{V})]=\sum_{k}\frac{N\eta_{V}^{2}}{(\xi^{z}_{k})^{2}+\eta_{V}^{2}}\|T \mathbf{u}_{k}^{z}\|^{2}.
\end{align*}
Above, $\xi_{k}^{z}$ are eigenvalues of the Hermitization $\mathcal{H}_{z}$, and $\mathbf{u}_{k}^{z}$ are the corresponding eigenvalues. We adopt the convention $\xi_{k}^{z}\geq0$ for all $k\in\llbracket1,N\rrbracket$, and $\xi_{k}^{z}=-\xi_{-k}^{z}$ for any $k\in\llbracket-N,-1\rrbracket$. We also impose $\xi_{k}^{z}\leq\xi_{\ell}^{z}$ for $k\leq\ell$. The sum on the far RHS of the previous display is over all $k\in\llbracket-N,1\rrbracket\cup\llbracket1,N\rrbracket$.
\end{defn}
Fix any right eigenvalue-eigenvector pairs $(\lambda_{1},u_{1}),\ldots,(\lambda_{m_{R}},u_{m_{R}})$ and left eigenvalue-eigenvector pairs $(\lambda_{m_{R}+1},v_{m_{R}+1}),\ldots,(\lambda_{m_{R}+m_{L}},v_{m_{R}+m_{L}})$. Now, fix $j=1,\ldots,m_{R}$. Since $(A-\lambda_{j})u_{j}=0$, we have 
\begin{align*}
\mathcal{H}_{\lambda_{j}}\begin{pmatrix}0\\u_{j}\end{pmatrix}=0.
\end{align*}
Similarly, for any $j=m_{R}+1,\ldots,m_{R}+m_{L}$, since $u_{j}^{\ast}(A-\lambda_{j})=\lambda_{j}u_{j}^{\ast}$, we have 
\begin{align*}
\mathcal{H}_{\lambda_{j}}\begin{pmatrix}v_{j}\\0\end{pmatrix}=0.
\end{align*}
We now provide the following, which essentially compares $V(\lambda_{j},T)$ to the corresponding eigenvector information we are interested in. We explain its utility afterwards.
\begin{lemma}\label{lemma:Vformula}
We have 
\begin{align*}
V(z,T)=\sum_{k=\pm1}\frac{N\eta_{V}^{2}}{(\xi_{k}^{z})^{2}+\eta_{V}^{2}}\|T\mathbf{u}_{k}^{z}\|^{2}+\mathbf{1}\left(\xi_{2}^{z}\leq N^{-1-\delta_{V}/2}\right)\mathcal{O}(1)+\mathcal{O}(N^{-\delta_{V}/2}).
\end{align*}
Above, $\mathcal{O}$ is with respect to the randomness of the matrix $A$. Now, fix any $j=1,\ldots,m_{R}+m_{L}$. We have
\begin{align*}
V(\lambda_{j},T)=\sum_{k=\pm1}N\|T\mathbf{u}_{k}^{\lambda_{j}}\|^{2}+\mathbf{1}\left(\xi_{2}^{\lambda_{j}}\leq N^{-1-\delta_{V}/2}\right)\mathcal{O}(1)+\mathcal{O}(N^{-\delta_{V}/2}).
\end{align*}
\end{lemma}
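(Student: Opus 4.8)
The plan is to deduce the second identity from the first and then prove the first by a two-scale resolvent split. \textbf{Reduction.} If $\lambda_{j}$ is an eigenvalue of $A$ (or of $A^{\ast}$, in the left-eigenvector case), then $A-\lambda_{j}$ is singular, so $\xi_{1}^{\lambda_{j}}=\xi_{-1}^{\lambda_{j}}=0$ and the weights $N\eta_{V}^{2}/((\xi_{\pm1}^{\lambda_{j}})^{2}+\eta_{V}^{2})$ in the first identity equal $N$; hence the second identity follows at once from the first. It therefore suffices to prove that, with overwhelming probability,
\[
R(z):=\sum_{|k|\ge 2}\frac{N\eta_{V}^{2}}{(\xi_{k}^{z})^{2}+\eta_{V}^{2}}\,\|T\mathbf{u}_{k}^{z}\|^{2}=\mathbf{1}\!\left(\xi_{2}^{z}\le N^{-1-\delta_{V}/2}\right)\mathcal{O}(1)+\mathcal{O}(N^{-\delta_{V}/2}).
\]

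\textbf{Setup and the bulk tail.} Fix a small $\epsilon_{V}\in(0,\delta_{V}/2)$, put $\eta_{1}:=N^{-1+\epsilon_{V}}$ (so $\eta_{V}\ll\eta_{1}$), and write $R(z)=R_{>}(z)+R_{\le}(z)$ according to whether $|\xi_{k}^{z}|>\eta_{1}$ or $|\xi_{k}^{z}|\le\eta_{1}$, keeping $|k|\ge 2$ throughout. The single external input is the isotropic local law for $\mathcal{H}_{z}$ at the deterministic-order scale $\eta_{1}$, coming from the estimates behind {\bf A1} and {\bf A3} (which hold uniformly in $|z|<1-\tau$ and, by Lipschitz continuity in $z$, also at the random shift $z=\lambda_{j}$): for each of the finitely many deterministic vectors $\mathbf{w}$ in the spectral decomposition of $T^{\ast}T$ one has $\mathbf{w}^{\ast}\mathrm{Im}\,G_{z}(\eta_{1})\mathbf{w}\prec1$, hence $\tr[T^{\ast}T\,\mathrm{Im}\,G_{z}(\eta_{1})]\prec\|T\|_{\mathrm{F}}^{2}\prec1$. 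Since $|\xi_{k}^{z}|>\eta_{1}\gg\eta_{V}$ gives $\eta_{V}^{2}/((\xi_{k}^{z})^{2}+\eta_{V}^{2})\le 2\eta_{V}^{2}/((\xi_{k}^{z})^{2}+\eta_{1}^{2})$, we obtain
\[
R_{>}(z)\le 2N\eta_{V}^{2}\sum_{k}\frac{\|T\mathbf{u}_{k}^{z}\|^{2}}{(\xi_{k}^{z})^{2}+\eta_{1}^{2}}=\frac{2N\eta_{V}^{2}}{\eta_{1}}\tr[T^{\ast}T\,\mathrm{Im}\,G_{z}(\eta_{1})]\prec\frac{2N\eta_{V}^{2}}{\eta_{1}}=2N^{-2\delta_{V}-\epsilon_{V}}=\mathcal{O}(N^{-\delta_{V}/2}),
\]
uniformly in $z$, using $\eta_{V}=N^{-1-\delta_{V}}$.

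\textbf{The small-singular-value part.} A second application of the local law controls the spectral mass of $T$ near the origin: since $(\xi_{k}^{z})^{2}+\eta_{1}^{2}\le 2\eta_{1}^{2}$ when $|\xi_{k}^{z}|\le\eta_{1}$, summing the inequality $1\le 2\eta_{1}^{2}/((\xi_{k}^{z})^{2}+\eta_{1}^{2})$ against $\|T\mathbf{u}_{k}^{z}\|^{2}$ gives $\sum_{|\xi_{k}^{z}|\le\eta_{1}}\|T\mathbf{u}_{k}^{z}\|^{2}\le 2\eta_{1}\tr[T^{\ast}T\,\mathrm{Im}\,G_{z}(\eta_{1})]\prec\eta_{1}$. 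On the event $\xi_{2}^{z}>N^{-1-\delta_{V}/2}$ every $k$ with $|k|\ge 2$ satisfies $(\xi_{k}^{z})^{2}\ge(\xi_{2}^{z})^{2}>N^{-2-\delta_{V}}$, so the prefactor is at most $N\eta_{V}^{2}N^{2+\delta_{V}}=N^{1-\delta_{V}}$ and $R_{\le}(z)\le N^{1-\delta_{V}}\sum_{|\xi_{k}^{z}|\le\eta_{1}}\|T\mathbf{u}_{k}^{z}\|^{2}\prec N^{\epsilon_{V}-\delta_{V}}=\mathcal{O}(N^{-\delta_{V}/2})$. On the complementary event $\xi_{2}^{z}\le N^{-1-\delta_{V}/2}$ we use only the trivial prefactor bound $N\eta_{V}^{2}/((\xi_{k}^{z})^{2}+\eta_{V}^{2})\le N$ to get $R_{\le}(z)\le N\sum_{|\xi_{k}^{z}|\le\eta_{1}}\|T\mathbf{u}_{k}^{z}\|^{2}\prec N^{\epsilon_{V}}=\mathcal{O}(1)$ up to the harmless $N^{\epsilon_{V}}$ slack; since this event has probability $\mathcal{O}(N^{-c})$ by the level repulsion estimate of Proposition~\ref{prop:levelrepulsion}, this bound is enough for the Green-function comparison, and a genuinely $O(1)$ bound can be extracted from the same estimate by counting the singular values of $A-z$ in $[0,\eta_{1}]$. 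Adding $R_{>}$ and $R_{\le}$ in the two cases yields the displayed formula for $R(z)$, hence the lemma.

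\textbf{Main obstacle.} The delicate point is that $\eta_{V}=N^{-1-\delta_{V}}$ sits far below the smallest scale at which a local law for $G_{z}$ is available, so $\mathrm{Im}\,G_{z}(\eta_{V})$ itself cannot be controlled directly; the argument is organized so as never to need it, trading it for $\mathrm{Im}\,G_{z}(\eta_{1})$ at the deterministic scale $\eta_{1}$ together with the elementary dichotomy on $\xi_{2}^{z}$. The remaining friction is (i) propagating the isotropic local-law inputs uniformly to the random shift $z=\lambda_{j}$, and (ii), if one insists on a genuine $O(1)$ rather than $O(N^{\epsilon_{V}})$ on the exceptional event, invoking Proposition~\ref{prop:levelrepulsion} to bound how many singular values of $A-z$ can pile up near the origin; neither affects the use of the lemma downstream.
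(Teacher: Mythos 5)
Your proof is correct, and it reaches the result by a somewhat different route than the paper. The overall skeleton is the same (isolate $k=\pm1$, use the dichotomy on $\xi_{2}^{z}$, accept an $\mathcal{O}(1)$ error on the exceptional event), but the key estimates differ: the paper splits the remaining sum by the index $|k|\lessgtr N^{\epsilon}$ (after a preliminary cut at $|\xi_{k}^{z}|\lessgtr c$) and controls it with rigidity of the singular values together with delocalization of the eigenvectors of $\mathcal{H}_{z}$, i.e.\ Proposition \ref{prop:rigiditydeloc}, to get $N\|T\mathbf{u}_{k}^{z}\|^{2}=\mathcal{O}(1)$ and $|\xi_{k}^{z}|\gtrsim N^{-1}|k|$; you instead split at the spectral scale $|\xi_{k}^{z}|\lessgtr\eta_{1}=N^{-1+\epsilon_{V}}$ and trade everything for the single input $\tr[T^{\ast}T\,\mathrm{Im}\,G_{z}(\eta_{1})]\prec1$, which follows from the isotropic law of Proposition \ref{prop:locallaw} applied to the finitely many deterministic eigenvectors of $T^{\ast}T$, plus elementary comparisons between the Lorentzian weights at scales $\eta_{V}$ and $\eta_{1}$. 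Your version thus avoids eigenvector delocalization and rigidity entirely, at the price of a fixed auxiliary exponent $\epsilon_{V}$; note that your bound $\prec N^{\epsilon_{V}}$ on the exceptional event is formally weaker than the stated $\mathcal{O}(1)$, but since $\epsilon_{V}\in(0,\delta_{V}/2)$ is arbitrary this is resolved exactly as the paper resolves its own $N^{\epsilon}$ (``since $\epsilon>0$ is arbitrary''), so neither the appeal to Proposition \ref{prop:levelrepulsion} nor the suggested singular-value counting is actually needed there. The only point you should make explicit, which the paper also leaves implicit, is the net/Lipschitz argument upgrading the fixed-$z$ local law to uniformity over $|z|\leq1-\tau$ so that it may be evaluated at the random shift $z=\lambda_{j}$; with that, the argument is complete.
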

\begin{proof}
To prove the second claim, we set $z=\lambda_{j}$ and use $\xi_{\pm1}^{\lambda_{j}}=0$. To prove the first claim, it suffices to show 
\begin{align*}
\sum_{k\neq\pm1}\frac{N\eta_{V}^{2}}{(\xi^{z}_{k})^{2}+\eta_{V}^{2}}\|T \mathbf{u}_{k}^{z}\|^{2}=\mathbf{1}\left(\xi_{2}^{z}\leq N^{-1-\delta_{V}/2}\right)\mathcal{O}(1)+\mathcal{O}(N^{-\delta_{V}/2}).
\end{align*}
We first write, for some $c>0$ small and independent of $N$,
\begin{align*}
\sum_{k\neq\pm1}\frac{N\eta_{V}^{2}}{(\xi^{z}_{k})^{2}+\eta_{V}^{2}}\|T \mathbf{u}_{k}^{z}\|^{2}=\sum_{\substack{k\neq\pm1\\|\xi_{k}^{z}|\leq c}}\frac{N\eta_{V}^{2}}{(\xi^{z}_{k})^{2}+\eta_{V}^{2}}\|T \mathbf{u}_{k}^{z}\|^{2}+\sum_{\substack{k\neq\pm1\\|\xi_{k}^{z}|> c}}\frac{N\eta_{V}^{2}}{(\xi^{z}_{k})^{2}+\eta_{V}^{2}}\|T \mathbf{u}_{k}^{z}\|^{2}.
\end{align*}
Using the trivial bound $\|T\mathbf{u}_{k}^{z}\|=O(1)$ and $\eta_{V}^{2}=N^{-2-2\delta_{V}}$, we have 
\begin{align*}
\sum_{\substack{k\neq\pm1\\|\xi_{k}^{z}|> c}}\frac{N\eta_{V}^{2}}{(\xi^{z}_{k})^{2}+\eta_{V}^{2}}\|T \mathbf{u}_{k}^{z}\|^{2}\lesssim N^{-1-2\delta_{V}}\sum_{\substack{k\neq\pm1\\|\xi_{k}^{z}|> c}}\frac{1}{(\xi^{z}_{k})^{2}+\eta_{V}^{2}}\lesssim N^{-2\delta_{V}}.
\end{align*}
Next, for $|\xi_{k}^{z}|<c$, we can use delocalization (see Proposition \ref{prop:rigiditydeloc}) and the finite-rank property of $T$ to get $N\|T\mathbf{u}_{k}^{z}\|^{2}=O(1)$; note that delocalization only holds for bulk singular values in Proposition \ref{prop:rigiditydeloc}. Now, we write
\begin{align}
\sum_{\substack{k\neq\pm1\\|\xi_{k}^{z}|\leq c}}\frac{\eta_{V}^{2}}{(\xi_{k}^{z})^{2}+\eta_{V}^{2}}&=\sum_{\substack{k\neq\pm1\\|\xi_{k}^{z}|\leq c\\|k|\leq N^{\epsilon}}}\frac{\eta_{V}^{2}}{(\xi_{k}^{z})^{2}+\eta_{V}^{2}}+\sum_{\substack{k\neq\pm1\\|\xi_{k}^{z}|\leq c\\|k|> N^{\epsilon}}}\frac{\eta_{V}^{2}}{(\xi_{k}^{z})^{2}+\eta_{V}^{2}}.\label{eq:Vformuladecomp}
\end{align}
The first term on the RHS is bounded deterministically by $N^{\epsilon}$. For the second term on the RHS, we use rigidity (Proposition \ref{prop:rigiditydeloc}) to bound it by $\mathcal{O}(N^{-\epsilon})$. In particular, this gives $|\xi_{k}^{z}|\geq N^{-1}|k|+\mathcal{O}(N^{-1})\geq \frac12N^{-1}|k|$ since $|k|>N^{\epsilon}$, with which we can bound the second term on the RHS. Since $\epsilon>0$ is arbitrary, we get the a priori bound 
\begin{align*}
\sum_{k\neq\pm1}\frac{N\eta_{V}^{2}}{(\xi^{z}_{k})^{2}+\eta_{V}^{2}}\|T \mathbf{u}_{k}^{z}\|^{2}=\mathcal{O}(1).
\end{align*}
It remains to show that
\begin{align*}
\mathbf{1}\left(\xi_{2}^{z}\geq N^{-1-\delta_{V}/2}\right)\sum_{\substack{k\neq\pm1\\ |\xi_{k}^{z}|\leq c}}\frac{N\eta_{V}^{2}}{(\xi^{z}_{k})^{2}+\eta_{V}^{2}}\|T \mathbf{u}_{k}^{z}\|^{2}=\mathcal{O}(N^{-\delta_{V}/2}).
\end{align*}
We again use $N\|T\mathbf{u}_{k}^{z}\|^{2}=\mathcal{O}(1)$ and \eqref{eq:Vformuladecomp}. By the assumption $\xi_{2}^{z}\geq N^{-1-\delta_{V}/2}$, we have 
\begin{align*}
\sum_{\substack{k\neq\pm1\\|k|\leq N^{\epsilon}}}\frac{\eta_{V}^{2}}{(\xi_{k}^{z})^{2}+\eta_{V}^{2}}\leq \sum_{\substack{k\neq\pm1\\|k|\leq N^{\epsilon}}}\frac{\eta_{V}^{2}}{(\xi_{k}^{z})^{2}}\leq N^{-\delta_{V}+\epsilon}.
\end{align*}
Also, we can again use rigidity to show the second term on the RHS of \eqref{eq:Vformuladecomp} is $\mathcal{O}(N^{-\epsilon})$. Now choose $\epsilon=\delta_{V}/2$ to conclude.
\end{proof}
If we choose $T$ appropriately, then our computations before Lemma \ref{lemma:Vformula} show that $N\|T\mathbf{u}_{\pm1}^{\lambda_{j}}\|^{2}=N\|T_{j}u_{j}\|^{2}$ for $j\leq m_{R}$, and $N\|T\mathbf{u}_{\pm1}^{\lambda_{j}}\|^{2}=N\|T_{j}v_{j}\|^{2}$ for $j\geq m_{R}+1$. We expand on this more later when it is more relevant. First, we present the following technical result, which is important to various estimates in the proof of Theorem \ref{theorem:comparisonwitht}.
\begin{lemma}\label{lemma:gradientestimates}
Fix Hermitian $T\in M_{2N}(\C)$, and assume $\|T\|_{\mathrm{op}}=O(1)$. Fix $\eta\leq N^{-1}$. We have 
\begin{align}
\sup_{|z|\leq1-\tau}|\nabla_{z}\tr TG_{z}(\eta)|&=\mathcal{O}(N^{-\frac32}\eta^{-2}\tr\sqrt{T^{\ast}T}),\label{res:nz1}\\
\sup_{|z|\leq1-\tau}|\nabla_{z}^{2}\tr TG_{z}(\eta)|&=\mathcal{O}(N^{-2}\eta^{-3}\tr\sqrt{T^{\ast}T}).\label{res:nz2}
\end{align}
The $\mathcal{O}$ is with respect to randomness of $A$. In particular, we have 
\begin{align}
\nabla_{z}^{m}V(z,T)&=\mathcal{O}(N^{-1-m/2}\eta_{V}^{-1-m}),\quad m=1,2.\label{dev_V}
\end{align}
\end{lemma}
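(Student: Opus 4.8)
The plan is to start from the resolvent derivative identities. Writing $\mathcal{Z}$ for the $2N\times2N$ matrix with $I_{N}$ in the top‑right block and $0$ elsewhere, we have $\partial_{z}\mathcal{H}_{z}=-\mathcal{Z}$ and $\partial_{\bar z}\mathcal{H}_{z}=-\mathcal{Z}^{\ast}$, so differentiating $G_{z}(\eta)=(\mathcal{H}_{z}-i\eta)^{-1}$ gives $\partial_{z}G_{z}(\eta)=G_{z}(\eta)\mathcal{Z}G_{z}(\eta)$ and $\partial_{\bar z}G_{z}(\eta)=G_{z}(\eta)\mathcal{Z}^{\ast}G_{z}(\eta)$; thus it suffices to bound $|\tr[TG_{z}(\eta)\mathcal{W}G_{z}(\eta)]|$ and $|\tr[TG_{z}(\eta)\mathcal{W}_{1}G_{z}(\eta)\mathcal{W}_{2}G_{z}(\eta)]|$ for $\mathcal{W},\mathcal{W}_{i}\in\{\mathcal{Z},\mathcal{Z}^{\ast}\}$. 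The first key point is the chiral cancellation: since $\Sigma\mathcal{H}_{z}\Sigma=-\mathcal{H}_{z}$ for $\Sigma=\diag(I_{N},-I_{N})$, the spectrum of $\mathcal{H}_{z}$ is symmetric about $0$, and pairing the eigenvalue $\xi$ with $-\xi$ in the spectral expansion of $G_{z}(\eta)\mathcal{Z}G_{z}(\eta)$ produces an explicit factor of $\eta$ (because $\frac{1}{\xi-i\eta}+\frac{1}{-\xi-i\eta}=\frac{-2i\eta}{\xi^{2}+\eta^{2}}$). Carrying this out via the block form of $G_{z}(\eta)$, for block-diagonal $T=\mathrm{diag}(T^{11},T^{22})$ one gets the identity
\[
\tr[TG_{z}(\eta)\mathcal{Z}G_{z}(\eta)]=i\eta\Big(\tr[T^{11}\tilde{H}_{z}(\eta)(A-z)^{\ast}\tilde{H}_{z}(\eta)]+\tr[T^{22}(A-z)^{\ast}\tilde{H}_{z}(\eta)H_{z}(\eta)]\Big),
\]
and an analogous identity (with an extra factor $\eta$ or $(A-z)^{\ast}$ on the off-diagonal blocks) when $T$ is block-off-diagonal. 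A general Hermitian $T$ is split into its block-diagonal and block-off-diagonal parts, each with trace norm $\le\tr\sqrt{T^{\ast}T}$.

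It then remains to estimate traces such as $\eta\,\tr[T^{11}\tilde{H}_{z}(\eta)(A-z)^{\ast}\tilde{H}_{z}(\eta)]$. Using the singular value decomposition $A-z=\sum_{\alpha}\xi_{\alpha}\mathbf{p}_{\alpha}\mathbf{q}_{\alpha}^{\ast}$ with $0\le\xi_{1}\le\xi_{2}\le\cdots$, this trace equals $\sum_{\alpha,\beta}\frac{\xi_{\beta}(\mathbf{p}_{\alpha}^{\ast}\mathbf{q}_{\beta})(\mathbf{p}_{\beta}^{\ast}T^{11}\mathbf{p}_{\alpha})}{(\xi_{\alpha}^{2}+\eta^{2})(\xi_{\beta}^{2}+\eta^{2})}$, and diagonalizing $T^{11}=\sum_{k}t_{k}\mathbf{v}_{k}\mathbf{v}_{k}^{\ast}$ and applying Cauchy--Schwarz reduces the bound to $\tr\sqrt{T^{\ast}T}=\sum_{k}|t_{k}|$ times sums over $\alpha,\beta$ of $\frac{\xi_{\beta}|\mathbf{p}_{\alpha}^{\ast}\mathbf{q}_{\beta}|\,|\mathbf{v}^{\ast}\mathbf{p}_{\alpha}||\mathbf{v}^{\ast}\mathbf{p}_{\beta}|}{(\xi_{\alpha}^{2}+\eta^{2})(\xi_{\beta}^{2}+\eta^{2})}$ for a deterministic unit vector $\mathbf{v}$. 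For bulk indices ($\alpha\gtrsim N^{\epsilon}$) we use rigidity ($\xi_{\alpha}\asymp\alpha/N$, Proposition~\ref{prop:rigiditydeloc}) and delocalization ($|\mathbf{v}^{\ast}\mathbf{p}_{\alpha}|^{2}\lesssim N^{-1+\epsilon}$), so that $(\xi_{\alpha}^{2}+\eta^{2})^{-1}\lesssim N^{2}\alpha^{-2}$ is summable with room to spare. The delicate contribution is from the $O(N^{\epsilon})$ smallest singular values, and here the naive lower bound $\xi_{1}\gtrsim N^{-1-\epsilon}$ is \emph{not} available with the required high probability. Instead we exploit the correspondence between small singular values of $A-z$ and eigenvalues of $A$: near an eigenvalue $\lambda$ of $A$ with unit right/left eigenvectors $u,v$ one has $\xi=|\lambda-z|\,|v^{\ast}u|(1+o(1))$ and $\mathbf{p}=v(1+o(1))$, $\mathbf{q}=u(1+o(1))$; together with the two-sided overlap bound $|v^{\ast}u|^{2}\asymp N^{-1}$ (\cite{BD20,cipolloni2023optimal,EJ23}), the level repulsion estimate (Proposition~\ref{prop:levelrepulsion}), eigenvector delocalization (Proposition~\ref{prop:rigiditydeloc} and {\bf A3}), and the fact that the largest spectral gap of $A$ inside $\{|z|\le1-\tau\}$ is $\mathcal{O}(N^{-1/2+\epsilon})$, one checks that the single worst term ($\alpha=\beta$ smallest) is $\mathcal{O}(N^{-3/2+\epsilon}\eta^{-3})$ --- and this holds regardless of how small $\xi_{1}$ is, since $\xi_{1}\to0$ forces $|\lambda-z|\to0$ and $|\lambda-z|$ sits in the numerator. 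Summing the $O(N^{\epsilon})$ edge terms, multiplying by the prefactor $\eta$ and by $\tr\sqrt{T^{\ast}T}$, and absorbing $N^{\epsilon}$ into $\mathcal{O}$ gives \eqref{res:nz1}; the block-off-diagonal piece of $T$ and the estimate \eqref{res:nz2} (whose derivatives are sums $G_{z}\mathcal{W}_{1}G_{z}\mathcal{W}_{2}G_{z}$, still carrying one factor of $\eta$ from the chiral cancellation but one extra resolvent factor) are treated by the same edge analysis. Uniformity over $|z|\le1-\tau$ is obtained by a net argument: the bound holds at each point of an $N^{-K}$-net with probability $1-\mathcal{O}(N^{-D})$ (the inputs above are w.h.p.\ statements), and between net points $\nabla_{z}\tr[TG_{z}(\eta)]$ varies by at most $N^{-K}$ times the crude deterministic bound $\|\nabla_{z}^{2}\tr[TG_{z}(\eta)]\|\lesssim\eta^{-3}\tr\sqrt{T^{\ast}T}$, which is negligible for $K$ large. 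Finally \eqref{dev_V} follows by applying \eqref{res:nz1}--\eqref{res:nz2} with $T$ replaced by $T^{\ast}T$ (finite rank, so $\tr\sqrt{(T^{\ast}T)^{2}}=\tr T^{\ast}T=\mathcal{O}(1)$) and multiplying by $N\eta_{V}$; since $N\eta_{V}\le1$, the resulting bound is in fact stronger than what is asserted.

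The main obstacle is the edge analysis just described: one must organize the bookkeeping so that the apparently divergent factors $(\xi_{\alpha}^{2}+\eta^{2})^{-1}$ attached to the order-$N^{-1}$ smallest singular values are controlled not by a lower bound on $\xi_{\alpha}$ alone but by the cancellation built into the eigenvalue--singular-value correspondence together with the overlap and delocalization estimates. Concretely, it is essential to keep the product $\xi_{\beta}(\mathbf{p}_{\alpha}^{\ast}\mathbf{q}_{\beta})$ intact — bounding $\xi_{\beta}$ and $|\mathbf{p}_{\alpha}^{\ast}\mathbf{q}_{\beta}|$ separately loses exactly the $N^{3/2}$ needed to reach the stated exponents — and to verify that the error terms in $\xi=|\lambda-z||v^{\ast}u|(1+o(1))$, $\mathbf{p}\approx v$, $\mathbf{q}\approx u$ are genuinely lower order uniformly over the relevant range of $z$.
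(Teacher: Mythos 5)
Your overall architecture matches the paper's: reduce to fixed $z$ by a net argument plus a crude deterministic bound, write the $z$-derivatives of $\tr TG_{z}(\eta)$ as traces of resolvent products with insertions of the off-diagonal block matrices, use delocalization to extract $N^{-1}\tr\sqrt{T^{\ast}T}$ from the $T$-factor, and use rigidity to control the bulk of the spectral sum. Your explicit extraction of the factor $i\eta$ from the block structure of $G_{z}(\eta)\mathcal{Z}G_{z}(\eta)$ is a correct algebraic reformulation of the gain that the paper obtains implicitly, and your deduction of \eqref{dev_V} from \eqref{res:nz1}--\eqref{res:nz2} is fine.

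The gap is in the only genuinely hard step: bounding the left--right singular vector overlaps $|\mathbf{p}_{\alpha}^{\ast}\mathbf{q}_{\beta}|$ for the $O(N^{\epsilon})$ smallest indices by $N^{-1/2}$ (plus a diagonal correction of order $\xi_{\alpha}$). The paper gets exactly this, uniformly over all bulk indices and with probability $1-N^{-D}$, from the eigenstate thermalization estimate, Proposition \ref{prop:eth} (Theorem 2.7 of \cite{cipolloni2023optimal}): $|\mathbf{u}_{\alpha}^{z,\ast}E_{12}\mathbf{u}_{\beta}^{z}|=\mathcal{O}(N^{-1/2}+\delta_{|\alpha|=|\beta|}N^{-1}|\alpha|)$, after which the sum $\sum_{\alpha,\beta}(N^{-1/2}+|\xi_{\alpha}|\delta_{|\alpha|=|\beta|})|\xi_{\alpha}-i\eta|^{-1}|\xi_{\beta}-i\eta|^{-1}$ is bounded by $\eta^{-2}N^{-1/2}$ using nothing but $|\xi-i\eta|^{-1}\le\eta^{-1}$ and rigidity; no lower bound on $\xi_{1}$, no level repulsion, and no eigenvalue--eigenvector correspondence are needed. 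Your substitute for this input does not work as stated. First, the two-sided bound $|v^{\ast}u|^{2}\asymp N^{-1}$ is not available with overwhelming probability: the diagonal overlap $\mathcal{O}_{ii}$ has a polynomial upper tail, so the lower bound on $|v^{\ast}u|^{2}$ fails on events of only polynomially small probability, which is fatal for a conclusion that must hold with probability $1-N^{-D}$ and then survive a union bound over a polynomial-size net in $z$. Second, Proposition \ref{prop:levelrepulsion} only gives failure probability $N^{-2.1\delta}$ at a \emph{fixed} $z$, which is again far too weak for this purpose (and its uniform version, Lemma \ref{lemma:levelrepulsion}, is itself proved using \eqref{res:nz1}). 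Third, and most importantly, the approximations $\xi\approx|\lambda-z||v^{\ast}u|$, $\mathbf{p}_{\alpha}\approx v_{\alpha}$, $\mathbf{q}_{\beta}\approx u_{\beta}$ are exactly where the difficulty sits: for $\alpha\neq\beta$ the leading term $v_{\alpha}^{\ast}u_{\beta}$ vanishes by biorthogonality, so the entire contribution comes from the perturbative error, and controlling that error to precision $N^{-1/2}$ is equivalent in strength to the two-resolvent/ETH input you are trying to avoid. You should replace this whole detour by a direct appeal to Proposition \ref{prop:eth}; once you do, your $\eta$-extraction and the rest of the bookkeeping close the argument.
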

\begin{proof}
By union bound over a very fine net and elementary resolvent bounds for $G_{z}(\eta)$, it is enough to prove the proposed bounds for a fixed deterministic $|z|\leq1-\tau$. We first write (for some $c>0$ small and independent of $N$)
\begin{align*}
G_{z}(\eta):=G+\widehat{G}:=\sum_{\alpha:|\xi_{\alpha}^{z}|\leq c}\frac{\mathbf{u}_{\alpha}\mathbf{u}_{\alpha}^{\ast}}{\xi_{\alpha}^{z}-i\eta}+\sum_{\alpha:|\xi_{\alpha}^{z}|> c}\frac{\mathbf{u}_{\alpha}\mathbf{u}_{\alpha}^{\ast}}{\xi_{\alpha}^{z}-i\eta}
\end{align*}
Bounds for $|\nabla_{z}\tr T\widehat{G}|$ and $|\nabla_{z}^{2}\tr T\widehat{G}|$ are simple to establish since $\|\widehat{G}\|_{\mathrm{op}}=O(1)$. Thus, we prove bounds for $|\nabla_{z}\tr TG|$ and $|\nabla_{z}^{2}\tr TG|$. The advantage of this decomposition is the use of delocalization estimates (Proposition \ref{prop:rigiditydeloc}) for all relevant eigenvectors.

Fix any $Y,Y_{1},Y_{2}\in\{E_{12},E_{21}\}$, where $E_{12}$ and $E_{21}$ are the $2N\times2N$ block matrices
\begin{align*}
E_{12}:=\begin{pmatrix}0&I_{N}\\0&0\end{pmatrix},\quad E_{21}:=\begin{pmatrix}0&0\\I_{N}&0\end{pmatrix}.
\end{align*}
By resolvent perturbation identities, it suffices to prove 
\begin{align}
\tr GTGY&=\mathcal{O}(N^{-3/2}\eta^{-2}\tr\sqrt{T^{\ast}T}),\label{res:nz3},\\
\tr GTGY_{1}GY_{2}&=\mathcal{O}(N^{-2}\eta^{-3}\tr\sqrt{T^{\ast}T}).\label{res:nz4}
\end{align}
Now, for any vectors $\mathbf{u},\mathbf{v}$ and any matrix $X$, we write $X_{\mathbf{u}\mathbf{v}}:=\mathbf{u}^{\ast}X\mathbf{v}$. We have 
\begin{align*}
\tr GTGY&=\sum_{\alpha,\beta}\frac{T_{\mathbf{u}_{\alpha}^{z}\mathbf{u}_{\beta}^{z}}Y_{\mathbf{u}_{\beta}^{z}\mathbf{u}_{\alpha}^{z}}}{(\xi_{\alpha}^{z}-i\eta)(\xi_{\beta}^{z}-i\eta)}.
\end{align*}
By Proposition \ref{prop:rigiditydeloc} applied to $\mathbf{u}_{\alpha}^{z}$, because $T$ is deterministic and Hermitian, we have $|T_{\mathbf{u}_{\alpha}^{z}\mathbf{u}_{\beta}^{z}}|=\mathcal{O}(N^{-1}\tr\sqrt{T^{\ast}T})$. By Proposition \ref{prop:eth}, we also have $|Y_{\mathbf{u}_{\beta}^{z}\mathbf{u}_{\alpha}^{z}}|=\mathcal{O}(N^{-1/2}+\delta_{|\alpha|=|\beta|}N^{-1}|\alpha|)$. If we plug these bounds into the previous display and use rigidity of $\xi_{\alpha}^{z}$, we have 
\begin{align*}
|\tr GTGY|&=N^{-1}\tr\sqrt{T^{\ast}T}\mathcal{O}\left(\sum_{\alpha,\beta}\frac{N^{-\frac12}+|\xi_{\alpha}^{z}|\delta_{|\alpha|=|\beta|}}{(\xi_{\alpha}^{z}-i\eta)(\xi_{\beta}^{z}-i\eta)}\right)=\mathcal{O}(N^{-3/2}\eta^{-2}\tr\sqrt{T^{\ast}T}).
\end{align*}
(The last estimate also uses rigidity of $\xi_{\alpha}^{z}$.) This proves \eqref{res:nz3}. To show \eqref{res:nz4}, we similarly have 
\begin{align*}
\tr GTGY_{1}GY_{2}&=\sum_{\alpha,\beta,\gamma}\frac{T_{\mathbf{u}_{\alpha}^{z}\mathbf{u}_{\beta}^{z}}(Y_{1})_{\mathbf{u}_{\beta}^{z}\mathbf{u}_{\gamma}^{z}}(Y_{2})_{\mathbf{u}_{\gamma}^{z}\mathbf{u}_{\alpha}^{z}}}{(\xi_{\alpha}^{z}-i\eta)(\xi_{\beta}^{z}-i\eta)(\xi_{\gamma}^{z}-i\eta)}\\
&=N^{-1}\tr\sqrt{T^{\ast}T}\mathcal{O}\left(\sum_{\alpha,\beta,\gamma}\frac{(N^{-\frac12}+|\xi_{\alpha}^{z}|\delta_{|\alpha|=|\beta|})(N^{-\frac12}+|\xi_{\beta}^{z}|\delta_{|\beta|=|\gamma|})}{(\xi_{\alpha}^{z}-i\eta)(\xi_{\beta}^{z}-i\eta)(\xi_{\gamma}^{z}-i\eta)}\right),
\end{align*}
at which point we again use rigidity of $\xi^{z}_{\alpha}$ to conclude \eqref{res:nz4}.
\end{proof}
We now reduce comparison of $N\|T\mathbf{u}_{\pm1}^{\lambda_{j}}\|^{2}$ to that of $V(\lambda_{j},T)$ via level repulsion.
\begin{prop}\label{prop:levelrepulsion}
Fix any $|z|\leq1-\tau$ with $\tau>0$ fixed. If $\delta>0$ is small enough, then we have
\begin{align*}
\mathbb{P}\left(\xi_{2}^{z}\leq N^{-1-\delta}\right)\leq N^{-2.1\delta}.
\end{align*}
\end{prop}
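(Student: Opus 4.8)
The plan is to deduce Proposition~\ref{prop:levelrepulsion} from the two inputs quoted after its statement: a quantitative lower tail for the smallest singular value of $A-z$ (Theorem~2.10 of \cite{EJ23}) and a level repulsion estimate for its two smallest singular values (Theorem~3.2 of \cite{CL19}). With the conventions of Definition~\ref{defn:Vfunction}, $\xi_{1}^{z}\leq\xi_{2}^{z}$ are the two smallest singular values of $A-z$, so $\{\xi_{2}^{z}\leq N^{-1-\delta}\}$ is the event that $(A-z)^{\ast}(A-z)$ has at least two eigenvalues in $[0,N^{-2-2\delta}]$, equivalently that $\mathcal{H}_{z}$ has at least two nonnegative eigenvalues below $N^{-1-\delta}$.

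First I would split according to the size of $\xi_{1}^{z}$:
\begin{align*}
\mathbb{P}\big(\xi_{2}^{z}\leq N^{-1-\delta}\big)\leq\mathbb{P}\big(\xi_{1}^{z}\leq N^{-1-2\delta}\big)+\mathbb{P}\big(N^{-1-2\delta}<\xi_{1}^{z}\leq\xi_{2}^{z}\leq N^{-1-\delta}\big).
\end{align*}
For the first term I would apply Theorem~2.10 of \cite{EJ23}, which gives, uniformly over $|z|\leq 1-\tau$ and for complex i.i.d.\ entries with the moment assumptions in force, a bound of the form $\mathbb{P}(\xi_{1}^{z}\leq sN^{-1})\lesssim s^{2}$ for $s\in(0,1]$; the quadratic exponent is the expected one, since for a square matrix the origin plays the role of a hard edge for the singular-value process in the $\beta=2$ symmetry class (the squared singular values near $0$ form a Laguerre process with parameter $\nu=0$, whose density at the origin is positive, so the singular-value density there vanishes linearly). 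Taking $s=N^{-2\delta}$ bounds the first term by $O(N^{-4\delta})$. For the second term, on that event $\xi_{1}^{z}$ and $\xi_{2}^{z}$ both lie in the window $(N^{-1-2\delta},N^{-1-\delta}]$, so $\xi_{2}^{z}-\xi_{1}^{z}\leq N^{-1-\delta}$ while $\xi_{1}^{z}\gtrsim N^{-1-2\delta}$; this is exactly the regime handled by Theorem~3.2 of \cite{CL19}, a level repulsion estimate for the bottom singular values of $A-z$ obtained there by comparison with the complex Ginibre ensemble, whose smallest singular values form a determinantal process with an explicit Bessel-type kernel near the origin. Reading off its quantitative content, the probability that two singular values of $A-z$ fall inside a window of length $N^{-1-\delta}$ near the origin is $\lesssim(N\cdot N^{-1-\delta})^{3}=N^{-3\delta}$, the exponent $3$ combining the quadratic intra-pair $\beta=2$ repulsion with the (at least linear) cost of the pair being near the origin. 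Adding the two pieces, $\mathbb{P}(\xi_{2}^{z}\leq N^{-1-\delta})\lesssim N^{-3\delta}\leq N^{-2.1\delta}$ for $N$ large enough; the exponent $2.1$ is a deliberately conservative choice leaving room for the implicit constants and for error terms in the two external inputs.

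The bookkeeping above is routine; the main obstacle is interfacing cleanly with the two cited theorems. Concretely I would have to check that Theorem~2.10 of \cite{EJ23} and Theorem~3.2 of \cite{CL19} apply to the present model (general complex i.i.d.\ entries with the stated moment bounds, deterministic shift $z$ with $|z|\leq1-\tau$) uniformly in $z$, and that the repulsion exponents they furnish, after rescaling by the typical singular-value scale $N^{-1}$, strictly exceed $2.1\delta$ — which they do, with margin, by the $\beta=2$ heuristics above. If Theorem~3.2 of \cite{CL19} is stated as a universality/comparison result rather than a ready-made tail bound, the second regime instead requires the short determinantal computation for complex Ginibre that two of its smallest singular values lie within $N^{-1-\delta}$ of the origin with probability $O(N^{-4\delta})$, followed by the transfer through the comparison; this route also sidesteps the subtlety that conditioning on $\xi_{1}^{z}$ is not elementary for general i.i.d.\ matrices. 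Alternatively, if one of the two results is available as a joint lower-tail bound $\mathbb{P}(\xi_{1}^{z}\leq s_{1},\xi_{2}^{z}\leq s_{2})\lesssim(Ns_{1})^{2}(Ns_{2})$, then the conclusion follows in one step with $s_{1}=s_{2}=N^{-1-\delta}$.
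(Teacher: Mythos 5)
There is a genuine gap: your proof assigns to the two cited theorems roles they do not play, and in doing so it skips the step that is actually the heart of the argument. In the paper, Theorem~2.10 of \cite{EJ23} is invoked only for the \emph{Ginibre} ensemble --- it is the tail bound $\mathbb{P}(\mu_{2}(0)\leq N^{-1-\delta_{2}})=O(N^{\epsilon}N^{-8\delta_{2}})$ for the second-smallest singular value of a matrix with i.i.d.\ complex Gaussian entries --- and Theorem~3.2 of \cite{CL19} is a \emph{coupling} between the singular values of the Gaussian-divisible matrix $\tilde{M}_{t}$ and those of a Ginibre matrix (it gives $|\xi_{2}^{z}(t)-\mu_{2}(t)|\leq N^{-1-\kappa}$ up to an $O(N^{-D})$ exceptional event). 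Neither result is a smallest-singular-value bound or a level-repulsion estimate for the general i.i.d.\ matrix $A-z$ itself. Your main route treats them as exactly that, i.e.\ it assumes as an input a level-repulsion estimate for the two smallest singular values of a general i.i.d.\ non-Hermitian matrix --- but that is precisely the statement being proven, not something available off the shelf.

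The missing idea is the transfer from $A$ to a Gaussian-divisible ensemble. The paper's proof first smooths the indicator of $\{\xi_{2}^{z}\leq N^{-1-\delta}\}$ by the observable $\phi\bigl(\tfrac{\eta}{2}\mathrm{Im}\tr G_{z}(\eta)\bigr)$ with $\eta=N^{-1-\delta_{1}}$ (using \eqref{eq:xitraceduality} to relate this trace to the number of singular values below scale $\eta$), then applies the three-and-a-half-moment Green function comparison (Lemma~\ref{lemma:ex35M}, Lemma~\ref{lemma:comparisonlemma}, and the derivative bounds \eqref{eq:trgderivative}) to replace $A$ by $\tilde{M}_{t}$ at the cost of $O(N^{-\delta_{t}/2})$; only then can the \cite{CL19} coupling to Ginibre and the \cite{EJ23} Ginibre tail bound be applied. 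Your final paragraph gestures at the second half of this chain (``the transfer through the comparison''), but even there the comparison you have in mind is Ginibre versus $A-z$ directly, whereas \cite{CL19} only compares Ginibre with the \emph{Gaussian-divisible} ensemble; the passage from $A$ to $\tilde{M}_{t}$ via moment matching is indispensable and absent from the proposal. A smaller point: your decomposition according to the size of $\xi_{1}^{z}$, and the exponents $2$ and $3$ you attribute to the inputs, are not needed and do not match the actual quantitative content used (the relevant exponent from \cite{EJ23} is $8\delta_{2}$ for $\mu_{2}$ of Ginibre, which is then degraded to $2.1\delta$ through the losses in the net/comparison steps).
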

We defer the proof of this to the end of this entire section. The coefficient $2.1$ in the exponent is not important; it just needs to be strictly bigger than $2$.

Note that Proposition \ref{prop:levelrepulsion} is for \emph{fixed} $|z|<1$, whereas we will need it for random $z$. To this end, we use a net argument; we ultimately conclude the following.
\begin{lemma}\label{lemma:levelrepulsion}
Fix any $z_{0}\in\C$ such that $|z_{0}|\leq1-\tau$ with $\tau>0$ independent of $N$. Choose any $R>0$ independent of $N$, and let $B_{z_{0}}(RN^{-1/2})\subseteq\C$ be the ball of radius $RN^{-1/2}$ around $z_{0}$. If $\delta>0$ is small enough, then there exists $c=c(\delta)>0$ such that 
\begin{align*}
\mathbb{P}\left(\exists\lambda\in\mathrm{Spec}(A)\cap B_{z_{0}}(RN^{-1/2}): \ \xi_{2}^{\lambda}\leq N^{-1-\delta}\right)=O(N^{-c}).
\end{align*}
\end{lemma}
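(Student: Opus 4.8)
```latex
The plan is to upgrade Proposition \ref{prop:levelrepulsion}, which holds for a fixed deterministic $z$, to a statement uniform over a mesoscopic ball by combining it with a covering argument and the gradient estimates of Lemma \ref{lemma:gradientestimates}. First I would fix a net $\mathcal{N}\subseteq B_{z_0}(RN^{-1/2})$ of spacing $N^{-1-100}$ (any sufficiently small polynomial power works); then $|\mathcal{N}|=O(N^{200})$, and by Proposition \ref{prop:levelrepulsion} and a union bound over $\mathcal{N}$, with probability $1-O(N^{200}N^{-2.1\delta'})$ we have $\xi_2^{w}\ge N^{-1-\delta'}$ for every $w\in\mathcal{N}$, for a slightly larger exponent $\delta'$ chosen so that $2.1\delta'>200+c$ — concretely, since $\delta$ in Proposition \ref{prop:levelrepulsion} is only required to be small, we instead run the covering at a scale $N^{-1-\delta}$ with $\delta$ itself small but with net spacing $N^{-K}$ for $K$ large depending on $\delta$, and absorb the $N^{K}$ net cardinality into the gain $N^{-2.1K\delta}$ obtained by applying Proposition \ref{prop:levelrepulsion} with parameter $K\delta$; this is legitimate as long as $K\delta$ is still small.

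The second ingredient is continuity of $\xi_2^{z}$ in $z$. The map $z\mapsto \mathcal{H}_z$ is Lipschitz in operator norm with constant $O(1)$ (indeed $\|\mathcal{H}_{z}-\mathcal{H}_{w}\|_{\mathrm{op}}\le 2|z-w|$), so by Weyl's inequality $|\xi_2^{z}-\xi_2^{w}|\le 2|z-w|$ for all $z,w$. Hence if $\lambda\in\mathrm{Spec}(A)\cap B_{z_0}(RN^{-1/2})$ and $w\in\mathcal{N}$ is the nearest net point, then $|\xi_2^{\lambda}-\xi_2^{w}|\le 2N^{-K}\ll N^{-1-\delta}$, so $\xi_2^{\lambda}\le N^{-1-\delta}$ forces $\xi_2^{w}\le 2N^{-1-\delta}\le N^{-1-\delta/2}$. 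Thus the event in the lemma is contained in the event that some net point $w$ has $\xi_2^{w}\le N^{-1-\delta/2}$, and Proposition \ref{prop:levelrepulsion} (applied at each $w$ with exponent $\delta/2$) together with the union bound over the $O(N^{K})$ net points gives probability $O(N^{K}\cdot N^{-2.1(\delta/2)})$. Choosing $K$ small enough relative to $\delta$ — or equivalently choosing the net spacing to be $N^{-1-2\delta}$ rather than a huge power, which already suffices since $|\xi_2^\lambda-\xi_2^w|\le 2N^{-1-2\delta}$ still beats $N^{-1-\delta}$ — makes $|\mathcal{N}|=O(N^{2(1+2\delta)})\cdot$(area factor)$=O(N^{4\delta})$ after accounting for the $N^{-1}$-scale ball, so the total probability is $O(N^{4\delta-1.05\delta})=O(N^{-c})$ once we further shrink $\delta$; one checks the exponents balance because the ball has radius $N^{-1/2}$, so a net of spacing $N^{-1-2\delta}$ inside it has only $O(N^{1+4\delta})$ points, and $1+4\delta-2.1\delta<1$ is false — so in fact one genuinely needs the large-power net and the strong decay $N^{-2.1K\delta}$, which is the content of why the coefficient $2.1$ (strictly bigger than $2$) matters.

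The main obstacle, and the reason the margin $2.1>2$ in Proposition \ref{prop:levelrepulsion} is essential, is precisely this bookkeeping: a net of spacing $\theta$ inside a ball of radius $RN^{-1/2}$ has $\asymp N^{-1}\theta^{-2}$ points, while continuity requires $\theta\lesssim N^{-1-\delta}$, giving $\asymp N^{1+2\delta}$ points; applying Proposition \ref{prop:levelrepulsion} with its parameter set to $\delta$ gives per-point probability $N^{-2.1\delta}$, and the union bound yields $N^{1+2\delta}\cdot N^{-2.1\delta}=N^{1-0.1\delta}$, which is useless. The fix is to take $\theta=N^{-A}$ for a large constant $A$ and apply Proposition \ref{prop:levelrepulsion} with parameter $A\delta_0$ for a small fixed $\delta_0$: then continuity needs only $N^{-A}\ll N^{-1-\delta}$ (true for $A>1+\delta$), the net has $O(N^{2A-1})$ points, and the per-point bound is $N^{-2.1A\delta_0}$; the union bound gives $N^{2A-1-2.1A\delta_0}$, and since $\delta_0$ small but the ``$2.1$'' is fixed we cannot win directly either — so the correct route is: fix the target $\delta$ small, set the net spacing to $N^{-1-\delta}$ exactly (so $|\mathcal{N}|=O(N^{1+2\delta})$ after the $N^{-1}$ area gain), and apply Proposition \ref{prop:levelrepulsion} \emph{with its parameter equal to $2\delta$}, noting $|\xi_2^\lambda-\xi_2^w|\le 2N^{-1-2\delta}\le\tfrac12 N^{-1-\delta}$ and the per-point bound is $N^{-2.1\cdot 2\delta}=N^{-4.2\delta}$; the union bound then gives $N^{1+2\delta}\cdot N^{-4.2\delta}=N^{1-2.2\delta}$ — still $>1$. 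Hence the genuinely required statement is a version of Proposition \ref{prop:levelrepulsion} with decay $N^{-(2+\epsilon)\delta}$ for the ball of radius $N^{-1/2}$, i.e. one must instead localize: cover $B_{z_0}(RN^{-1/2})$ by $O(N^{2\delta})$ balls of radius $N^{-1/2-\delta}$, observe that (by rigidity, Proposition \ref{prop:rigiditydeloc}, and a crude counting bound for the number of eigenvalues of $A$ in such a small ball) with overwhelming probability each such sub-ball contains $O(1)$ eigenvalues of $A$, reduce to checking $\xi_2^{\lambda}$ at those $O(1)$ random points, and at each such point use a net of spacing $N^{-100}$ together with continuity and Proposition \ref{prop:levelrepulsion} with parameter, say, $\delta$ fixed small — giving per-point $N^{-2.1\delta}$ and total $O(N^{2\delta}\cdot N^{100}\cdot N^{-2.1\delta})$, which is again bad. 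The cleanest correct argument, and the one I would actually write, therefore avoids meshing $z$-space entirely: one uses that eigenvalues of $A$ are determined by $\det(A-z)=0$, applies Proposition \ref{prop:levelrepulsion} only along the (finitely many, by rigidity) eigenvalues actually present in $B_{z_0}(RN^{-1/2})$, and handles the failure of ``fixedness'' of those eigenvalues by a Markov-type argument on $\mathbb{E}\,\#\{\lambda\in B_{z_0}(RN^{-1/2}):\xi_2^\lambda\le N^{-1-\delta}\}$, bounding this expectation via Proposition \ref{prop:levelrepulsion} integrated against the one-point intensity $\rho_{\mathrm{GinUE}}^{(1)}=O(1)$ on the ball (area $O(N^{-1})$) — giving $O(N^{-1}\cdot N\cdot N^{-2.1\delta})=O(N^{-2.1\delta})$, hence the stated $O(N^{-c})$ with $c=2.1\delta$. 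The main technical obstacle is thus making this first-moment computation rigorous: one needs a Kac--Rice / coarea type identity expressing $\mathbb{E}\,\#\{\lambda: \xi_2^\lambda\le N^{-1-\delta}\}$ in terms of the eigenvalue intensity together with the conditional probability (given an eigenvalue at $z$) that $\xi_2^z$ is small, and then the conditional probability is controlled by (a conditional version of) Proposition \ref{prop:levelrepulsion}; the gradient estimates in Lemma \ref{lemma:gradientestimates} enter to control the Jacobian in this coarea formula and to justify the continuity/interchange of $z$-derivatives needed for it.
```
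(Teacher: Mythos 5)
Your accounting of why the naive net-plus-Weyl argument fails is accurate, but the route you finally settle on does not close the resulting gap. Your last proposal is to bound $\E\,\#\{\lambda\in\mathrm{Spec}(A)\cap B_{z_{0}}(RN^{-1/2}):\xi_{2}^{\lambda}\leq N^{-1-\delta}\}$ by integrating ``Proposition \ref{prop:levelrepulsion} conditioned on an eigenvalue sitting at $z$'' against the one-point intensity. That conditional statement is precisely what is missing: Proposition \ref{prop:levelrepulsion} is proved for a \emph{fixed, deterministic} $z$ and an unconditioned matrix $A$ (its proof runs through moment matching and comparison to Ginibre at that fixed $z$), and conditioning on $\det(A-z)=0$ changes the law of $A$ in a way none of the available tools control. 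A rigorous Kac--Rice/coarea identity of the type you invoke would require exactly this conditional level-repulsion input (or a joint density estimate for an eigenvalue location together with the second singular value), so your argument is circular at its key step; you acknowledge the obstacle but do not resolve it, and the gradient bounds of Lemma \ref{lemma:gradientestimates} do not supply it --- they control $z$-derivatives of resolvent traces, not conditional laws.

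The idea you are missing is to never test $\xi_{2}$ at the random point $\lambda$ at all, but to pass to the stable observable $\frac{\eta}{2}\mathrm{Im}\tr G_{z}(\eta)=\sum_{k}\frac{\eta^{2}}{(\xi_{k}^{z})^{2}+\eta^{2}}$ with $\eta=N^{-1-\delta_{1}}$, $0<\delta_{1}<\delta$, and a threshold strictly between $1$ and $2$. If some eigenvalue $\lambda\in B$ has $\xi_{2}^{\lambda}\leq N^{-1-\delta}\ll\eta$, then since also $\xi_{1}^{\lambda}=0$ the observable at $z=\lambda$ is $\geq1.9$; conversely, at a deterministic net point $z$ the observable being $\geq1.1$ forces $\xi_{2}^{z}\leq N^{-1-\delta_{2}}$ (rigidity handles the tail of the sum), which is exactly the event Proposition \ref{prop:levelrepulsion} bounds by $N^{-2.1\delta_{2}}$. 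The decisive gain is in the continuity step: by \eqref{res:nz1} with $T=I$, the map $z\mapsto\eta\tr G_{z}(\eta)$ has (high-probability) Lipschitz constant $\mathcal{O}(N^{-1/2}\eta^{-1})$, far better than the deterministic $\eta^{-2}$ or the Weyl bound on $\xi_{2}^{z}$ you used, so the net spacing only needs to be $N^{1/2-\epsilon}\eta=N^{-1/2-\delta_{1}-\epsilon}$, i.e.\ barely finer than the ball radius $N^{-1/2}$. The net then has only $O(N^{2\epsilon+2\delta_{1}})$ points, and the union bound costs $N^{2\epsilon+2\delta_{1}-2.1\delta_{2}}$, which is negative for $\delta_{2}$ close to $\delta_{1}$ and $\epsilon$ small --- this is where the margin $2.1>2$ is actually spent, not in the $N^{1+2\delta}$-point nets you were forced into by meshing $\xi_{2}^{z}$ directly.
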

\begin{proof}
Throughout this argument, we set $\eta=N^{-1-\delta_{1}}$, where $0<\delta_{1}<\delta$ (and $\delta>0$ will be a small parameter chosen shortly). Let $\phi:\R\to\R_{\geq0}$ be a smooth test function such that 
\begin{equation}\label{def_phi}
    \phi(x)= \begin{cases}  1, & x \ge 1.9 \\ 0, & x \leq 1.1 \end{cases}
\end{equation} 
For convenience, set $B:=B_{z_{0}}(RN^{-1/2})$. We claim that for some $c>0$, we have
\begin{align}
\E\left[\phi\left(\max_{z\in B}\frac{\eta}{2}\mathrm{Im}\tr G_{z}(\eta)\right)\right]\leq N^{-c},\label{eq:randomlevelrepulsion}
\end{align}
where the expectation is over randomness of the matrix $A$. To see that this is enough, we first note 
\begin{align}
\frac{\eta}{2}\mathrm{Im}\tr G_{z}(\eta)&=\sum_{k=1}^{N}\frac{\eta^{2}}{(\xi_{k}^{z})^{2}+\eta^{2}}\geq \frac{2\eta^{2}}{(\xi_{2}^{z})^{2}+\eta^{2}}+\sum_{k=3}^{N}\frac{\eta^{2}}{(\xi_{k}^{z})^{2}+\eta^{2}}.\label{eq:xitraceduality}
\end{align}
Thus, if there is $z\in B$ for which $\xi_{2}^{z}\leq N^{-1-\delta}\ll \eta$, then on this event, we have 
\begin{align*}
\eta\mathrm{Im}\tr G_{z}(\eta)\geq 2\frac{\eta^{2}}{(\xi_{2}^{z})^{2}+\eta^{2}}\geq1.9.
\end{align*}
In particular, since this statement is deterministic in $z$, we have the bounds
\begin{align*}
\mathbb{P}\left(\exists\lambda\in\mathrm{Spec}(A)\cap B_{z_{0}}(RN^{-1/2}): \ \xi_{2}^{\lambda}\leq N^{-1-\delta}\right)&\leq\mathbb{P}\left(\max_{z\in B}\frac{\eta}{2}\mathrm{Im}\tr G_{z}(\eta)\geq1.9 \right)\\
&\leq\E\left[\phi\left(\max_{z\in B}\frac{\eta}{2}\mathrm{Im}\tr G_{z}(\eta)\right)\right],
\end{align*}
at which point we conclude. To prove the remaining estimate \eqref{eq:randomlevelrepulsion}, Let $\mathcal{E}\subseteq B$ be such that the distance between any point in $B$ to $\mathcal{E}$ is at most $N^{1/2-\epsilon}\eta$, and $|\mathcal{E}|=O(N^{2\epsilon+2\delta_{1}})$. Here, $\epsilon>0$ is small. Fix $0<\delta_{2}<\delta_{1}$ such that $2\epsilon+2\delta_{1}-2.1\delta_{2}<0$. We claim
\begin{align*}
\E\left[\phi\left(\max_{z\in B\cap\mathcal{E}}\frac{\eta}{2}\mathrm{Im}\tr G_{z}(\eta)\right)\right]&\leq\sum_{z\in B\cap\mathcal{E}}\mathbb{P}\left(\frac{\eta}{2}\mathrm{Im}\tr G_{z}(\eta)\geq 1.1\right)\\
&\leq \sum_{z\in B\cap\mathcal{E}}\mathbb{P}\left(\xi_{2}^{z}\leq N^{-1-\delta_{2}}\right)\\
&\leq |\mathcal{E}|N^{-2.1\delta_{2}}\lesssim N^{2\epsilon+2\delta_{1}-2.1\delta_{2}}.
\end{align*}
The first bound follows by union bound and definition of $\phi$. To show the second bound, we first use the identity in \eqref{eq:xitraceduality} to get the upper bound
\begin{align*}
\frac{\eta}{2}\mathrm{Im}\tr G_{z}(\eta)&\leq 1+\sum_{k=2}^{N}\frac{\eta^{2}}{(\xi_{k}^{z})^{2}+\eta^{2}}.
\end{align*}
If the LHS of the previous display is $\geq1.1$, then $\xi_{2}^{z}\leq N^{-1-\delta_{2}}$ (for $N$ large enough). Indeed, if not, then the $k=2$ term in the sum is $O(N^{-\kappa})$. The same is true for the first $N^{\tau}$ many terms, where $\tau<\kappa$. Now we use rigidity to handle the remaining terms; see the proof of Lemma \ref{lemma:Vformula}, for example. The last line follows by Proposition \ref{prop:levelrepulsion}. By construction of $\delta_{2}$, we deduce
\begin{align*}
\E\left[\phi\left(\max_{z\in B\cap\mathcal{E}}\frac{\eta}{2}\mathrm{Im}\tr G_{z}(\eta)\right)\right]\leq N^{-c}.
\end{align*}
To extend from $B\cap\mathcal{E}$ to $B$, take any $z_{1}\in B$ and $z_{2}\in\mathcal{E}$. By \eqref{res:nz1} for $T=I$, we have
\begin{align*}
|\eta\tr G_{z_{1}}(\eta)-\eta\tr G_{z_{2}}(\eta)|&=\mathcal{O}(N^{-1/2}\eta^{-1}|z_{1}-z_{2}|).
\end{align*}
Now use $|z_{1}-z_{2}|\leq N^{1/2-\epsilon}\eta$ to conclude. 
\end{proof}
Now, by Lemmas \ref{lemma:Vformula} and \ref{lemma:levelrepulsion}, to conclude the proof of Theorem \ref{theorem:comparisonwitht}, it suffices to show the following.
\begin{prop}\label{prop:Vcomparison}
Retain the setting of Theorem \ref{theorem:comparisonwitht}. Define
\begin{align*}
\mathbf{V}&:=(V(\lambda_{j},F_{12}\otimes T_{j}),V(\lambda_{k},F_{21}\otimes T_{k}))_{j,k}\\
\mathbf{V}(t)&:=(V(\lambda_{j}(t),F_{12}\otimes T_{j}),V(\lambda_{k}(t),F_{21}\otimes T_{k}))_{j,k},
\end{align*}
where $j\in\llbracket 1,m_{R}\rrbracket$ and $k\in\llbracket m_{R}+1,m_{R}+m_{L}\rrbracket$, and where $F_{12},F_{21}$ are the $2\times2$ matrices
\begin{align*}
F_{12}=\begin{pmatrix}0&1\\0&0\end{pmatrix}\quad\mathrm{and}\quad F_{21}=\begin{pmatrix}0&0\\1&0\end{pmatrix}.
\end{align*}
For any test function $F\in C^{\infty}_{c}(\C^{m_{R}+m_{L}}\times \R^{m_{R}+m_{L}})$, we have 
\begin{align*}
\lim_{N\rightarrow\infty}\E F(\mathbf{z},\mathbf{V}) = \lim_{N\rightarrow\infty}\E F(\mathbf{z}(t),\mathbf{V}(t)),
\end{align*}
where the expectation is over all the randomness in $A$ and $\tilde{M}_{t}$, respectively.
\end{prop}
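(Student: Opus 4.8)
The plan is to run a Green function comparison in the spirit of \cite{KY13,KY16}, after first rewriting the eigenvalue locations and the eigenvector weights $V(\lambda_{j},\cdot)$ in terms of resolvents $G_{z}(\eta)$ at \emph{deterministic} $z$ via Girko's formula.

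\textbf{Step 1 (reduction to a deterministic-$z$ resolvent functional).} Since $F$ is smooth and compactly supported, $F(\mathbf{z},\mathbf{V})$ depends only on the eigenvalues $\lambda_{j}$ lying in balls $B_{z_{j}^{0}}(RN^{-1/2})$ for $R$ large and on the values $V(\lambda_{j},\cdot)$ there. First I would apply Girko's formula \cite{girko1984} to express the relevant linear statistics in the $\lambda_{j}$ as integrals over deterministic $z$ of $\log|\det(A-z)|^{2}$, which is a regularized $\eta$-integral of $\mathrm{Im}\,\tr G_{z}(\eta)$. For the eigenvector weights I would discretize each ball by a net and, on the high-probability event from Lemma \ref{lemma:levelrepulsion} (on which $\xi_{2}^{z}\gtrsim N^{-1-\delta_{V}/2}$ near every $\lambda_{j}$, so that at most one singular value of $A-z$ is microscopically small), replace $V(\lambda_{j},T_{j})$ by $V(z,T_{j})$ at the nearest net point, controlling the discretization error by the gradient bounds \eqref{dev_V} of Lemma \ref{lemma:gradientestimates}. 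Using rigidity and delocalization (Proposition \ref{prop:rigiditydeloc}) to discard the irrelevant scales, exactly as in the proof of Lemma \ref{lemma:Vformula}, this expresses $\E F(\mathbf{z},\mathbf{V})$ up to an $o(1)$ error as $\E\Psi$ for a bounded functional $\Psi$ of the family $\{G_{z}(\eta)\}$ with $z$ in a fixed net and $\eta$ in a net of $[\eta_{V},O(1)]$, and likewise $\E F(\mathbf{z}(t),\mathbf{V}(t))=\E\Psi(t)+o(1)$ with the \emph{same} functional $\Psi$ applied to the resolvents of $\tilde{M}_{t}$.

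\textbf{Step 2 (Lindeberg swapping).} By Lemma \ref{lemma:ex35M} the entrywise laws of $A$ and $\tilde{M}_{t}$ match to three moments exactly and to the fourth up to $O(N^{-\delta_{t}})$. I would enumerate the real and imaginary parts of the entries, interpolate between the two ensembles one entry at a time, and Taylor-expand $\Psi$ in the swapped variable to fourth order using $\partial_{A_{ab}}G_{z}(\eta)=-G_{z}(\eta)(\partial_{A_{ab}}\mathcal{H}_{z})G_{z}(\eta)$. The contributions of orders $\le 3$ cancel by the moment matching, the fourth-order term is $O(N^{-\delta_{t}})$ per entry after inserting the fourth-moment discrepancy, and the fifth-order remainder is absorbed by a priori bounds; summing over the $O(N^{2})$ entries gives a total error $O(N^{-\delta_{t}})\to0$, which is the claim.

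\textbf{Step 3 (a priori bounds) and the main obstacle.} The per-swap estimates require each $\partial_{A_{ab}}$-derivative of $\Psi$ to carry a factor $N^{-1}$, with every further derivative gaining $N^{-1/2}$. For the part of $\Psi$ built from scales $\eta\gtrsim N^{-1/2+\epsilon}$ this is precisely the content of the averaged, isotropic, and two-resolvent local laws invoked in \textbf{A1}--\textbf{A2} and the references in the introduction. The hard part, which I expect to be the main obstacle, is the microscopic scale $\eta=\eta_{V}=N^{-1-\delta_{V}}\ll N^{-1}$ that enters through the $V$-terms, where no local law holds: there I would instead exploit the exact spectral expansion $N\eta_{V}\tr[T^{\ast}T\,\mathrm{Im}\,G_{z}(\eta_{V})]=\sum_{k}\frac{N\eta_{V}^{2}}{(\xi_{k}^{z})^{2}+\eta_{V}^{2}}\|T\mathbf{u}_{k}^{z}\|^{2}$ from Definition \ref{defn:Vfunction}, together with delocalization of the bulk singular vectors $\mathbf{u}_{k}^{z}$, rigidity of the $\xi_{k}^{z}$ (Proposition \ref{prop:rigiditydeloc}), and the level-repulsion bound of Proposition \ref{prop:levelrepulsion}, to bound the entrywise derivatives of the $V$-part on the relevant high-probability event; this microscopic bookkeeping, and its interaction with Girko's $z$-integral, is the delicate point of the argument.
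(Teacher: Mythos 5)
Your overall strategy (rewrite the eigenvector observables via Girko-type formulas, then compare the two ensembles by Lindeberg swapping using the three-and-a-half-moment matching of Lemma \ref{lemma:ex35M}) is the same skeleton as the paper's proof, but your Step 1 has a genuine gap at exactly the point the paper identifies as its key novelty. The quantity $\E F(\mathbf{z},\mathbf{V})$ couples the random eigenvalue locations $\lambda_{j}$ to $V$ evaluated \emph{at those same random points}, and your mechanism for turning this into a functional of resolvents at deterministic spectral parameters is not actually supplied: replacing $V(\lambda_{j},T_{j})$ by $V(z,T_{j})$ at the ``nearest net point'' makes the summand depend on which net cell $\lambda_{j}$ falls into, i.e.\ it produces a test function that is discontinuous in the eigenvalue variable, whereas the Girko machinery you need (the version behind $I_{\delta_{V}}$, i.e.\ Theorem 2.4 of \cite{CES20}) requires smooth compactly supported test functions with quantitative control of their Laplacian. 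The paper resolves this by building the composite \emph{random} test function $f^{(j)}(z)=NF^{(j)}(N^{1/2}(z-z_{j}^{0}),V(z,F_{12}\otimes T_{j}))$, so that the linear statistic $\frac1N\sum_{k}f^{(j)}(\lambda_{k})$ automatically couples position and eigenvector weight, and then runs the \cite{CES20} argument for this random $f^{(j)}$, using the gradient bounds of Lemma \ref{lemma:gradientestimates} to control $\Delta f^{(j)}$ and handling the $\eta<N^{-1-10\delta_{V}}$ and $\eta>N^{-1+10\delta_{V}}$ parts of Girko's integral separately (no comparison needed there). Your appeal to ``exactly as in the proof of Lemma \ref{lemma:Vformula}'' does not cover this: that lemma compares $V(\lambda_{j},T)$ with $N\|Tu_{j}\|^{2}$, it does not remove the eigenvalue randomness from the spectral parameter. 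Note also that level repulsion (Proposition \ref{prop:levelrepulsion}, Lemma \ref{lemma:levelrepulsion}) belongs to the reduction from $\mathbf{T}$ to $\mathbf{V}$ in Theorem \ref{theorem:comparisonwitht}, not to the proof of this proposition.

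Your Step 3 bookkeeping is also off, in a way that would stall your own argument. You assert that each $\partial_{\mathcal{H}_{ij}}$-derivative of $\Psi$ must carry a factor $N^{-1}$ with further derivatives gaining $N^{-1/2}$; at the microscopic scales $\eta\in[N^{-1-10\delta_{V}},N^{-1+10\delta_{V}}]$ that enter through $V$, such gains are simply unavailable (isotropic resolvent entries there are of size $\mathcal{O}(N^{-1}\eta^{-1})=\mathcal{O}(N^{C\delta_{V}})$, not $o(1)$), so the requirement you set yourself cannot be met. Fortunately it is also not needed: because the first three moments match exactly and the fourth up to $O(N^{-2-\delta_{t}})$, the swapping scheme (Lemma \ref{lemma:comparisonlemma}) only requires $\mathcal{O}(N^{C\delta_{V}})$ bounds on the entrywise derivatives, which follow from rigidity and delocalization alone (\eqref{eq:trtgderivative}--\eqref{eq:trgderivative}); the remaining smallness comes from the fact that after the change of variables $w_{j}=N^{1/2}(z-z_{j}^{0})$ the $\eta$-integration window $\mathcal{I}$ has length $O(N^{-1+10\delta_{V}})$, which offsets the $N^{m}$ Jacobian. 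If you replace your per-derivative gain requirement by this weaker one and supply the smooth coupling of Step 1 via a random test function, your outline matches the paper's proof.
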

Let us now prove Theorem \ref{theorem:comparisonwitht} assuming that Proposition \ref{prop:Vcomparison} holds.
\begin{proof}[Proof of Theorem \ref{theorem:comparisonwitht}]
By Lemmas \ref{lemma:Vformula} and \ref{lemma:levelrepulsion}, we get $\E F(\mathbf{z},\mathbf{T})=\E F(\mathbf{z},\mathbf{V})+O(\|F\|_{C^{1}}N^{-\kappa})$, where $\kappa>0$. But $\tilde{M}_{t}$ has the same structure as $A$, so $\E F(\mathbf{z}(t),\mathbf{T}(t))=\E F(\mathbf{z}(t),\mathbf{V}(t))+O(\|F\|_{C^{1}}N^{-\kappa})$. Theorem \ref{theorem:comparisonwitht} follows immediately by these two estimates and Proposition \ref{prop:Vcomparison}.
\end{proof}
\subsection{Proof of Proposition \ref{prop:Vcomparison}}
We approximate general $F\in C^{\infty}_{c}(\C^{m_{R}+m_{L}}\times\R^{m_{R}+m_{L}})$ by products of functions $F^{(j)}\in C^{\infty}_{c}(\C\times\R)$ and provide estimates uniform with respect to some norm on $F^{(j)}$ (this can be done by taking cutoffs of the Fourier transform). In particular, we consider 
\begin{align*}
F(\mathbf{z},\mathbf{V})&=\prod_{j=1}^{m_{R}+m_{L}}\frac{1}{N}\sum_{k=1}^{N}f^{(j)}(\lambda_{k}),\\
f^{(j)}(z)&=\begin{cases}NF^{(j)}\left(N^{1/2}(z-z_{j}^{0}),V(z,F_{12}\otimes T_{j})\right)&1\leq j\leq m_{R}\\NF^{(j)}\left(N^{1/2}(z-z_{j}^{0}),V(z,F_{21}\otimes T_{j})\right)&m_{R}+1\leq j\leq m_{R}+m_{L}\end{cases}
\end{align*}
and likewise for $F(\mathbf{z}(t),\mathbf{V}(t))$. Fix $1\leq j\leq m_{R}$; the following analysis holds similarly for $m_{R}+1\leq\ell\leq m_{R}+m_{L}$. For convenience, we define the function $\tilde{f}^{(j)}(z):=F^{(j)}(z,V(z_{j}^{0}+N^{-1/2}z,F_{12}\otimes T_{j}))$, so that $f^{(j)}(z)=N\tilde{f}^{(j)}(N^{1/2}(z-z_{j}^{0}))$. We claim that the quantity
\begin{align*}
\mathcal{E}&:=\frac{1}{N}\sum_{k=1}^{N}f^{(j)}(\lambda_{k})-\frac{1}{\pi}\int_{\mathbb{D}}f^{(j)}(z)\d^{2}z-I_{\delta_{V}}(A,f^{(j)}),
\end{align*}
satisfies the bound $\E|\mathcal{E}|=O(N^{-\kappa})$ for some $\kappa>0$. Above, $\mathbb{D}=\{z\in\C: |z|\leq1\}$ is the unit disc. Moreover, we introduced
\begin{align*}
I_{\delta_{V}}(A,f^{(j)})&:=\frac{1}{2\pi}\int_{\C}\Delta f^{(j)}(z)\int_{N^{-1-10\delta_{V}}}^{N^{-1+10\delta_{V}}}\langle\mathrm{Im}G_{z}(\eta)-\mathrm{Im}m_{z}(i\eta)\rangle\d\eta\d^{2}z,
\end{align*}
and $m_{z}(i\eta)$ is defined via the self-consistent equation
\begin{align*}
-\frac{1}{m_{z}(w)}=w+m_{z}(w)-\frac{|z|^{2}}{w+m_{z}(w)},\quad \mathrm{Im}[m_{z}(w)]\mathrm{Im}[w]>0.
\end{align*}
Let us briefly sketch the proof; it is the same argument as Theorem 2.4 in \cite{CES20}. We start with a priori bounds. Let $\nabla_{1}$ and $\Delta_{1}$ mean gradient and Laplacian, respectively, with respect to the $z$-variable in the first input into $F^{(j)}$, and let $\partial_{2}$ mean derivative with respect to the second input into $\tilde{f}^{(j)}$. Finally, let $\Delta$ be the total Laplacian. We first compute 
\begin{align*}
\Delta \tilde{f}^{(j)}(z,V(z,F_{12}\otimes T_{j}))&=\Delta_{1}\tilde{f}^{(j)}+2N^{-\frac12}\nabla_{1}\tilde{f}^{(j)}\cdot(\partial_{2}\tilde{f}^{(j)}\nabla_{z}V(z,F_{12}\otimes T_{j}))\\
&+N^{-1}(\partial_{2}\tilde{f}^{(j)})^{2}\|\nabla_{z}V(z,F_{12}\otimes T_{j})\|^{2}+N^{-1}\partial_{2}\tilde{f}^{(j)}\Delta V(z,F_{12}\otimes T_{j}).
\end{align*}
Since $F^{(j)}\in C^{\infty}_{c}(\C\times\R)$, its derivatives are $O(1)$ deterministically. Using this and \eqref{dev_V}, we have 
\begin{align*}
|\Delta \tilde{f}^{(j)}(z)|=\mathcal{O}(1+N^{-2}\eta_{V}^{-2}+N^{-4}\eta_{V}^{-4}+N^{-3}\eta_{V}^{-3})=\mathcal{O}(N^{4\delta_{V}}),
\end{align*}
where we recall $\eta_{V}=N^{-1-\delta_{V}}$ with $\delta_{V}>0$ small. On the other hand, since $\delta_{V}>0$ is small, resolvent perturbation also gives the bound $|\Delta \tilde{f}^{(j)}(z)|=O(N^{4})$. This last deterministic bound, for example, shows that $|\mathcal{E}|=O(N^{10})$. Next, by (3.2) in \cite{CES20}, which is a deterministic identity that holds even for our random test function $f^{(j)}$ in place of $g$ therein, we must first prove the following estimates:
\begin{align*}
\int_{\C}\Delta f^{(j)}(z)\log|\det[\mathcal{H}_{z}-iT]|\d^{2}z&=\mathcal{O}(N^{-98}\|\Delta \tilde{f}^{(j)}\|_{L^{1}(\C)})=\mathcal{O}(N^{-94}),\\
\int_{\C}\Delta f^{(j)}(z)\int_{N^{100}}^{+\infty}\left(\mathrm{Im}m_{z}(i\eta)-\frac{1}{\eta+1}\right)\d\eta\d^{2}z&=\mathcal{O}(N^{-98}\|\Delta \tilde{f}^{(j)}\|_{L^{1}(\C)})=\mathcal{O}(N^{-94}),
\end{align*}
The first estimate in each line is by Lemma 3.1 in \cite{CES20}. To prove the second estimate in each line, we use the compact support property of $f^{(j)}$ along with $\|\Delta f^{(j)}\|_{L^{\infty}(\C)}=O(N^{4})$. Next, we define the quantities
\begin{align*}
I_{0}^{\eta_{0}}&:=\frac{1}{2\pi}\int_{\C}\Delta f^{(j)}(z)\int_{0}^{\eta_{0}}\langle \mathrm{Im}G_{z}(\eta)-\mathrm{Im}m_{z}(i\eta)\rangle\d\eta\d^{2}z,\\
I_{\eta_{1}}^{N^{100}}&:=\frac{1}{2\pi}\int_{\C}\Delta f^{(j)}(z)\int_{\eta_{1}}^{N^{100}}\langle \mathrm{Im}G_{z}(\eta)-\mathrm{Im}m_{z}(i\eta)\rangle\d\eta\d^{2}z,
\end{align*}
where $\eta_{0}=N^{-1-10\delta_{V}}$ and $\eta_{1}=N^{-1+10\delta_{V}}$. To conclude $\E|\mathcal{E}|=O(N^{-\kappa})$ for some $\kappa>0$, according to proof of Theorem 2.4 in \cite{CES20}, it suffices to prove 
\begin{align*}
|I_{0}^{\eta_{0}}|+|I_{\eta_{1}}^{N^{100}}|&=\mathcal{O}(\|\Delta \tilde{f}^{(j)}\|_{L^{1}(\C)})=\mathcal{O}(N^{4\delta_{V}}) \quad\mathrm{and}\quad \E|I_{0}^{\eta_{0}}|+\E|I_{\eta_{1}}^{N^{100}}|=O(N^{-\kappa}).
\end{align*}
The first estimate follows from Lemma 3.1 in \cite{CES20}, by $\|\Delta \tilde{f}^{(j)}\|_{L^{1}(\C)}\lesssim\|\Delta\tilde{f}^{(j)}\|_{L^{\infty}(\C)}$ (because $\tilde{f}^{(j))}$ is compactly supported) and by our earlier estimate $\|\Delta\tilde{f}^{(j)}\|_{L^{\infty}(\C)}=\mathcal{O}(N^{4\delta_{V}})$. To prove the second estimate, we write the following with $L>0$ large but fixed:
\begin{align*}
I_{0}^{\eta_{0}}&=\frac{1}{2\pi}\int_{\C}\Delta f^{(j)}(z)\times\frac{1}{N}\sum_{\lambda_{k}<N^{-L}}\log\left(1+\frac{\eta_{0}^{2}}{\lambda_{k}^{2}}\right)\d^{2}z\\
&+\frac{1}{2\pi}\int_{\C}\Delta f^{(j)}(z)\times\frac{1}{N}\sum_{\lambda_{k}\geq N^{-L}}\log\left(1+\frac{\eta_{0}^{2}}{\lambda_{k}^{2}}\right)\d^{2}z\\
&-\frac{1}{2\pi}\int_{\C}\Delta f^{(j)}(z)\times\int_{0}^{\eta_{0}}\mathrm{Im}m_{z}(i\eta)\d^{2}z\\
&=\mathrm{I}+\mathrm{II}+\mathrm{III}.
\end{align*}
Using the deterministic bound $|\Delta f^{(j)}(z)|=O(N^{2}|\Delta\tilde{f}^{(j)}(z)|)=O(N^{6})$, follow (3.5) and use (2.15), all in \cite{CES20}, to show that if $L>0$ is large enough, then $\E|\mathrm{I}|=O(N^{4}N^{-L})=O(N^{-1})$. For $\mathrm{II}$, we use the earlier bound $\|\Delta\tilde{f}^{(j)}\|_{L^{1}(\C)}=\mathcal{O}(N^{4\delta_{V}})$ to get the following for $\epsilon>0$ small and $D>0$ large:
\begin{align*}
\E|\mathrm{II}|&\leq N^{4\delta_{V}+\epsilon}\E\left[\frac{|\mathrm{II}|}{\|\Delta\tilde{f}^{(j)}\|_{L^{1}(\C)}}\right]+O(N^{-D}).
\end{align*}
The calculation (3.7) in \cite{CES20} shows that the expectation on the RHS of the previous estimate is $O(N^{-2\kappa})$ for some $\kappa>0$ fixed depending only on $L>0$. Thus, we can choose $\delta_{V},\epsilon>0$ small enough so that $\E|\mathrm{II}|=O(N^{-\kappa})$. Finally, bounding $\E|\mathrm{III}|=O(N^{-\kappa})$ uses the same argument as for $\mathrm{II}$, except we only need the bound $\mathrm{Im}m_{z}(i\eta)=O(1)$, which can be deduced from (2.12) in \cite{CES20}. We have shown $\E|I_{0}^{\eta_{0}}|=O(N^{-\kappa})$. The proof of $\E|I_{\eta_{1}}^{N^{100}}|=O(N^{-\kappa})$ follows by the same argument. In particular, note the deterministic bound $|I_{\eta_{1}}^{N^{100}}|=O(N^{200})$; this and $|\Delta\tilde{f}^{(j)}(z)|=\mathcal{O}(N^{4\delta_{V}})$ give
\begin{align*}
\E|I_{\eta_{1}}^{N^{100}}|&\leq N^{4\delta_{V}+\epsilon}\E\left[\frac{|I_{\eta_{1}}^{N^{100}}|}{\|\Delta\tilde{f}^{(j)}\|_{L^{1}(\C)}}\right]+O(N^{-D}).
\end{align*}
By the proof of Proposition 3.3 in \cite{CES20}, the expectation on the RHS is $O(N^{-2\kappa})$. So, if we take $\delta,\epsilon>0$ small, we get $\E|I_{\eta_{1}}^{N^{100}}|=O(N^{-\kappa})$. The proof of Theorem 2.4 in \cite{CES20} now gives $\E|\mathcal{E}|=O(N^{-\kappa})$. In particular, the comparison in Proposition \ref{prop:Vcomparison} amounts to comparison of $I_{\delta_{V}}(A,f^{(j)})$ and
\begin{align*}
I_{\delta_{V}}(\tilde{M}_{t},f^{(j)}):=\frac{1}{2\pi}\int_{\C}\Delta f^{(j)}(z)\int_{N^{-1-10\delta_{V}}}^{N^{-1+10\delta_{V}}}\langle\mathrm{Im}G_{z,t}(\eta)-\mathrm{Im}m_{z}(i\eta)\rangle\d\eta\d^{2}z,
\end{align*}
where $G_{z,t}$ is $G_{z}$ but with $\tilde{M}_{t}$ instead of $A$. Precisely, by the proof of Proposition 2.5 in \cite{CES20}, we have the following.
\begin{lemma}\label{lemma:Vcomparisonreduction}
Suppose there exists a constant $c>0$ such that
\begin{align}
\E\left[\prod_{j=1}^{m_{R}+m_{L}}I_{\delta_{V}}(A,f^{(j)})\right]-\E\left[\prod_{j=1}^{m_{R}+m_{L}}I_{\delta_{V}}(\tilde{M}_{t},f^{(j)})\right]=O(N^{-c}).\label{eq:Vcomparisonreduction}
\end{align}
Then Proposition \ref{prop:Vcomparison} follows.
\end{lemma}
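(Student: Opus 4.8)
The plan is to run the argument behind Proposition~2.5 of \cite{CES20}, with one genuine modification needed because here the test function $f^{(j)}$ depends on the matrix through $V$. First I would reduce to product test functions: by Fourier inversion and a truncation of the Fourier transforms (as indicated at the start of this subsection) it suffices to treat $F(\mathbf{z},\mathbf{V})=\prod_{j=1}^{m_{R}+m_{L}}\frac1N\sum_{k=1}^{N}f^{(j)}(\lambda_{k})$ with each $F^{(j)}\in C_{c}^{\infty}(\C\times\R)$, uniformly in a fixed Schwartz-type norm of the $F^{(j)}$. For such a product I insert the decomposition established just above,
\[
\frac1N\sum_{k=1}^{N}f^{(j)}(\lambda_{k})=\frac1\pi\int_{\mathbb{D}}f^{(j)}(z)\,\d^{2}z+I_{\delta_{V}}(A,f^{(j)})+\mathcal{E}^{(j)},\qquad \E|\mathcal{E}^{(j)}|=O(N^{-\kappa}),
\]
together with the deterministic fallback $|\mathcal{E}^{(j)}|=O(N^{10})$ and the high-probability a priori bounds $\bigl|\frac1N\sum_{k}f^{(j)}(\lambda_{k})\bigr|,\ \bigl|\frac1\pi\int_{\mathbb{D}}f^{(j)}\bigr|,\ |I_{\delta_{V}}(A,f^{(j)})|=O(N^{\varepsilon})$ coming from rigidity (Proposition~\ref{prop:rigiditydeloc}) and the local law; all of this holds for $\tilde{M}_{t}$ as well.

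Next I would expand $\prod_{j}\bigl(\frac1\pi\int_{\mathbb{D}}f^{(j)}+I_{\delta_{V}}(A,f^{(j)})+\mathcal{E}^{(j)}\bigr)$ and discard every term carrying a factor $\mathcal{E}^{(j)}$: bounding the remaining factors by $O(N^{\varepsilon})$ on a high-probability event and by $O(N^{10(m_{R}+m_{L})})$ on its complement, Cauchy--Schwarz together with $\E|\mathcal{E}^{(j)}|=O(N^{-\kappa})$, $|\mathcal{E}^{(j)}|\le N^{10}$ and $\mathbb{P}(\text{bad})=O(N^{-D})$ produces a total error $O(N^{C\varepsilon-\kappa})+O(N^{C-D/2})=o(1)$ for $\varepsilon$ small and $D$ large. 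Thus $\E F(\mathbf{z},\mathbf{V})=\E\prod_{j}\bigl(\frac1\pi\int_{\mathbb{D}}f^{(j)}+I_{\delta_{V}}(A,f^{(j)})\bigr)+o(1)$, and likewise for $\tilde{M}_{t}$ (with $\lambda_{j}$ and $V$ computed from $\tilde{M}_{t}$).

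The main step—and the one that departs from \cite{CES20}, where the analogous bulk integral is literally deterministic—is to show that $\frac1\pi\int_{\mathbb{D}}f^{(j)}(z)\,\d^{2}z$ equals, up to an $L^{1}$-error $O(N^{-\kappa'})$ (with deterministic fallback $O(N^{C})$), the \emph{deterministic} constant $c^{(j)}:=\frac1\pi\int_{\C}F^{(j)}(w,0)\,\d^{2}w$, which is the same for $A$ and $\tilde{M}_{t}$. After the rescaling $w=N^{1/2}(z-z_{j}^{0})$ the integral becomes $\frac1\pi\int_{\C}F^{(j)}\bigl(w,V(z_{j}^{0}+N^{-1/2}w,F_{12}\otimes T_{j})\bigr)\d^{2}w$; using $|F^{(j)}(w,V)-F^{(j)}(w,0)|\lesssim\min(|V|,1)$ this reduces to showing $N\int_{|z-z_{j}^{0}|\le CN^{-1/2}}\E\min\bigl(V(z,F_{12}\otimes T_{j}),1\bigr)\d^{2}z=O(N^{-\kappa'})$, i.e.\ $\E\min(V(z,F_{12}\otimes T_{j}),1)=O(N^{-\delta_{V}/2})$ for fixed bulk $z$. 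The last bound follows from Lemma~\ref{lemma:Vformula}: the remainder there is $O(N^{-\delta_{V}/2})$ plus $\mathbf{1}(\xi_{2}^{z}\le N^{-1-\delta_{V}/2})O(1)$, whose expectation is $O(N^{-\delta_{V}})$ by level repulsion (Proposition~\ref{prop:levelrepulsion}); while the $k=\pm1$ terms are handled by splitting on whether $\xi_{1}^{z}\le N^{-1-\delta_{V}/2}$ (a small-probability event by standard smallest-singular-value estimates, on which $\min(\cdot,1)\le1$) and otherwise using delocalization of the bulk singular vector $\mathbf{u}_{1}^{z}$ (Proposition~\ref{prop:rigiditydeloc}) and the finite rank of $T_{j}$ to get $\frac{N\eta_{V}^{2}}{(\xi_{1}^{z})^{2}+\eta_{V}^{2}}\|T_{j}\mathbf{u}_{1}^{z}\|^{2}=O(N^{-\delta_{V}/2})$. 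Substituting $\frac1\pi\int_{\mathbb{D}}f^{(j)}=c^{(j)}+O(N^{-\kappa'})$ and absorbing the errors as in the previous paragraph gives
\[
\E F(\mathbf{z},\mathbf{V})=\sum_{S\subseteq\llbracket 1,m_{R}+m_{L}\rrbracket}\Bigl(\prod_{j\notin S}c^{(j)}\Bigr)\,\E\prod_{j\in S}I_{\delta_{V}}(A,f^{(j)})+o(1),
\]
and the identical identity for $\tilde{M}_{t}$ with the very same constants $c^{(j)}$.

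Subtracting the two identities, $\E F(\mathbf{z},\mathbf{V})-\E F(\mathbf{z}(t),\mathbf{V}(t))$ equals $o(1)$ plus $\sum_{S}\bigl(\prod_{j\notin S}c^{(j)}\bigr)\bigl[\E\prod_{j\in S}I_{\delta_{V}}(A,f^{(j)})-\E\prod_{j\in S}I_{\delta_{V}}(\tilde{M}_{t},f^{(j)})\bigr]$. For $S=\llbracket 1,m_{R}+m_{L}\rrbracket$ the bracket is $O(N^{-c})$ by the hypothesis \eqref{eq:Vcomparisonreduction}; for each proper $S$ the bracket is $O(N^{-c})$ by the evident counterpart of \eqref{eq:Vcomparisonreduction} for products of fewer $I_{\delta_{V}}$-factors (proved by the very same Green function comparison). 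Summing the finitely many $S$ and undoing the Fourier reduction of the first step yields $\E F(\mathbf{z},\mathbf{V})=\E F(\mathbf{z}(t),\mathbf{V}(t))+o(1)$, which is Proposition~\ref{prop:Vcomparison}. I expect the bulk-integral step to be the main obstacle: it is the only place where the randomness of $f^{(j)}$ through $V$ genuinely enters, and it requires combining level repulsion, delocalization, and the finite-rank structure of $T_{j}$ to see that $V(\cdot,F_{12}\otimes T_{j})$ is $o(1)$ away from the microscopic windows around eigenvalues.
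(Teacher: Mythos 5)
Your proposal is correct in substance and follows the route the paper intends, but it is considerably more explicit than the paper, whose entire proof of this lemma is the citation ``by the proof of Proposition 2.5 in \cite{CES20}'' after having established $\E|\mathcal{E}|=O(N^{-\kappa})$ for the Girko-type decomposition. The genuinely new ingredient you supply --- showing that the random term $\frac1\pi\int_{\mathbb{D}}f^{(j)}$ is, up to an $L^{1}$-error $O(N^{-\kappa'})$, the same deterministic constant $c^{(j)}=\frac1\pi\int F^{(j)}(w,0)\,\d^{2}w$ for both ensembles --- is exactly the point at which the present setting differs from \cite{CES20} (where the test function is deterministic and this term cancels trivially), and the paper leaves it implicit. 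Your mechanism for it is sound: for fixed bulk $z$ one has $V(z,\cdot)\prec N^{-c\delta_{V}}$ away from the events $\{\xi_{1}^{z}\ \text{tiny}\}$ and $\{\xi_{2}^{z}\le N^{-1-\delta_{V}/2}\}$, which have small probability by a least-singular-value bound and Proposition \ref{prop:levelrepulsion} respectively, and $\min(V,1)\le1$ handles the bad events; Fubini then gives the claim without any net argument. The bookkeeping for discarding the $\mathcal{E}^{(j)}$-terms (good/bad event split with the deterministic polynomial fallback) is also fine, and all inputs hold for $\tilde{M}_{t}$ as well.

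Two caveats, neither fatal. First, after the deterministic replacement you need comparison of $\E\prod_{j\in S}I_{\delta_{V}}$ for \emph{every} subset $S$, not only the full product stated in \eqref{eq:Vcomparisonreduction}; this genuinely exceeds the literal hypothesis. Any proof has this feature (the expansion of the product unavoidably produces sub-products with nonzero coefficients $\prod_{j\notin S}c^{(j)}$), and it is harmless here because the Green-function comparison proved in the next subsection applies verbatim to any sub-collection of factors, but you should state this explicitly rather than as an ``evident counterpart''. Second, the bound $\pp(\xi_{1}^{z}\le N^{-1-\delta_{V}/2})=o(1)$ at fixed $z$ is not stated anywhere in the paper (Proposition \ref{prop:levelrepulsion} concerns $\xi_{2}^{z}$ only); it is indeed standard and can be obtained, e.g., by running the argument of the proof of Proposition \ref{prop:levelrepulsion} with $\phi$ detecting one small singular value instead of two, together with Theorem 2.10 of \cite{EJ23}, or from classical least-singular-value estimates for i.i.d.\ matrices --- but the source should be named, since it is an input your argument genuinely uses.
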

\subsection{Proof of \eqref{eq:Vcomparisonreduction}}
We start with the following construction.
\begin{defn}
Take $A$ and $\tilde{M}_{t}$ as in Lemma \ref{lemma:ex35M}. Define the matrices
\begin{align}
\mathcal{H}^{0}:=\tilde{\mathcal{H}}=\begin{pmatrix}0&\tilde{M}_{t}\\\tilde{M}_{t}^{\ast}&0\end{pmatrix},\quad \mathcal{H}^{1}:=\mathcal{H}=\begin{pmatrix}0&A\\A^{\ast}&0\end{pmatrix}.\nonumber
\end{align}
Let $\rho_{ij}^{0}$ and $\rho_{ij}^{1}$ denote laws of $\mathcal{H}^{0}_{ij}$ and $\mathcal{H}^{1}_{ij}$, respectively, and for any $\theta\in[0,1]$, define the interpolated law $\rho_{ij}^{\theta}=(1-\theta)\rho_{ij}^{0}+\theta\rho_{ij}^{1}$. Now, for any $\theta\in[0,1]$, let $\mathcal{H}^{\theta}$ be a matrix of size $2N\times2N$ that satisfies the following properties. First, the triple $(\mathcal{H}^{0},\mathcal{H}^{\theta},\mathcal{H}^{1})$ is jointly independent. Second, the marginal distribution of $\mathcal{H}^{\theta}$ is given by the product measure
\begin{align*}
\prod_{i,j}\rho_{ij}^{\theta}(\d \mathcal{H}_{ij}^{\theta}).
\end{align*}
Next, for any indices $(i,j)$ and any $\lambda\in\C$, define the matrix $\mathcal{H}^{\theta,\lambda}_{(ij)}$ as 
\begin{align*}
(\mathcal{H}^{\theta,\lambda}_{(ij)})_{k\ell}:=\begin{cases}\mathcal{H}_{(ij)}^{\theta}&(i,j)\neq(k,\ell)\\\lambda&(i,j)=(k,\ell)\end{cases}
\end{align*}
Finally, for any $\eta>0$ and $z\in\C$, define the resolvents
\begin{align*}
G_{z}^{\theta}(\eta):=\left[\mathcal{H}^{\theta}-\begin{pmatrix}0&z\\z^{\ast}&0\end{pmatrix}-i\eta\right]^{-1}, \quad G_{z,(ij)}^{\theta,\lambda}(\eta):=\left[\mathcal{H}^{\theta,\lambda}_{(ij)}-\begin{pmatrix}0&z\\z^{\ast}&0\end{pmatrix}-i\eta\right]^{-1}.
\end{align*}
\end{defn}
We note that we do not require $\mathcal{H}^{\theta_{1}}$ to be independent from $\mathcal{H}^{\theta_{2}}$ for $\theta_{1}\neq\theta_{2}$. By calculus, we have the following for any smooth $F:\C^{2N\times2N}\to\C$, provided all expectations exist:
\begin{align}
\frac{\d}{\d\theta}\E F(\mathcal{H}^{\theta})=\sum_{i,j=1}^{2N}\E\left[F\left(\mathcal{H}_{(ij)}^{\theta,\mathcal{H}_{(ij)}^{1}}\right)-F\left(\mathcal{H}_{(ij)}^{\theta,\mathcal{H}_{(ij)}^{0}}\right)\right]. \label{expH}
\end{align}
Now, we combine \eqref{expH} with equations (1.7)-(1.8) in \cite{KY16}. This gives
\begin{align*}
\frac{\d}{\d\theta}\E F(\mathcal{H}^{\theta})&=\sum_{n=1}^{\ell}\sum_{i,j=1}^{N}K_{n,(ij)}^{\theta}\E\left[\left(\frac{\d}{\d\mathcal{H}^{\theta}_{ij}}\right)^{n}F(\mathcal{H}^{\theta})\right]+\mathcal{E}_{\ell},
\end{align*}
where the coefficients are defined by
\begin{align*}
\sum_{n=1}^{\infty}K_{n,(ij)}^{\theta}t^{n}=\frac{\E e^{t\mathcal{H}_{ij}^{1}}-\E e^{t\mathcal{H}_{ij}^{0}}}{\E e^{t\mathcal{H}_{ij}^{\theta}}}.
\end{align*}
Because $A$ and $\tilde{M}_{t}$ match up to three and a half moments, one can easily check that
\begin{align*}
K_{n,(ij)}^{\theta}=\begin{cases}0&n=1,2,3\\O(N^{-2-\delta_{t}})&n=4\\O(N^{-n/2})&5\leq n\leq\ell\end{cases}
\end{align*}
where $\delta_{t}$ is from Lemma \ref{lemma:ex35M}. Next, $\mathcal{E}_{\ell}$ satisfies the usual Taylor series estimate below, where $N^{-\ell/2}$ comes from bounding the $t^{\ell+1}$ coefficient in $\E e^{t\mathcal{H}_{ij}^{\tau}}-\E e^{t\mathcal{H}_{ij}^{0}}$ uniformly in $\tau$ as in the $K_{n,(ij)}^{\theta}$ estimate above:
\begin{align*}
|\mathcal{E}_{\ell}|&=O\left(\sum_{i,j=1}^{N}N^{-\frac{\ell}{2}}\E\left[\sup_{\theta\in[0,1]}\left|\left(\frac{\d}{\d\mathcal{H}^{\theta}_{ij}}\right)^{\ell+1}F(\mathcal{H}^{\theta})\right|\left(1+|\mathcal{H}^{\theta}_{ij}-\mathcal{H}^{0}_{ij}|^{\ell+1}\right)\right]\right)\\
&=O\left(\sum_{i,j=1}^{N}N^{-\frac{\ell}{2}}\left\{\E\left[\sup_{\theta\in[0,1]}\left|\left(\frac{\d}{\d\mathcal{H}^{\theta}_{ij}}\right)^{\ell+1}F(\mathcal{H}^{\theta})\right|^{2}\right]\right\}^{1/2}\right).
\end{align*}
The second bound uses Cauchy-Schwarz and moment bounds on $\mathcal{H}_{ij}^{\theta}$ uniformly in $\theta\in[0,1]$ and $i,j$. So, we have the following.
\begin{lemma}\label{lemma:comparisonlemma}
Suppose $F:\C^{2N\times2N}\to\C$ is smooth and satisfies the following for any $\ell>0$ fixed:
\begin{align}
\sup_{i,j=1,\ldots,N}\sup_{4\leq n\leq\ell}\E\left[\left(\frac{\d}{\d\mathcal{H}^{\theta}_{ij}}\right)^{n}F(\mathcal{H}^{\theta})\right]&=O(N^{\frac{\delta_{t}}{3}}N^{C\delta_{V}}),\label{eq:comparisonlemma1}\\
\sup_{i,j=1,\ldots,N}\left\{\E\left[\sup_{\theta\in[0,1]}\left|\left(\frac{\d}{\d\mathcal{H}^{\theta}_{ij}}\right)^{\ell+1}F(\mathcal{H}^{\theta})\right|^{2}\right]\right\}^{1/2}&=O(N^{C\delta_{V}}).\label{eq:comparisonlemma2}
\end{align}
Then for $\delta_{V}>0$ small, we have $\E F(\mathcal{H}^{1})=\E F(\mathcal{H}^{0})+O(N^{-\frac{\delta_{t}}{2}})$.
\end{lemma}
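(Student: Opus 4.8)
The proof is a standard Lindeberg interpolation and telescoping argument built entirely out of the estimates already assembled above; no genuinely new analytic input is required. The plan is to start from the fundamental theorem of calculus,
\begin{align*}
\E F(\mathcal{H}^{1})-\E F(\mathcal{H}^{0})=\int_{0}^{1}\frac{\d}{\d\theta}\,\E F(\mathcal{H}^{\theta})\,\d\theta,
\end{align*}
and substitute the expansion derived just before the lemma,
\begin{align*}
\frac{\d}{\d\theta}\,\E F(\mathcal{H}^{\theta})=\sum_{n=1}^{\ell}\sum_{i,j=1}^{N}K_{n,(ij)}^{\theta}\,\E\left[\left(\frac{\d}{\d\mathcal{H}^{\theta}_{ij}}\right)^{n}F(\mathcal{H}^{\theta})\right]+\mathcal{E}_{\ell},
\end{align*}
where $\ell$ is a large integer to be fixed at the end. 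All bounds below will be uniform in $\theta\in[0,1]$, so the $\theta$-integration is harmless and it suffices to bound the right-hand side for each fixed $\theta$.

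Next I would bound the summands using the coefficient estimates recorded above, namely $K_{n,(ij)}^{\theta}=0$ for $n=1,2,3$, $K_{4,(ij)}^{\theta}=O(N^{-2-\delta_{t}})$, and $K_{n,(ij)}^{\theta}=O(N^{-n/2})$ for $5\le n\le\ell$, together with hypothesis \eqref{eq:comparisonlemma1}. Since the double sum over $(i,j)$ carries a combinatorial factor $N^{2}$, the $n=4$ term contributes $O\!\big(N^{2}\cdot N^{-2-\delta_{t}}\cdot N^{\delta_{t}/3+C\delta_{V}}\big)=O(N^{-2\delta_{t}/3+C\delta_{V}})$, and for $5\le n\le\ell$ the term is $O\!\big(N^{2}\cdot N^{-n/2}\cdot N^{\delta_{t}/3+C\delta_{V}}\big)=O(N^{-1/2+\delta_{t}/3+C\delta_{V}})$ because $2-n/2\le-1/2$. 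For the Taylor remainder, the bound for $\mathcal{E}_{\ell}$ stated above combined with hypothesis \eqref{eq:comparisonlemma2} gives $|\mathcal{E}_{\ell}|=O\!\big(N^{2}\cdot N^{-\ell/2}\cdot N^{C\delta_{V}}\big)=O(N^{2-\ell/2+C\delta_{V}})$.

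To finish, I would fix the parameters in the right order: first choose $\ell$ large enough that $2-\ell/2+C\delta_{V}<-\delta_{t}/2$ (any $\ell>4+\delta_{t}+2C\delta_{V}$ works), which makes $|\mathcal{E}_{\ell}|=O(N^{-\delta_{t}/2})$; then choose $\delta_{V}>0$ small enough, depending on the constant $C$ and on $\delta_{t}$, that $C\delta_{V}<\delta_{t}/6$, which makes the $n=4$ term $O(N^{-\delta_{t}/2})$ and the $n\ge5$ terms $O(N^{-1/2+\delta_{t}/2})=O(N^{-\delta_{t}/2})$ for $\delta_{t}$ small. Adding up the $O(\ell)=O(1)$ many contributions and integrating over $\theta\in[0,1]$ yields $\E F(\mathcal{H}^{1})-\E F(\mathcal{H}^{0})=O(N^{-\delta_{t}/2})$. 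The only step needing any attention is the bookkeeping in the $n=4$ term: the gain $N^{-2-\delta_{t}}$ coming from matching three-and-a-half moments must simultaneously absorb the factor $N^{2}$ from the entrywise sum and the admissible growth $N^{\delta_{t}/3}N^{C\delta_{V}}$ allowed in \eqref{eq:comparisonlemma1}; this is precisely why the hypothesis is stated with the exponent $\delta_{t}/3$ and why $\delta_{V}$ must be taken small. Everything else has ample room to spare.
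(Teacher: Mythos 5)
Your proposal is correct and is essentially the paper's own argument: the paper derives the cumulant-type expansion of $\frac{\d}{\d\theta}\E F(\mathcal{H}^{\theta})$, the coefficient bounds $K_{n,(ij)}^{\theta}$, and the remainder estimate immediately before the lemma, and the lemma is then exactly the bookkeeping you perform (sum over the $O(N^{2})$ entries, use \eqref{eq:comparisonlemma1} for $4\le n\le\ell$ and \eqref{eq:comparisonlemma2} for the remainder, then take $\ell$ large and $\delta_{V}$ small). Your parameter choices, including the observation that the $n=4$ term forces $C\delta_{V}<\delta_{t}/6$ and that one may take $\delta_{t}$ (equivalently, work with a smaller $\delta_{t}$) so that the $n\ge5$ terms are also $O(N^{-\delta_{t}/2})$, match the intended proof.
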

Now, define the function
\begin{align*}
I_{\delta_{V}}(\theta,f^{(j)})&:=\frac{1}{2\pi}\int_{\C}\Delta f^{(j)}(z)\int_{N^{-1-10\delta_{V}}}^{N^{-1+10\delta_{V}}}\langle\mathrm{Im}G_{z}^{\theta}(\eta)-\mathrm{Im}m_{z}(i\eta)\rangle\d\eta\d^{2}z,
\end{align*}
where we recall $f^{(j)}(z)=N\tilde{f}^{(j)}(N^{1/2}(z-z_{j}^{0}))$ with $\tilde{f}^{(j)}\in C^{\infty}_{c}(\C)$. Let $\mathcal{I}:=[N^{-1-10\delta_{V}},N^{-1+10\delta_{V}}]$ and $m:=m_{R}+m_{L}$ for convenience. We compute
\begin{align*}
&\E\left[\prod_{j=1}^{m}I_{\delta_{V}}(\theta,f^{(j)})\right]=\int_{\C^{m}}\int_{\mathcal{I}^{m}}\E\left[\prod_{j=1}^{m}\Delta f^{(j)}(z_{j})\langle\mathrm{Im}G_{z_{j}}^{\theta}(\eta_{j})-\mathrm{Im}m_{z_{j}}(i\eta_{j})\rangle\right]\prod_{j=1}^{m}\d\eta_{j}\d^{2}z_{j}\\
&=\int_{\C^{m}}N^{m}\int_{\mathcal{I}^{m}}\E\left[\prod_{j=1}^{m}\Delta \tilde{f}^{(j)}(w_{j})\langle\mathrm{Im}G_{z_{j}^{0}+N^{-1/2}w_{j}}^{\theta}(\eta_{j})-\mathrm{Im}m_{z_{j}^{0}+N^{-1/2}w_{j}}(i\eta_{j})\rangle\right]\prod_{j=1}^{m}\d\eta_{j}\d^{2}w_{j},
\end{align*}
where the second line holds by change-of-variables $w_{j}=N^{1/2}(z-z_{j}^{0})$ and the observation $\Delta f^{(j)}(z)=N\Delta\tilde{f}^{(j)}(N^{1/2}(z-z_{j}^{0}))$. Thus, by Lemmas \ref{lemma:Vcomparisonreduction} and \ref{lemma:comparisonlemma}, we are interested in the function
\begin{align*}
F(\mathcal{H}^{\theta}):=\prod_{j=1}^{m}\Delta \tilde{f}^{(j)}(w_{j})\langle\mathrm{Im}G_{z_{j}^{0}+N^{-1/2}w_{j}}^{\theta}(\eta_{j})-\mathrm{Im}m_{z_{j}^{0}+N^{-1/2}w_{j}}(i\eta_{j})\rangle.
\end{align*}
In particular, by Lemmas \ref{lemma:Vcomparisonreduction} and \ref{lemma:comparisonlemma}, to complete the proof of Theorem \ref{theorem:main}, it suffices to prove \eqref{eq:comparisonlemma1}-\eqref{eq:comparisonlemma2} for this choice of $F$. Indeed, this would give 
\begin{align*}
&\E\left[\prod_{j=1}^{m_{R}+m_{L}}I_{\delta_{V}}(A,f^{(j)})\right]-\E\left[\prod_{j=1}^{m_{R}+m_{L}}I_{\delta_{V}}(\tilde{M}_{t},f^{(j)})\right]\\
&=\E\left[\prod_{j=1}^{m}I_{\delta_{V}}(1,f^{(j)})\right]-\E\left[\prod_{j=1}^{m}I_{\delta_{V}}(0,f^{(j)})\right]\\
&=\int_{\C^{m}}N^{m}\int_{\mathcal{I}^{m}}\left[\E F(\mathcal{H}^{1})-\E F(\mathcal{H}^{0})\right]\prod_{j=1}^{m}\d\eta_{j}\d^{2}w_{j}\\
&=O\left(N^{-\frac{\delta_{t}}{2}}\int_{\mathbb{K}^{m}}N^{m}\int_{\mathcal{I}^{m}}\prod_{j=1}^{m}\d\eta_{j}\d^{2}w_{j}\right)=O(N^{-\frac{\delta_{t}}{2}}N^{C_{m}\delta_{V}}).
\end{align*}
The last line follows by Lemma \ref{lemma:comparisonlemma} and the fact that $\tilde{f}^{(j)}$ is compactly supported in $\C$ (here, $\mathbb{K}\subseteq\C$ is a compact subset). Then, we use the fact that $|\mathcal{I}|\leq 2N^{-1+10\delta_{V}}$. Now, by the Leibniz rule, to show \eqref{eq:comparisonlemma1}-\eqref{eq:comparisonlemma2} with the above choice of $F$, it suffices to show that for any $n\geq0$ and locally uniformly in $w_{j}\in\C$ and $\eta_{j}\in\mathcal{I}$, we have 
\begin{align}
\left(\frac{\d}{\d\mathcal{H}^{\theta}_{ij}}\right)^{n}\mathcal{A}_{j}(\mathcal{H}^{\theta},w_{j},\eta_{j})=\mathcal{O}\left(N^{C_{n}\delta_{V}}\right),\label{eq:comparisonderivatives}
\end{align}
where $\mathcal{A}_{j}(\mathcal{H}^{\theta},w_{j},\eta_{j})$ either has the form $g(V(z_{j}^{0}+N^{-1/2}w_{j},T))$ with $T$ finite-rank and Hermitian and $g\in C^{\infty}_{c}(\C)$ or the form $\langle\mathrm{Im}G_{z_{j}^{0}+N^{-1/2}w_{j}}^{\theta}(\eta_{j})-\mathrm{Im}m_{z_{j}^{0}+N^{-1/2}w_{j}}(i\eta_{j})\rangle$. (Such $\mathcal{A}_{j}(\mathcal{H}^{\theta},w_{j},\eta_{j})$ have $m$-th order derivatives that are deterministically $O(N^{C_{m}})$ if $\eta_{j}\geq N^{-1-10\delta_{V}}$ and $\delta_{V}>0$ is small; this can be checked by elementary resolvent perturbation identities. Thus, an $\mathcal{O}$-estimate is enough. We also note that straightforward regularity bounds in $\theta$ of $\mathcal{A}_{j}(\mathcal{H}^{\theta},w_{j},\eta_{j})$ let us use a net argument to extend \eqref{eq:comparisonderivatives} to the same estimate but with a supremum over $\theta\in[0,1]$, upon possibly changing the constant $C_{n}>0$.) 

We are left to prove \eqref{eq:comparisonderivatives}. First, the assumption $g\in C^{\infty}_{c}(\C)$ and the local law (see Theorem 2.6 in \cite{cipolloni2023optimal}) imply \eqref{eq:comparisonderivatives} for $n=0$, so we focus on $n\geq1$. By definition of $V(z,T)$, it suffices to show that locally uniformly in $z\in\C$ and uniformly in $\eta\in[N^{-1-10\delta_{V}},N^{-1+10\delta_{V}}]$, for any finite-rank, Hermitian $T$, we have
\begin{align}
\left(\frac{\d}{\d\mathcal{H}^{\theta}_{ij}}\right)^{n}\tr TG^{\theta}_{z}(\eta)&=\mathcal{O}(N^{C_{n}\delta_{V}})\label{eq:trtgderivative}\\
\left(\frac{\d}{\d\mathcal{H}^{\theta}_{ij}}\right)^{n}\tr \mathrm{Im}G^{\theta}_{z}(\eta)&=\mathcal{O}(N^{1+C_{n}\delta_{V}})\label{eq:trgderivative}.
\end{align}
For this, we record the following consequence of resolvent perturbation identities:
\begin{align*}
\left(\frac{\d}{\d\mathcal{H}^{\theta}_{ij}}\right)^{n}G_{z}^{\theta}(\eta)&=C_{n}G_{z}^{\theta}(\eta)\left[\Delta_{ij}G_{z}^{\theta}(\eta)\right]^{n},\quad (\Delta_{ij})_{k\ell}=\delta_{ik}\delta_{\ell j}+\delta_{i\ell}\delta_{jk}.
\end{align*}
Using this identity and a spectral decomposition $T=\sum_{a=1}^{\ell}\tau_{a}\mathbf{w}_{a}\mathbf{w}_{a}^{\ast}$ we have 
\begin{align*}
\left(\frac{\d}{\d\mathcal{H}^{\theta}_{ij}}\right)^{n}\tr TG_{z}^{\eta}(\eta)&=C_{n}\tr TG_{z}^{\theta}(\eta)[\Delta_{ij}G_{z}^{\theta}(\eta)]^{n}G_{z}^{\theta}(\eta)\\
&=\sum_{a=1}^{\ell}C_{n}\tau_{a}\mathbf{w}_{a}^{\ast}G_{z}^{\theta}(\eta)\mathbf{e}_{i}\mathbf{e}_{j}^{\ast}G_{z}^{\theta}(\eta)\mathbf{e}_{i}\ldots\mathbf{e}_{j}^{\ast}G_{z}^{\theta}(\eta)\mathbf{w}_{a},
\end{align*}
where the product has at most $n+2$ many factors of the form $\mathbf{x}^{\ast}G_{z}^{\theta}(\eta)\mathbf{y}$ with $\mathbf{x},\mathbf{y}$ deterministic unit vectors. By rigidity and delocalization, i.e. Proposition \ref{prop:rigiditydeloc}, for eigenvalues $\xi_{k}$ and eigenvectors $\mathbf{u}_{k}$ of $G_{z}^{\theta}(\eta)$, we have the following for some $c>0$ independent of $N$:
\begin{align*}
\mathbf{x}^{\ast}G_{z}^{\theta}(\eta)\mathbf{y}&=\sum_{k}\frac{\mathbf{x}^{\ast}\mathbf{u}_{k}\mathbf{u}_{k}^{\ast}\mathbf{y}}{\xi_{k}-i\eta}=\mathcal{O}\left(\frac{1}{N}\sum_{k}\frac{1}{\xi_{k}-i\eta}\right)+\sum_{k:|\xi_{k}|\geq c}\frac{\mathbf{x}^{\ast}\mathbf{u}_{k}\mathbf{u}_{k}^{\ast}\mathbf{y}}{\xi_{k}-i\eta}\\
&=\mathcal{O}(N^{-1}\eta^{-1})+O(1)=\mathcal{O}(N^{10\delta_{V}}).
\end{align*}
The last term in the first line has the form $\mathbf{x}^{\ast}Y\mathbf{y}$, where $\|Y\|_{\mathrm{op}}=O(1)$, hence it is $O(1)$.

Combining the previous two displays gives \eqref{eq:trtgderivative}. To prove \eqref{eq:trgderivative}, we write $\mathrm{Im}G_{z}^{\theta}(\eta)=G_{z}^{\theta}(\eta)-G_{z}^{\theta}(\eta)^{\ast}$. In particular, it suffices to prove \eqref{eq:trgderivative} but with $\mathrm{Im}G_{z}^{\theta}(\eta)$ replaced by $G_{z}^{\theta}(\eta)$ and $G_{z}^{\theta}(\eta)^{\ast}$. We prove \eqref{eq:trgderivative} with $\mathrm{Im}G_{z}^{\theta}(\eta)$ replaced by $G_{z}^{\theta}(\eta)$; for $G_{z}^{\theta}(\eta)^{\ast}$, the same argument works. Note that 
\begin{align*}
\left(\frac{\d}{\d\mathcal{H}^{\theta}_{ij}}\right)^{n}\tr G^{\theta}_{z}(\eta)&=\sum_{k=1}^{2N}\left(\frac{\d}{\d\mathcal{H}^{\theta}_{ij}}\right)^{n}\tr \mathbf{e}_{k}\mathbf{e}_{k}^{\ast}G^{\theta}_{z}(\eta).
\end{align*}
The estimate \eqref{eq:trtgderivative} with $T=\mathbf{e}_{k}\mathbf{e}_{k}^{\ast}$ implies that the RHS of the previous display is $\mathcal{O}(N^{1+C_{n}\delta_{V}})$. This proves \eqref{eq:trgderivative}, so as mentioned right before \eqref{eq:trtgderivative}, this shows \eqref{eq:comparisonderivatives} and thus completes the proof of Theorem \ref{theorem:main}. \qed
\subsection{Proof of Proposition \ref{prop:levelrepulsion}}
This argument has three steps. First, we reduce to proving the estimate for $\xi_{2}^{z}(t)$, the second-smallest singular value of the matching matrix $\tilde{M}_{t}$ from Lemma \ref{lemma:ex35M}. Then, we further reduce to proving the estimate for the second-smallest singular value of a Ginibre matrix. Finally, we cite known estimates for the Ginibre case. As argued in the proof of Lemma \ref{lemma:levelrepulsion}, we have 
\begin{align*}
\mathbb{P}(\xi_{2}^{z}\leq N^{-1-\delta})\lesssim\E\left[\phi\left(\frac{\eta}{2}\mathrm{Im}\tr G_{z}(\eta)\right)\right].
\end{align*}
Now, define $G_{t,z}(\eta):=[\mathcal{H}_{t,z}-i\eta]^{-1}$, where $\mathcal{H}_{t,z}$ is defined by
\begin{align*}
\mathcal{H}_{t,z}&:=\begin{pmatrix}0&\tilde{M}_{t}-z\\(\tilde{M}_{t}-z)^{\ast}&0\end{pmatrix}.
\end{align*}
Since $\phi:\R\to\R$ is smooth and compactly supported, we can use \eqref{eq:trgderivative} and $\eta\leq N^{-1}$ and Lemma \ref{lemma:comparisonlemma} to deduce 
\begin{align*}
\E\left[\phi\left(\frac{\eta}{2}\mathrm{Im}\tr G_{z}(\eta)\right)\right]-\E\left[\phi\left(\frac{\eta}{2}\mathrm{Im}\tr G_{t,z}(\eta)\right)\right]=O(N^{-\delta_{t}/2}).
\end{align*}
(The input \eqref{eq:comparisonlemma2} to use Lemma \ref{lemma:comparisonlemma} can be shown, again, by \eqref{eq:trgderivative} and a standard net argument.) As argued in the proof of Lemma \ref{lemma:levelrepulsion}, we have 
\begin{align*}
\E\left[\phi\left(\frac{\eta}{2}\mathrm{Im}\tr G_{t,z}(\eta)\right)\right]\lesssim\mathbb{P}(\xi_{2}^{z}(t)\leq N^{-1-\delta_{2}}),
\end{align*}
where $\xi_{2}^{z}(t)$ is the second-smallest (positive) singular value of $\tilde{M}_{t}$, and $\delta_{2}$ is any fixed number strictly smaller than $\delta_{1}$. Summarizing so far, we have proven
\begin{align*}
\mathbb{P}(\xi_{2}^{z}\leq N^{-1-\delta})\lesssim\mathbb{P}(\xi_{2}^{z}(t)\leq N^{-1-\delta_{2}})+N^{-\delta_{t}/2}.
\end{align*}
We now use Theorem 3.2 in \cite{CL19}. This gives the following. There exists a matrix $W(t)$ whose entries are i.i.d. standard complex Gaussians mutiplied by $(1+t)^{1/2}$ such that if $\{\mu_{k}(t)\}_{k}$ are the singular values of $W(t)$ with $\mu_{k}(t)\leq\mu_{k+1}(t)$ for $k\geq1$ and $\mu_{-k}(t)=\mu_{k}(t)$, then 
\begin{align*}
\mathbb{P}\left(|\xi_{2}^{z}(t)-\mu_{2}^{z}(t)|\leq N^{-1-\kappa}\right)\lesssim N^{-D}
\end{align*}
for any fixed $D>0$, where $\kappa>0$ is fixed. (This requires that $\langle \mathrm{Im}G_{t,z}(\eta)\rangle=\mathcal{O}(1)$ for all $\eta\geq N^{-1+\epsilon}$ for some $\epsilon>0$ fixed. Such an estimate follows by the local law in (3.6) in \cite{cipolloni2023mesoscopic} combined with (3.7) in \cite{cipolloni2023mesoscopic}.) Since $\kappa>0$ is fixed, for small enough $\delta$, we deduce
\begin{align*}
\mathbb{P}(\xi_{2}^{z}\leq N^{-1-\delta})\lesssim\mathbb{P}(\mu_{2}(t)\leq N^{-1-\delta_{2}})+N^{-\delta_{t}/2}=\mathbb{P}((1+t)^{1/2}\mu_{2}(0)\leq N^{-1-\delta_{2}})+N^{-\delta_{t}/2}.
\end{align*}
We now use Theorem 2.10 in \cite{EJ23}; this shows that the first term on the far RHS of the previous display is $O(N^{\epsilon}N^{-8\delta_{2}})$ with $\epsilon>0$ fixed but otherwise arbitrary. If we choose $\epsilon>0$ small enough and $\delta_{2}$ sufficiently close to $\delta_{1}$, then the far RHS of the previous display becomes $O(N^{-2.1\delta})$ for $\delta>0$ small enough. This completes the proof.
%
%
%
\section{Green's function estimates}\label{sec:green-function}
The purpose of this section is to record various auxiliary bounds for Green's functions. These bounds are mostly standard. First, recall the Hermitization $\mathcal{H}_{z}$ of $A$ and Green's function:
\begin{align*}
\mathcal{H}_{z}:=\begin{pmatrix}0&A-z\\(A-z)^{\ast}&0\end{pmatrix}\quad\mathrm{and}\quad G_{z}(\eta):=[\mathcal{H}_{z}-i\eta]^{-1}
\end{align*}
for $\eta>0$. Green's function can be written as
\[
G_z(\eta) = \begin{pmatrix} 
i\eta\tilde{H}_z(\eta) & \tilde{H}_z(\eta)(A-z)\\
(A^\ast-\bar{z})\tilde{H}_z(\eta) & i\eta H_z(\eta)
\end{pmatrix},
\]
where
\begin{align*}
H_z(\eta) &= \left[(A-z)^\ast(A-z)+\eta^2\right]^{-1},\\
\tilde{H}_z(\eta) &= \left[(A-z)(A-z)^\ast+\eta^2\right]^{-1}.
\end{align*}
We assume $A$ has entries that satisfy $\E A_{ij}=0$ and $\E A_{ij}^{2}=N^{-1}$ and $\E|A_{ij}|^{p}\leq C_{p}N^{-p/2}$ for all $p\geq2$. As in Definition \ref{defn:Vfunction}, we let $\xi_{k}^{z}$ be eigenvalues of the Hermitization $\mathcal{H}_{z}$, and we let $\mathbf{u}_{k}^{z}$ be the corresponding eigenvalues. We adopt the convention $\xi_{k}^{z}\geq0$ for all $k\in\llbracket1,N\rrbracket$, and $\xi_{k}^{z}=-\xi_{-k}^{z}$ for any $k\in\llbracket-N,-1\rrbracket$. We also impose $\xi_{k}^{z}\leq\xi_{\ell}^{z}$ for $k\leq\ell$.

We now introduce a deterministic local law approximation for $G_{z}$. Define 
\begin{align*}
M_{z}(w):=\begin{pmatrix}m_{z}(w)&-zu_{z}(w)\\-\overline{z}u_{z}(w)&m_{z}(w)\end{pmatrix},\quad u_{z}(w):=\frac{m_{z}(w)}{w+m_{z}(w)},
\end{align*}
where $m_{z}(w)$ satisfies the self-consistent equation
\begin{align}\label{eq:mz-equation}
-\frac{1}{m_{z}(w)}=w+m_{z}(w)-\frac{|z|^{2}}{w+m_{z}(w)},\quad \mathrm{Im}[m_{z}(w)]\mathrm{Im}[w]>0.
\end{align}
%
\begin{prop}\label{prop:locallaw}
Fix any deterministic matrix $Y\in M_{2N}(\C)$ with $\|Y\|_{\mathrm{op}}=O(1)$ and any deterministic unit vectors $\mathbf{x},\mathbf{y}\in\C^{2N}$. Fix any $\tau>0$. Fix any $\eta\geq N^{-1+\epsilon}$ for any $\epsilon>0$, and fix $|z|\leq1-\tau$. We have 
\begin{align*}
\langle Y(G_{z}(\eta)-M_{z}(i\eta))\rangle&=\mathcal{O}\left(N^{-1}\eta^{-1}\right)\\
\mathbf{x}^{\ast}(G_{z}(\eta)-M_{z}(i\eta))\mathbf{y}&=\mathcal{O}\left(N^{-1/2}\eta^{-1/2}\right).
\end{align*}
In particular, we have \eqref{assn:3.1}-\eqref{assn:3.2}.
\end{prop}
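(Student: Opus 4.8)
The plan is to obtain both displayed bounds directly from the existing literature and then deduce \eqref{assn:3.1}--\eqref{assn:3.2} as a short corollary. The averaged bound $\langle Y(G_z(\eta)-M_z(i\eta))\rangle=\mathcal{O}(N^{-1}\eta^{-1})$ and the isotropic bound $\mathbf{x}^{\ast}(G_z(\eta)-M_z(i\eta))\mathbf{y}=\mathcal{O}(N^{-1/2}\eta^{-1/2})$ are the standard averaged and isotropic local laws for the Hermitization $\mathcal{H}_z$ of an i.i.d.\ matrix in the bulk regime $|z|\leq1-\tau$, valid down to $\eta\geq N^{-1+\epsilon}$ with deterministic approximation $M_z(i\eta)$; I would cite \cite{alt2018local} and, for the isotropic and optimal forms, \cite{cipolloni2023mesoscopic,cipolloni2023optimal} (e.g.\ Theorem 2.6 of \cite{cipolloni2023optimal}), noting that these hold under only the moment bounds $\E|A_{ij}|^{p}\leq C_{p}N^{-p/2}$ assumed here, and that $\mathrm{Im}\,m_z(i\eta)=\mathcal{O}(1)$ on $|z|\leq1-\tau$ so that the generic isotropic error $\sqrt{\mathrm{Im}\,m_z(i\eta)/(N\eta)}+(N\eta)^{-1}$ collapses to $N^{-1/2}\eta^{-1/2}$. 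Here $\mathcal{O}$ is read, as throughout the paper, in the high-probability (stochastic domination) sense; a routine net argument over $\{|z|\leq1-\tau\}$ and over a dyadic net of $\eta$, using $\|\partial_z G_z(\eta)\|_{\mathrm{op}},\|\partial_\eta G_z(\eta)\|_{\mathrm{op}}\lesssim\eta^{-2}$, upgrades the pointwise statements to uniform ones.

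To deduce \eqref{assn:3.1}--\eqref{assn:3.2}, I would fix $\eta\in[N^{-1/2+\epsilon_0},10]$ and $|z|\leq1-\tau$ and let $\mathbf{w}_1,\mathbf{w}_2\in\C^{N}$ range over the finitely many vectors from {\bf A3} (rescaled to unit norm, which only affects constants). Applying the isotropic law with $\mathbf{x}=\begin{pmatrix}0\\\mathbf{w}_1\end{pmatrix}$, $\mathbf{y}=\begin{pmatrix}0\\\mathbf{w}_2\end{pmatrix}\in\C^{2N}$ and using the block structure $[G_z(\eta)]_{22}=i\eta H_z(\eta)$ together with $[M_z(i\eta)]_{22}=m_z(i\eta)I_N$ yields
\begin{align*}
i\eta\,\mathbf{w}_1^{\ast} H_z(\eta)\mathbf{w}_2 = m_z(i\eta)\,\mathbf{w}_1^{\ast}\mathbf{w}_2 + \mathcal{O}(N^{-1/2}\eta^{-1/2}),
\end{align*}
while the averaged law with $Y=\begin{pmatrix}0&0\\0&I_N\end{pmatrix}$ (for which $\|Y\|_{\mathrm{op}}=1$) gives $i\eta\,\langle H_z(\eta)\rangle=m_z(i\eta)+\mathcal{O}(N^{-1}\eta^{-1})$. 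Dividing by $i\eta$ and subtracting produces
\begin{align*}
\left|\mathbf{w}_1^{\ast} H_z(\eta)\mathbf{w}_2-\langle H_z(\eta)\rangle\,\mathbf{w}_1^{\ast}\mathbf{w}_2\right| = \mathcal{O}(N^{-1/2}\eta^{-3/2}) + \mathcal{O}(N^{-1}\eta^{-2})\,|\mathbf{w}_1^{\ast}\mathbf{w}_2|,
\end{align*}
and since $|\mathbf{w}_1^{\ast}\mathbf{w}_2|\leq1$ and $\eta\geq N^{-1}$ the last term is $\leq\mathcal{O}(N^{-1/2}\eta^{-3/2})$, which gives \eqref{assn:3.1}; I would obtain \eqref{assn:3.2} in exactly the same way with $\mathbf{x}=\begin{pmatrix}\mathbf{w}_1\\0\end{pmatrix}$, $\mathbf{y}=\begin{pmatrix}\mathbf{w}_2\\0\end{pmatrix}$, $[G_z(\eta)]_{11}=i\eta\tilde H_z(\eta)$, $[M_z(i\eta)]_{11}=m_z(i\eta)I_N$ and $Y=\begin{pmatrix}I_N&0\\0&0\end{pmatrix}$, and then take a union bound over the finitely many pairs $(\mathbf{w}_1,\mathbf{w}_2)$.

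I do not expect a genuine obstacle here: the content is almost entirely a matter of quoting the cited local laws in precisely the form needed — isotropic, valid down to $\eta\geq N^{-1+\epsilon}$, with the deterministic approximation $M_z(i\eta)$, and assuming only finitely many finite moments — and then translating between the $2\times2$ block form of $G_z$ and the resolvents $H_z,\tilde H_z$. The only mildly delicate points are checking that the references cover both the weak moment hypotheses and the full range of $\eta$, and carrying out the standard $(z,\eta)$-net argument to pass from pointwise to uniform bounds (needed because {\bf A3} is stated for all $\eta\in[N^{-1/2+\epsilon_0},10]$); no genuinely new estimate is required.
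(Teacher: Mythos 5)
Your proposal is correct and follows essentially the same route as the paper: both quote the averaged and isotropic local laws for the Hermitization (the paper cites (3.6) of \cite{cipolloni2023mesoscopic}) and then deduce \eqref{assn:3.1}--\eqref{assn:3.2} by testing with $\mathbf{x},\mathbf{y}$ supported on the appropriate $N$-block and $Y=E_{22}$ (resp.\ $E_{11}$), using $[G_z(\eta)]_{22}=i\eta H_z(\eta)$, $[M_z(i\eta)]_{22}=m_z(i\eta)I_N$, dividing by $i\eta$ and subtracting. Your extra remarks on rescaling the vectors, absorbing the $N^{-1}\eta^{-2}$ term for $\eta\geq N^{-1}$, and the net argument for uniformity are consistent with (and slightly more explicit than) the paper's treatment.
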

\begin{proof}
The first two bounds are (3.6) in \cite{cipolloni2023mesoscopic}. We now show \eqref{assn:3.1}; the proof of \eqref{assn:3.2} is similar. We use the first two bounds with $Y=\begin{pmatrix}0&0\\0&I_{N}\end{pmatrix}$ and $\mathbf{x}^{\ast}=\begin{pmatrix}0&\mathbf{w}_{1}^{\ast}\end{pmatrix}$ and $\mathbf{y}^{\ast}=\begin{pmatrix}0&\mathbf{w}_{2}^{\ast}\end{pmatrix}$, so
\begin{align*}
i\eta\mathbf{w}_{1}^{\ast}H_{z}(\eta)\mathbf{w}_{2}-m_{z}(i\eta)\mathbf{w}_{1}^{\ast}\mathbf{w}_{2}&=\mathcal{O}(N^{-1/2}\eta^{-1/2})\\
\langle i\eta H_{z}(\eta)\rangle-m_{z}(i\eta)&=\mathcal{O}(N^{-1}\eta^{-1}).
\end{align*}
This gives \eqref{assn:3.1}.
\end{proof}
Define the following limiting empirical spectral density of $\mathcal{H}_{z}$ for $E\in\R$:
\begin{align*}
\rho_{z}(E):=\lim_{\eta\to0^{+}}\frac{1}{\pi}|\mathrm{Im}m_{z}(E+i\eta)|.
\end{align*}
With this, we define two objects. The first is the quantile $\gamma_{j,z}$ for any $|j|\leq N$ via the equation
\begin{align*}
\frac{j+N}{2N}=\int_{-\infty}^{\gamma_{j,z}}\rho_{z}(E)\d E.
\end{align*}
Note that $\gamma_{j,z}=-\gamma_{-j,z}$. The second is the $\kappa$-bulk $\mathbb{B}_{\kappa,z}:=\{E\in\R:\rho_{z}(E)\geq \kappa^{1/3}\}$, where $\kappa>0$. These notations are taken from Section 2 of \cite{cipolloni2023optimal}. By Example 2.5 in \cite{cipolloni2023optimal}, if $|z|\leq1-\tau$ with $\tau>0$ fixed, then we can take $\kappa>0$ small enough depending only on $\tau$ such that $\mathbb{B}_{\kappa,z}$ contains an interval around $0$.
\begin{prop}[Rigidity and delocalization, Corollaries 2.8 and 2.9 in \cite{AEK18}]\label{prop:rigiditydeloc}
Suppose $|z|\leq1-\tau$ for some $\tau>0$. Fix any $\kappa>0$ small enough, and suppose $\gamma_{k,z}\in\mathbb{B}_{\kappa,z}$. We have $|\xi_{k}^{z}-\gamma_{k,z}|=\mathcal{O}(N^{-1})$. Moreover, we have $\|\mathbf{u}_{k}^{z}\|_{\infty}=\mathcal{O}(N^{-1/2})$.
\end{prop}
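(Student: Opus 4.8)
## Proof proposal for Proposition \ref{prop:rigiditydeloc}

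The statement is cited from \cite{AEK18} (Corollaries 2.8 and 2.9) and from the general theory of local laws for the Hermitization $\mathcal{H}_z$, so the plan is essentially to reduce everything to the isotropic and averaged local law already recorded in Proposition \ref{prop:locallaw} and to the known regularity of the self-consistent density $\rho_z$. First I would recall that for $|z| \le 1 - \tau$ the self-consistent equation \eqref{eq:mz-equation} has a unique solution $m_z(w)$ in the upper half plane, and that the associated density $\rho_z$ is real analytic and strictly positive on an interval $(-c_\tau, c_\tau)$ around $0$, with a square-root edge at the boundary of its support; this is exactly the content of the analysis in \cite{AEK18} and is summarized via Example 2.5 in \cite{cipolloni2023optimal} cited just above. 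In particular the quantiles $\gamma_{k,z}$ lying in the $\kappa$-bulk $\mathbb{B}_{\kappa,z}$ are separated by $\sim N^{-1}$ and $\rho_z$ is bounded above and below there.

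The rigidity bound $|\xi_k^z - \gamma_{k,z}| = \mathcal{O}(N^{-1})$ would follow from the averaged local law $\langle G_z(\eta) - M_z(i\eta)\rangle = \mathcal{O}(N^{-1}\eta^{-1})$ of Proposition \ref{prop:locallaw} by the standard Helffer--Sjöstrand / counting-function argument: one controls the empirical counting function $\mathfrak{n}(E) = \frac{1}{2N}\#\{k : \xi_k^z \le E\}$ against its deterministic counterpart $\int_{-\infty}^E \rho_z$ by integrating the Stieltjes transform estimate against a smooth cutoff, obtaining $|\mathfrak{n}(E) - \int_{-\infty}^E \rho_z(E')\,dE'| \prec N^{-1}$ uniformly for $E$ in a neighborhood of the bulk. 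Inverting this near a bulk quantile $\gamma_{k,z}$, where $\rho_z \gtrsim \kappa^{1/3}$, converts the $\mathcal{O}(N^{-1})$ discrepancy in the counting function into an $\mathcal{O}(N^{-1})$ discrepancy in the eigenvalue location. One has to be slightly careful that the optimal $N^{-1}$ rigidity (as opposed to $N^{-1+\epsilon}$) is what is claimed; since this is quoted verbatim from \cite{AEK18} I would simply invoke that reference, noting that the improvement from $N^{-1+\epsilon}$ to $N^{-1}$ in the bulk is by now routine (e.g. via the fluctuation averaging mechanism already used in the cited local law papers \cite{alt2018local,cipolloni2023mesoscopic}).

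The delocalization bound $\|\mathbf{u}_k^z\|_\infty = \mathcal{O}(N^{-1/2})$ follows from the isotropic local law $\mathbf{x}^\ast(G_z(\eta) - M_z(i\eta))\mathbf{y} = \mathcal{O}(N^{-1/2}\eta^{-1/2})$ of Proposition \ref{prop:locallaw} by the usual argument: for a bulk index $k$ with $\gamma_{k,z} \in \mathbb{B}_{\kappa,z}$, pick $\eta = N^{-1}$, take $\mathbf{x} = \mathbf{y} = \mathbf{e}_a$ the $a$-th standard basis vector, and use
\begin{align*}
\|\mathbf{e}_a^\ast \mathbf{u}_k^z\|^2 \le \sum_j \frac{\eta^2}{(\xi_j^z - \xi_k^z)^2 + \eta^2}|\mathbf{e}_a^\ast \mathbf{u}_j^z|^2 = \eta\, \mathrm{Im}\,(G_z(\xi_k^z + i\eta))_{aa},
\end{align*}
together with $\mathrm{Im}\,(M_z(i\eta))_{aa} = \mathrm{Im}\, m_z(i\eta) = \mathcal{O}(1)$ in the bulk (by the strict positivity of $\rho_z$) and the isotropic error $\mathcal{O}(N^{-1/2}\eta^{-1/2}) = \mathcal{O}(N^{-1/2}\cdot N^{1/2}) = \mathcal{O}(1)$ at $\eta = N^{-1}$, wait --- here one should instead take $\eta = N^{-1+\epsilon}$ to keep the error $\mathcal{O}(N^{-\epsilon/2})$ small; then $\|\mathbf{e}_a^\ast \mathbf{u}_k^z\|^2 \lesssim \eta = N^{-1+\epsilon}$, and an $\epsilon$-net / Markov argument over $a$ and over the discrete set of bulk indices $k$ upgrades this to the high-probability bound $\|\mathbf{u}_k^z\|_\infty \prec N^{-1/2}$. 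The main obstacle, such as it is, is purely bookkeeping: matching the precise form of the bulk assumption $\gamma_{k,z} \in \mathbb{B}_{\kappa,z}$ with the regime of validity of the local law, and confirming the \emph{optimal} (non-$N^\epsilon$) rigidity scale claimed in the statement; since both are quoted directly from \cite{AEK18}, I would keep this proof short and refer there for the details, only sketching the Helffer--Sjöstrand and spectral-decomposition reductions above.
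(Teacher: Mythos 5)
The paper gives no proof of this proposition at all---it is quoted directly from Corollaries 2.8 and 2.9 of \cite{AEK18}---and your proposal does essentially the same thing, deferring to that reference while sketching the standard counting-function (rigidity) and spectral-decomposition (delocalization) reductions to the local laws, both of which are correct. Note only that $\mathcal{O}(\cdot)$ in this paper denotes stochastic domination, so an $N^{\epsilon}$ factor is already allowed and your concern about optimal $N^{-1}$ versus $N^{-1+\epsilon}$ rigidity is moot; likewise your delocalization step evaluates the resolvent at spectral parameter $\xi_{k}^{z}+i\eta$ rather than the purely imaginary parameter appearing in Proposition \ref{prop:locallaw}, which is fine only because the cited reference provides the local law at general bulk energies.
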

Next, we note that
\begin{align}
\mathrm{Im}[M_{z}(E)]&=\mathrm{Im}[m_{z}(E)]-\frac{E+o(E)}{\mathrm{Im}[m_{z}(0)]}\begin{pmatrix}0&-z\\-\overline{z}&0\end{pmatrix},\quad\mathrm{Im}[m_{z}(0)]=\sqrt{1-|z|^{2}}.\label{eq:Mzbound}
\end{align}
%

\begin{prop}[Eigenstate thermalization hypothesis, Theorem 2.7 in \cite{cipolloni2023optimal}]\label{prop:eth}
Fix any deterministic matrix $Y\in M_{2N}(\C)$ such that $\|Y\|_{\mathrm{op}}=O(1)$. We have 
\begin{align*}
\max_{|i|,|j|\leq N}\left|\mathbf{u}_{i}^{z,\ast}Y\mathbf{u}_{j}^{z}-\delta_{ij}\frac{\langle\mathrm{Im}M_{z}(\gamma_{j})Y\rangle}{\langle\mathrm{Im}M_{z}(\gamma_{j})\rangle}-\delta_{-ij}\frac{\langle\mathrm{Im}M_{z}(\gamma_{j})(E_{11}-E_{22})Y\rangle}{\langle\mathrm{Im}M_{z}(\gamma_{j})\rangle}\right|=\mathcal{O}(N^{-\frac12}),
\end{align*}
where $\mathcal{O}$ is with respect to randomness of $A$, while $E_{11},E_{22}$ are the $2N\times2N$ block matrices
\begin{align*}
E_{11}:=\begin{pmatrix}I_{N}&0\\0&0\end{pmatrix}\quad\mathrm{and}\quad E_{22}:=\begin{pmatrix}0&0\\0&I_{N}\end{pmatrix}.
\end{align*}
Thus, by \eqref{eq:Mzbound}, if $\langle YE_{11}\rangle,\langle YE_{22}\rangle=0$, then
\begin{align*}
\mathbf{u}_{i}^{z,\ast}Y\mathbf{u}_{j}^{z}=\delta_{|i|=|j|}O(N^{-1}i)+\mathcal{O}(N^{-1/2}).
\end{align*}
\end{prop}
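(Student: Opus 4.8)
The first displayed estimate is precisely Theorem~2.7 of \cite{cipolloni2023optimal} applied to the Hermitization $\mathcal{H}_{z}$ for $|z|\leq1-\tau$, so the plan is to invoke it directly; the only content to supply is that the second display follows from the first together with \eqref{eq:Mzbound} and the hypothesis $\langle YE_{11}\rangle=\langle YE_{22}\rangle=0$. Since $E_{11}+E_{22}=I_{2N}$, that hypothesis immediately gives $\langle Y\rangle=\langle YE_{11}\rangle+\langle YE_{22}\rangle=0$ and likewise $\langle(E_{11}-E_{22})Y\rangle=0$, and these two identities are what will kill the leading scalar parts of the two numerators in the ETH formula.

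Concretely, I would first rewrite \eqref{eq:Mzbound} as $\mathrm{Im}[M_{z}(\gamma_{j})]=\mathrm{Im}[m_{z}(\gamma_{j})]\,I_{2N}+R_{j}$, where $R_{j}$ is block off-diagonal with $\|R_{j}\|_{\mathrm{op}}=O(|\gamma_{j}|)$ (using $|z|\leq1-\tau$ and $\mathrm{Im}[m_{z}(0)]=\sqrt{1-|z|^{2}}\gtrsim1$). Then $\tr R_{j}=0$ gives $\langle\mathrm{Im}M_{z}(\gamma_{j})\rangle=\mathrm{Im}[m_{z}(\gamma_{j})]$, which for $\gamma_{j}$ in the $\kappa$-bulk equals $\pi\rho_{z}(\gamma_{j})\gtrsim1$, while
\begin{align*}
\langle\mathrm{Im}M_{z}(\gamma_{j})Y\rangle&=\mathrm{Im}[m_{z}(\gamma_{j})]\langle Y\rangle+\langle R_{j}Y\rangle=O(|\gamma_{j}|\,\|Y\|_{\mathrm{op}}),\\
\langle\mathrm{Im}M_{z}(\gamma_{j})(E_{11}-E_{22})Y\rangle&=\mathrm{Im}[m_{z}(\gamma_{j})]\langle(E_{11}-E_{22})Y\rangle+\langle R_{j}(E_{11}-E_{22})Y\rangle=O(|\gamma_{j}|\,\|Y\|_{\mathrm{op}}).
\end{align*}
Hence both ratios appearing in the ETH identity are $O(|\gamma_{j}|)$. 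Finally, from the defining equation of the quantile together with the fact that $\rho_{z}$ is bounded above and below on $\mathbb{B}_{\kappa,z}$ (Example~2.5 of \cite{cipolloni2023optimal}) one gets $|\gamma_{j,z}|\lesssim|j|/N$ for bulk indices; combining this with $\gamma_{j,z}=-\gamma_{-j,z}$ and the ETH identity yields $\mathbf{u}_{i}^{z,\ast}Y\mathbf{u}_{j}^{z}=\delta_{|i|=|j|}\,O(N^{-1}|j|)+O(N^{-1/2})$ (note $|i|=|j|$ on the support of the Kronecker symbol, so $|\gamma_{i}|\asymp|\gamma_{j}|$ there).

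This computation is routine; the only point I would treat carefully is uniformity. Theorem~2.7 of \cite{cipolloni2023optimal} and the $o(E)$ error in \eqref{eq:Mzbound} have to be used uniformly over the indices and energies that actually occur, which is why one restricts to $\gamma_{j,z}\in\mathbb{B}_{\kappa,z}$; this is harmless because every invocation of this proposition in the rest of the paper (for instance in Lemma~\ref{lemma:gradientestimates}, where the splitting $G_{z}(\eta)=G+\widehat{G}$ isolates the small singular values) only ever involves bulk indices, exactly as for Proposition~\ref{prop:rigiditydeloc}.
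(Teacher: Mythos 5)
Your proposal is correct and follows the same route as the paper, which simply cites Theorem 2.7 of \cite{cipolloni2023optimal} for the first display and leaves the deduction of the second display from \eqref{eq:Mzbound} implicit; your computation (using $\langle Y\rangle=\langle(E_{11}-E_{22})Y\rangle=0$ to kill the scalar parts, the tracelessness of the block off-diagonal remainder, and $|\gamma_{j,z}|\lesssim|j|/N$) is exactly the intended argument. Your remark on restricting to bulk indices is also consistent with how the proposition is used elsewhere in the paper.
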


\begin{prop}
Suppose $|z_1|, |z_2|\le 1-\tau$ for some $\tau>0$ and $\eta_1,\eta_2\in[N^{-\frac13+\epsilon_0}, 10]$. Define $\eta^\ast=\max\{\eta_1,\eta_2\}$. Then there exists a constant $c>0$ such that for any $D>0$ we have
\begin{align}
\pp\left(\ntr{H_{z_1}(\eta_1)H_{z_2}(\eta_2)} \le c\left(\eta^\ast\right)^{-2}\right) \le N^{-D}\label{eq:2G-LL-1}\\
\pp\left(\ntr{\tilde{H}_{z_1}(\eta_1)\tilde{H}_{z_2}(\eta_2)} \le c\left(\eta^\ast\right)^{-2}\right) \le N^{-D}\label{eq:2G-LL-2}\\
\pp\left(\ntr{H_{z_1}(\eta_1)\tilde{H}_{z_2}(\eta_2)} \le \frac{c}{\eta^\ast\left(\eta^\ast+|z_1-z_2|^2\right)}\right)\le N^{-D}\label{eq:2G-LL-3}
\end{align}
for large enough $N$ uniformly in $z_1, z_2, \eta_1, \eta_2$.
\end{prop}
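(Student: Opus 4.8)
The plan is to deduce all three estimates from the averaged two–resolvent local law for the Hermitization $\mathcal{H}_z$ — a regime in which $\eta_1,\eta_2\geq N^{-1/3+\epsilon_0}$ is very comfortable — together with the known lower bound on the associated deterministic approximation. First I would record the algebraic identity
\begin{align*}
\mathrm{Im}\,G_z(\eta)=\eta(\mathcal{H}_z^2+\eta^2)^{-1}=\begin{pmatrix}\eta\tilde H_z(\eta)&0\\0&\eta H_z(\eta)\end{pmatrix},
\end{align*}
which follows from $\mathcal{H}_z^2=\mathrm{diag}((A-z)(A-z)^\ast,(A-z)^\ast(A-z))$. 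Writing $E_{11},E_{22}$ for the two diagonal block projections and $J$ for the block exchange matrix swapping the two copies of $\C^N$, this yields
\begin{align*}
\ntr{H_{z_1}(\eta_1)H_{z_2}(\eta_2)}&\asymp\tfrac{1}{\eta_1\eta_2}\ntr{\mathrm{Im}\,G_{z_1}(\eta_1)\,E_{22}\,\mathrm{Im}\,G_{z_2}(\eta_2)\,E_{22}},\\
\ntr{\tilde H_{z_1}(\eta_1)\tilde H_{z_2}(\eta_2)}&\asymp\tfrac{1}{\eta_1\eta_2}\ntr{\mathrm{Im}\,G_{z_1}(\eta_1)\,E_{11}\,\mathrm{Im}\,G_{z_2}(\eta_2)\,E_{11}},\\
\ntr{H_{z_1}(\eta_1)\tilde H_{z_2}(\eta_2)}&\asymp\tfrac{1}{\eta_1\eta_2}\ntr{\mathrm{Im}\,G_{z_1}(\eta_1)\,E_{22}J\,\mathrm{Im}\,G_{z_2}(\eta_2)\,JE_{22}}.
\end{align*}
Each right-hand side is of the form $\ntr{\mathrm{Im}\,G_{z_1}(\eta_1)B_1\,\mathrm{Im}\,G_{z_2}(\eta_2)B_2}$ with $B_1,B_2$ fixed matrices of $\mathcal{O}(1)$ operator norm, and each is manifestly nonnegative; so the task reduces to bounding such traces below with probability $1-\mathcal{O}(N^{-D})$.

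Next I would invoke the averaged two–resolvent local law of \cite{cipolloni2021fluctuation,cipolloni2023optimal,cipolloni2023mesoscopic}: for $|z_1|,|z_2|\leq 1-\tau$ and $\eta_1,\eta_2\geq N^{-1+\epsilon}$ one has $\ntr{G_{z_1}(\eta_1)B_1G_{z_2}(\eta_2)B_2}=\ntr{\mathbf{M}_{12}B_2}+\mathcal{O}_\prec\!\big((N\eta_1\eta_2)^{-1}N^\epsilon\big)$, where the deterministic $\mathbf{M}_{12}=\mathbf{M}_{z_1,z_2}^{\eta_1,\eta_2}(B_1)$ is obtained by applying the inverse of the two–body stability operator $\mathcal{B}_{z_1,z_2}=1-M_{z_1}\mathcal{S}[\,\cdot\,]M_{z_2}$ ($\mathcal{S}$ the self–energy operator of $\mathcal{H}_z$, $M_z=M_z(i\eta)$ from Section \ref{sec:green-function}). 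Substituting $\eta_a\mapsto\pm\eta_a$ and taking imaginary parts converts this into the corresponding statement for $\mathrm{Im}\,G$ with deterministic term $\mathrm{Im}\,\mathbf{M}_{12}$; since $\eta_1,\eta_2\geq N^{-1/3+\epsilon_0}\gg N^{-1/2}$, the stochastic error is a negative power of $N$ times the positive main term estimated below and is therefore absorbed.

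The remaining point is the lower bound on the deterministic term, which is the standard size estimate for $\mathbf{M}_{12}$ in this setting: the stability operator obeys $\|\mathcal{B}_{z_1,z_2}^{-1}\|\lesssim(\eta_1+\eta_2+|z_1-z_2|^2)^{-1}$ on its one unstable direction and is $\mathcal{O}(1)$ on the complement (see \cite{cipolloni2021fluctuation,cipolloni2023optimal}), so tracking that direction through $\mathbf{M}_{12}$ for the above choices of $B_1,B_2$ gives $\ntr{\mathrm{Im}\,\mathbf{M}_{12}}\gtrsim \eta_1\eta_2(\eta^\ast)^{-2}$ in the two "same–block" cases ($B_1=B_2=E_{22}$ or $E_{11}$) and $\ntr{\mathrm{Im}\,\mathbf{M}_{12}}\gtrsim \eta_1\eta_2(\eta^\ast(\eta^\ast+|z_1-z_2|^2))^{-1}$ in the off–diagonal case. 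Combining with the $(\eta_1\eta_2)^{-1}$ prefactor from the first step proves \eqref{eq:2G-LL-1}--\eqref{eq:2G-LL-3} for fixed parameters on an event of probability $1-\mathcal{O}(N^{-D})$. To obtain uniformity in $|z_i|\leq 1-\tau$ and $\eta_i\in[N^{-1/3+\epsilon_0},10]$, I would union bound over a net of mesh $N^{-C}$ and use Lipschitz continuity of $\eta_1\eta_2\ntr{H_{z_1}(\eta_1)H_{z_2}(\eta_2)}$ (and its analogues) in $(z_1,z_2,\eta_1,\eta_2)$, which follows from resolvent perturbation exactly as in Lemma \ref{lemma:gradientestimates}; this loses only an arbitrarily small power of $N$, absorbed into $c$.

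The main obstacle is the last step — specifically the $(\eta^\ast+|z_1-z_2|^2)$ scaling of $\|\mathcal{B}_{z_1,z_2}^{-1}\|$, which produces the eigenvalue–separation factor in \eqref{eq:2G-LL-3}. This is, however, precisely the deterministic analysis already carried out in the two–resolvent local law papers \cite{cipolloni2021fluctuation,cipolloni2023optimal}, and since our $\eta^\ast$ lies far above $N^{-1/2}$ we are well inside the range of validity of those results, so no new estimates are needed beyond assembling the pieces above.
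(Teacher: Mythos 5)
Your overall architecture is the same as the paper's: rewrite each quantity as a two-resolvent trace of Hermitized Green's functions against fixed $O(1)$ block matrices, apply the averaged two-resolvent local laws of \cite{cipolloni2021fluctuation,cipolloni2023mesoscopic}, check that the fluctuation error is negligible for $\eta_{1},\eta_{2}\geq N^{-1/3+\epsilon_{0}}$, and lower-bound the deterministic leading term. (Your use of $\mathrm{Im}\,G_{z}(\eta)=\mathrm{diag}(\eta\tilde{H}_{z}(\eta),\eta H_{z}(\eta))$ versus the paper's $G_{z_{1}}E_{ab}G_{z_{2}}E_{cd}$ is a cosmetic difference; both identities are exact by the block structure.)

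However, the step you defer to the cited papers --- the lower bound on the deterministic term --- is a genuine gap, and it is the mathematical content of the proposition. An upper bound $\|\mathcal{B}_{z_{1},z_{2}}^{-1}\|\lesssim(\eta_{1}+\eta_{2}+|z_{1}-z_{2}|^{2})^{-1}$ on the unstable direction gives an \emph{upper} bound on $\mathbf{M}_{12}$; it does not produce a lower bound on $\ntr{\mathcal{B}^{-1}[M_{z_{1}}B_{1}M_{z_{2}}]B_{2}}$, and the two-resolvent local law papers do not state the lower bounds you need. What actually happens, and what must be verified by explicit computation, is: in the same-block cases the quantity equals $|m_{z_{1}}m_{z_{2}}|\big/\big(|1-z_{1}\bar{z}_{2}u_{z_{1}}u_{z_{2}}|^{2}-m_{z_{1}}^{2}m_{z_{2}}^{2}\big)$, and one needs $|m_{z_{j}}|\gtrsim1$ in the bulk together with an $O(1)$ upper bound and positivity of the denominator to conclude it is $\geq c$ (so that the prefactor $(\eta_{1}\eta_{2})^{-1}\geq(\eta^{\ast})^{-2}$ carries the whole bound). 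In the cross-block case \eqref{eq:2G-LL-3} the numerator acquires the extra factor $1-(|z_{1}|^{2}u_{z_{1}}^{2}-m_{z_{1}}^{2})(|z_{2}|^{2}u_{z_{2}}^{2}-m_{z_{2}}^{2})$, which is itself \emph{small}, of order $\eta_{1}+\eta_{2}$ (via $0\leq|z|^{2}u_{z}^{2}-m_{z}^{2}\leq1-c\eta$ from the self-consistent equation \eqref{eq:mz-equation}); this partially cancels the small denominator $\det\mathcal{B}\lesssim\eta_{1}+\eta_{2}+|z_{1}-z_{2}|^{2}$ and is exactly what produces the ratio $(\eta_{1}+\eta_{2})/(\eta_{1}+\eta_{2}+|z_{1}-z_{2}|^{2})$ in \eqref{eq:2G-LL-3}. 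Your "tracking the unstable direction" heuristic, if taken at face value, would miss this cancellation entirely and could even suggest a bound that is too large by a factor $(\eta^{\ast})^{-1}$. So you must carry out the explicit evaluation of the deterministic term (as the paper does), not merely cite stability-operator norm bounds.
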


\begin{proof}
Define $\eta_\ast = \min\{\eta_1,\eta_2\}$. To show \eqref{eq:2G-LL-1}, we note that
\[
\ntr{H_{z_1}(\eta_1)H_{z_2}(\eta_2)} = -\frac{1}{\eta_1\eta_2}\ntr{G_{z_1}(\eta_1)E_{22}G_{z_2}(\eta_2) E_{22}}.
\]
Now we use Theorem 3.5 of \cite{cipolloni2021fluctuation} to approximate this trace by a deterministic quantity
\[
\ntr{H_{z_1}(\eta_1)H_{z_2}(\eta_2)} = -\frac{1}{\eta_1\eta_2} \ntr{\mathcal{B}^{-1}[M_{z_1}(\eta_1)E_{22}M_{z_2}(\eta_2)] E_{22}} + \oh\left(\frac{N^{-3\epsilon_0}}{\eta_1\eta_2}\right).
\]
Here $\mathcal{B}: M_{2N}(\C)\rightarrow M_{2N}(\C)$ is a linear operator given by
\[
\mathcal{B}(\cdot) = Id - M_{z_1}(\eta_1)\mathcal{S}(\cdot)M_{z_2}(\eta_2).
\]
By a straightforward computation we see that
\[
-\ntr{\mathcal{B}^{-1}[M_{z_1}(\eta_1)E_{22}M_{z_2}(\eta_2)] E_{22}} = \frac{|m_{z_1}(\eta_1)m_{z_2}(\eta_2)|}{|1-z_1\bar{z_2}u_{z_1}(\eta_1)u_{z_2}(\eta_2)|^2 - m_{z_1}(\eta_1)^2 m_{z_2}(\eta_2)^2} \ge c
\]
for some constant $c$. Here we used that $|m_{z_j}(\eta_j)|$ is bounded below by a constant; for $\eta\geq c_{0}$ for any fixed $c_{0}>0$, this follows by continuity of solutions to \eqref{eq:mz-equation}, and for $\eta\leq c_{0}$, this follows by Lemma 3.2 in \cite{cipolloni2023mesoscopic} if $c_{0}>0$ is small enough. We also used that the denominator is bounded above by a constant, which can be readily checked using properties of $m_{z_{j}}(\eta_{j})$, and positive since we know that $\ntr{H_{z_1}(\eta_1)H_{z_2}(\eta_2)}>0$. This concludes the proof of \eqref{eq:2G-LL-1}. The bound \eqref{eq:2G-LL-2} is proved the same way.

Now we prove \eqref{eq:2G-LL-3}. If $|z_1-z_2|<N^{-\frac{\epsilon_0}{2}}$, we use Theorem 3.3 of \cite{cipolloni2023mesoscopic} and if $|z_1-z_2|\ge N^{-\frac{\epsilon_0}{2}}$, we use Theorem 3.5 of \cite{cipolloni2021fluctuation} to get
\begin{align*}
\ntr{H_{z_1}(\eta_1)\tilde{H}_{z_2}(\eta_2)} &= -\frac{1}{\eta_1\eta_2}\ntr{G_{z_1}(\eta_1)E_{12}G_{z_2}(\eta_2) E_{21}}\\
&= -\frac{1}{\eta_1\eta_2} \ntr{\mathcal{B}^{-1}[M_{z_1}(\eta_1)E_{12}M_{z_2}(\eta_2)] E_{21}} + \oh\left(\frac{N^{\epsilon_0}}{N\eta_{\ast}^{2}}\frac{1}{\eta_1\eta_2}\right).
\end{align*}
(The quantity $\hat{\beta}_{\ast}$ in Theorem 3.5 of \cite{cipolloni2021fluctuation} is bounded below uniformly by Lemma 6.1 in \cite{cipolloni2021fluctuation}.) Since $\eta_{\ast}\geq N^{-1/3+\epsilon_{0}}$, the error term $\mathcal{O}(N^{\epsilon_0}N^{-1}\eta_{\ast}^{-4})$ is much smaller than $\eta_{\ast}^{-1}$. Thus, we can drop the $\oh$-term in the second line. Again, we can directly compute
\begin{align}
&-\ntr{\mathcal{B}^{-1}[M_{z_1}(\eta_1)E_{12}M_{z_2}(\eta_2)] E_{21}}\nonumber \\
&= \frac{|m_{z_1}(\eta_1)m_{z_2}(\eta_2)|\left(1 - (|z_1|^2 u_{z_1}(\eta_1)^2 - m_{z_1}(\eta_1)^2)(|z_2|^2 u_{z_2}(\eta_2)^2 - m_{z_2}(\eta_2)^2)\right)}{|1-z_1\bar{z_2}u_{z_1}(\eta_1)u_{z_2}(\eta_2)|^2 - m_{z_1}(\eta_1)^2 m_{z_2}(\eta_2)^2}.\label{eq:HHt-LL-main-term}
\end{align}
Now from the self-consistent equation \eqref{eq:mz-equation} for $m_z(\eta)$, we see that
\[
0\le |z|^2 u_z(\eta)^2 - m_z(\eta)^2 \le 1 - \eta |m_z(\eta)| \le 1-c\eta.
\]
Thus
\[
-\ntr{\mathcal{B}^{-1}[M_{z_1}(\eta_1)E_{12}M_{z_2}(\eta_2)] E_{21}} \ge c(\eta_1 + \eta_2)
\]
for some constant $c$. The denominator of \eqref{eq:HHt-LL-main-term} is the determinant of $\mathcal{B}$. From the calculations in Appendix B of \cite{cipolloni2023mesoscopic} we see that
\[
\det \mathcal{B} \lesssim \eta_1+\eta_2 + |z_1-z_2|^2.
\]
This concludes the proof of \eqref{eq:2G-LL-3}.
\end{proof}

\bibliographystyle{abbrv}
\bibliography{leftrightevector_4}
\end{document}